\title{The structure of groups with\\
  all proper quotients virtually nilpotent}
\author{Benjamin Klopsch \& Martyn Quick}
\newtheorem{thm}{Theorem}[section]
\newtheorem{lemma}[thm]{Lemma}
\newtheorem{prop}[thm]{Proposition}
\newtheorem{cor}[thm]{Corollary}
\newtheorem{THM}{Theorem}
\newtheorem{defn}[thm]{Definition}
\newtheorem{example}[thm]{Example}
\newcommand{\1}{\mathbf{1}}
\newcommand{\Aset}[1]{\mathcal{A}_{#1}}
\newcommand{\Astarset}[1]{\mathcal{A}_{#1}^{\ast}}
\newcommand{\Aut}[1]{\operatorname{Aut}#1}
\newcommand{\Autc}[1]{\operatorname{Aut}_{\mathrm{c}}#1}
\newcommand{\Cent}[2]{\mathrm{C}_{#1}(#2)}
\newcommand{\Centre}[2][]{\mathrm{Z}_{#1}(#2)}
\newcommand{\Core}[2]{\mathrm{Core}_{#1}(#2)}
\newcommand{\Cset}[1]{\mathcal{C}_{#1}}
\newcommand{\Cstarset}[1]{\mathcal{C}_{#1}^{\ast}}
\newcommand{\dirlim}{\varinjlim}
\newcommand{\Field}[1]{\mathbb{F}_{#1}}
\newcommand{\Fin}[1]{\mathrm{Fin}(#1)}
\newcommand{\Fitt}[1]{\mathrm{F}(#1)}
\newcommand{\GL}[1]{\mathrm{GL}(V)}
\newcommand{\group}[2]{\langle\,#1\mid#2\,\rangle}
\newcommand{\Inn}[1]{\operatorname{Inn}#1}
\newcommand{\invlim}{\varprojlim}
\newcommand{\JNNcF}{JNN$_{c}$F}
\newcommand{\lcm}{\operatorname{lcm}}
\newcommand{\leqc}{\leq_{\mathrm{c}}}
\newcommand{\leqo}{\leq_{\mathrm{o}}}
\newcommand{\lowerc}[2]{\gamma_{#1}(#2)}
\newcommand{\Mel}[2][]{\mathrm{M}_{#1}(#2)}
\newcommand{\mult}{^{\ast}}
\newcommand{\Nat}{\mathbb{N}}
\newcommand{\nbd}{\nobreakdash-}
\newcommand{\Norm}[2]{\mathrm{N}_{#1}(#2)}
\newcommand{\normal}{\trianglelefteqslant}
\newcommand{\normalc}{\normal_{\mathrm{c}}}
\newcommand{\normalo}{\normal_{\mathrm{o}}}
\newcommand{\Nott}{\mathcal{N}}
\newcommand{\order}[1]{\mathopen{|}#1\mathclose{|}}
\renewcommand{\P}{\mathscr{P}}
\newcommand{\PSL}[2]{\mathrm{PSL}_{#1}(#2)}
\newcommand{\psring}{\Field{p}\llbracket T\rrbracket}
\newcommand{\set}[2]{\{\,#1\mid#2\,\}}
\newcommand{\SL}[2]{\mathrm{SL}_{#1}(#2)}
\newcommand{\slLie}[2]{\mathfrak{sl}_{#1}(#2)}
\newcommand{\SLc}[2]{\mathrm{SL}_{#1}^{1}(#2)}
\newcommand{\succnar}{\succ_{\text{nar}}}
\newcommand{\Zint}{\mathbb{Z}}
\newcommand{\AND}{\qquad\text{and}\qquad}
\newcommand{\spc}{\vspace{\baselineskip}}
\renewcommand{\leq}{\leqslant}
\renewcommand{\geq}{\geqslant}
\renewcommand{\nleq}{\nleqslant}
\renewcommand{\wr}{\operatorname{wr}}
\newenvironment{proof}{%
  \begin{trivlist}\item[]\textsc{Proof:}}{%
  \qed\end{trivlist}}
\newcommand{\qed}{\hspace*{\fill}$\square$}
\begin{document}

\maketitle

\begin{abstract}
  Just infinite groups play a significant role in profinite group
  theory.  For each $c \geq 0$, we consider more generally
  \JNNcF\ profinite (or, in places, discrete) groups that are
  Fitting-free; these are the groups~$G$ such that every proper
  quotient of $G$ is virtually class\nbd$c$ nilpotent whereas $G$
  itself is not, and additionally $G$ does not have any non-trivial
  abelian normal subgroup.  When $c = 1$, we obtain the just
  non-(virtually abelian) groups without non-trivial abelian normal
  subgroups.

  Our first result is that a finitely generated profinite group is
  virtually class\nbd$c$ nilpotent if and only if there are only
  finitely many subgroups arising as the lower central series
  terms~$\lowerc{c+1}{K}$ of open normal subgroups $K$ of~$G$.  Based
  on this we prove several structure theorems.  For instance, we
  characterize the \JNNcF\ profinite groups in terms of subgroups of
  the above form~$\lowerc{c+1}{K}$.  We also give a description of
  \JNNcF\ profinite groups as suitable inverse limits of virtually
  nilpotent profinite groups.  Analogous results are established for
  the family of hereditarily \JNNcF\ groups and, for instance, we show
  that a Fitting-free \JNNcF\ profinite (or discrete) group is
  hereditarily \JNNcF\ if and only if every maximal subgroup of finite
  index is \JNNcF\@.  Finally, we give a construction of hereditarily
  \JNNcF\ groups, which uses as an input known families of
  hereditarily just infinite groups.
\end{abstract}

%%%%%

\section{Introduction and Main Results}

If $\P$~is a property of groups, a group~$G$ is said to be \emph{just
  non\nbd$\P$} when $G$~does not have property~$\P$ but all proper
quotients of~$G$ do satisfy~$\P$.  In the case when $G$~is a profinite
group, we require instead that every quotient of~$G$ by a non-trivial
\emph{closed} normal subgroup has~$\P$.  The property~$\P$ considered
most often has been that of being finite and the more common term
\emph{just infinite} is then used.  Just infinite groups are
particularly important within the context of profinite -- or more
generally residually finite -- groups, since infinite residually
finite groups are never simple but instead just infinite groups can be
viewed as those with all proper quotients \emph{essentially} trivial
from a `residually finite' viewpoint (see, for example, the discussion
in~\cite[\S12.1]{LGM}).  Important examples of just infinite groups
include the Grigorchuk group~\cite{Grig84} and the Nottingham
group~\cite{Klopsch-Nott,Hegedus}, but also families arising as
quotients of arithmetic groups by their centres~\cite{BMS}.

There is a dichotomy in the study of just non\nbd$\P$ groups.  One
thread within the literature is concerned with the study of just
non\nbd$\P$ groups possessing a non-trivial normal abelian subgroup.
In this context, a key idea is to exploit the structure of a maximal
abelian normal subgroup when viewed as a module in the appropriate
way.  Studies of this type
include~\cite{McCarthy1,McCarthy2,deFalco,MRQ} and we also refer to
the monograph~\cite{KOS} for more examples.  On the other hand,
Wilson~\cite{JSW71,JSW00} addresses the case of just infinite groups
with no non-trivial abelian normal subgroup.  He shows that such
groups fall into two classes: (i)~branch groups, and (ii)~certain
subgroups of wreath products of a hereditarily just infinite group by
a symmetric group of finite degree.  The class of branch groups has
been studied considerably (see, for example, \cite{BGS,Grig00}, though
many more articles on these groups have appeared since these surveys
were written).  It is known that every proper quotient of a branch
group is virtually abelian (see the proof of~\cite[Theorem~4]{Grig00})
and there are examples of branch groups that are not just infinite
(see~\cite{Fink}, for example).  It is interesting therefore to note
that Wilson's methods extend to the class of groups with all proper
quotients virtually abelian, as observed by Hardy in his PhD
thesis~\cite{Hardy-PhD}.  We shall use the abbreviation \emph{JNAF
groups} for these just non-(abelian-by-finite) groups.

More recently, Reid established various fundamental results concerning
the structure and properties of just infinite groups
(see~\cite{Reid10a,Reid10b,Reid12,Reid-corr}).  One might wonder to
what extent JNAF groups have a similar structure to just infinite
groups.  In this article, we demonstrate how, for fixed~$c \geq 0$,
Reid's results may in fact be extended to the even larger class of
groups with all proper quotients being virtually nilpotent of class at
most~$c$; that is, the just non-(class\nbd$c$-nilpotent-by-finite)
groups.  We shall abbreviate this term to \emph{\JNNcF\ group} in what
follows.  The case $c = 0$ essentially returns Reid's results, while
the case $c = 1$ covers all JNAF groups and so, in particular, would
apply to all branch groups.

We do require some additional, though rather mild, hypotheses to those
appearing in Reid's work.  First, the \JNNcF\ groups that we consider
will be assumed to be \emph{Fitting-free}; that is, to have no
non-trivial abelian normal subgroup.  This is consistent with Wilson's
and Hardy's studies and with the viewpoint that says that the case
with a non-trivial abelian normal subgroup should be studied through a
module-theoretic lens.  (As an aside, we emphasize that \JNNcF\ groups
with some non-trivial abelian normal subgroup are, in particular,
abelian-by-nilpotent-by-finite.)  Infinite Fitting-free groups cannot
be virtually nilpotent, so part of the definition of \JNNcF\ group
comes immediately.  In addition, we shall frequently assume that the
groups under consideration are finitely generated.  This latter
condition will enable us to control the structure of the quotients
that arise.

It is interesting to note which parts of Reid's ideas adapt readily to
the \JNNcF\ setting and where differences occur.  One example is that
he implicitly uses the fact that a proper quotient of a just infinite
group, being finite, has only finitely many subgroups.  In contrast,
any infinite (virtually nilpotent) quotient of a profinite group will
necessarily have infinitely many open normal subgroups.  We shall
depend upon the following result as a key tool in our work.  It means
that, while a finitely generated virtually nilpotent profinite group
typically has infinitely many closed normal subgroups, it only has
finitely many that occur as corresponding lower central series
subgroups of open normal subgroups.

\begin{THM}
  \label{thm:maintool}
  Let $G$~be a finitely generated profinite group.  Then $G$~is
  virtually nilpotent of class at most~$c$ if and only if the set
  $\set{ \lowerc{c+1}{K} }{K \normalo G}$ is finite.
\end{THM}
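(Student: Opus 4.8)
The easy direction is that virtual class-$c$ nilpotence forces finiteness of $\set{ \lowerc{c+1}{K} }{K \normalo G}$. Here I would argue as follows. Let $N \normalo G$ be a normal subgroup of finite index that is nilpotent of class at most~$c$, so $\lowerc{c+1}{N} = \1$. For an arbitrary $K \normalo G$, the subgroup $K \cap N$ has finite index in~$K$, indeed index bounded by $\order{G:N}$. Since $K \cap N \leq N$ is nilpotent of class at most~$c$, we get $\lowerc{c+1}{K \cap N} = \1$, and hence $\lowerc{c+1}{K} = \lowerc{c+1}{K}$ is contained in the (finite-index, in fact bounded-index) subgroup generated by commutators that "use up'' at least one coset representative of $K \cap N$ in~$K$. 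More precisely, I would show that $\lowerc{c+1}{K}$ lies in a subgroup of~$G$ of order (or index) bounded in terms of $\order{G:N}$ and the number of generators of~$G$ alone; since a finitely generated profinite group has only finitely many open subgroups of each given index (by the positive solution to the analogous statement, or directly since it has finitely many continuous homomorphisms to each finite group), this yields finitely many possibilities for $\lowerc{c+1}{K}$. The cleanest route is: every $\lowerc{c+1}{K}$ is contained in~$\lowerc{c+1}{G}$, which need not help directly, so instead bound the index of~$\Cent{G}{\lowerc{c+1}{K}}$ or, better, observe that modulo $N$ the image of~$K$ has bounded order, so $\lowerc{c+1}{K}$ is generated by boundedly many commutators each of which is "anchored'' in a bounded quotient --- I expect this bookkeeping to be routine but slightly fiddly.

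The substantive direction is the converse: if $\set{ \lowerc{c+1}{K} }{K \normalo G}$ is finite, then $G$~is virtually class-$c$ nilpotent. The plan is to let $L = \bigcap \set{ \lowerc{c+1}{K} }{K \normalo G}$, a finite intersection of closed normal subgroups, hence itself a closed normal subgroup of~$G$. I claim $L = \1$, from which the result follows quickly: indeed, the open normal subgroups of~$G$ are cofinal (their intersection is trivial), so $\bigcap_{K \normalo G} \lowerc{c+1}{K} = \1$ combined with the finiteness hypothesis forces one of the finitely many subgroups $\lowerc{c+1}{K}$ to be trivial, i.e.\ some $K \normalo G$ is nilpotent of class at most~$c$, and that $K$ has finite index. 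So the whole theorem reduces to showing that the finitely many subgroups $\lowerc{c+1}{K}$ have trivial intersection --- but this is immediate from $\bigcap_{K \normalo G} K = \1$ together with the monotonicity $K \leq K' \Rightarrow \lowerc{c+1}{K} \leq \lowerc{c+1}{K'}$: given any $1 \neq g \in G$, pick $K \normalo G$ with $g \notin K$; then... hmm, that shows $g \notin K$, not $g \notin \lowerc{c+1}{K}$.

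So the actual crux --- and the step I expect to be the main obstacle --- is producing, from the finiteness hypothesis, a single open normal subgroup with $\lowerc{c+1}{K} = \1$. I would attack this by a minimal-counterexample / compactness argument: among the finitely many subgroups in the set, choose $\lowerc{c+1}{K_0}$ minimal under inclusion. Then for every $K \normalo G$ with $K \leq K_0$ we have $\lowerc{c+1}{K} \leq \lowerc{c+1}{K_0}$, and by minimality $\lowerc{c+1}{K} = \lowerc{c+1}{K_0}$; in particular $\lowerc{c+1}{K_0}$ is "stable'' and is normal in~$G$. Now work in $\bar G = G/\lowerc{c+1}{K_0}$: the images $\bar K$ of the $K \leq K_0$ are nilpotent of class at most~$c$ and form a descending chain of open normal subgroups with trivial intersection, and I want to conclude $\lowerc{c+1}{K_0} = \1$. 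If not, pick $1 \neq g \in \lowerc{c+1}{K_0} = \lowerc{c+1}{K}$ for all small~$K$; writing $g$ as a product of $(c+1)$-fold commutators of elements of~$K$ and pushing $K$ down a cofinal system, one forces $g$ into $\bigcap_K \lowerc{c+1}{K}$... I would instead use that $G$~is finitely generated to get a uniform bound: in a $d$-generator profinite group, $\lowerc{c+1}{K}$ for $K \normalo G$ of index~$n$ is generated by a bounded number (depending only on $d, c, n$) of commutators, and for $K$ deep enough in the chain these commutators all lie in arbitrarily small open subgroups, contradicting $g \neq 1$ by Hausdorffness. This last compactness/finite-generation squeeze is the heart of the matter; everything else is assembly. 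Once $\lowerc{c+1}{K_0} = \1$ is established, $K_0$ is the required open normal subgroup of class at most~$c$, and the proof is complete. $\square$
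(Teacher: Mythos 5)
The genuine gap is in the direction you call easy; that is where all the substance of the theorem lies. Knowing that $\lowerc{c+1}{K}$ is contained in a subgroup of bounded index, or in one of finitely many open subgroups, bounds nothing: an open subgroup of a profinite group in general contains infinitely many closed subgroups that are normal in~$G$, so such a containment leaves infinitely many possibilities for $\lowerc{c+1}{K}$ itself; and ``bounded order'' is unavailable, since $\lowerc{c+1}{K}$ is typically infinite (already $\lowerc{2}{G}$ for the pro\nbd$2$ dihedral group). Likewise ``generated by boundedly many commutators anchored in a bounded quotient'' does not pin the subgroup down, because the commutators themselves range over an infinite set. What is actually needed is a two-sided squeeze: fixing $N \normalo G$ with $\lowerc{c+1}{N} = \1$ and setting $L = KN$ (finitely many possibilities for~$L$), one proves $[N,_{2c}L] \leq \lowerc{c+1}{K} \leq \lowerc{c+1}{L}$ and, crucially, that $[N,_{2c}L]$ is \emph{open} in $\lowerc{c+1}{L}$; this openness is Lemma~\ref{lem:comms}, which rests on Baer's theorem together with a commutator argument in which finite generation is used to force the sections $[N,_{i}L]/[N,_{i+1}L]$ to be finite. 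Only then are there finitely many closed subgroups between the two ends. Your sketch contains no lower bound of this kind and no substantive use of finite generation --- your only appeal to it (finitely many open subgroups of each index) is a non sequitur here --- and since the statement fails without finite generation (see the example following Theorem~\ref{thm:Cset-finite}), no ``routine bookkeeping'' avoiding such an input can succeed.

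For the converse, your minimal-element idea is right and, contrary to what you write, it finishes on the spot with no compactness or generation bound: once $\lowerc{c+1}{K_{0}}$ is minimal in the finite set, so that $\lowerc{c+1}{K} = \lowerc{c+1}{K_{0}}$ for every $K \normalo G$ with $K \leq K_{0}$, you get $\lowerc{c+1}{K_{0}} \leq K$ for all such $K$ simply because $\lowerc{c+1}{K} \leq K$ (this also dissolves your worry that $g \notin K$ does not give $g \notin \lowerc{c+1}{K}$ --- it does), and these $K$ intersect in~$\1$; hence $\lowerc{c+1}{K_{0}} = \1$ and $K_{0}$ is the required open class\nbd$c$ nilpotent subgroup. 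The ``uniform bound on the number of commutators'' that you identify as the heart of the matter is both unnecessary and unsubstantiated as stated. (The paper's own proof of this implication is a close cousin of your argument: it chooses $M \normalo G$ maximizing the number of subgroups of the form $\lowerc{c+1}{H/M}$ and deduces $\lowerc{c+1}{M} \leq N$ for every open normal $N \leq M$; neither version needs finite generation.) So your proposal has the relative difficulty of the two implications reversed, is essentially repairable in the converse direction, but is missing the key idea in the forward direction.
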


The above result is established as Theorem~\ref{thm:Cset-finite} in
Section~\ref{sec:prelims}.  In that section, we also give precise
definitions and recall properties needed during the course of our
work.

In Section~\ref{sec:JNNF}, we fix an integer~$c \geq 0$ and
investigate the structure of \JNNcF\ profinite groups $G$ that are
Fitting-free.  We shall establish various descriptions that generalize
those of just infinite groups in~\cite{Reid10b,Reid12,Reid-corr}.  One
point that can be noted is that the subgroups of the
form~$\lowerc{c+1}{K}$, for $K$~an open normal subgroup of~$G$, play a
role in \JNNcF\ groups analogous to that of open normal subgroups in
just infinite groups.  For example, we show that a directed
graph~$\Gamma$ can be constructed from a suitable subcollection of
$\set{ \lowerc{c+1}{K} }{ K \normalc G }$ that is locally finite.
This enables us to establish our first characterization of
\JNNcF\ groups (established as Theorem~\ref{thm:Aset} below), which is
the following analogue of Reid's ``Generalized Obliquity Theorem''
\cite[Theorem~A]{Reid10b}.  Specifying $c = 0$ results in a mild
weakening of Reid's theorem.

\begin{THM}
  \label{thm:obliq}
  Let $G$~be a finitely generated infinite profinite group that has no
  non-trivial abelian closed normal subgroup.  Then $G$~is \JNNcF\ if
  and only if the set $\Aset{H} = \set{ \lowerc{c+1}{K} }{
    \text{$K \normalo G$ with $\lowerc{c+1}{K} \nleq H$} }$ is finite
  for every open subgroup~$H$ of~$G$.
\end{THM}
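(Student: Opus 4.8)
The plan is to prove both implications by leveraging Theorem~\ref{thm:maintool} applied to appropriate quotients of~$G$, together with standard correspondence-theorem bookkeeping for the subgroups~$\lowerc{c+1}{K}$.

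For the forward direction, suppose $G$~is \JNNcF\ and let $H \leqo G$. Since being \JNNcF\ is inherited by passing to finite-index overgroups in the relevant sense (or directly: the core $N = \Core{G}{H}$ is a non-trivial open normal subgroup because $G$~is infinite, so we may work with~$N$), I would first reduce to the case where $H = N$ is open and normal. The key observation is that $\lowerc{c+1}{K} \nleq N$ forces $\lowerc{c+1}{K}$ to have non-trivial image in the proper quotient $G/N$, and since $G$~is \JNNcF, $G/N$ is virtually class-$c$ nilpotent. Now $G/N$ is finitely generated, so by Theorem~\ref{thm:maintool} the set $\set{ \lowerc{c+1}{L/N} }{ L/N \normalo G/N }$ is finite; equivalently, the set of subgroups $\lowerc{c+1}{K}N/N$ for $K \normalo G$ is finite. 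The subtlety is that distinct~$K$ can give the same $\lowerc{c+1}{K}N/N$ while having different $\lowerc{c+1}{K}$; here I would use that $\lowerc{c+1}{K} \leq \lowerc{c+1}{K}N$ is of finite index (both being open in~$G$) and that $\lowerc{c+1}{KN} = \lowerc{c+1}{K}\lowerc{c+1}{N}\cdots$ type identities (or more simply, that $\lowerc{c+1}{K} \normal G$ and $\lowerc{c+1}{K}N/\lowerc{c+1}{K}$ is a subgroup of the fixed finite group $G/\lowerc{c+1}{K}$ — but this still needs finitely many~$\lowerc{c+1}{K}$ to begin with). The clean route: since $\lowerc{c+1}{K} \nleq N$ implies $\lowerc{c+1}{K}$ is not contained in the open normal subgroup~$N$, and the subgroups $\lowerc{c+1}{K}$ that do lie above some fixed $\lowerc{c+1}{K_0}$ are controlled, one argues that $\Aset{N}$ injects into the finite poset of subgroups of $G/\lowerc{c+1}{K_0}$-type data; I would pin this down using that $\Aset{N}$, being an antichain-free family closed under the relevant joins, maps finite-to-one onto its image in $G/N$.

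For the converse, assume $\Aset{H}$ is finite for every open~$H$; I must show $G$~is \JNNcF, i.e., every proper quotient $G/N$ (with $\1 \neq N \normalc G$) is virtually class-$c$ nilpotent. Since $G$~is finitely generated profinite and Fitting-free and infinite, it is not itself virtually class-$c$ nilpotent, so the ``not'' half of the definition is automatic. For the positive half, fix $\1 \neq N \normalc G$; I want to show $G/N$ is virtually class-$c$ nilpotent, and by Theorem~\ref{thm:maintool} it suffices to show $\set{ \lowerc{c+1}{L} }{ L \normalo G/N }$ is finite, i.e., that $\set{ \lowerc{c+1}{K}N/N }{ K \normalo G, K \geq N }$ is finite. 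Choose an open subgroup~$H$ (e.g. $H = G$ itself, or an open normal subgroup) with $N \nleq H$ is impossible if $H = G$ — so instead pick a proper open~$H$; the point is that since $N \neq \1$ and $G$~is residually finite, there is an open normal subgroup $U$ of $G$ with $N \nleq U$. Then for $K \normalo G$ with $K \geq N$, if $\lowerc{c+1}{K} \leq U$ then $\lowerc{c+1}{K}N/N$ would... — here is where I have to be careful: $\lowerc{c+1}{K} \leq U$ does not immediately bound $\lowerc{c+1}{K}N/N$. The correct argument: the subgroups $\lowerc{c+1}{K}$ with $\lowerc{c+1}{K} \nleq U$ form the finite set~$\Aset{U}$; for the remaining~$K$ we have $\lowerc{c+1}{K} \leq U$, and I claim these contribute only finitely much data modulo~$N$. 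Indeed $\lowerc{c+1}{K}N/N \leq UN/N$; but also $\lowerc{c+1}{K}N/N$ is a lower central term $\lowerc{c+1}{(K/N)}$... no. The honest resolution is: we only need finiteness of $\set{ \lowerc{c+1}{K}N/N }{ N \leq K \normalo G }$, and I would prove this by showing that if two such $K_1, K_2$ have $\lowerc{c+1}{K_1} = \lowerc{c+1}{K_2}$ then they give the same subgroup mod~$N$, so $\Aset{U}$ finite plus the observation that all $K$ with $\lowerc{c+1}{K} \leq U$ satisfy $\lowerc{c+1}{K} = \lowerc{c+1}{KN}$ having bounded index behaviour — reducing to showing $\set{\lowerc{c+1}{K}}{K \normalo G}$ itself has only finitely many members not below~$U$, which is exactly $\Aset{U}$, together with the sub-case $\lowerc{c+1}{K} \leq U$ handled by induction on the (finite) index $[G:U]$ or by replacing $G$ with $U$ and noting $\Aset[U]{\cdot}$ is still finite.

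The main obstacle, as the above makes clear, is managing the passage between the lattice of subgroups~$\lowerc{c+1}{K}$ in~$G$ and their images in a quotient $G/N$: unlike open normal subgroups, the operation $K \mapsto \lowerc{c+1}{K}$ does not commute cleanly with taking quotients, so I expect to need a lemma (likely already available from Section~\ref{sec:prelims} or provable by a short commutator-calculus argument) stating that $\lowerc{c+1}{K}N/N$ differs from $\lowerc{c+1}{KN/N}$ only by a subgroup of the center-type residue controlled by $N$, or at least that the map $\lowerc{c+1}{K} \mapsto \lowerc{c+1}{K}N/N$ is finite-to-one on the family $\Aset{\cdot}$. Once that bookkeeping lemma is in place, both directions become a matter of invoking Theorem~\ref{thm:maintool} for the finitely generated quotient $G/N$ (respectively $G/\Core{G}{H}$) and translating the finiteness statement across the correspondence. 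I would also remark that the Fitting-free and finitely-generated hypotheses are used precisely to guarantee, on one side, that $G$~is not itself virtually class-$c$ nilpotent (so the statement is non-vacuous) and, on the other, that Theorem~\ref{thm:maintool} applies to every quotient in sight.
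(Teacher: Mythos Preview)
Your proposal has a genuine gap in the forward direction that cannot be closed by the bookkeeping you describe. You correctly observe that if $N = \Core{G}{H}$ then $G/N$ is a finitely generated virtually class\nbd$c$ nilpotent group, and Theorem~\ref{thm:maintool} then says that $\set{\lowerc{c+1}{\bar K}}{\bar K \normalc G/N}$ is finite. But this only tells you that the \emph{images} $\lowerc{c+1}{K}N/N$ take finitely many values; it says nothing about how many distinct subgroups~$\lowerc{c+1}{K}$ of~$G$ sit above a given image. There is no reason for the map $\lowerc{c+1}{K} \mapsto \lowerc{c+1}{K}N/N$ to be finite-to-one on~$\Aset{N}$, and your sketch does not produce one. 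The commutator identities you allude to (relating $\lowerc{c+1}{K}$, $\lowerc{c+1}{KN}$, etc.) go in the wrong direction: they control $\lowerc{c+1}{K}$ from \emph{above} by data involving~$N$, not from below, so they do not separate the fibres.

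The paper's argument for this direction is genuinely different and more involved. It builds a directed graph on~$\Cset{H}$, shows via a Melnikov-subgroup lemma (Lemma~\ref{lem:JNNF-Mel}) and Theorem~\ref{thm:maintool} that each vertex has finite out-degree and finite in-set, and then applies K\"onig's Lemma: if $\Cset{H}$ were infinite one would extract an infinite strictly descending chain $\lowerc{c+1}{K_1} > \lowerc{c+1}{K_2} > \dots$, whose intersection~$J$ is shown to be non-trivial by a compactness argument (\cite[Lemma~2.4]{Reid10b}). Then $G/J$ is virtually class\nbd$c$ nilpotent yet contains infinitely many distinct~$\lowerc{c+1}{K_i}/J$, contradicting Theorem~\ref{thm:maintool}. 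The descending-chain-plus-nontrivial-intersection step is the missing idea in your approach.

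For the converse you are also making life harder than necessary. Rather than trying to verify the hypothesis of Theorem~\ref{thm:maintool} for~$G/N$, the paper argues directly: since $G$~is Fitting-free, $\lowerc{c+1}{N} \neq \1$, so choose $H \normalo G$ with $\lowerc{c+1}{N} \nleq H$. Every open normal $K \geq N$ then has $\lowerc{c+1}{K} \in \Aset{H} = \{\lowerc{c+1}{L_1},\dots,\lowerc{c+1}{L_r}\}$, and with $L = \bigcap_i L_i$ one gets $\lowerc{c+1}{L} \leq \bigcap_{N \leq K \normalo G} K = N$, so $LN/N$ witnesses that $G/N$ is virtually class\nbd$c$ nilpotent. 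Your outline never reaches this clean endpoint because you are fighting the same lifting problem in reverse.
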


This result is used to characterize, in Theorem~\ref{thm:JNNFstruct},
when a finitely generated Fitting-free profinite group is \JNNcF.  The
characterization is expressed in terms of properties of a descending
chain of open normal subgroups~$H_{i}$ and the $(c+1)$-th term of
their lower central series.  In Theorem~\ref{thm:JNNF-invlim}, we
establish a further characterization of such a group as an inverse
limit in a manner analogous to \cite[Theorem~4.1]{Reid12}.  One
important tool (see Lemma~\ref{lem:JNNF-Mel}) that is used throughout
Section~\ref{sec:JNNF} is that, if $G$~is a Fitting-free
\JNNcF\ profinite group and $N$~is a non-trivial closed normal
subgroup, then the Melnikov subgroup~$\Mel{N}$ of~$N$ is non-trivial
and so, via the Fitting-free assumption, $\lowerc{i}{\Mel{N}} \neq \1$
for all~$i \geq 1$.

Section~\ref{sec:hJNNF} is concerned with the structure of profinite
groups that are hereditarily \JNNcF\@.  We establish there a similar
suite of results, though the description of a finitely generated,
Fitting-free hereditarily \JNNcF\ group as an inverse limit is more
technical (see Theorem~\ref{thm:Hinvlim}).  It shares this level of
technicality with Reid's characterization of hereditarily just
infinite groups.

In Section~\ref{sec:finiteindex}, we establish the following (as
Corollary~\ref{cor:max}) which is the analogue of the main result
of~\cite{Reid10b}.  The material in this section does not depend upon
Theorem~\ref{thm:maintool} and so is more directly developed from
Reid's arguments.

\begin{THM}
  \label{thm:Hmax}
  Let $G$~be a \JNNcF\ profinite or discrete group that has no
  non-trivial abelian normal subgroup.  Then $G$~is hereditarily
  \JNNcF\ if and only if every maximal (open) subgroup of finite index
  is \JNNcF.
\end{THM}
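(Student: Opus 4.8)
The forward implication is immediate, so the plan is to treat the converse: assuming every maximal open subgroup of finite index in~$G$ is \JNNcF\@, I want to show that every open subgroup of finite index is \JNNcF\@, in three stages. I would begin by recording two routine facts, obtained as in Reid's work by passing to normal cores: every open subgroup of finite index in~$G$ is again Fitting-free, and a Fitting-free \JNNcF\ group has no non-trivial finite closed normal subgroup. Consequently $\Cent{G}{N} = \1$ for every non-trivial open normal subgroup~$N$, since $\Cent{G}{N}\cap N = \Centre{N}$ is an abelian normal subgroup of~$G$, hence trivial, so that $\Cent{G}{N}$ embeds in~$G/N$ and is a finite normal subgroup of~$G$, hence trivial.

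The first stage is a reduction to open normal subgroups: it suffices to show that every open normal subgroup~$N$ of~$G$ is \JNNcF\@. Given an arbitrary open subgroup~$H$ of finite index, put $N = \Core{G}{H}$, which is open normal and hence \JNNcF\ by assumption. Since $G$~is not virtually class\nbd$c$ nilpotent and $N$ has finite index in~$G$, neither $N$ nor $H$ is virtually class\nbd$c$ nilpotent. Let $\1 \neq L \normalc H$. If $L\cap N \neq \1$ then $L\cap N \normalc N$, so $N/(L\cap N)$ is virtually class\nbd$c$ nilpotent, whence so is $H/(L\cap N)$ --- which contains it with finite index --- and so is its quotient~$H/L$. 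If $L\cap N = \1$ then $[L,N] \leq L\cap N = \1$, so $L \leq \Cent{G}{N} = \1$, a contradiction. Hence $H$~is \JNNcF\@. (In the discrete case one reads ``finite index'' throughout for ``open''.)

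The second stage is a criterion: for any \JNNcF\ group~$G$ and any $N \normalo G$, the subgroup~$N$ is \JNNcF\ if and only if every non-trivial closed normal subgroup of~$N$ contains a non-trivial closed normal subgroup of~$G$. If $\1 \neq P \normalc G$ with $P \leq L \normalc N$, then $P \leq N \neq G$, so $G/P$ --- and with it $N/P$ and its quotient~$N/L$ --- is virtually class\nbd$c$ nilpotent. Conversely, suppose $\1 \neq L \normalc N$ with $\Core{G}{L} = \1$, and let $L = L_{1},\dots,L_{k}$ be the finitely many $G$-conjugates of~$L$: each $L_{i}$ is normal in~$N$ because $N \normalo G$, their intersection is $\Core{G}{L} = \1$, and conjugation identifies each $N/L_{i}$ with $N/L$, so that $N$~embeds in~$(N/L)^{k}$. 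As $N$~has finite index in~$G$, it is not virtually class\nbd$c$ nilpotent; hence neither is $(N/L)^{k}$, nor therefore $N/L$, and so $N$~is not \JNNcF\@.

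The last stage, which I expect to be the main obstacle, is a minimal-index counterexample argument. Suppose some open normal subgroup of~$G$ is not \JNNcF, and take such~$N$ of smallest index; by the criterion there is $\1 \neq L \normalc N$ with $\Core{G}{L} = \1$. Were $N$~maximal in~$G$, it would be a maximal open subgroup of finite index that is not \JNNcF, contrary to hypothesis; so $N < M < G$ for some maximal open subgroup~$M$, and $M$~is \JNNcF\ and Fitting-free. The goal is then to contradict either the minimality of~$N$ or the hypothesis, and I would do so by adapting the argument of~\cite{Reid10b}: first, show that every non-trivial closed normal subgroup of~$N$ with trivial $G$-core has normalizer exactly~$N$; then, for such a subgroup, analyse its normal closure in~$M$ together with the permutation action of~$M/N$ on its $G$-conjugates, so as to produce either an open normal subgroup of~$G$ of strictly smaller index that fails to be \JNNcF\ (contradicting minimality), or a maximal open subgroup of finite index that fails to be \JNNcF\ (contradicting the hypothesis). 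This is exactly the point at which obliquity-type estimates are needed; combined with the first two stages, it yields that $G$ is hereditarily \JNNcF\@.
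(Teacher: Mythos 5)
Your Stage~1 is exactly the paper's reduction (Lemma~\ref{lem:HJNNF-normalredn}) and is correct, and your Stage~2 criterion is also sound. But Stage~3 is where the entire content of the theorem lives, and there you have only a plan, not a proof: you say you would ``adapt the argument of~\cite{Reid10a}'' and produce a contradiction via ``obliquity-type estimates,'' without carrying out the adaptation. Two specific ingredients are missing. First, the argument needs the basal subgroup machinery: given $\1 \neq K \normal H$ with $H \normal G$ of finite index, one has $K \normal K^{G}$, so Lemma~\ref{lem:Reid-basal} (which uses the Fitting-free hypothesis) yields a basal subgroup $B \leq K$ obtained by intersecting conjugates of~$K$, with $H \leq \Norm{G}{B}$ and $B^{G}$ the \emph{direct product} of the conjugates of~$B$. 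Without this direct-product structure, your proposed ``analysis of the normal closure in~$M$ together with the permutation action of $M/N$ on the conjugates'' has nothing to bite on: it is precisely the decomposition $B^{G} = B_{1} \times \dots \times B_{n}$ that lets one identify $B^{G}/B^{M}$ with a non-trivial direct product of copies of~$B$ when $\Norm{G}{B} \leq M < G$.

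Second, and more substantively, the contradiction in the \JNNcF\ setting is not the one from the just infinite case. In Reid's argument one only needs $B$ to be infinite (via $\Fin{G} = \1$); here one must know that $B$, and hence the quotient $B^{G}/B^{M}$, is \emph{not virtually nilpotent}, so that it cannot be a section of the virtually nilpotent group $M/B^{M}$ (which is virtually nilpotent because $M$ is \JNNcF\ and $B^{M} \neq \1$). This requires the strengthening of Corollary~\ref{cor:nofinite} to Lemma~\ref{lem:noVN}: a Fitting-free \JNNcF\ group has no non-trivial virtually nilpotent normal subgroup. That lemma is the genuinely new point relative to the just infinite case and is nowhere in your sketch. (Incidentally, no obliquity estimates are needed at all --- the paper's Section~\ref{sec:finiteindex} is independent of Theorem~\ref{thm:maintool} --- and no minimal-index induction is required either: once $B$ is shown to be normal in~$G$, the conclusion for $H/K$ follows directly for every normal $H$ of finite index.) As it stands, your proposal correctly reduces the problem and locates the difficulty, but does not resolve it.
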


One reasonable conclusion from the results described so far is that
there is a similarity in the structure of \JNNcF\ groups when compared
to just infinite groups.  One might ask: just how closely are these
classes linked?  As \JNNcF\ groups have not yet been studied
systematically, there are presently rather few examples to examine
when considering these links.  In the final section of the paper,
Section~\ref{sec:construction}, we take a first step and present one
way to construct hereditarily \JNNcF\ groups from hereditarily just
infinite groups as semidirect products and discuss some explicit
examples.  We give examples of hereditarily JNAF groups of the form~$G
\rtimes A$ where $G$~can be a hereditarily just infinite group
suitably built as an iterated wreath product or using Wilson's
Construction~B from~\cite{JSW-Large} and $A$~can be selected from a
rather broad range of abelian groups (see Examples~\ref{ex:JNAF1}
and~\ref{ex:JNAF2}).  By exploiting the fact that every countable
pro\nbd$p$ group can be embedded in the Nottingham group, we construct
a hereditarily \JNNcF\ group of the form $\SLc{n}{\psring} \rtimes A$
where $A$~can be any virtually nilpotent pro\nbd$p$ group (see
Example~\ref{ex:Nott}).  This last family of examples demonstrates
that, for every possible choice of~$c \geq 1$, there is a
\JNNcF\ pro-$p$ group that is not just non-(virtually nilpotent of
smaller class).

Since the examples constructed are built using hereditarily just
infinite groups, one is drawn back to the above question concerning
the link between \JNNcF\ groups and just infinite groups.  The results
of Sections~\ref{sec:JNNF}--\ref{sec:finiteindex} suggest such a link
and it is an open challenge to produce examples of hereditarily
\JNNcF\ groups of a compellingly different flavour to those built in
Section~\ref{sec:construction}.

%%%

\section{Preliminaries}
\label{sec:prelims}

In this section, we first give the precise definitions of the groups
under consideration.  We then recall some useful tools
from~\cite{Reid12} and make a number of basic observations about
\JNNcF\ groups.  In the last part of the section we consider the
behaviour of finitely generated virtually nilpotent groups and
establish Theorem~\ref{thm:maintool} which is crucial for the sections
that follow.

We shall write maps on the right throughout, so $H\phi$~denotes the
image of a group~$H$ under a homomorphism~$\phi$ and $x^{y}$~is the
conjugate~$y^{-1}xy$.  If $G$~is a profinite group, we use the usual
notation $H \normalo G$ and $K \normalc G$ for an open normal subgroup
and a closed normal subgroup, respectively.  If $K$~and~$L$ are closed
subgroups of~$G$, then $[K,L]$~will denote the \emph{closed} subgroup
generated by all commutators $[x,y] = x^{-1}y^{-1}xy$ where $x \in K$
and $y \in L$.  The \emph{lower central series} of~$G$ is then defined
by $\lowerc{1}{G} = G$ and $\lowerc{i+1}{G} = [\lowerc{i}{G},G]$ for
each~$i \geq 1$.  As usual, we also use~$G'$ for the derived
subgroup~$\lowerc{2}{G}$ of~$G$.  These concepts will, in particular,
be relevant for the instances of the following definition that concern
us.

\begin{defn}
  \label{def:justnot}
  Let $\P$~be a property of groups.  A profinite (or discrete)
  group~$G$ is said to be \emph{just non\nbd$\P$} if $G$~does not have
  property~$\P$ but $G/N$~does have~$\P$ for every non-trivial closed
  normal subgroup~$N$ of~$G$.  It is \emph{hereditarily just
  non\nbd$\P$} if every closed subgroup of finite index in~$G$ is just
  non\nbd$\P$.

  (When $G$~is discrete, the word ``closed'' can and should be
  ignored.  Note that a closed subgroup of finite index is necessarily
  open, but the definition is phrased to enable that for discrete
  groups to be readily extracted.)
\end{defn}

In this paper we shall consider three options for the property~$\P$:
\begin{enumerate}[label=(\arabic*)]
\item When $\P$~is the property of being finite, we use the more
  common term \emph{just infinite} for an infinite group with every
  proper quotient finite.
\item We use the abbreviation \emph{JNAF} for just non\nbd$\P$ when
  $\P$~is the property of being virtually abelian, which is the same
  as being abelian-by-finite.  A profinite group has an abelian
  subgroup of finite index if and only if it has an abelian open
  subgroup (as the topological closure of any abelian subgroup is
  again abelian), so we use the term virtually abelian in this
  situation also.
\item If $c$~is an integer with $c \geq 0$, we use the abbreviation
  \emph{\JNNcF} for just non\nbd$\P$ when $\P$~is the property that
  there is a subgroup~$H$ of finite index such that
  $\lowerc{c+1}{H} = \1$.  A profinite group has a class\nbd$c$
  nilpotent subgroup of finite index if and only if it has an open
  class\nbd$c$ nilpotent subgroup (as the topological closure of any
  class\nbd$c$ nilpotent subgroup is again class\nbd$c$ nilpotent).
\end{enumerate}
The case $c = 1$ for a \JNNcF\ group is then identical to it being
JNAF\@.  We shall speak of a group~$G$ being \emph{virtually
class\nbd$c$ nilpotent} when it has a subgroup~$H$ of finite index
satisfying $\lowerc{c+1}{H} = \1$.  More precisely such a group is
``virtually (nilpotent of class at most~$c$)''.  The
\JNNcF\ groups~$G$ considered will usually be assumed to not have a
non-trivial abelian closed normal subgroup.  Consequently, such~$G$
will itself not be virtually nilpotent (of any class) and so we are
studying groups that are just non-(virtually nilpotent) with an
additional bound upon the nilpotency class occurring in the proper
quotients.  In particular, when $c = 1$ we are considering groups that
are not virtually metabelian but where every proper quotient is
virtually abelian.

Let $G$~be a profinite group.  In line
with~\cite[Definition~2.1]{Reid12}, a \emph{chief factor} of~$G$ is a
quotient~$K/L$ where $K$~and~$L$ are closed normal subgroups of~$G$
such that there is no closed normal subgroup~$M$ of~$G$ with $L < M <
K$.  Accordingly, we do not require that $K$~be open in~$G$ in this
definition, though necessarily $L$~is open in~$K$ and hence $K/L$~is
isomorphic (under an isomorphism that commutes with the action of~$G$)
to a chief factor~$K_{0}/L_{0}$ with $K_{0}$~an open normal subgroup
of~$G$.

The \emph{Melnikov subgroup}~$\Mel{G}$ of~$G$ is the intersection of
all maximal open normal subgroups of~$G$.  Provided $G$~is
non-trivial, this is a topologically characteristic proper closed
subgroup of~$G$.  As usual, to say a subgroup of~$G$ is
\emph{topologically characteristic} means that it is invariant under
all automorphisms of~$G$ that are also homeomorphisms.  We follow
\cite[Definition~3.1]{Reid12} and, for a non-trivial closed normal
subgroup~$A$ of~$G$, define $\Mel[G]{A}$~to be the intersection of all
maximal $G$\nbd invariant open subgroups of~$A$.  This satisfies
$\Mel{A} \leq \Mel[G]{A} < A$.  We call~$A$ a \emph{narrow subgroup}
of~$G$ if $A$~has a unique maximal $G$\nbd invariant open subgroup
(that is, when $\Mel[G]{A}$~is this unique subgroup).  The first part
of the following lemma is a consequence of the Correspondence Theorem,
while the other two are, respectively, Lemmas~3.2 and~3.3 in Reid's
paper~\cite{Reid12}.

\begin{lemma}
  \label{lem:Mel}
  Let $G$~be a profinite group.
  \begin{enumerate}
  \item \label{i:Mel-corr}
    Let $K$~and~$L$ be closed normal subgroups of~$G$ such that $L
    \leq \Mel[G]{K}$.  Then $\Mel[G/L]{K/L} = \Mel[G]{K}/L$.
  \item \label{i:Mel-incl}
    Let $K$~and~$L$ be closed normal subgroups of~$G$.  Then $K \leq
    L\,\Mel[G]{K}$ if and only if $K \leq L$.
  \item \label{i:narrow}
    If $K/L$~is a chief factor of~$G$, there is a closed normal
    subgroup~$A$ which is narrow in~$G$ and is contained in~$K$ but
    not in~$L$.  This narrow subgroup satisfies $A \cap L = \Mel[G]{A}$.
  \end{enumerate}
\end{lemma}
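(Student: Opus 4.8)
The plan is to prove the three parts separately: parts (i) and (ii) follow from the Correspondence Theorem together with the defining property of $\Mel[G]{-}$ as an intersection of maximal $G$-invariant open subgroups, while part (iii) rests on a minimality argument combined with a compactness observation. Throughout I use that $\Mel[G]{A} < A$ for every non-trivial closed normal subgroup $A$, which is recorded before the statement and guarantees that maximal $G$-invariant open subgroups exist. For part (i), since $L \leq \Mel[G]{K}$ and $\Mel[G]{K}$ is the intersection of all maximal $G$-invariant open subgroups $U$ of $K$, each such $U$ contains $L$. The Correspondence Theorem matches the $G$-invariant open subgroups of $K$ containing $L$ with the $G/L$-invariant open subgroups of $K/L$, preserving both openness and maximality; because every maximal $G$-invariant open subgroup of $K$ already contains $L$, this is a bijection onto \emph{all} maximal $G/L$-invariant open subgroups of $K/L$. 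Intersecting and using $\bigcap_{U}(U/L) = \bigl(\bigcap_{U} U\bigr)/L$ yields $\Mel[G/L]{K/L} = \Mel[G]{K}/L$.

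For part (ii), the implication $K \leq L$ gives $K \leq L\,\Mel[G]{K}$ trivially. For the converse I would first reduce to the case $L \leq K$: assuming $K \leq L\,\Mel[G]{K}$ and writing $k = lm$ with $l \in L$ and $m \in \Mel[G]{K} \leq K$ forces $l = km^{-1} \in K \cap L$, whence $K = (K \cap L)\,\Mel[G]{K}$. Setting $L_{0} = K \cap L$, it then suffices to show $L_{0} = K$. If instead $L_{0} < K$, then $K/L_{0}$ is a non-trivial closed normal subgroup of $G/L_{0}$, so $\Mel[G/L_{0}]{K/L_{0}} < K/L_{0}$ and there exists a maximal $G/L_{0}$-invariant open subgroup of $K/L_{0}$; its preimage is a maximal $G$-invariant open subgroup $U$ of $K$ with $L_{0} \leq U < K$. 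But then both $L_{0}$ and $\Mel[G]{K}$ lie in $U$, giving $K = L_{0}\,\Mel[G]{K} \leq U < K$, a contradiction. Hence $L_{0} = K$ and $K \leq L$.

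For part (iii), consider the family $\mathcal{F}$ of closed normal subgroups $A$ of $G$ with $A \leq K$ and $A \nleq L$, ordered by reverse inclusion. The chief factor hypothesis shows every $A \in \mathcal{F}$ satisfies $AL = K$, since $AL$ is a closed normal subgroup of $G$ with $L < AL \leq K$. To apply Zorn's Lemma I would check that the intersection $A_{\infty}$ of a descending chain in $\mathcal{F}$ again lies in $\mathcal{F}$: the image of $A_{\infty}$ under $G \to G/L$ equals the intersection of the images $A_{i}L/L = K/L$, which is $K/L$, so $A_{\infty}L = K$ and in particular $A_{\infty} \nleq L$. A minimal member $A$ of $\mathcal{F}$ therefore exists. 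As $A/(A \cap L) \cong K/L$ is finite, $A \cap L$ is a proper $G$-invariant open subgroup of $A$. For any maximal $G$-invariant open subgroup $U$ of $A$, note that $U$ is a closed normal subgroup of $G$ with $U \leq K$ and $U < A$; minimality of $A$ forbids $U \nleq L$, so $U \leq A \cap L < A$, and maximality forces $U = A \cap L$. Thus $A \cap L$ is the unique maximal $G$-invariant open subgroup of $A$, so $A$ is narrow with $\Mel[G]{A} = A \cap L$.

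I expect the main obstacle to be the Zorn step in part (iii): one must ensure that intersecting along a descending chain of subgroups, each not contained in $L$, does not produce a subgroup that \emph{is} contained in $L$. This is precisely where the chief factor hypothesis (yielding $AL = K$) and the compactness fact that the continuous image of a nested intersection equals the intersection of the images in the finite group $K/L$ both enter. Once a minimal $A$ is secured, the verification of narrowness and the identification $\Mel[G]{A} = A \cap L$ follow directly from minimality, and parts (i) and (ii) are essentially bookkeeping with the Correspondence Theorem.
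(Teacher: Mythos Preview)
Your argument is correct in all three parts. The paper itself does not prove this lemma: it observes that part~(i) is a consequence of the Correspondence Theorem (exactly as you argue) and simply cites parts~(ii) and~(iii) as Lemmas~3.2 and~3.3 of Reid~\cite{Reid12}. Your direct proofs therefore supply details the paper omits; in particular, your Zorn's Lemma argument for~(iii), with the compactness step showing that the intersection of a descending chain in~$\mathcal{F}$ still surjects onto~$K/L$, is a clean self-contained route to the existence of the narrow subgroup.
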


It is well-known that a finitely generated finite-by-abelian discrete
group is centre-by-finite.  This is established by ideas related to
FC\nbd groups (see~\cite[Section~14.5]{Robinson}, in particular, the
proof of~(14.5.11) in that source).  In the case of profinite groups,
however, the hypothesis of finite generation is unnecessary, as
observed by Detomi, Morigi and Shumyatsky (see \cite[Lemma~2.7]{DMS}).
In fact, a similar argument establishes the following result needed in
our context and that only needs residual finiteness as a hypothesis:

\begin{lemma}
  Let $G$~be a residually finite group with a finite normal
  subgroup~$N$.  If $G/N$~is virtually class\nbd$c$ nilpotent, for
  some~$c \geq 0$, then $G$ is also virtually class\nbd$c$ nilpotent.
\end{lemma}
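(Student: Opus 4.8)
The plan is to lift the virtual nilpotency from the quotient $G/N$ up to $G$ itself, using residual finiteness to absorb the finite normal subgroup~$N$.

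First I would reduce to the case where $\lowerc{c+1}{G}$ is finite. By hypothesis there is a subgroup of finite index in $G/N$, necessarily of the form $M/N$ with $N \leq M \leq G$ and $M$ of finite index in~$G$, such that $\lowerc{c+1}{M/N} = \1$; equivalently $\lowerc{c+1}{M} \leq N$, so that $\lowerc{c+1}{M}$ is a finite characteristic subgroup of~$M$. Since a subgroup of finite index in $M$ that is nilpotent of class at most~$c$ has finite index in~$G$, it is enough to prove the lemma with $M$ in place of~$G$; thus I may assume that $F := \lowerc{c+1}{G}$ is itself finite.

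Then I would invoke residual finiteness just once, applied to the finite set $F \setminus \{\1\}$: for each non-trivial element $f \in F$ choose a normal subgroup of finite index in $G$ not containing~$f$, and let $U$ be the intersection of these finitely many subgroups, so that $U$ is a normal subgroup of finite index in $G$ with $U \cap F = \1$. Now $\lowerc{c+1}{U}$ is contained in~$U$ and, since $U \leq G$, also in $\lowerc{c+1}{G} = F$; hence $\lowerc{c+1}{U} \leq U \cap F = \1$. Thus $U$ is a subgroup of finite index in $G$ that is nilpotent of class at most~$c$, which is exactly the statement that $G$ is virtually nilpotent of class at most~$c$.

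I do not anticipate any real obstacle; the argument is short, and only the bookkeeping involved in passing between $G$, the finite-index subgroup~$M$, and the separating subgroup~$U$ needs a little care. It is perhaps worth remarking that residual finiteness genuinely cannot be dropped: already for $c = 1$ there exist (necessarily infinitely generated) groups with finite commutator subgroup possessing no abelian subgroup of finite index, while the finitely generated and profinite cases of the statement may alternatively be deduced from the FC\nbd group arguments referred to in \cite[(14.5.11)]{Robinson} and \cite[Lemma~2.7]{DMS}.
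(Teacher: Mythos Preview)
Your argument is correct, and the key step --- using residual finiteness to produce a finite-index normal subgroup avoiding a given finite set --- is the same as in the paper. The execution differs slightly: the paper separates~$N$ itself, obtaining $K \normal G$ of finite index with $K \cap N = \1$, and then observes that $G$ embeds in the direct product $G/N \times G/K$; since $G/K$~is finite and the property of being virtually class\nbd$c$ nilpotent passes to subgroups and finite direct products, this finishes in one stroke without your preliminary reduction to the case $\lowerc{c+1}{G}$ finite. Your route trades that embedding trick for an explicit identification of the nilpotent finite-index subgroup~$U$, at the cost of first passing to~$M$; the two arguments are of comparable length and difficulty.
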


\begin{proof}
  For each non-identity element~$x$ of~$N$, there exists a normal
  subgroup of finite index in~$G$ that does not contain~$x$.  By
  intersecting these, we produce a normal subgroup~$K$ of finite index
  in~$G$ such that $N \cap K = \1$.  Then $G$~embeds in the direct
  product $G/N \times G/K$ of~$G/N$ and a finite group, so the result
  follows.
\end{proof}

\begin{cor}
  \label{cor:nofinite}
  Let $G$~be a profinite group that is \JNNcF\@.  Then $G$~has no
  non-trivial finite normal subgroup.
\end{cor}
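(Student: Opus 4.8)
The plan is a short argument by contradiction that feeds the hypotheses of the lemma immediately preceding the corollary. Suppose, for contradiction, that $G$~has a non-trivial finite normal subgroup~$N$. Since $G$~is profinite, and in particular Hausdorff, the finite subset~$N$ is closed in~$G$; thus $N$~is a non-trivial \emph{closed} normal subgroup of~$G$, which is precisely the kind of kernel to which the definition of a \JNNcF\ group applies. Hence $G/N$~is virtually class\nbd$c$ nilpotent.

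Next I would invoke the preceding lemma. A profinite group is residually finite, so the lemma applies verbatim to~$G$ with this choice of finite normal subgroup~$N$: from the fact that $G/N$~is virtually class\nbd$c$ nilpotent it follows that $G$~itself is virtually class\nbd$c$ nilpotent. But a \JNNcF\ group is by definition one that does \emph{not} have the property of being virtually class\nbd$c$ nilpotent, so this is a contradiction. Therefore $G$~has no non-trivial finite normal subgroup, as claimed.

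There is really no substantive obstacle here; the corollary is a direct consequence of the lemma above. The only small points needing a word of justification are that finite subgroups of a profinite group are closed (so that $N$~qualifies as an admissible kernel in the definition of \JNNcF) and that a profinite group satisfies the residual-finiteness hypothesis of the lemma, both of which are standard. If one wished to phrase the proof without passing through the lemma, one could instead reprove its one-line argument in place — intersect, over the finitely many non-identity elements of~$N$, normal subgroups of finite index in~$G$ avoiding them to obtain $K \normal G$ of finite index with $N \cap K = \1$, embed $G$ into $G/N \times G/K$, and conclude — but using the already-stated lemma is cleaner.
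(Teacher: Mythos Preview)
Your proposal is correct and is exactly the argument the paper intends: the corollary is stated without proof immediately after the lemma, and your deduction---finite subgroups are closed, profinite groups are residually finite, so the lemma applies and yields a contradiction---is precisely the implicit reasoning.
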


Sections~\ref{sec:JNNF} and~\ref{sec:hJNNF} are concerned with
profinite \JNNcF\ groups, whereas the last two sections consider both
profinite and abstract \JNNcF\ groups.  To state efficiently the
results in Section~\ref{sec:finiteindex}, we shall adopt there the
convention that ``subgroup'' for a profinite group means ``closed
subgroup'' so that it remains in the same category.  For the results
in the current section that will be used in the discrete case, we
simply bracket the word ``closed'' to indicate it is unnecessary in
such a situation.  The following lemma illustrates this convention.
It is a standard elementary fact about just non\nbd$\P$ groups when
$\P$~is a property that is inherited by both finite direct products
and subgroups.

\begin{lemma}
  \label{lem:JNNF-intersect}
  Let $G$~be a profinite group or discrete group that is \JNNcF\@.  If
  $K$~and~$L$ are non-trivial (closed) normal subgroups of~$G$, then
  $K \cap L \neq \1$.
\end{lemma}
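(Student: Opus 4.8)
The plan is to argue by contradiction, exploiting the just-non-$\P$ property through the quotient $G/K$. Suppose $K$ and $L$ are non-trivial (closed) normal subgroups of $G$ with $K \cap L = \1$. Since $G$ is \JNNcF, the quotient $G/K$ is virtually class-$c$ nilpotent, so there is an (open) subgroup of finite index in $G/K$ that is nilpotent of class at most~$c$; pulling back, we obtain a (closed) subgroup $H$ of finite index in $G$ with $K \leq H$ and $\lowerc{c+1}{H} \leq K$. The idea is then to play the symmetric fact for $L$ against this.

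Next I would look at $L \cap H$. This is a (closed) normal subgroup of $H$ of finite index in $L$, hence itself non-trivial provided we first note $L \cap H \neq \1$ — which holds because $H$ has finite index in $G$ and $L$ is infinite, or more robustly because $L \cap H \geq L \cap (\text{finite-index subgroup})$; in fact since $L$ is a non-trivial closed normal subgroup of a \JNNcF\ profinite (hence infinite, by Corollary~\ref{cor:nofinite}) group, $L$ is infinite and meets every finite-index subgroup non-trivially. Now $[\,L \cap H, H\,] \leq L \cap \lowerc{2}{H} \leq L$, and iterating, $\lowerc{c+1}{H} \cap L$-type reasoning gives $[L \cap H, \underbrace{H, \dots, H}_{c}] \leq L \cap \lowerc{c+1}{H} \leq L \cap K = \1$. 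Thus $L \cap H$ is contained in the $c$-th term of the upper central series of $H$; in particular $L \cap H$ together with the fact that $\lowerc{c+1}{H} \leq K$ with $K \cap L = \1$ forces strong commutation constraints.

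To extract the contradiction cleanly, I would instead run the argument fully symmetrically: by the same reasoning applied to $L$, there is a (closed) finite-index subgroup $H^{*}$ of $G$ with $L \leq H^{*}$ and $\lowerc{c+1}{H^{*}} \leq L$. Put $D = H \cap H^{*}$, a (closed) subgroup of finite index in $G$. Then $\lowerc{c+1}{D} \leq \lowerc{c+1}{H} \cap \lowerc{c+1}{H^{*}} \leq K \cap L = \1$, so $D$ is nilpotent of class at most~$c$ and of finite index in $G$. But then $G$ itself is virtually class-$c$ nilpotent, contradicting the assumption that $G$ is \JNNcF\ (which by definition means $G$ does \emph{not} have property~$\P$). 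This contradiction shows $K \cap L \neq \1$.

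The only genuinely delicate point is the passage from ``$G/K$ is virtually class-$c$ nilpotent'' to the existence of a \emph{closed} finite-index $H$ with $K \leq H$ and $\lowerc{c+1}{H} \leq K$: one must use that a class-$c$ nilpotent open subgroup of $G/K$ exists (recorded in the discussion after Definition~\ref{def:justnot}), take its preimage $H$ in $G$, which is closed of finite index and contains $K$, and observe $\lowerc{c+1}{H}$ maps into $\lowerc{c+1}{H/K} = \1$, hence $\lowerc{c+1}{H} \leq K$. Everything else — the inclusion $\lowerc{c+1}{H \cap H^{*}} \leq \lowerc{c+1}{H} \cap \lowerc{c+1}{H^{*}}$ and closure of the constructions under the profinite/discrete distinction — is routine, and the bracketed ``closed'' convention makes the discrete case a literal specialization.
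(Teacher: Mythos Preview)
Your proof is correct. The paper does not actually prove this lemma; it simply remarks that it ``is a standard elementary fact about just non-$\P$ groups when $\P$~is a property that is inherited by both finite direct products and subgroups,'' i.e., one uses the embedding $G \hookrightarrow G/K \times G/L$ when $K \cap L = \1$. Your argument is a concrete unwinding of exactly this: pulling back class-$c$ nilpotent finite-index subgroups from $G/K$ and $G/L$ and intersecting them inside~$G$ is precisely what one obtains by intersecting $G$ (via the embedding) with a class-$c$ nilpotent finite-index subgroup of the direct product. So the approaches coincide, yours just avoids naming the direct product explicitly.

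One small comment: your second paragraph (the detour through $L \cap H$, upper central series, and the appeal to Corollary~\ref{cor:nofinite}) is unnecessary and should be deleted; you correctly abandon it in favour of the clean symmetric argument in the third paragraph, and nothing in the final proof depends on it.
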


As Reid~\cite{Reid10a}, we shall use Wilson's concept~\cite{JSW00} of
a basal subgroup:

\begin{defn}
  A subgroup~$B$ of a group~$G$ is called \emph{basal} if $B$~is
  non-trivial, has finitely many conjugates $B_{1}$,~$B_{2}$,
  \dots,~$B_{n}$ in~$G$ and the normal closure of~$B$ in~$G$ is the
  direct product of these conjugates: $B^{G} = B_{1} \times B_{2}
  \times \dots \times B_{n}$.
\end{defn}

Lemma~\ref{lem:Reid-basal} below is based on~\cite[Lemma~5]{Reid10a}.
The hypothesis that $K$~has only finitely many conjugates is
sufficient to adapt the proof of Reid's lemma to our needs.  In its
statement, and in many that follow, we shall say that a (profinite or
discrete) group~$G$ is \emph{Fitting-free} when it has no non-trivial
abelian (closed) normal subgroup.  This is immediately equivalent to
the requirement that the Fitting subgroup~$\Fitt{G}$ be trivial.
Furthermore, if $G$~is a \JNNcF\ group, one observes that $G$~is
Fitting-free if and only if $G$~is not virtually soluble.  If $K$~is a
normal subgroup of~$G$, then $\Centre{K} = K \cap \Cent{G}{K}$ and we
deduce the following characterization of the Fitting-free condition in
\JNNcF\ groups using Lemma~\ref{lem:JNNF-intersect}.

\begin{lemma}
  \label{lem:FittingFree-Cent}
  Let $G$~be a profinite or discrete group that is \JNNcF\@.  Then
  $G$~is Fitting-free if and only if\/ $\Cent{G}{K} = \1$ for every
  non-trivial normal (closed) subgroup~$K$ of~$G$.
\end{lemma}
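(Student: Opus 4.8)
The statement asserts, for a \JNNcF{} group~$G$, the equivalence of (a) $G$ is Fitting-free and (b) $\Cent{G}{K} = \1$ for every non-trivial normal (closed) subgroup~$K$. The plan is to prove both implications directly, using only Lemma~\ref{lem:JNNF-intersect} and elementary facts about centres of normal subgroups. The key identity that does the work is the one already flagged in the text: if $K \normalc G$ then $\Centre{K} = K \cap \Cent{G}{K}$. This is because an element of $K$ centralises $K$ if and only if it is central in $K$, and $\Cent{G}{K}$ is automatically normal in~$G$ (being the centraliser of a normal subgroup), so both $\Centre{K}$ and $\Cent{G}{K}$ are closed normal subgroups of~$G$ when $K$ is.

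For the direction (b) $\Rightarrow$ (a): suppose $\Cent{G}{K} = \1$ for every non-trivial (closed) normal subgroup~$K$. Let $A$ be an abelian (closed) normal subgroup of~$G$; I want $A = \1$. If $A \neq \1$, then $A$ is abelian, so $A \leq \Cent{G}{A}$, forcing $\Cent{G}{A} \neq \1$, contradicting the hypothesis applied to $K = A$. Hence $A = \1$ and $G$ is Fitting-free. This direction does not even use the \JNNcF{} hypothesis.

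For the direction (a) $\Rightarrow$ (b): suppose $G$ is Fitting-free, and let $K$ be a non-trivial (closed) normal subgroup of~$G$. Put $C = \Cent{G}{K}$, which is a (closed) normal subgroup of~$G$. I claim $C = \1$. First, $K \cap C = \Centre{K}$ is abelian and normal in~$G$, hence trivial by the Fitting-free assumption. Now here is where the \JNNcF{} hypothesis enters via Lemma~\ref{lem:JNNF-intersect}: that lemma says any two non-trivial (closed) normal subgroups of~$G$ intersect non-trivially. Since $K \neq \1$ and $K \cap C = \1$, we must have $C = \1$, which is exactly what we wanted.

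The argument is essentially routine once the identity $\Centre{K} = K \cap \Cent{G}{K}$ is in hand, so there is no real obstacle; the only point requiring a little care is the bookkeeping of the word ``closed'' in the profinite case — one should check that $\Cent{G}{K}$ is closed whenever $K$ is closed (it is an intersection of centralisers of single elements, each of which is closed), so the bracketed convention is respected throughout. No display-math is needed, so I will write this as a short running paragraph proof.
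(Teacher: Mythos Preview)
Your proof is correct and follows exactly the approach the paper indicates: the text immediately preceding the lemma already records the identity $\Centre{K} = K \cap \Cent{G}{K}$ and says the characterization is deduced using Lemma~\ref{lem:JNNF-intersect}, which is precisely your argument for (a)$\Rightarrow$(b), while (b)$\Rightarrow$(a) is the trivial direction. The paper does not even write out a formal proof, so your version simply fills in the details it leaves implicit.
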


\begin{lemma}
  \label{lem:Reid-basal}
  Let $G$~be a profinite or discrete group that is Fitting-free.  Let
  $K$~be a non-trivial (closed) subgroup of~$G$ whose conjugates
  $\set{K_{i}}{i \in I}$ are parametrized by the finite set~$I$ and
  which satisfies $K \normal K^{G}$.  Then there exists some $J
  \subseteq I$ such that $\bigcap_{j \in J} K_{j}$ is basal.
\end{lemma}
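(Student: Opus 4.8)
The plan is to adapt the Wilson--Reid argument, invoking the Fitting-free hypothesis just once, at the end, to kill a central and hence abelian closed normal subgroup. For a subset $J$ of the finite set $I$ indexing the conjugates of $K$, write $B_{J} = \bigcap_{j \in J} K_{j}$, so that $B_{\{i\}} = K_{i} \neq \1$ and $B_{J} \cap B_{J'} = B_{J \cup J'}$ for all $J, J' \subseteq I$. First I would note that, since $K \normal K^{G}$ and $K^{G} \normal G$, conjugation by any $g \in G$ maps $K$ into $K^{G}$; thus every conjugate $K_{i}$ is normal in $K^{G}$, and consequently each $B_{J}$, being an intersection of finitely many of them, is a closed normal subgroup of $K^{G}$. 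I would then fix $J \subseteq I$ of maximal cardinality subject to $B_{J} \neq \1$ (possible since $I$ is finite and every singleton qualifies) and show that $B := B_{J}$ is basal.

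Next I would identify the conjugates of $B$. Since conjugation is an automorphism, $B^{g} = \bigcap_{j \in J} K_{j}^{g} = B_{Jg}$, where $Jg$ is the image of $J$ under the conjugation action of $G$ on $I$; hence the conjugates of $B$ are precisely the subgroups $B_{J_{1}}, \dots, B_{J_{m}}$ attached to the distinct $G$-translates $J_{1} = J, J_{2}, \dots, J_{m}$ of $J$. For $k \neq l$, the translates $J_{k}$ and $J_{l}$ are distinct of the same size, so $J_{k} \cup J_{l}$ properly contains $J_{k}$ and has cardinality larger than $|J|$; maximality of $|J|$ then forces $B_{J_{k}} \cap B_{J_{l}} = B_{J_{k} \cup J_{l}} = \1$. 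In particular the $B_{J_{k}}$ are pairwise distinct, and, being normal in $K^{G}$ with pairwise trivial intersections, they commute pairwise.

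It remains to promote this pairwise commutation to a direct decomposition. Since $G$ permutes the set $\{B_{J_{1}}, \dots, B_{J_{m}}\}$, the product $P = B_{J_{1}} \cdots B_{J_{m}}$ --- a closed subgroup, as a product of finitely many closed normal subgroups of $K^{G}$ --- is a normal subgroup of $G$ containing $B$, so $P = B^{G}$. Fixing $k$ and putting $D = B_{J_{k}} \cap \prod_{l \neq k} B_{J_{l}}$, the pairwise commutation shows that $B_{J_{k}}$ centralizes $\prod_{l \neq k} B_{J_{l}}$, whence $D$ is centralized both by $\prod_{l \neq k} B_{J_{l}}$ (because $D \leq B_{J_{k}}$) and by $B_{J_{k}}$ (because $D \leq \prod_{l \neq k} B_{J_{l}}$), so that $D \leq \Centre{B^{G}}$. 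Now $\Centre{B^{G}}$ is a topologically characteristic subgroup of the closed normal subgroup $B^{G}$ of $G$, hence an abelian closed normal subgroup of $G$, so the Fitting-free hypothesis gives $\Centre{B^{G}} = \1$, and therefore $D = \1$. Since $k$ was arbitrary, $B^{G} = B_{J_{1}} \times \dots \times B_{J_{m}}$, so $B$ is non-trivial with finitely many conjugates whose product is direct, i.e.\ $B$ is basal.

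Most of this is routine manipulation; the two things the argument really rests on are the hypothesis $K \normal K^{G}$, which is exactly what makes each $B_{J}$ normal in $K^{G}$ and so lets trivial intersections be turned into commutator relations, and the single appeal to Fitting-freeness, which is the only non-formal step. In the profinite case one must take the standard care that all the subgroups produced --- the $B_{J}$, the product $P$, and $\Centre{B^{G}}$ --- are closed, so that Fitting-freeness applies to the last of them; I anticipate this being bookkeeping rather than a real obstacle. So the main substantive point is simply the concluding use of the absence of non-trivial abelian closed normal subgroups.
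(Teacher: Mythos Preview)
Your argument is correct and follows essentially the same route as the paper's proof: choose $J \subseteq I$ of maximal size with $B_{J} \neq \1$, use normality in $K^{G}$ together with pairwise trivial intersections to get pairwise commutation, and then invoke Fitting-freeness once on an abelian closed normal subgroup to upgrade to a direct product. The only cosmetic difference is in the final step: the paper observes that each $\Centre{B_{J_{k}}}$ lies inside $\Centre{B^{G}}$, kills the latter, and then argues $B_{J_{k}} \cap \prod_{l \neq k} B_{J_{l}} \leq \Centre{B_{J_{k}}} = \1$, whereas you go straight to $D \leq \Centre{B^{G}} = \1$; these are the same idea.
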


\begin{proof}
  For $J \subseteq I$, define $K_{J} = \bigcap_{j \in J} K_{j}$.  Let
  $\mathcal{I}$~be the set of subsets~$J$ of~$I$ such that
  $K_{J} \neq \1$.  Certainly $\mathcal{I}$~is non-empty since it
  contains all singletons as $K \neq \1$.  Choose $J \in \mathcal{I}$
  of largest size and define $B = K_{J}$.  Then $B$~also has finitely
  many conjugates in~$G$ and we denote these by $B_{1}$,~$B_{2}$,
  \dots,~$B_{n}$.  Two distinct conjugates intersect trivially, $B_{i}
  \cap B_{j} = \1$ when $i \neq j$, since this is the intersection of
  more than~$\order{J}$ conjugates of~$K$.  Since each~$K_{i}$ is
  normal in~$K^{G}$, it follows that each~$B_{j} \normal K^{G}$ and
  therefore $[B_{i},B_{j}] \leq B_{i} \cap B_{j} = \1$ when $i \neq
  j$.  Set $L = B^{G} = B_{1}B_{2}\dots B_{n}$.  Then the centre
  of~$L$ is the product of the centres of the~$B_{j}$.  Our hypothesis
  that $G$~has no non-trivial abelian (closed) subgroup then forces
  $\Centre{B_{j}} = \1$ for each~$j$.  Now if $j \in \{1,2,\dots,n\}$,
  set $P_{j} = B_{1} \dots B_{j-1} B_{j+1} \dots B_{n}$.  Then
  $[B_{j},P_{j}] = \1$ and so $P_{j} \cap B_{j} \leq \Centre{B_{j}} =
  \1$.  Since this holds for each~$j$, we conclude that $L = B_{1}
  \times B_{2} \times \dots \times B_{n}$; that is, $B$~is basal.
\end{proof}

%%%

\subsection{Properties of virtually nilpotent profinite groups}

If $N$~is a closed normal subgroup of a profinite group~$G$, we define
the commutator subgroup $[N,_{i}G] \normalc G$ recursively by
$[N,_{0}G] = N$ and $[N,_{i}G] = [ [N,_{i-1}G], G]$ for $i \geq 1$.
Thus, using left-normed commutator notation,
\[
[N,_{c}G] = [N,\underbrace{G,G,\dots,G}_{\text{$c$~times}}].
\]
We also write~$\Centre[i]{G}$ for the $i$th term of the upper central
series of a group~$G$.

\begin{lemma}
  \label{lem:comms}
  Let $G$~be a finitely generated profinite group and $N$~be an open
  normal subgroup of~$G$ such that $\lowerc{c+1}{N} = \1$ for some $c
  \geq 0$.  Then $[N,_{i}G]$~is an open subgroup of~$\lowerc{c+1}{G}$
  for all $i \geq c$.
\end{lemma}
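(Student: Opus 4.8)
We have a finitely generated profinite group $G$, an open normal subgroup $N$ with $\lowerc{c+1}{N} = \1$, and we want to show $[N,_{i}G]$ is an open subgroup of $\lowerc{c+1}{G}$ for every $i \geq c$. Since $[N,_{i}G] \geq [N,_{i+1}G]$, it suffices to handle $i = c$: once $[N,_{c}G]$ is shown to be open in $\lowerc{c+1}{G}$, all later terms $[N,_{i}G]$ lie between this open subgroup and $\lowerc{c+1}{G}$... — wait, that is backwards, since $[N,_c G] \geq [N,_{i}G]$. So I actually need a lower bound on $[N,_i G]$ as well. Let me restructure: I will show $[N,_{i}G]$ is open in $G$ (equivalently, of finite index) for all $i \geq c$, and separately that $[N,_{i}G] \leq \lowerc{c+1}{G}$ for $i \geq c$; since $\lowerc{c+1}{G}$ is a closed subgroup containing an open subgroup, $\lowerc{c+1}{G}$ is itself open and $[N,_{i}G]$ is open in it.

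**The inclusion $[N,_{c}G] \leq \lowerc{c+1}{G}$.** This is the easy direction. Since $N \leq G = \lowerc{1}{G}$, an immediate induction on $i$ using $[N,_{i}G] = [[N,_{i-1}G],G]$ and $[\lowerc{j}{G},G] = \lowerc{j+1}{G}$ gives $[N,_{i}G] \leq \lowerc{i+1}{G}$ for all $i \geq 0$; in particular $[N,_{c}G] \leq \lowerc{c+1}{G}$, and hence $[N,_{i}G] \leq \lowerc{c+1}{G}$ for all $i \geq c$ as the terms are descending.

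**The key point: $[N,_{c}G]$ has finite index in $G$.** This is where the work lies. The plan is to use the hypothesis $\lowerc{c+1}{N} = \1$, i.e.\ $N$ is nilpotent of class at most $c$, together with finite generation. Set $Q = G/[N,_{c}G]$; I want $Q$ finite. The image $\bar N$ of $N$ in $Q$ satisfies $[\bar N,_{c} Q] = \1$, so $\bar N \leq \Centre[c]{Q}$, the $c$-th term of the upper central series of $Q$. Thus $Q/\Centre[c]{Q}$ is a quotient of $Q/\bar N \cong G/N$, which is finite. Now I invoke the standard fact (a Baer-type theorem, or its profinite analogue) that if a finitely generated profinite group $Q$ has $Q/\Centre[c]{Q}$ finite, then $\lowerc{c+1}{Q}$ is finite; but in $Q$ we have $[N,_{c}G]$ mapping to $\1$, and one checks $\lowerc{c+1}{Q}$ is trivial here only after more care — instead the cleaner route is: $\bar N$ is central-by-(class $c-1$) \dots Actually the most robust argument: $N$ nilpotent of class $\le c$ and open means $N$ is finitely generated, so each $\lowerc{j}{N}/\lowerc{j+1}{N}$ is a finitely generated abelian profinite group, and $G$ acts on these finite-rank modules through the finite group $G/N$; a three-subgroups/induction argument then shows $[\lowerc{j}{N},_{k}G]$ stabilizes quickly, giving that $[N,_{c}G]$ has finite index. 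The main obstacle is making this stabilization precise: I expect to induct on the nilpotency class $c$ of $N$, using the exact sequence $1 \to N' \to N \to N/N' \to 1$, noting $N/N'$ is abelian-by-(finite $G/N$-action) so $[N/N',_{1}G]$ already has finite index in its ``$G$-span'', and applying the inductive hypothesis to $N'$ (which has class $\le c-1$) inside $[N,G]$ or a suitable open subgroup. I would lean on Lemma~\ref{lem:comms}'s own statement pattern and on finite generation forcing all the relevant sections to be Noetherian profinite modules, so that descending chains of the form $[M,_{k}G]$ stabilize; the index being finite then follows because the stable value is contained in $\lowerc{c+1}{G}$ while $[N,_{0}G] = N$ is open.
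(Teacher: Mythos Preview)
Your proposal has a structural error and a genuine gap.

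\textbf{The framing is wrong.} You aim to show that $[N,_{i}G]$ is open in~$G$ itself, but this is false in general: if $G$~is nilpotent of class at most~$c$ then $[N,_{c}G] \leq \lowerc{c+1}{G} = \1$, which is certainly not open in an infinite~$G$. The correct target is openness in~$\lowerc{c+1}{G}$, and Baer's theorem delivers exactly that. Your own argument already reaches the right place: with $Q = G/[N,_{c}G]$ you have $Q/\Centre[c]{Q}$ finite, so Baer yields $\lowerc{c+1}{Q}$ finite. Since $[N,_{c}G] \leq \lowerc{c+1}{G}$, this says precisely that $\lowerc{c+1}{G}/[N,_{c}G}$ is finite, i.e.\ $[N,_{c}G]$~is open in~$\lowerc{c+1}{G}$. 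That is the base case, and it is exactly what the paper does. You talked yourself out of it by looking for $Q$ to be finite or for $\lowerc{c+1}{Q}$ to be trivial; neither is needed.

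\textbf{The inductive step is missing.} You still need to pass from~$i$ to~$i+1$, and your proposed routes do not work. The chain $[N,_{i}G]$ does \emph{not} stabilise in general (take $G = \Zint_{p}^{\,p} \rtimes C_{p}$, the wreath product, with $N$~the abelian base group and $c = 1$: the subgroups $[N,_{i}G] = (\sigma-1)^{i}N$ strictly decrease forever), so ``Noetherian modules force descending chains $[M,_{k}G]$ to stabilise'' is simply false---Noetherian gives ascending chain conditions, not descending. What the paper does instead is show that each section $[N,_{i}G]/[N,_{i+1}G]$ is finite: it is abelian and generated, modulo~$[N,_{i+1}G]$, by finitely many left-normed commutators $[x,y_{1},\dots,y_{i}]$ with $x$~from a finite generating set of~$N$ and the~$y_{j}$ from one of~$G$; the identity
\[
[x,y_{1},\dots,y_{i}]^{k^{i}} \equiv [x,y_{1}^{\,k},\dots,y_{i}^{\,k}] \pmod{[N,_{i+1}G]},
\]
with $k = \order{G/N}$, lands in $\lowerc{i+1}{N} = \1$, so every generator has finite order and the section is finite. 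This commutator-calculus step is the missing ingredient in your argument.
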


\begin{proof}
  Define $k = \order{G/N}$.  It follows from the definitions that
  $[N,_{c}G]$~is a closed normal subgroup of~$G$ with $N/[N,_{c}G]
  \leq \Centre[c]{G/[N,_{c}G]}$.  Hence, this term of the upper
  central series is open in~$G/[N,_{c}G]$ and a theorem of Baer (see
  \cite[(14.5.1)]{Robinson}) shows that $\lowerc{c+1}{G/[N,_{c}G]}$ is
  finite.  Hence $[N,_{c}G]$~is an open subgroup of~$\lowerc{c+1}{G}$.

  Now suppose that we have shown $[N,_{i}G]$~is open
  in~$\lowerc{c+1}{G}$ for some~$i \geq c$.  This subgroup is 
  generated, modulo~$[N,_{i+1}G]$, by all left-normed commutators
  $[x,y_{1},y_{2},\dots,y_{i}]$ where $x$~is selected from
  some finite generating set for~$N$ and $y_{1},y_{2},\dots,y_{i}$
  from a finite generating set for~$G$.  In particular,
  $[N,_{i}G]/[N,_{i+1}G]$~is a finitely generated abelian profinite
  group.  Furthermore, standard commutator calculus shows that,
  modulo~$[N,_{i+1}G]$,
  \[
    [x,y_{1},y_{2},\dots,y_{i}]^{k^{i}} \equiv
    [x,y_{1}^{\, k},y_{2}^{\, k},\dots,y_{i}^{\, k}] \in
    \lowerc{i+1}{N} = \1.
  \]
  Hence, every generator of~$[N,_{i}G]/[N,_{i+1}G]$ has finite order
  and so this abelian group is finite.  It follows that
  $[N,_{i+1}G]$~is an open subgroup of~$[N,_{i}G]$.  The lemma then
  follows by induction on~$i \geq c$.
\end{proof}

The next result establishes, in particular,
Theorem~\ref{thm:maintool}, stated in the introduction.

\begin{thm}
  \label{thm:Cset-finite}
  \begin{enumerate}
  \item \label{i:Cset-finite}
    Let $G$~be a finitely generated virtually class\nbd$c$ nilpotent
    profinite group, for some~$c \geq 0$.  Then the set $\set{
      \lowerc{c+1}{K} }{K \normalc G}$ is finite.

  \item \label{i:Aset-converse}
    Conversely, if $G$~is a profinite group such that $\set{
      \lowerc{c+1}{K} }{K \normalo G}$ is finite, for some~$c \geq 0$,
    then $G$ is virtually class\nbd$c$ nilpotent.
  \end{enumerate}
\end{thm}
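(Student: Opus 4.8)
The plan is to prove the two parts separately, since they have a rather different character. For part~\ref{i:Cset-finite}, let $N \normalo G$ be an open normal subgroup with $\lowerc{c+1}{N} = \1$, and set $k = \order{G/N}$. The idea is to show that every subgroup of the form $\lowerc{c+1}{K}$, for $K \normalc G$, is squeezed between two subgroups that do not depend on~$K$. On the one hand $\lowerc{c+1}{K} \leq \lowerc{c+1}{G}$ trivially. On the other hand, I claim $\lowerc{c+1}{K} \geq [\,K \cap N,_{c}G\,] \geq [\lowerc{c+1}{N}\cdot(\text{something}),\dots]$; more precisely, since $K \cap N$ has index at most $k$ in~$N$, the subgroup $\lowerc{c+1}{K}$ contains a subgroup of bounded index in $\lowerc{c+1}{G}$. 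The cleanest route is: by Lemma~\ref{lem:comms} applied with $N$ in place of the open normal subgroup, $[N,_{c}G]$ is open in $\lowerc{c+1}{G}$, and a direct commutator computation (exactly as in the proof of Lemma~\ref{lem:comms}, raising generators to the power $k^{c}$) shows that $[N,_{c}G]^{k^{c}} \leq \lowerc{c+1}{K}$ whenever $K \geq \lowerc{c+1}{G}$ — wait, more carefully, one wants $\lowerc{c+1}{K}$ to contain a fixed open subgroup of $\lowerc{c+1}{G}$ for \emph{all} $K$. Since $G$ is finitely generated, $\lowerc{c+1}{G}$ has only finitely many open subgroups of index bounded by any given constant; hence there are only finitely many possibilities for $\lowerc{c+1}{K}$ as $K$ ranges over closed normal subgroups, provided I can pin down a uniform lower bound. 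The uniform lower bound comes from noting that $K \mapsto \lowerc{c+1}{K}$ is monotone and that every nontrivial closed normal $K$ contains $\lowerc{c+1}{G} \cap K$-type data; the technically smoothest statement to establish is that $[\,\lowerc{c+1}{G},_{0}G\,]^{k^{c}} \cdot \text{stuff}$ lies in $\lowerc{c+1}{K}$ once $K$ is large enough, and to handle small $K$ separately using that $G$ is Fitting-free is \emph{not} available here (no Fitting-free hypothesis in this theorem), so instead I will argue directly: set $C = \lowerc{c+1}{G}$; then $\lowerc{c+1}{K}$ for $K \normalc G$ always contains $\lowerc{c+1}{KC}$ — no. Let me restructure.

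The correct clean argument: first reduce to open $K$. If $K \normalc G$, then $\lowerc{c+1}{K}$ is a closed normal subgroup, and $K\!\cdot\!N \normalo G$ with $\lowerc{c+1}{KN} \geq \lowerc{c+1}{K}$; moreover $K \cap N \normalo K$ with finite index dividing~$k$, and by the commutator identity $[x,y_1,\dots,y_c]^{k^{c}} \equiv [x,y_1^{k},\dots,y_c^{k}] \pmod{\lowerc{c+2}{K}}$, together with $\lowerc{c+1}{K\cap N} \leq \lowerc{c+1}{N} = \1$, one gets that $\lowerc{c+1}{K}$ has finite exponent modulo $\lowerc{c+2}{K}$; iterating up the lower central series of $K$ (which has bounded length once we bound things) shows $\lowerc{c+1}{K}$ is, modulo a fixed open subgroup of $\lowerc{c+1}{G}$, a finitely generated abelian profinite group of bounded exponent, hence of bounded order. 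So each $\lowerc{c+1}{K}$ lies between a \emph{fixed} open subgroup $D \leqo \lowerc{c+1}{G}$ — namely $D = \bigl(\lowerc{c+1}{G}\bigr)^{m}\,[\lowerc{c+1}{G},G]\cdots$ for a suitable uniform~$m$ — and $\lowerc{c+1}{G}$ itself; since $\lowerc{c+1}{G}/D$ is finite and $G$ (hence $\lowerc{c+1}{G}$) is finitely generated, there are only finitely many subgroups in that interval, proving~\ref{i:Cset-finite}.

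For part~\ref{i:Aset-converse}, suppose $\set{\lowerc{c+1}{K}}{K \normalo G}$ is finite. The intersection $R$ of all these finitely many subgroups is a closed normal subgroup of~$G$, and $R = \bigcap_{K\normalo G}\lowerc{c+1}{K}$ because the set is finite and closed under the relevant intersections (if $K_1, K_2 \normalo G$ then $K_1 \cap K_2 \normalo G$ and $\lowerc{c+1}{K_1\cap K_2} \leq \lowerc{c+1}{K_1}\cap\lowerc{c+1}{K_2}$, so finitely many distinct values force stabilisation). Now $G$ being profinite, $\bigcap_{K\normalo G} K = \1$, and since $\lowerc{c+1}{K} \leq K$ we get $R = \1$. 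Thus there is a single open normal subgroup $K_0 \normalo G$ with $\lowerc{c+1}{K_0} = \1$, which says exactly that $G$ is virtually class-$c$ nilpotent. The main obstacle, as usual in this circle of ideas, is the bookkeeping in part~\ref{i:Cset-finite}: making the "bounded exponent, bounded index" estimates genuinely \emph{uniform} over all closed normal $K$, including those not contained in or containing~$N$. I expect that the cleanest way around this is to work systematically modulo $[N,_{c}G]$ (open in $\lowerc{c+1}{G}$ by Lemma~\ref{lem:comms}), reducing at once to a finitely generated virtually class-$c$ nilpotent group in which $\lowerc{c+1}{G}$ is \emph{finite}, where the finiteness of $\set{\lowerc{c+1}{K}}{K\normalc G}$ is immediate.
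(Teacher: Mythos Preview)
Your argument for part~\ref{i:Aset-converse} is correct and in fact slightly slicker than the paper's. The paper argues by choosing an open normal~$M$ for which the image set $\set{\lowerc{c+1}{H}}{H \normal G/M}$ has maximal size and then compares with deeper quotients. Your approach --- observe that the finite set $\set{\lowerc{c+1}{K}}{K\normalo G}$ is downward-directed (via $\lowerc{c+1}{K_{1}\cap K_{2}} \leq \lowerc{c+1}{K_{1}} \cap \lowerc{c+1}{K_{2}}$), hence has a minimum $\lowerc{c+1}{K_{0}}$, which is then contained in every open normal~$K$ and so is trivial --- is more direct.

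Part~\ref{i:Cset-finite}, however, has a genuine gap. Your final ``clean'' reduction proposes to pass to the quotient $\bar{G} = G/[N,_{c}G]$, where $\lowerc{c+1}{\bar{G}}$ is finite, and declare the result immediate there. The problem is that this reduction does not preserve the question: finiteness of $\set{\lowerc{c+1}{\bar{K}}}{\bar{K} \normalc \bar{G}}$ says nothing about finiteness of $\set{\lowerc{c+1}{K}}{K \normalc G}$, because $\lowerc{c+1}{K}$ is not determined by the image of~$K$ modulo~$[N,_{c}G]$. Your earlier attempts to find a \emph{fixed} open subgroup $D \leq \lowerc{c+1}{G}$ contained in every $\lowerc{c+1}{K}$ cannot work either, as $K = \1$ shows.

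The paper's key idea, which your proposal does not reach, is to stratify by $L = KN$. There are only finitely many such~$L$ (since $N \leq L \leqo G$), and for fixed~$L$ one shows
\[
[N,_{2c}L] \;\leq\; \lowerc{c+1}{K} \;\leq\; \lowerc{c+1}{L}
\]
via a commutator expansion: any $[x,y_{1},\dots,y_{2c}]$ with $x \in N$ and each $y_{i} \in K \cup N$ has either $c+1$ entries from~$N$ (and so lies in $\lowerc{c+1}{N} = \1$) or $c+1$ entries from~$K$ (and so lies in~$\lowerc{c+1}{K}$). Lemma~\ref{lem:comms} applied inside~$L$ then gives that $[N,_{2c}L]$ is open in~$\lowerc{c+1}{L}$, so only finitely many $\lowerc{c+1}{K}$ arise for each~$L$. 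The stratification is exactly what circumvents the obstruction you identified (``no Fitting-free hypothesis, small~$K$ are a problem''): the lower bound depends on~$L$, not on~$K$, and there are only finitely many~$L$.
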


\begin{proof}
  \ref{i:Cset-finite}~Let $N$~be an open normal subgroup of~$G$ such
  that $\lowerc{c+1}{N} = \1$.  Let $K$~be any closed normal subgroup
  of~$G$ and set $L = KN$.  By standard commutator calculus, any
  element of~$[N,_{2c}KN]$ can be expressed as a product of
  commutators $[x,y_{1},y_{2},\dots,y_{2c}]$ where $x \in N$ and
  each~$y_{i}$ belongs either to~$K$ or to~$N$.  Since such a
  commutator involves either at least $c+1$~entries from~$K$ or at
  least~$c+1$~entries from~$N$, we deduce
  \[
  [N,_{2c}L] = [N,_{2c}KN] \leq \lowerc{c+1}{N} \, [N,_{c+1}K] \leq
  \lowerc{c+1}{K} \leq \lowerc{c+1}{L}.
  \]
  Furthermore, upon applying Lemma~\ref{lem:comms} to the profinite
  group~$L$, we conclude that $[N,_{2c}L]$~is an open subgroup
  of~$\lowerc{c+1}{L}$.  Therefore, for each open normal subgroup~$L$
  of~$G$ that contains~$N$, there are at most finitely many
  possibilities for~$\lowerc{c+1}{K}$ as $K$~ranges over all closed
  normal subgroups of~$G$ with $KN = L$.  Finally, since there are
  only finitely many possibilities for~$L$, we conclude that $\set{
    \lowerc{c+1}{K} }{K \normalc G}$ is indeed finite.

  \ref{i:Aset-converse}~Let $G$~be a profinite group and suppose that
  $\mathcal{A} = \set{ \lowerc{c+1}{K} }{ K \normalo G }$ is finite.
  If $N$~is any open normal subgroup of~$G$, then the set
  $\mathcal{L}_{G/N} = \set{ \lowerc{c+1}{H} }{ H \normal G/N }$ is
  the image of~$\mathcal{A}$ under the map induced by the natural
  homomorphism $G \to G/N$.  In particular, there exists some open
  normal subgroup~$M$ of~$G$ such that $\order{\mathcal{L}_{G/M}}$~is
  maximal.  If $N$~is an open normal subgroup of~$G$ contained in~$M$,
  then $\order{\mathcal{L}_{G/N}} = \order{\mathcal{L}_{G/M}}$ and so,
  in particular, $\lowerc{c+1}{M/N}$~must coincide
  with~$\lowerc{c+1}{N/N}$; that is, $\lowerc{c+1}{M} \leq N$.  As
  this holds for all such open normal subgroups~$N$, we conclude that
  $\lowerc{c+1}{M} = \1$.  This shows that $G$~is virtually
  class\nbd$c$ nilpotent.
\end{proof}

The following example demonstrates that the assumption of finite
generation is necessary in
Theorem~\ref{thm:Cset-finite}\ref{i:Cset-finite}.  We construct a
countably-based virtually abelian pro\nbd$p$ group such that the set
$\set{ K' }{K \normalo G}$ contains infinitely many subgroups.

\begin{example}
  Let $p$~be a prime and, for each~$i \geq 0$, set~$V_{i}$ to be the
  direct product of $p$~copies of the cyclic group~$C_{p}$ of
  order~$p$.  Take $H = C_{p}$ and let $H$~act on each~$V_{i}$ by
  cyclically permuting the factors.  Define $W_{i} = V_{i} \rtimes H
  \cong C_{p} \wr C_{p}$, the standard wreath product.  Then
  $[V_{i},H]$~and~$[V_{i},H,H]$ are normal subgroups of~$W_{i}$ of
  indices $p^{2}$~and~$p^{3}$, respectively.  Now take $G = \left(
  \prod_{i=0}^{\infty} V_{i} \right) \rtimes H$.  This is a virtually
  abelian pro\nbd$p$ group, indeed $G = \invlim G_{n}$ where $G_{n} =
  \left( \prod_{i=0}^{n} V_{i} \right) \rtimes H$.  Certainly $G$~is
  not finitely generated.  Observe that, for each finite subset~$S$
  of~$\Nat_{0}$,
  \[
  U_{S} = \biggl( \prod_{i \in S} [V_{i},H] \times \prod_{i \notin S}
  V_{i} \biggr) \rtimes H
  \]
  is an open normal subgroup of~$G$ and
  \[
  U_{S}' = \prod_{i \in S} [V_{i},H,H] \times \prod_{i \notin S} [V_{i},H].
  \]
  In particular, the set $\set{K'}{K \normalo G}$ is infinite for this
  group~$G$.
\end{example}

%%%

\section{Characterization of \JNNcF\ profinite groups}
\label{sec:JNNF}

We fix the integer $c \geq 0$ throughout this section.  In order to
establish Theorem~\ref{thm:obliq} that characterizes Fitting-free
\JNNcF\ profinite groups, we shall associate a directed graph~$\Gamma$
to the set~$\Cset{H}$ that appears in the statement of
Theorem~\ref{thm:Aset} below.  This graph is similar to that used by
Reid in~\cite{Reid10b}.  A key difference is that the vertices
of~$\Gamma$ correspond only to closed subgroups that have the
form~$\lowerc{c+1}{K}$ (where $K$~is a closed normal subgroup of the
profinite group under consideration) rather than any other non-trivial
closed subgroups that the group may have.  We begin by describing this
graph and establishing that it is locally finite.

In the following, recall that the Melnikov subgroup~$\Mel{N}$ of~$N$
is the intersection of the maximal open normal subgroups of~$N$.

\begin{lemma}
  \label{lem:JNNF-Mel}
  Let $G$~be a Fitting-free \JNNcF\ profinite group and let $N$~be a
  non-trivial closed normal subgroup of~$G$.  Then
  $\lowerc{i}{\Mel{N}} \neq \1$ for all~$i \geq 1$.
\end{lemma}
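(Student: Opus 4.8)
The plan is to show first that $\Mel{N}$ is non-trivial and then to bootstrap to all lower central series terms using the Fitting-free hypothesis. For the first part, suppose toward a contradiction that $\Mel{N} = \1$. Since $\Mel{N}$ is the intersection of the maximal open normal subgroups of~$N$, this would force $N$ to be a subdirect product of the finite simple groups $N/M$ as $M$ ranges over these maximal open normal subgroups; in particular $N$ would have trivial Frattini-type behaviour and be far from nilpotent. More usefully, I would argue as follows: if $N$ is non-trivial and $\Mel{N} = \1$, pick any maximal open normal subgroup $M$ of $N$, so that $N/M$ is a finite simple group. The key observation is that, because $G$ is \JNNcF, the quotient $G/N$ is virtually class-$c$ nilpotent, and I want to leverage that $N$ itself cannot be ``too simple'' without creating an abelian normal subgroup of $G$ somewhere. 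Actually the cleanest route: $\Mel{N}$ is a topologically characteristic subgroup of~$N$, hence normal in~$G$; if $\Mel{N} = \1$ then $N$ is residually (finite simple), and one shows $N$ has a minimal closed normal subgroup which, being characteristic-free enough, must be a non-abelian chief factor — but then one uses that $G$ is \JNNcF\ to get a contradiction. Let me instead take the more direct approach below.

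Here is the approach I would actually carry out. First, $\Mel{N} \neq \1$: if $\Mel{N} = \1$, then in particular $N \neq \Mel{N}$ forces nothing, so argue instead that $G/\Mel{N}$ would then contain $N/\Mel{N} = N$ as a normal subgroup; but by Corollary~\ref{cor:nofinite}, $G$ has no non-trivial finite normal subgroup, and if $\Mel{N} = \1$ one needs $N$ infinite. Then $N$ infinite with $\Mel{N}=\1$ means $N$ has infinitely many maximal open normal subgroups with trivial intersection. Pick a descending chain giving a minimal closed normal subgroup $A$ of $G$ inside $N$ (this exists by compactness / Zorn applied to closed normal subgroups of $G$ contained in $N$, since $G$ is finitely-generated-free of the relevant pathologies — actually minimal closed normal subgroups of profinite groups need not exist in general, so I would instead pass to a chief factor $K/L$ of $G$ with $K \leq N$). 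By Lemma~\ref{lem:Mel}\ref{i:narrow}, there is a narrow subgroup $A \leq K$, $A \not\leq L$, with $A \cap L = \Mel[G]{A}$. Since $G$ is Fitting-free and \JNNcF, $A$ is non-abelian, so $A/\Mel[G]{A}$ is a non-abelian chief factor. Now I would use Lemma~\ref{lem:Reid-basal} (with $K$ there taken to be~$A$, which has finitely many conjugates since it is narrow in $G$ — wait, narrow does not immediately give finitely many conjugates; here I would instead note $\Mel[G]{A}$ is $G$-invariant and work with $A^G$ directly) to extract a basal subgroup $B$ with $B^G = B_1 \times \cdots \times B_n$ a direct product of non-abelian factors, all contained in $N$.

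With the basal subgroup $B$ in hand, the endgame runs as follows. Consider the proper quotient $G/C$ where $C = \Cent{G}{B^G}$; by Lemma~\ref{lem:FittingFree-Cent}, $C \neq B^G$ in general, but the point is that $G/B^G$ is a proper quotient (since $B^G \neq \1$) and hence virtually class-$c$ nilpotent. That alone does not yet give a contradiction. The real leverage: $B^G \cong B_1 \times \cdots \times B_n$ with each $B_j$ having trivial centre and being ``close to simple'' — in fact if $\Mel{N} = \1$ then one can choose $B$ so that $B$ itself is finite simple or at least has $\Mel{B} = \1$, and then $B^G$ is a non-trivial finite normal subgroup of $G$, contradicting Corollary~\ref{cor:nofinite}. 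This is the crux and the main obstacle: pinning down that $\Mel{N} = \1$ lets one manufacture a genuinely \emph{finite} non-trivial normal subgroup of $G$. I expect this to work because $N/\Mel{N}$ being a subdirect product of finite simple groups, combined with Lemma~\ref{lem:JNNF-intersect} (any two non-trivial closed normal subgroups of $G$ intersect non-trivially, so the direct factors of $N/\Mel{N}$ cannot be freely combined if there are $G$-invariantly infinitely many of them), should force only finitely many simple factors, hence $N = \Mel{N} \times (\text{finite})$ — no, $\Mel{N}=\1$ — hence $N$ itself finite, the desired contradiction with Corollary~\ref{cor:nofinite}.

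Finally, once $\Mel{N} \neq \1$: it is a non-trivial closed normal subgroup of $G$ (being topologically characteristic in the normal subgroup $N$), so by Lemma~\ref{lem:FittingFree-Cent} it is non-abelian, hence $\lowerc{2}{\Mel{N}} = (\Mel{N})' \neq \1$. But $(\Mel{N})'$ is again topologically characteristic in $N$, hence normal in $G$, hence non-abelian by Lemma~\ref{lem:FittingFree-Cent} again, so $\lowerc{3}{\Mel{N}} = [(\Mel{N})', \Mel{N}] \neq \1$ — indeed iterating, $\lowerc{i}{\Mel{N}}$ is topologically characteristic in $N$ and therefore normal in $G$, and if it were trivial for some $i \geq 2$ then $\lowerc{i-1}{\Mel{N}}$ would be a non-trivial abelian normal subgroup of $G$, contradicting Fitting-freeness. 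By induction, $\lowerc{i}{\Mel{N}} \neq \1$ for all $i \geq 1$. The only genuinely hard step is the non-triviality of $\Mel{N}$ itself; the propagation to all lower central terms is a routine Fitting-free argument, and I would present it in two or three lines.
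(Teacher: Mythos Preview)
Your overall architecture is right: reduce to showing $\Mel{N}\neq\1$, then observe that a non-trivial normal subgroup of a Fitting-free group cannot be nilpotent, so every $\lowerc{i}{\Mel{N}}$ survives. That second step is fine (and indeed the paper dispatches it in one line). The problem is the first step: none of the approaches you sketch actually closes.

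Your basal/narrow-subgroup detour stalls exactly where you noticed it does, and your final plan --- ``$N$ is a subdirect product of finite simple groups, and Lemma~\ref{lem:JNNF-intersect} should force only finitely many factors'' --- is where the real gap lies. Lemma~\ref{lem:JNNF-intersect} concerns closed normal subgroups of~$G$, not of~$N$; the putative simple pieces of~$N$ need not be $G$-invariant, so you cannot feed them into that lemma. And a mere \emph{subdirect} product of finite simple groups gives you no handle on individual factors at all. What you are missing is a two-part manoeuvre. First, split the maximal open normal subgroups of~$N$ into those with abelian simple quotient and those with non-abelian simple quotient; Lemma~\ref{lem:JNNF-intersect} (now applied to two genuine closed normal subgroups of~$G$) forces one of these two intersections to be trivial. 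The abelian branch makes $N$ itself abelian, contradicting Fitting-freeness. In the non-abelian branch one invokes a structural fact (see \cite[Corollary~8.2.3]{RZ}): a profinite group whose maximal open normal subgroups with non-abelian simple quotient intersect trivially is a \emph{Cartesian} product $N=\prod_{R}S_{R}$ of non-abelian finite simple groups, not merely a subdirect one. This is the step you do not have and cannot replace by basal-subgroup bookkeeping. Second, to extract a finite $G$-invariant piece, pick any open normal $K\normalo G$ with $N\cap K<N$; since every closed normal subgroup of such a product is a subproduct of the~$S_{R}$, one has $N\cap K=\prod_{S_{R}\leq K}S_{R}$, so the complementary subproduct $M_{2}=\prod_{S_{R}\nleq K}S_{R}$ is finite and non-trivial. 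Because $K$ is $G$-invariant, the condition ``$S_{R}\nleq K$'' is $G$-stable, hence $M_{2}\normal G$, and Corollary~\ref{cor:nofinite} gives the contradiction. Without the Cartesian-product structure you have no way to isolate a finite $G$-invariant piece, which is why your argument does not land.
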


\begin{proof}
  We shall show that the normal subgroup~$\Mel{N}$ is non-trivial, for
  the hypothesis that $G$~is Fitting-free then ensures it cannot be
  nilpotent.  Suppose for a contradiction that $\Mel{N} = \1$.  Let
  $\mathcal{L}$~be the set of open normal subgroups~$M$ of~$N$ such
  that $N/M$~is cyclic of prime order and $\mathcal{M}$~be the set of
  open normal subgroups~$M$ of~$N$ such that $N/M$~is a non-abelian
  finite simple group.  Then $\bigl( \bigcap \mathcal{L} \bigr) \cap
  \bigl( \bigcap \mathcal{M} \bigr) = \Mel{N} = \1$.  By
  Lemma~\ref{lem:JNNF-intersect}, either $\bigcap \mathcal{L} = \1$ or
  $\bigcap \mathcal{M} = \1$.  If $\bigcap \mathcal{L} = \1$, then
  $N$~embeds in a Cartesian product of cyclic groups of prime order
  and so $N$~would be abelian, contrary to hypothesis.

  Hence $\bigcap \mathcal{M} = \1$.  Then \cite[Corollary~8.2.3]{RZ}
  tells us that $N$~is a Cartesian product of non-abelian finite
  simple groups~$S_{R}$ indexed by the set~$\mathcal{M}$, say $N =
  \prod_{R \in \mathcal{M}} S_{R}$.  Now there exists some open normal
  subgroup~$K$ of~$G$ such that $N \cap K < N$.  Define
  \[
  M_{1} = \prod_{S_{R} \leq K} S_{R}
  \AND
  M_{2} = \prod_{S_{R} \nleq K} S_{R},
  \]
  the products of those factors~$S_{R}$ contained in~$K$ and not
  contained in~$K$, respectively.  Any closed normal subgroup of~$N$
  is the product of the factors~$S_{R}$ that it contains, so
  $M_{1} = N \cap K$.  Hence $M_{2}$~is non-trivial and finite.
  Furthermore, since $K$~is normal in~$G$, for $g \in G$, \
  $S_{R} \nleq K$ if and only if $S_{R}^{\,\, g} \nleq K$.  Therefore
  $M_{2}$~is a normal subgroup of~$G$ and we have a contradiction by
  Corollary~\ref{cor:nofinite}.
\end{proof}

Let $G$~be a finitely generated Fitting-free \JNNcF\ profinite group
and $H$~be an open subgroup of~$G$.  Construct a directed graph
$\Gamma = \Gamma(H)$ whose vertices are the members of the set
\[
\Cset{H} = \set{ \lowerc{c+1}{K} }{\text{$K \normalc G$ with
    $\lowerc{c+1}{K} \nleq H$}}
\]
and where there is an edge from a member~$A$ of~$\Cset{H}$ to another
member~$B$ when $B < A$ and there is no $C \in \Cset{H}$ with $B < C <
A$.

\begin{lemma}
  \label{lem:JNNF-graph}
  Let $G$~be a finitely generated Fitting-free \JNNcF\ profinite
  group, let $H$~be an open subgroup of~$G$ and let $\Gamma =
  \Gamma(H)$ be the graph defined above.
  \begin{enumerate}
  \item \label{i:graph-Mel}
    If $K$~and~$L$ are closed normal subgroups of~$G$ such that
    $\lowerc{c+1}{K}, \lowerc{c+1}{L} \in \Cset{H}$ and there is an
    edge from~$\lowerc{c+1}{K}$ to~$\lowerc{c+1}{L}$ in~$\Gamma$,
    then $\lowerc{c+1}{\Mel{\lowerc{c+1}{K}}} \leq \lowerc{c+1}{L}$.
  \item \label{i:graph-edges}
    If $K$~is a closed normal subgroup of~$G$ such that
    $\lowerc{c+1}{K} \in \Cset{H}$, then there are at most finitely
    many $\lowerc{c+1}{L} \in \Cset{H}$ such that there is an edge
    from~$\lowerc{c+1}{K}$ to~$\lowerc{c+1}{L}$ in~$\Gamma$.
  \end{enumerate}
\end{lemma}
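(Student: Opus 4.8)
The plan is to establish the two parts in turn, deriving part~\ref{i:graph-edges} from part~\ref{i:graph-Mel} together with Theorem~\ref{thm:Cset-finite}\ref{i:Cset-finite}. Write $A = \lowerc{c+1}{K}$ and $B = \lowerc{c+1}{L}$; by hypothesis $A,B \in \Cset{H}$ and there is an edge from~$A$ to~$B$ in~$\Gamma$, so $B < A$ and no member of~$\Cset{H}$ lies strictly between $B$ and~$A$. In particular $A \neq \1$ (since $A \nleq H$), so its Melnikov subgroup $N := \Mel{A} = \Mel{\lowerc{c+1}{K}}$ is a topologically characteristic closed subgroup of~$A$ with $N \leq \Mel[G]{A} < A$; since $A \normalc G$, conjugation by elements of~$G$ restricts to automorphisms of~$A$ and hence fixes~$N$, so $N$ is in fact a closed normal subgroup of~$G$, as is $\lowerc{c+1}{N}$.

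For part~\ref{i:graph-Mel} the strategy is to show that $\lowerc{c+1}{LN}$ is a member of~$\Cset{H}$ satisfying $B \leq \lowerc{c+1}{LN} < A$, and then invoke the edge hypothesis. For the upper bound, I would work modulo~$N$: the images of~$LN$ and of~$L$ in~$G/N$ coincide, so, the lower central series commuting with continuous epimorphisms, $\lowerc{c+1}{LN}\,N = \lowerc{c+1}{L}\,N = BN$, whence $\lowerc{c+1}{LN} \leq BN$. Since $B < A$ we have $A \nleq B$, so Lemma~\ref{lem:Mel}\ref{i:Mel-incl}, applied to the closed normal subgroups $A$ and~$B$, gives $A \nleq B\,\Mel[G]{A}$; as $B\,\Mel[G]{A} \leq A$ this forces $B\,\Mel[G]{A} < A$, and therefore $BN \leq B\,\Mel[G]{A} < A$ using $N \leq \Mel[G]{A}$. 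Thus $\lowerc{c+1}{LN} < A$. For the lower bound, $L \leq LN$ gives $B = \lowerc{c+1}{L} \leq \lowerc{c+1}{LN}$, so $\lowerc{c+1}{LN}$ contains~$B$; in particular $\lowerc{c+1}{LN} \nleq H$, and since $LN \normalc G$ it lies in~$\Cset{H}$. As there is no member of~$\Cset{H}$ strictly between $B$ and~$A$, we conclude $\lowerc{c+1}{LN} = B$, and then $N \leq LN$ gives $\lowerc{c+1}{\Mel{\lowerc{c+1}{K}}} = \lowerc{c+1}{N} \leq \lowerc{c+1}{LN} = \lowerc{c+1}{L}$, as required. (Note that part~\ref{i:graph-Mel} does not use that $G$ is Fitting-free.)

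For part~\ref{i:graph-edges}, Lemma~\ref{lem:JNNF-Mel} applied to the non-trivial closed normal subgroup~$A$ gives $\lowerc{c+1}{\Mel{A}} \neq \1$, so $\bar{G} := G/\lowerc{c+1}{\Mel{A}}$ is a proper quotient of the \JNNcF\ group~$G$; it is finitely generated and virtually class\nbd$c$ nilpotent, so Theorem~\ref{thm:Cset-finite}\ref{i:Cset-finite} tells us that $\set{\lowerc{c+1}{\bar{M}}}{\bar{M} \normalc \bar{G}}$ is a finite set. Now suppose $\lowerc{c+1}{L} \in \Cset{H}$ receives an edge from~$\lowerc{c+1}{K}$ in~$\Gamma$. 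Part~\ref{i:graph-Mel} gives $\lowerc{c+1}{\Mel{A}} \leq \lowerc{c+1}{L} \leq L$, so $\lowerc{c+1}{\Mel{A}} \normalc L$ and the image of~$\lowerc{c+1}{L}$ in~$\bar{G}$ equals $\lowerc{c+1}{L}/\lowerc{c+1}{\Mel{A}} = \lowerc{c+1}{\bar{L}}$, where $\bar{L} = L/\lowerc{c+1}{\Mel{A}}$; moreover $\lowerc{c+1}{L}$ is the full preimage in~$G$ of this image, again because $\lowerc{c+1}{\Mel{A}} \leq \lowerc{c+1}{L}$. Hence $\lowerc{c+1}{L}$ is determined by its image in~$\bar{G}$, which is one of finitely many subgroups, so there are only finitely many possibilities for~$\lowerc{c+1}{L}$. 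The one step that really needs checking is the containment $\lowerc{c+1}{LN} \leq BN$ used in part~\ref{i:graph-Mel}; the remainder is routine bookkeeping with the properties of the Melnikov subgroup collected in Section~\ref{sec:prelims} and, for the non-triviality needed in part~\ref{i:graph-edges}, Lemma~\ref{lem:JNNF-Mel}.
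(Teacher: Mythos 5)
Your proposal is correct and takes essentially the same route as the paper: in part (i) both arguments consider the closed normal subgroup $\Mel{\lowerc{c+1}{K}}\,L$, bound its $(c{+}1)$-st lower central term by $\lowerc{c+1}{L}\,\Mel{\lowerc{c+1}{K}} < \lowerc{c+1}{K}$ and invoke edge-minimality, and in part (ii) both pass to $G/\lowerc{c+1}{\Mel{\lowerc{c+1}{K}}}$ via Lemma~\ref{lem:JNNF-Mel} and apply Theorem~\ref{thm:Cset-finite}\ref{i:Cset-finite}. The only (harmless) cosmetic difference is that you obtain $\lowerc{c+1}{L}\,\Mel{\lowerc{c+1}{K}} < \lowerc{c+1}{K}$ from Lemma~\ref{lem:Mel}\ref{i:Mel-incl}, whereas the paper argues directly with the maximal open normal subgroups of~$\lowerc{c+1}{K}$ containing~$\lowerc{c+1}{L}$.
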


\begin{proof}
  \ref{i:graph-Mel}~Suppose that there is an edge
  from~$\lowerc{c+1}{K}$ to~$\lowerc{c+1}{L}$ in~$\Gamma$.  Then
  $\lowerc{c+1}{L}$~is a proper subgroup of~$\lowerc{c+1}{K}$, so the
  intersection~$R$ of the maximal open normal subgroups
  of~$\lowerc{c+1}{K}$ that contain~$\lowerc{c+1}{L}$ satisfies $R <
  \lowerc{c+1}{K}$.  By definition, $\Mel{\lowerc{c+1}{K}} \leq R$
  and so $\Mel{\lowerc{c+1}{K}} \lowerc{c+1}{L} \leq R <
  \lowerc{c+1}{K}$.  Take $J = \Mel{\lowerc{c+1}{K}}L$.  Then
  $J$~is a closed normal subgroup of~$G$ and $\lowerc{c+1}{L} \leq
  \lowerc{c+1}{J} \leq \Mel{\lowerc{c+1}{K}} \lowerc{c+1}{L} <
  \lowerc{c+1}{K}$.  Since there is an edge in~$\Gamma$
  from~$\lowerc{c+1}{K}$ to~$\lowerc{c+1}{L}$, this forces
  $\lowerc{c+1}{J} = \lowerc{c+1}{L}$ and hence
  $\lowerc{c+1}{\Mel{\lowerc{c+1}{K}}} \leq \lowerc{c+1}{L}$.

  \ref{i:graph-edges}~Define
  $M = \lowerc{c+1}{\Mel{\lowerc{c+1}{K}}}$.  By
  Lemma~\ref{lem:JNNF-Mel}, $M \neq \1$ and hence $Q = G/M$ is
  virtually class\nbd$c$ nilpotent.  If there is an edge
  from~$\lowerc{c+1}{K}$ to~$\lowerc{c+1}{L}$ in~$\Gamma$ then, by
  part~\ref{i:graph-Mel}, $\lowerc{c+1}{L}$~corresponds to
  $\lowerc{c+1}{L/M}$ and here $L/M$~is a closed normal subgroup
  of~$Q$.  Consequently, there are only finitely many possibilities
  for~$\lowerc{c+1}{L}$ by
  Theorem~\ref{thm:Cset-finite}\ref{i:Cset-finite}.
\end{proof}

\begin{thm}
  \label{thm:Aset}
  Let $G$~be a finitely generated infinite profinite group that is
  Fitting-free and let $c$~be a non-negative integer.  Then the
  following conditions are equivalent:
  \begin{enumerate}
  \item $G$~is \JNNcF;
  \item \label{i:Aset-condn}
    the set $\Aset{H} = \set{ \lowerc{c+1}{K} }{\text{$K \normalo G$
        with $\lowerc{c+1}{K} \nleq H$}}$ is finite for every open
    subgroup~$H$ of~$G$;
  \item \label{i:Cset-condn}
    the set $\Cset{H} = \set{ \lowerc{c+1}{K} }{\text{$K \normalc G$
        with $\lowerc{c+1}{K} \nleq H$}}$ is finite for every open
    subgroup~$H$ of~$G$.
  \end{enumerate}
\end{thm}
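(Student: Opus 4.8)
The plan is to prove the cycle of implications $\text{(i)} \Rightarrow \text{(iii)} \Rightarrow \text{(ii)} \Rightarrow \text{(i)}$. The implication $\text{(iii)} \Rightarrow \text{(ii)}$ is immediate, since every open normal subgroup is closed, so $\Aset{H} \subseteq \Cset{H}$ for every open subgroup~$H$ and finiteness of the latter forces finiteness of the former.

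For $\text{(ii)} \Rightarrow \text{(i)}$, note first that an infinite Fitting-free profinite group is not virtually class\nbd$c$ nilpotent, so it remains only to show that $G/N$ is virtually class\nbd$c$ nilpotent for every non-trivial closed normal subgroup~$N$. I would use the Fitting-free hypothesis here: a non-trivial nilpotent closed normal subgroup of~$G$ would have non-trivial centre, contradicting that $G$ has no non-trivial abelian closed normal subgroup; hence $N$~is not nilpotent and $\lowerc{c+1}{N} \neq \1$. Choosing an open subgroup~$H$ with $\lowerc{c+1}{N} \nleq H$, monotonicity of the lower central series gives $\lowerc{c+1}{K} \geq \lowerc{c+1}{N} \nleq H$ for every open normal subgroup~$K \geq N$, so all such $\lowerc{c+1}{K}$ lie in the finite set~$\Aset{H}$. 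Since the open normal subgroups of~$G/N$ are exactly the $K/N$ with $K \normalo G$ and $K \geq N$, this shows $\set{\lowerc{c+1}{\bar K}}{\bar K \normalo G/N}$ is finite, whence $G/N$ is virtually class\nbd$c$ nilpotent by Theorem~\ref{thm:Cset-finite}\ref{i:Aset-converse}.

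The substantial implication is $\text{(i)} \Rightarrow \text{(iii)}$, and here I would use the directed graph~$\Gamma(H)$ together with its local finiteness. We may assume $\Cset{H} \neq \emptyset$; monotonicity of~$\lowerc{c+1}{\cdot}$ then shows $\lowerc{c+1}{G}$ is the greatest element of~$\Cset{H}$ (and it is non-trivial since $G$, being infinite and Fitting-free, is not class\nbd$c$ nilpotent). I would establish three facts. (a)~For each $B \in \Cset{H}$ the up-set $\set{A \in \Cset{H}}{A \geq B}$ is finite: as $B \neq \1$, the group $G/B$ is finitely generated and virtually class\nbd$c$ nilpotent, and since $B \leq \lowerc{c+1}{K} \leq K$ whenever $\lowerc{c+1}{K} \geq B$, this up-set injects into the finite set $\set{\lowerc{c+1}{\bar K}}{\bar K \normalc G/B}$ provided by Theorem~\ref{thm:Cset-finite}\ref{i:Cset-finite}. (b)~$\Cset{H}$ has no infinite properly descending chain $A_{0} > A_{1} > \dots$: writing $N = \bigcap_{i} A_{i} \normalc G$, if $N \neq \1$ then the distinct subgroups $A_{i}/N$ would form an infinite family inside the finite set $\set{\lowerc{c+1}{\bar K}}{\bar K \normalc G/N}$, while if $N = \1$ then compactness of~$G$ forces $A_{i} \leq \Core{G}{H}$ for some~$i$, contradicting $A_{i} \in \Cset{H}$. (c)~Every vertex of~$\Gamma(H)$ is reachable from~$\lowerc{c+1}{G}$ by a directed path: given $B \in \Cset{H}$, the up-set in~(a) is a finite poset with greatest element $\lowerc{c+1}{G}$ and least element~$B$, so a maximal chain in it runs from $\lowerc{c+1}{G}$ down to~$B$ through covering steps, and each such covering step is also a covering relation in~$\Cset{H}$ (anything strictly in between would again lie in the up-set), hence an edge of~$\Gamma(H)$. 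Granting (a)--(c) together with Lemma~\ref{lem:JNNF-graph}\ref{i:graph-edges}, the tree of directed paths out of~$\lowerc{c+1}{G}$ is finitely branching, has no infinite branch by~(b), and so is finite by K\"onig's lemma; by~(c) its vertex set exhausts~$\Cset{H}$, which is therefore finite.

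The main obstacle, and the point at which the \JNNcF\ and Fitting-free hypotheses genuinely intervene, is assembling the K\"onig's lemma argument: local finiteness of~$\Gamma(H)$ on its own is not enough, since an abstract poset with a greatest element, the descending chain condition and finite ``downward degree'' can still be infinite, so one really needs the finiteness of the up-sets in~(a) to obtain reachability from the root and the quotient/compactness dichotomy in~(b) to exclude infinite branches. The remaining ingredients — monotonicity of the lower central series, the Correspondence-Theorem bookkeeping in passing to $G/B$ and $G/N$ (using $\lowerc{c+1}{K}\leq K$ to ensure the relevant quotients make sense), and the elementary facts about maximal chains in finite posets used in~(c) — I expect to be routine.
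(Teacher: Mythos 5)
Your proposal is correct and takes essentially the same route as the paper: the same graph $\Gamma(H)$, local finiteness from Lemma~\ref{lem:JNNF-graph}\ref{i:graph-edges}, finiteness of the sets of elements above a given vertex and the final contradiction both via Theorem~\ref{thm:Cset-finite}\ref{i:Cset-finite}, and K\"onig's Lemma, with the monotonicity argument for \ref{i:Aset-condn}$\Rightarrow$(i). The only cosmetic differences are that you rule out an infinite descending chain with trivial intersection by a direct compactness argument where the paper cites \cite[Lemma~2.4]{Reid10b}, and that in \ref{i:Aset-condn}$\Rightarrow$(i) you apply Theorem~\ref{thm:Cset-finite}\ref{i:Aset-converse} to $G/N$ where the paper exhibits the class-$c$ nilpotent open subgroup $LN/N$ explicitly.
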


Observe that if $H$~is any open subgroup of~$G$ with $C =
\Core{G}{H}$, then $\Aset{H} = \Aset{C}$ and $\Cset{H} = \Cset{C}$.
Hence each of the Conditions~\ref{i:Aset-condn}
and~\ref{i:Cset-condn} is equivalent to the requirement that the given
set be finite for every open \emph{normal} subgroup~$H$ of~$G$.

\begin{proof}
  Assume that $G$~is \JNNcF\@.  Suppose that $\Cset{H}$~is infinite
  for some open subgroup~$H$ of~$G$.  As described above, construct
  the graph $\Gamma = \Gamma(H)$ whose vertices are the members
  of~$\Cset{H}$.  Lemma~\ref{lem:JNNF-graph}\ref{i:graph-edges}~tells
  us that each vertex of~$\Gamma$ has finite out-degree.  Furthermore,
  if $\lowerc{c+1}{K} \in \Cset{H}$, then $G/\lowerc{c+1}{K}$~is a
  proper quotient of~$G$ and so is virtually class\nbd$c$ nilpotent.
  Hence, by Theorem~\ref{thm:Cset-finite}\ref{i:Cset-finite},
  $G/\lowerc{c+1}{K}$~contains only finitely many subgroups of the
  form~$\lowerc{c+1}{\bar{L}}$ where $\bar{L}$~is a closed normal
  subgroup; that is, there are only finitely many members
  of~$\Cset{H}$ that contain~$\lowerc{c+1}{K}$.  Consequently there is
  a path of finite length in~$\Gamma$ from~$\lowerc{c+1}{G}$
  to~$\lowerc{c+1}{K}$.

  Thus $\Gamma$ is a connected, locally finite, infinite directed
  graph.  By K\"{o}nig's Lemma (compare, for example,
  \cite[Lemma~8.1.2]{Diestel}), $\Gamma$~has an infinite directed path
  and this corresponds to an infinite descending chain
  $\lowerc{c+1}{K_{1}} > \lowerc{c+1}{K_{2}} > \dots$ of members
  of~$\Cset{H}$.  An application of~\cite[Lemma~2.4]{Reid10b}, taking
  $O = H$, shows that $J = \bigcap_{i=1}^{\infty} \lowerc{c+1}{K_{i}}
  \neq \1$.  Then $G/J$~is finitely generated and virtually
  class\nbd$c$ nilpotent but it has infinitely many subgroups of the
  form~$\lowerc{c+1}{K_{i}}/J$ with $K_{i} \normalc G$.  This
  contradicts Theorem~\ref{thm:Cset-finite}\ref{i:Cset-finite}.  We
  conclude therefore that $\Cset{H}$~is finite for every open
  subgroup~$H$ of~$G$.

  Since $\Aset{H} \subseteq \Cset{H}$ for every~$H$, it is certainly
  the case that the third condition in the statement implies the
  second.

  Suppose finally that $\Aset{H}$~is finite for every open
  subgroup~$H$ of~$G$.  As $G$~is Fitting-free, it is not virtually
  nilpotent.  Let $N$~be a non-trivial closed normal subgroup of~$G$.
  Then $\lowerc{c+1}{N} \neq \1$ and so there exists an open normal
  subgroup~$H$ of~$G$ such that $\lowerc{c+1}{N} \nleq H$.  By
  hypothesis, $\Aset{H} = \{ \lowerc{c+1}{L_{1}}, \lowerc{c+1}{L_{2}},
  \dots, \lowerc{c+1}{L_{r}} \}$ for some open normal subgroups
  $L_{1}$,~$L_{2}$, \dots,~$L_{r}$ of~$G$.  Set $L = \bigcap_{i=1}^{r}
  L_{i}$.  If $K$~is an open normal subgroup of~$G$ with $N \leq K$,
  then necessarily $\lowerc{c+1}{K} \nleq H$ and so $\lowerc{c+1}{K} =
  \lowerc{c+1}{L_{i}}$ for some~$i$.  Therefore
  \[
  \lowerc{c+1}{L} \leq
  \bigcap \set{ \lowerc{c+1}{K} }{N \leq K \normalo G} \leq
  \bigcap \set{ K }{N \leq K \normalo G} = N.
  \]
  Hence $LN/N$~is a class\nbd$c$ nilpotent open subgroup of~$G/N$, as
  required.
\end{proof}

We shall now use Theorem~\ref{thm:Aset} to establish further
information about finitely generated Fitting-free \JNNcF\ groups,
including a description of them as inverse limits of suitable
virtually nilpotent groups (see Theorem~\ref{thm:JNNF-invlim} below).

Suppose that $G$~is a finitely generated Fitting-free \JNNcF\ group.
We start with any open normal subgroup~$H_{0}$.  Then certainly
$\lowerc{c+1}{H_{0}} \neq \1$ since $G$~is Fitting-free.  Now assume,
as an inductive hypothesis, that we have constructed a sequence of
open normal subgroups $G \geq H_{0} > H_{1} > \dots > H_{n-1}$ such
that for each $i \in \{1, 2, \dots, n-1\}$ the following holds:
$\lowerc{c+1}{H_{i}} \leq \Mel[G]{\lowerc{c+1}{H_{i-1}}}$ and if
$N$~is an open normal subgroup of~$G$ either $\lowerc{c+1}{N} \leq
H_{i-1}$ or $\lowerc{c+1}{H_{i}} \leq \lowerc{c+1}{N}$.  By
Theorem~\ref{thm:Aset}, the set
$\Aset{H_{n-1}} = \set{ \lowerc{c+1}{K} }{\text{$K \normalo G$ with
    $\lowerc{c+1}{K} \nleq H_{n-1}$}}$ is finite.  Also
$\Mel[G]{\lowerc{c+1}{H_{n-1}}} \neq \1$ since it
contains~$\Mel{\lowerc{c+1}{H_{n-1}}}$ which is non-trivial by
Lemma~\ref{lem:JNNF-Mel}.  Let
\[
R = \Mel[G]{\lowerc{c+1}{H_{n-1}}} \cap \bigcap \Aset{H_{n-1}}.
\]
Since this is a finite intersection of non-trivial closed normal
subgroups, $R$~is also a non-trivial closed normal subgroup of~$G$ by
Lemma~\ref{lem:JNNF-intersect}.  Then $G/R$~is virtually class\nbd$c$
nilpotent, so there exists an open normal subgroup~$S$ of~$G$ with
$\lowerc{c+1}{S} \leq R$.  Take $H_{n} = H_{n-1} \cap S$, so that
$H_{n}$~is open in~$G$, \ $H_{n} \leq H_{n-1}$ and
\[
\lowerc{c+1}{H_{n}} \leq R \leq
\Mel[G]{\lowerc{c+1}{H_{n-1}}} < \lowerc{c+1}{H_{n-1}}.
\]
Furthermore, if $N$~is an open normal subgroup of~$G$,
then either $\lowerc{c+1}{N} \leq H_{n-1}$ or $\lowerc{c+1}{N} \in
\Aset{H_{n-1}}$.  In the latter case, $\lowerc{c+1}{H_{n}} \leq R \leq
\lowerc{c+1}{N}$ according to our definition of~$R$.

By repeated application of these steps, we obtain a descending
sequence of open normal subgroups~$H_{n}$.  Let
$J = \bigcap_{n=0}^{\infty} H_{n}$.  If $J \neq \1$, then necessarily
$\lowerc{c+1}{J} \neq \1$ so $G/\lowerc{c+1}{J}$~is virtually
class\nbd$c$ nilpotent.  By
Theorem~\ref{thm:Cset-finite}\ref{i:Cset-finite}, the set
$\set{ \lowerc{c+1}{K} }{ K \normalo G/\lowerc{c+1}{J} }$ is finite
but each term $\lowerc{c+1}{H_{i}}/\lowerc{c+1}{J}$ is a member of
this set.  This contradiction shows that $J = \1$.

In conclusion, we have established the following observation:

\begin{lemma}
  \label{lem:fullstruct}
  Let $G$~be a finitely generated profinite group that is Fitting-free
  and \JNNcF\@. Then there is a descending sequence $G \geq H_{0} >
  H_{1} > H_{2} > \dots$ of open normal subgroups such that
  \begin{enumerate}
  \item for each $n \geq 1$, \ $\lowerc{c+1}{H_{n}} \leq
    \Mel[G]{\lowerc{c+1}{H_{n-1}}} < \lowerc{c+1}{H_{n-1}}$,
  \item $\bigcap_{n=0}^{\infty} H_{n} = \1$;
  \item if $N$~is an open normal subgroup of~$G$ and $n \geq 1$, then
    either $\lowerc{c+1}{N} \leq H_{n-1}$ or $\lowerc{c+1}{H_{n}} \leq
    \lowerc{c+1}{N}$. %\qed
  \end{enumerate}
\end{lemma}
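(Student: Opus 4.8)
The construction in the paragraphs immediately preceding the statement already produces such a sequence, so I would organise the proof as a recursion followed by three routine verifications. \emph{The recursion.} Pick any open normal subgroup~$H_{0}$ of~$G$; since $G$~is Fitting-free it is not virtually nilpotent, so $\lowerc{c+1}{H_{0}} \neq \1$. Assume inductively that open normal subgroups $G \geq H_{0} > H_{1} > \dots > H_{n-1}$ have been constructed. By Theorem~\ref{thm:Aset} the set~$\Aset{H_{n-1}}$ is finite, and $\Mel[G]{\lowerc{c+1}{H_{n-1}}}$ is non-trivial because it contains~$\Mel{\lowerc{c+1}{H_{n-1}}}$, which is non-trivial by Lemma~\ref{lem:JNNF-Mel} applied to the non-trivial closed normal subgroup~$\lowerc{c+1}{H_{n-1}}$. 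Set $R = \Mel[G]{\lowerc{c+1}{H_{n-1}}} \cap \bigcap \Aset{H_{n-1}}$; being a finite intersection of non-trivial closed normal subgroups of~$G$, this is non-trivial by Lemma~\ref{lem:JNNF-intersect}. Then $G/R$~is a proper quotient of~$G$, hence virtually class\nbd$c$ nilpotent, so one may pick an open normal subgroup~$S$ of~$G$ with $\lowerc{c+1}{S} \leq R$; put $H_{n} = H_{n-1} \cap S$.

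\emph{Properties (i) and (iii).} For~(i): $\lowerc{c+1}{H_{n}} \leq \lowerc{c+1}{S} \leq R \leq \Mel[G]{\lowerc{c+1}{H_{n-1}}}$, and this last inclusion into~$\lowerc{c+1}{H_{n-1}}$ is strict, since $\Mel[G]{A}$ is a proper subgroup of~$A$ for every non-trivial closed normal subgroup~$A$ of~$G$. In particular the chain $\lowerc{c+1}{H_{0}} > \lowerc{c+1}{H_{1}} > \dots$ is strictly descending. For~(iii): given $N \normalo G$, either $\lowerc{c+1}{N} \leq H_{n-1}$, or $\lowerc{c+1}{N} \in \Aset{H_{n-1}}$ and then $\lowerc{c+1}{H_{n}} \leq R \leq \lowerc{c+1}{N}$ by the definition of~$R$.

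\emph{Property (ii).} Let $J = \bigcap_{n=0}^{\infty} H_{n}$ and suppose, for a contradiction, that $J \neq \1$. Then $\lowerc{c+1}{J} \neq \1$, again by Fitting-freeness, so $G/\lowerc{c+1}{J}$~is a finitely generated virtually class\nbd$c$ nilpotent profinite group, and Theorem~\ref{thm:Cset-finite}\ref{i:Cset-finite} shows that the set $\set{\lowerc{c+1}{K}}{K \normalc G/\lowerc{c+1}{J}}$ is finite. But it contains every~$\lowerc{c+1}{H_{n}}/\lowerc{c+1}{J}$, and these are pairwise distinct by the strict descent from~(i); this contradiction forces $J = \1$.

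I do not anticipate a genuine obstacle: the substantive input --- finiteness of~$\Aset{H_{n-1}}$, non-triviality of finite intersections of non-trivial closed normal subgroups, non-triviality of the Melnikov subgroup, and finiteness of~$\set{\lowerc{c+1}{K}}{K \normalc Q}$ for a virtually class\nbd$c$ nilpotent quotient~$Q$ --- is already supplied by Theorems~\ref{thm:Aset} and~\ref{thm:Cset-finite} and Lemmas~\ref{lem:JNNF-intersect} and~\ref{lem:JNNF-Mel}. The only point needing a little care is the strictness of~$\Mel[G]{A} < A$ for non-trivial~$A$, since that is exactly what makes the chain of the~$\lowerc{c+1}{H_{n}}$ infinite and hence lets the finiteness in Theorem~\ref{thm:Cset-finite}\ref{i:Cset-finite} collide with the assumption $J \neq \1$.
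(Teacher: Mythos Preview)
Your proposal is correct and follows the paper's own argument essentially verbatim: the same recursive definition $H_{n} = H_{n-1} \cap S$ with $R = \Mel[G]{\lowerc{c+1}{H_{n-1}}} \cap \bigcap \Aset{H_{n-1}}$, the same verification of~(i) and~(iii) from the definition of~$R$, and the same contradiction for~(ii) via Theorem~\ref{thm:Cset-finite}\ref{i:Cset-finite} applied to~$G/\lowerc{c+1}{J}$. The only cosmetic difference is that you invoke the finiteness of $\set{\lowerc{c+1}{K}}{K \normalc G/\lowerc{c+1}{J}}$ whereas the paper quotes the subset with $K \normalo G/\lowerc{c+1}{J}$, which makes no difference.
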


The conditions appearing in the lemma are sufficient to ensure that
the group~$G$ is \JNNcF\@.  In fact, we can make them marginally
weaker as the following shows:

\begin{thm}
  \label{thm:JNNFstruct}
  Let $G$~be a finitely generated Fitting-free profinite group and let
  $c$~be a non-negative integer.  Then $G$~is \JNNcF\ if and only if
  there is a descending sequence $G \geq H_{0} > H_{1} > H_{2} >
  \dots$ of open normal subgroups such that
  \begin{enumerate}
  \item \label{i:JNNFstruct-intersect}
    $\bigcap_{n=0}^{\infty} H_{n} = \1$, and
  \item \label{i:JNNF-dichotomy}
    if $N$~is an open normal subgroup of~$G$ and $n \geq 1$, then
    either $\lowerc{c+1}{N} \leq H_{n-1}$ or $\lowerc{c+1}{H_{n}} \leq
    \lowerc{c+1}{N}$.
  \end{enumerate}
  Furthermore, when these conditions are satisfied, the group~$G$ is
  just infinite if and only if $\lowerc{c+1}{H_{n}}$~has finite index
  in~$G$ for all $n \geq 0$.
\end{thm}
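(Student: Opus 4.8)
The forward implication needs no new work: if $G$~is \JNNcF, then the descending sequence of open normal subgroups produced by Lemma~\ref{lem:fullstruct} already satisfies the two conditions in the statement, since these are precisely parts~(ii) and~(iii) of that lemma (part~(i) there, involving the Melnikov subgroup, is stronger than what is asked for here).

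For the converse, suppose we are given a descending sequence $G \geq H_{0} > H_{1} > H_{2} > \dots$ of open normal subgroups satisfying~(i) and~(ii).  First I would note that $G$~is infinite, since the finite quotients $G/H_{n}$ have strictly increasing order; being Fitting-free, $G$~is therefore not virtually nilpotent, so in particular $G$~itself is not virtually class\nbd$c$ nilpotent.  The crux of the argument is the following claim: \emph{for every non-trivial closed normal subgroup~$N$ of~$G$ there exists some~$m$ with $\lowerc{c+1}{H_{m}} \leq N$.}  Once this is available, the image of~$H_{m}$ in~$G/N$ is a subgroup of finite index whose $(c+1)$-th lower central subgroup is the image of~$\lowerc{c+1}{H_{m}}$ and hence trivial; thus $G/N$~is virtually class\nbd$c$ nilpotent, and combining this with the previous observation shows that $G$~is \JNNcF\@.

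To prove the claim I would argue as follows.  Since $G$~is Fitting-free, $\lowerc{c+1}{N}$~cannot be trivial: otherwise $N$~would be a non-trivial nilpotent normal subgroup and its centre~$\Centre{N}$ would be a non-trivial abelian closed normal subgroup of~$G$.  By condition~(i) there is then a least integer~$J \geq 0$ with $\lowerc{c+1}{N} \nleq H_{J}$; put $m = J+1$.  The delicate point is that condition~(ii) is phrased only for \emph{open} normal subgroups, so it cannot be applied to~$N$ directly.  Instead I would apply it to every open normal subgroup~$K$ of~$G$ containing~$N$: for such a~$K$ we have $\lowerc{c+1}{N} \leq \lowerc{c+1}{K}$, hence $\lowerc{c+1}{K} \nleq H_{m-1}$, so the first alternative in~(ii) fails and $\lowerc{c+1}{H_{m}} \leq \lowerc{c+1}{K} \leq K$.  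Taking the intersection over all such~$K$, and using that this intersection is~$N$ because $G/N$~is profinite, yields $\lowerc{c+1}{H_{m}} \leq N$.  I expect this reduction from the closed subgroup~$N$ to the open subgroups lying above it to be the only real subtlety; the rest is routine.

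For the final assertion, assume conditions~(i) and~(ii) hold, so that $G$~is \JNNcF\@.  If $G$~is just infinite, then for each $n \geq 0$ the subgroup~$\lowerc{c+1}{H_{n}}$ is non-trivial --- otherwise $H_{n}$, and hence~$G$, would be virtually class\nbd$c$ nilpotent, impossible for an infinite Fitting-free group --- whence the proper quotient~$G/\lowerc{c+1}{H_{n}}$ is finite, i.e.\ $\lowerc{c+1}{H_{n}}$ has finite index in~$G$.  Conversely, if $\lowerc{c+1}{H_{n}}$ has finite index for all~$n$, then by the claim every non-trivial closed normal subgroup of~$G$ contains some~$\lowerc{c+1}{H_{m}}$ and so has finite index, whence $G$~is just infinite.
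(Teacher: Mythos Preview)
Your proposal is correct and follows essentially the same argument as the paper's proof: both invoke Lemma~\ref{lem:fullstruct} for the forward direction, and for the converse both pass from the closed normal subgroup to the open normal subgroups above it, apply the dichotomy~\ref{i:JNNF-dichotomy}, and intersect. Your treatment is in fact slightly more careful in one place---you explicitly note that $G$ is infinite before concluding it is not virtually nilpotent---but otherwise the reasoning and the handling of the final assertion match the paper's.
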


\begin{proof}
  If $G$~is \JNNcF, the existence of the descending sequence of open
  subgroups~$H_{n}$ is provided by Lemma~\ref{lem:fullstruct}.
  Suppose conversely that $G$~possesses a descending chain~$H_{n}$, $n
  \geq 0$, of open normal subgroups satisfying
  \ref{i:JNNFstruct-intersect} and~\ref{i:JNNF-dichotomy}.  Since
  $G$~is Fitting-free, it cannot be virtually nilpotent.  Let $K$~be a
  non-trivial closed normal subgroup of~$G$.  Then, for the same
  reason, $\lowerc{c+1}{K} \neq \1$.  Therefore, since
  condition~\ref{i:JNNFstruct-intersect} holds, there exists some~$m
  \geq 0$ such that $\lowerc{c+1}{K} \nleq H_{m}$.  Let $N$~be any
  open normal subgroup of~$G$ with $K \leq N$.  Since $\lowerc{c+1}{N}
  \nleq H_{m}$, Condition~\ref{i:JNNF-dichotomy} shows that
  $\lowerc{c+1}{H_{m+1}} \leq \lowerc{c+1}{N} \leq N$.  It follows
  that
  \[
  \lowerc{c+1}{H_{m+1}} \leq \bigcap \set{N}{ K \leq N \normalo G} = K
  \]
  and hence $G/K$~is virtually nilpotent of class~$c$, as required.

  Finally, observe that if $\order{G:\lowerc{c+1}{H_{n}}}$~is infinite
  for some $n \geq 0$, then $G/\lowerc{c+1}{H_{n}}$ is an infinite
  quotient and so $G$~is not just infinite.  On the other hand, if
  $\order{G:\lowerc{c+1}{H_{n}}} < \infty$ for all $n \geq 0$, then in
  the previous paragraph it follows that if $K$~is a non-trivial
  closed normal subgroup of~$G$ then $\lowerc{c+1}{H_{m+1}} \leq K$
  for some $m \geq 0$ and so $K$~has finite index.  Hence $G$~is in
  fact just infinite under this assumption.
\end{proof}

In~\cite[Theorem~3.6]{Reid12}, Reid presents a condition which
guarantees the existence of a just infinite quotient of a profinite
group.  The condition is expressed in terms of the relation~$\succnar$
concerning chief factors of the profinite group~$G$ under
consideration.  Notice, however, that with use
of~\cite[Proposition~3.5(iii)]{Reid12}, the assumption that
$K_{1}/L_{1} \succnar K_{2}/L_{2} \succnar \dots$ is a descending
sequence of open chief factors (as appears
in~\cite[Theorem~3.6]{Reid12}) is equivalent to the existence of open
normal subgroups $G \geq K_{1} > L_{1} \geq K_{2} > L_{2} \geq \dots$
with $L = \bigcap_{n=1}^{\infty} L_{n}$ such that, for each~$n$, \
$K_{n}/L$~is a narrow subgroup of~$G/L$ and
$\Mel[G/L]{K_{n}/L} = L_{n}/L$.  Theorem~\ref{thm:JNNFquot} below can
consequently be viewed as an analogous result for the existence of
\JNNcF\ quotients of a profinite group.

The application of Zorn's Lemma in our proof is more delicate than for
Reid's result.  Under the hypotheses and notation
of~\cite[Theorem~3.6]{Reid12}, the quotient~$G/L_{n}$ would be finite
and so would have only finitely many subgroups.  However, our
corresponding quotient~$G/L_{n}$ is a finitely generated virtually
nilpotent group and this does not necessarily even possess the
ascending chain condition on closed normal subgroups.

\begin{thm}
  \label{thm:JNNFquot}
  Let $G$~be a finitely generated profinite group and let $c$~be a
  non-negative integer.
  \begin{enumerate}
  \item \label{i:JNNFquot-exist}
    For each~$n \geq 1$, let $K_{n}$~and~$L_{n}$ be closed normal
    subgroups of~$G$ and define $L = \bigcap_{n=1}^{\infty} L_{n}$.
    Suppose that
    \[
    G \geq K_{1} > L_{1} \geq \lowerc{c+1}{L_{1}}L \geq K_{2} > L_{2}
    \geq \lowerc{c+1}{L_{2}}L \geq \dots
    \]
    and that, for each~$n$, \ $K_{n}/L$~is a narrow subgroup of~$G/L$
    with $\Mel[G/L]{K_{n}/L} = L_{n}/L$ and $G/L_{n}$~is virtually
    class\nbd$c$ nilpotent.  Then there exists a closed normal
    subgroup~$K$ of~$G$ that is maximal subject to the conditions that
    $K \geq L$ and $K_{n} \nleq L_{n}K$ for all~$n$.  Furthermore,
    such a closed normal subgroup~$K$ has the property that $G/K$~is
    \JNNcF.

  \item \label{i:JNNFquot-conv}
    Every Fitting-free \JNNcF\ quotient~$G/K$ of~$G$ arises in the
    manner described in~\ref{i:JNNFquot-exist} with $L = K$.
  \end{enumerate}
\end{thm}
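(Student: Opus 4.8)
Throughout, both parts of the proof rest on two preliminary observations about the chain $K_{n}\geq L_{n}$. First, for any closed normal subgroup $K$ with $K\geq L$, applying Lemma~\ref{lem:Mel}\ref{i:Mel-incl} inside $G/L$ (with $\Mel[G/L]{K_{n}/L}=L_{n}/L$) shows that $K_{n}\nleq L_{n}K$ if and only if $K_{n}\nleq K$; so the awkward conditions ``$K_{n}\nleq L_{n}K$'' may be replaced throughout by ``$K_{n}\nleq K$''. Second, since $K_{n}/L$ is narrow in $G/L$, its Melnikov subgroup $L_{n}/L$ is open in $K_{n}/L$ and $K_{n}/L_{n}$ is a \emph{finite} chief factor of $G/L$; consequently every proper $G$-invariant closed subgroup of $K_{n}$ that contains $L$ is contained in $L_{n}$, so in particular $K_{n}\cap K\leq L_{n}$ whenever $K\geq L$ and $K_{n}\nleq K$.

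For the existence in~\ref{i:JNNFquot-exist} the plan is to apply Zorn's Lemma to $\mathcal{P}=\set{K\normalc G}{K\geq L\text{ and }K_{n}\nleq L_{n}K\text{ for all }n}$, which is non-empty as $L\in\mathcal{P}$. Given a chain $\mathcal{C}$ in $\mathcal{P}$, the only candidate upper bound is $K^{*}=\overline{\bigcup\mathcal{C}}$, and the real work is to verify $K^{*}\in\mathcal{P}$, that is, $K_{n}\nleq K^{*}$ for all $n$. Assuming $K_{n}\leq K^{*}$, I would pass to $\bar{G}=G/L_{n}$, which is finitely generated and virtually class\nbd$c$ nilpotent: the images $\bar{K}'=K'L_{n}/L_{n}$, $K'\in\mathcal{C}$, form an increasing chain of closed normal subgroups with $\overline{\bigcup\bar{K}'}=K^{*}L_{n}/L_{n}$, while $\bar{N}:=K_{n}/L_{n}$ is a non-trivial finite chief factor of $\bar{G}$ with $\bar{N}\cap\bar{K}'=\1$ for every $K'\in\mathcal{C}$ yet $\bar{N}\leq\overline{\bigcup\bar{K}'}$. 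By Theorem~\ref{thm:Cset-finite} the chains $\bigl(\lowerc{c+1}{\bar{K}'}\bigr)$ and $\bigl(\lowerc{c+1}{\bar{K}'\bar{N}}\bigr)$ of closed subgroups of $\bar{G}$ stabilise; since $\bar{N}\leq\overline{\bigcup\bar{K}'}$ both have the same limit, and comparing this with $\lowerc{c+1}{\bar{K}'_{0}\bar{N}}\leq\bar{K}'_{0}$ when $\bar{N}$ is abelian and $c\geq1$, respectively with $\bar{N}=\lowerc{c+1}{\bar{N}}$ when $\bar{N}$ is non-abelian, and using the hypothesis $K_{n}\leq\lowerc{c+1}{L_{n-1}}L$ to place $\bar{N}$ inside $\lowerc{c+1}{L_{n-1}/L_{n}}$ (for $n=1$ the analogous bound coming from $\lowerc{c+1}{L_{1}}\leq\lowerc{c+1}{G}$), one forces $\bar{N}\leq\bar{K}'_{0}$ for some $K'_{0}\in\mathcal{C}$, contradicting $\bar{N}\cap\bar{K}'_{0}=\1$ and $\bar{N}\neq\1$. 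I expect this chain argument -- and in particular making the abelian chief-factor case work -- to be the main obstacle; it is exactly the point where the present setting is harder than Reid's, whose corresponding quotients $G/L_{n}$ are finite.

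It remains, still in~\ref{i:JNNFquot-exist}, to show a maximal such $K$ has $G/K$ \JNNcF. If $K'\normalc G$ with $K'>K$, then $K'\notin\mathcal{P}$, so $K_{m}\leq K'$ for some $m$; as $L_{m}\leq K_{m}$, the quotient $G/K'$ is then a quotient of $G/L_{m}$ and hence virtually class\nbd$c$ nilpotent, so every proper quotient of $G/K$ is virtually class\nbd$c$ nilpotent. Finally $G/K$ is \emph{not} virtually class\nbd$c$ nilpotent: a compactness argument gives $\bigcap_{m}L_{m}K=K$, while the hypothesis $K_{m+1}\leq\lowerc{c+1}{L_{m}}L$ together with $K_{m+1}\cap K\leq L_{m+1}$ yields $\lowerc{c+1}{L_{m}}K\geq K_{m+1}K>L_{m+1}K\geq\lowerc{c+1}{L_{m+1}}K$, so the subgroups $\lowerc{c+1}{L_{m}K/K}$ form a strictly descending chain of $(c{+}1)$-th lower central terms of closed normal subgroups of $G/K$; by Theorem~\ref{thm:Cset-finite} this is impossible were $G/K$ virtually class\nbd$c$ nilpotent. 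Hence $G/K$ is \JNNcF.

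For part~\ref{i:JNNFquot-conv}: given $K\normalc G$ with $\bar{G}:=G/K$ Fitting-free and \JNNcF, fix an enumeration $V_{1},V_{2},\dots$ of the countably many open normal subgroups of $\bar{G}$ and build narrow subgroups $\bar{K}_{m}$ of $\bar{G}$ recursively. Setting $\bar{L}_{0}=\bar{G}$, at step $m$ put $R_{m}=\lowerc{c+1}{\bar{L}_{m-1}}\cap\bigcap_{i\leq m}\lowerc{c+1}{V_{i}}$, which is a non-trivial closed normal subgroup by Lemmas~\ref{lem:JNNF-Mel} and~\ref{lem:JNNF-intersect} (each factor is non-trivial: $\lowerc{c+1}{\bar{L}_{m-1}}\geq\lowerc{c+1}{\Mel{\bar{K}_{m-1}}}\neq\1$ by Lemma~\ref{lem:JNNF-Mel}, and $\lowerc{c+1}{V_{i}}\neq\1$ as $V_{i}$ has finite index in the non-virtually-nilpotent $\bar{G}$); choose a chief factor $R_{m}/C_{m}$ of $\bar{G}$ and, by Lemma~\ref{lem:Mel}\ref{i:narrow}, a narrow subgroup $\bar{K}_{m}\leq R_{m}$ with $\bar{L}_{m}:=\Mel[\bar{G}]{\bar{K}_{m}}=\bar{K}_{m}\cap C_{m}$. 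Pulling back to $G$, the inclusions $\bar{K}_{m+1}\leq R_{m+1}\leq\lowerc{c+1}{\bar{L}_{m}}$ and $\bar{L}_{m}<\bar{K}_{m}$ produce a chain $G\geq K_{1}>L_{1}\geq\lowerc{c+1}{L_{1}}K\geq K_{2}>L_{2}\geq\dots$ with each $K_{m}/K$ narrow in $\bar{G}$ and $\Mel[\bar{G}]{K_{m}/K}=L_{m}/K$; since $\bar{L}_{m}\leq V_{m}$ one gets $\bigcap_{m}L_{m}=K$; and $G/L_{m}$, a proper quotient of the \JNNcF\ group $G/K$, is virtually class\nbd$c$ nilpotent. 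Finally $K$ is maximal subject to $K\geq L\,(=K)$ and $K_{m}\nleq L_{m}K$, equivalently every non-trivial closed normal $\bar{N}\normalc\bar{G}$ contains some $\bar{K}_{m}$: indeed $\bar{G}/\bar{N}$ is virtually class\nbd$c$ nilpotent, so $\lowerc{c+1}{V_{i}}\leq\bar{N}$ for some $i$, whence $\bar{K}_{i}\leq R_{i}\leq\lowerc{c+1}{V_{i}}\leq\bar{N}$. This exhibits $G/K$ in the manner of~\ref{i:JNNFquot-exist} with $L=K$.
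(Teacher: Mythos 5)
Your part~\ref{i:JNNFquot-conv}, your maximality-implies-\JNNcF\ step, and the non-abelian half of your chain argument are all sound (indeed, building the enumeration of open normal subgroups of $G/K$ into the recursive construction is a neat way to get $\bigcap_m L_m = K$ and the maximality directly, where the paper instead invokes Theorem~\ref{thm:Cset-finite}). But there is a genuine gap at exactly the point you flag: the abelian chief-factor case in the verification that $K^{*}=\overline{\bigcup\mathcal{C}}$ lies in $\mathcal{P}$, for $c\geq 1$. Suppose $K_{n}\leq K^{*}$ and $\bar{N}=K_{n}/L_{n}$ is abelian. Since each $K'\in\mathcal{C}$ satisfies $K_{n}\cap K'\leq L_{n}$, the images $\bar{K}'$ and $\bar{N}$ are normal subgroups of $\bar{G}=G/L_{n}$ with trivial intersection, so $[\bar{K}',\bar{N}]=\1$ and hence $\lowerc{c+1}{\bar{K}'\bar{N}}=\lowerc{c+1}{\bar{K}'}$ for every $K'$ when $c\geq 1$. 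Thus the two chains you propose to compare are literally identical, and the ``same limit'' observation carries no information; worse, $\bar{N}$ centralizes $\bigcup\bar{K}'$ and hence its closure $D$, so $\bar{N}\leq\Centre{D}$ and the situation is completely invisible to $(c+1)$-st lower central terms. The extra fact that $\bar{N}\leq\lowerc{c+1}{L_{n-1}/L_{n}}$ concerns the fixed subgroup $L_{n-1}$, which has no relation to the chain $\mathcal{C}$, and there is no mechanism in your sketch by which it ``forces $\bar{N}\leq\bar{K}'_{0}$''; so the contradiction you need in the abelian case is not established.

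This is precisely where the paper's proof does its real work, and it uses a different idea. Having shown (much as you do) that the offending chief factor $K_{n}/L_{n}$ must be abelian, say an elementary abelian $q$\nbd group, it abandons lower central series: choose an open normal subgroup $A\geq L_{n}$ of $G$ with $\lowerc{c+1}{A}\leq L_{n}$ and (after replacing $A$ by $K_{n}A$) with $K_{n}\leq A$; decompose the finitely generated nilpotent group $A/L_{n}$ into its Sylow pro\nbd$p$ subgroups $A[p]/L_{n}$; for each prime $p$ take a maximal member $M[p]/L_{n}$ of the chain of Sylow pro\nbd$p$ subgroups of the groups $(C\cap A)L_{n}/L_{n}$, $C\in\mathcal{C}$ --- this uses the ascending chain condition on closed subgroups of finitely generated nilpotent pro\nbd$p$ groups --- and set $M/L_{n}=\prod_{p}M[p]/L_{n}$. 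Then $K_{n}\nleq M$, since otherwise the $q$\nbd group $K_{n}/L_{n}$ would lie in $M[q]/L_{n}$ and hence in $CL_{n}$ for some $C\in\mathcal{C}$; on the other hand, because $A$ is clopen one has $\overline{\bigcup_{C\in\mathcal{C}}(C\cap A)}=K^{*}\cap A\leq M$, whence $K_{n}\leq L_{n}K^{*}\cap A=(K^{*}\cap A)L_{n}\leq M$, a contradiction. Some argument of this kind, exploiting the nilpotent structure of $G/L_{n}$ rather than Theorem~\ref{thm:Cset-finite}, has to be supplied before your Zorn's Lemma step is valid; as it stands, part~\ref{i:JNNFquot-exist} of your proposal is incomplete.
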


\begin{proof}
  \ref{i:JNNFquot-exist}~When $c = 0$, this follows
  from~\cite[Theorem~3.6(i)]{Reid12}.  We shall assume that $c \geq 1$
  in the following argument.  Let $\mathcal{N}$~be the set of all
  closed normal subgroups~$N$ of~$G$ which contain~$L$ and such that
  $K_{n} \nleq L_{n}N$ for all~$n \geq 1$.  We shall
  order~$\mathcal{N}$ by inclusion.  Observe that $L \in \mathcal{N}$
  since $L_{n}L = L_{n} < K_{n}$.  Let $\mathcal{C}$~be a chain
  in~$\mathcal{N}$ and define $R = \overline{ \bigcup \mathcal{C} }$.
  Suppose that $R \notin \mathcal{N}$.  Then there exists some~$m \geq
  1$ such that $K_{m} \leq L_{m}R$.  If $C \in \mathcal{C}$, then
  $K_{m} \nleq L_{m}C$, so $(K_{m} \cap C)L_{m} = K_{m} \cap L_{m}C <
  K_{m}$ and therefore $K_{m} \cap C \leq L_{m}$ since $L_{m}$~is
  maximal among $G$\nbd invariant open subgroups of~$K_{m}$.  Hence
  $[C,K_{m}] \leq L_{m}$ and so $C \leq \Cent{G}{K_{m}/L_{m}}$ for all
  $C \in \mathcal{C}$.  Since this centralizer is an open subgroup
  of~$G$, it follows that $R \leq \Cent{G}{K_{m}/L_{m}}$.  Hence
  $K_{m} \leq L_{m}R \leq \Cent{G}{K_{m}/L_{m}}$ and so the chief
  factor~$K_{m}/L_{m}$ is abelian; that is, it is an elementary
  abelian $q$\nbd group for some prime~$q$.

  Since $G/L_{m}$~is virtually class\nbd$c$ nilpotent, there is an
  open normal subgroup~$A$ with $L_{m} \leq A$ such that
  $\lowerc{c+1}{A} \leq L_{m}$.  If $K_{m} \nleq A$, then $K_{m} \cap
  A = L_{m}$ and so $K_{m}A/L_{m} \cong K_{m}/L_{m} \times A/L_{m}$ is
  also class\nbd$c$ nilpotent.  Consequently, if necessary, we may
  replace~$A$ by~$K_{m}A$ and hence assume $K_{m} \leq A$.  For each
  prime~$p$, write~$A[p]/L_{m}$ for the Sylow pro\nbd$p$ subgroup
  of~$A/L_{m}$.  Then $A/L_{m}$~is the product $\prod_{p} A[p]/L_{m}$
  of these pro\nbd$p$ groups.  Furthermore, for each $C \in
  \mathcal{C}$ and prime~$p$, let $C[p]/L_{m}$~be the Sylow pro\nbd$p$
  subgroup of $(C\cap A)L_{m}/L_{m}$.  Since $\mathcal{C}$~is a chain,
  so is the set $\mathcal{S}_{p} = \set{ C[p]/L_{m} }{C \in
    \mathcal{C}}$.  As a finitely generated nilpotent pro\nbd$p$
  group, $A[p]/L_{m}$~satisfies the ascending chain condition on
  closed subgroups and so there exists some maximal
  member~$M[p]/L_{m}$ of~$\mathcal{S}_{p}$.

  If it were the case that $K_{m}/L_{m} \leq M[q]/L_{m}$, then
  $K_{m} \leq (C \cap A)L_{m} \leq CL_{m}$ for some $C \in
  \mathcal{C}$, contrary to the fact that $C \in \mathcal{N}$.  Define
  $M$~to be the closed subgroup of~$A$ defined by $M/L_{m} = \prod_{p}
  M[p]/L_{m}$.  Then $K_{m} \nleq M$ since we have observed that
  $K_{m}/L_{m}$~is not contained in the Sylow pro\nbd$q$ subgroup
  of~$M/L_{m}$.

  On the other hand, $C \cap A \leq M$ for all $C \in \mathcal{C}$,
  since by construction $C[p]/L_{m} \leq M[p]/L_{m}$ for each
  prime~$p$.  Furthermore, since $A$~is a clopen subset of~$G$,
  \[
  \overline{ \bigcup_{C \in \mathcal{C}} (C \cap A) } = \overline{
    \left( \bigcup \mathcal{C} \right) \cap A } = R \cap A
  \]
  and so we conclude that $R \cap A \leq M$.  Consequently, $K_{m}
  \leq L_{m}R \cap A = (R \cap A)L_{m} \leq M$, which contradicts our
  previous observation.

  In conclusion, we have shown that $R = \overline{ \bigcup
    \mathcal{C} } \in \mathcal{N}$ and so every chain in~$\mathcal{N}$
  has an upper bound.  Therefore, by Zorn's Lemma, there is a maximal
  member~$K \in \mathcal{N}$; that is, $K$~is maximal subject to the
  condition that $K_{n} \nleq L_{n}K$ for all~$n \geq 1$.  Suppose
  that $G/K$~is virtually class\nbd$c$ nilpotent.  By
  Theorem~\ref{thm:Cset-finite}\ref{i:Cset-finite}, the set $\set{
    \lowerc{c+1}{J} }{J \normalc G/K}$ is finite.  Hence
  $\lowerc{c+1}{L_{m}}K = \lowerc{c+1}{L_{m+1}}K$ for some~$m \geq 1$
  and so
  \[
  K_{m+1} \leq \lowerc{c+1}{L_{m}}L \leq \lowerc{c+1}{L_{m}}K =
  \lowerc{c+1}{L_{m+1}}K \leq L_{m+1}K,
  \]
  contrary to the fact that $K \in \mathcal{N}$.  We deduce that
  $G/K$~is not virtually class\nbd$c$ nilpotent.

  Now let $N$~be a closed normal subgroup of~$G$ that strictly
  contains~$K$.  Then $N \notin \mathcal{N}$ by maximality of~$K$, so
  there exists some~ $m \geq 1$ such that $K_{m} \leq L_{m}N$; that
  is, $K_{m}/L \leq \Mel[G/L]{K_{m}/L} \cdot (N/L)$.
  Lemma~\ref{lem:Mel}\ref{i:Mel-incl} then tells us that $K_{m} \leq
  N$.  Hence $G/N$~is a quotient of~$G/L_{m}$ and so is virtually
  class\nbd$c$ nilpotent.  This shows that $G/K$~is indeed \JNNcF.

  \ref{i:JNNFquot-conv}~Assume that $G/K$~is a \JNNcF\ quotient of~$G$
  and that it is Fitting-free.  We define the sequences of closed
  normal subgroups $K_{n}$~and~$L_{n}$ as follows.  First take any
  chief factor of~$G/K$ and let $K_{1}/K$~be a narrow subgroup as
  provided by Lemma~\ref{lem:Mel}\ref{i:narrow} and define~$L_{1}$ by
  $L_{1}/K = \Mel[G/K]{K_{1}/K}$.  Note that $L_{1} > K$ by use of
  Corollary~\ref{cor:nofinite} and hence $\lowerc{c+1}{L_{1}}K > K$ by
  the hypothesis that $G/K$~is Fitting-free.  Assuming that, for
  some $n \geq 2$, we have defined $K_{n-1}$~and~$L_{n-1}$ with
  $\lowerc{c+1}{L_{n-1}}K > K$, use Lemma~\ref{lem:Mel} again to
  produce a narrow subgroup~$K_{n}/K$ of~$G/K$ with $K_{n} \leq
  \lowerc{c+1}{L_{n-1}}K$.  Define~$L_{n}$ by $L_{n}/K =
  \Mel[G/K]{K_{n}/K}$ and note $\lowerc{c+1}{L_{n}}K > K$.  In
  conclusion, this produces the required chain of closed normal
  subgroups
  \[
  G \geq K_{1} > L_{1} \geq \lowerc{c+1}{L_{1}}K \geq K_{2} > L_{2}
  \geq \lowerc{c+1}{L_{2}}K \geq \dots.
  \]
  Now $L = \bigcap_{n=1}^{\infty} L_{n}$ certainly contains~$K$, while
  the quotient~$G/L$ cannot be virtually class\nbd$c$ nilpotent by
  Theorem~\ref{thm:Cset-finite}\ref{i:Cset-finite} as the
  subgroups~$\lowerc{c+1}{L_{n}/L}$ are distinct.  Hence $L = K$.
  Finally, if $N$~is a closed normal subgroup of~$G$ with $N > K$,
  then $G/N$~is virtually class\nbd$c$ nilpotent and so, by use of
  Theorem~\ref{thm:Cset-finite}\ref{i:Cset-finite}, there exists~$m
  \geq 1$ such that $\lowerc{c+1}{L_{m}}N = \lowerc{c+1}{L_{m+1}}N$.
  The same argument as used in part~\ref{i:JNNFquot-exist} shows that
  $K_{m+1} \leq L_{m+1}N$.  This shows that, amongst closed normal
  subgroups, $K$~is indeed maximal subject to $K_{n} \nleq L_{n}K$ for
  all~$n$; that is, arises as in part~\ref{i:JNNFquot-exist}.
\end{proof}

Our final result of this section is a characterization of finitely
generated Fitting-free \JNNcF\ profinite groups as inverse limits.
The natural inverse system to associate to such a group is of
virtually nilpotent profinite groups rather than of some class of
finite groups. The properties possessed by this inverse system are
analogous to those in~\cite[Theorem~4.1]{Reid-corr}.

\begin{thm}
  \label{thm:JNNF-invlim}
  Let $G$~be a finitely generated profinite group that is Fitting-free
  and let $c$~be a non-negative integer.  If $G$~is \JNNcF, then it is
  the inverse limit of a family~$G_n$, for $n \geq 0$, of profinite
  groups with respect to surjective continuous homomorphisms $\rho_{n}
  \colon G_{n+1} \to G_{n}$ with the following properties: for
  every~$n \geq 0$, \ $G_{n}$~has an open normal subgroup~$P_{n}$ such
  that, upon setting $Q_{n} = P_{n+1}\rho_{n}$,
  \begin{enumerate}
  \item \label{i:invlim-vN}
    $G_{n}$~is virtually class\nbd$c$ nilpotent;
  \item %\label{i:invlim-openinc}
    $P_{n} > Q_{n}$;
  \item $\lowerc{c+1}{P_{n}} > \Mel[G_{n}]{\lowerc{c+1}{P_{n}}} \geq
    \ker\rho_{n-1} \geq \lowerc{c+1}{Q_{n}} > \1$;
  \item \label{i:invlim-dichotomy}
    if $N$~is an open normal subgroup of~$G_{n}$, then either
    $\lowerc{c+1}{N} \leq P_{n}$ or $\lowerc{c+1}{Q_{n}} \leq
    \lowerc{c+1}{N}$.
  \end{enumerate}

  Conversely, suppose, for some integer $d \geq 1$, that
  $G = \invlim G_{n}$ is an inverse limit of a countable family of
  $d$\nbd generator profinite groups with respect to surjective
  continuous homomorphisms~$\rho_{n}$ such that $G$~is Fitting-free
  and the above conditions hold.  For each~$n$, let $\pi_{n} \colon G
  \to G_{n}$~be the natural map associated to the inverse limit.  Then
  if $K$~is a non-trivial closed normal subgroup of~$G$, there
  exists~$n_{0} \geq 0$ such that $\ker\pi_{n_{0}} \leq K$.  In
  particular, $G$~is \JNNcF.
\end{thm}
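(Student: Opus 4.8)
For the forward direction, the plan is to run Lemma~\ref{lem:fullstruct} through a quotient construction. Given the descending chain $G \geq H_0 > H_1 > H_2 > \dots$ of open normal subgroups supplied by that lemma, I would set $G_n = G/\lowerc{c+1}{H_{n+2}}$, let $\rho_n \colon G_{n+1} \to G_n$ be the canonical projection (legitimate since $\lowerc{c+1}{H_{n+3}} \leq \lowerc{c+1}{H_{n+2}}$), and take $P_n$ to be the image of $H_n$ in $G_n$. Since $\lowerc{c+1}{H_{n+2}} \leq H_{n+2}$ and $\bigcap_n H_n = \1$, the kernels $\ker\pi_n$ of the canonical maps $\pi_n \colon G \to G_n$ have trivial intersection, so indeed $G = \invlim G_n$. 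Property~\ref{i:invlim-vN} holds because each $G_n$ is a \emph{proper} quotient of~$G$: indeed $\lowerc{c+1}{H_{n+2}} \neq \1$, for otherwise $\Centre{H_{n+2}}$ would be a non-trivial abelian normal subgroup of~$G$. For the rest one computes $\lowerc{c+1}{P_n} = \lowerc{c+1}{H_n}/\lowerc{c+1}{H_{n+2}}$, identifies $Q_n = P_{n+1}\rho_n$ with the image of $H_{n+1}$ in $G_n$ (so that $\lowerc{c+1}{Q_n} = \lowerc{c+1}{H_{n+1}}/\lowerc{c+1}{H_{n+2}} = \ker\rho_{n-1}$, yielding~(ii) and the lower relations of~(iii)), and uses Lemma~\ref{lem:Mel}\ref{i:Mel-corr} to get $\Mel[G_n]{\lowerc{c+1}{P_n}} = \Mel[G]{\lowerc{c+1}{H_n}}/\lowerc{c+1}{H_{n+2}}$, so that the upper relations of~(iii) follow from $\Mel[G]{A} < A$ and from Lemma~\ref{lem:fullstruct}(i). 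Finally the dichotomy~\ref{i:invlim-dichotomy} is just Lemma~\ref{lem:fullstruct}(iii) read through the Correspondence Theorem. (The instance $n=0$, where $\ker\rho_{-1}$ is read as $\1$, is harmless since $\Mel[G]{\lowerc{c+1}{H_0}} \geq \lowerc{c+1}{H_1} > \lowerc{c+1}{H_2}$ by Lemma~\ref{lem:fullstruct}(i).)

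For the converse, first note $G$ is $d$-generated, hence finitely generated, and $G$ is infinite because $\lowerc{c+1}{Q_n} > \1$ forces $\ker\rho_n \neq \1$, so the chain $(\ker\pi_n)$ is strictly descending; being Fitting-free and infinite, $G$ is then not virtually nilpotent. So everything reduces to the displayed claim, which gives \JNNcF: if $\ker\pi_{n_0} \leq K$ then $G/K$ is a quotient of $G_{n_0}$, hence virtually class\nbd$c$ nilpotent by~\ref{i:invlim-vN}. To prove the claim, put $N_n = \ker\pi_n$ and $H_n = \pi_n^{-1}(P_n) \normalo G$. Using $\ker\rho_n \leq \Mel[G_{n+1}]{\lowerc{c+1}{P_{n+1}}} \leq P_{n+1}$ (from~(iii)) one checks $\pi_n^{-1}(Q_n) = H_{n+1}$, so~(ii) gives a strictly descending chain $H_0 > H_1 > \dots$; pulling~(iii) back along $\pi_n$ gives $\lowerc{c+1}{H_{n+1}} \leq N_{n-1}$, whence $\lowerc{c+1}{\bigcap_n H_n} \leq \bigcap_n N_n = \1$, and therefore $\bigcap_n H_n = \1$ by Fitting-freeness (a non-trivial class\nbd$c$ nilpotent closed normal subgroup would have non-trivial centre). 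One also needs a routine strengthening of~\ref{i:invlim-dichotomy} to closed normal subgroups: if $A \normalc G_n$ with $\lowerc{c+1}{A} \nleq P_n$, then applying~\ref{i:invlim-dichotomy} to each $AM$ with $M \normalo G_n$ and using the identity $\lowerc{c+1}{AM} \leq \lowerc{c+1}{A}M$ together with closedness of $\lowerc{c+1}{A}$ yields $\lowerc{c+1}{Q_n} \leq \lowerc{c+1}{A}$; hence for every $A \normalc G_n$ one has $\lowerc{c+1}{A} \leq P_n$ or $\lowerc{c+1}{Q_n} \leq \lowerc{c+1}{A}$.

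Now let $K \neq \1$ be a closed normal subgroup of~$G$; as $G$ is Fitting-free, $\lowerc{c+1}{K} \neq \1$. Apply the strengthened dichotomy with $A = K\pi_n$. If $\lowerc{c+1}{K\pi_n} \leq P_n$ held for all large~$n$, pulling back would give $\lowerc{c+1}{K} \leq \bigcap_n H_n = \1$, a contradiction; and once $\lowerc{c+1}{K\pi_n} \nleq P_n$ it persists, since $\rho_n$ carries $\lowerc{c+1}{K\pi_{n+1}}$ onto $\lowerc{c+1}{K\pi_n}$ and $P_{n+1}$ into $Q_n \leq P_n$. Hence there is $n_1$ with $\lowerc{c+1}{Q_n} \leq \lowerc{c+1}{K\pi_n} \leq K\pi_n$ for all $n \geq n_1$. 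The endgame is a Melnikov bootstrap: since $\lowerc{c+1}{Q_n} = \rho_n(\lowerc{c+1}{P_{n+1}})$ and $K\pi_n = \rho_n(K\pi_{n+1})$, the inclusion $\lowerc{c+1}{Q_n} \leq K\pi_n$ gives $\lowerc{c+1}{P_{n+1}} \leq (K\pi_{n+1})\ker\rho_n$; but $\ker\rho_n \leq \Mel[G_{n+1}]{\lowerc{c+1}{P_{n+1}}}$ by~(iii), so Lemma~\ref{lem:Mel}\ref{i:Mel-incl} upgrades this to $\lowerc{c+1}{P_{n+1}} \leq K\pi_{n+1}$, and then $\ker\rho_n \leq \Mel[G_{n+1}]{\lowerc{c+1}{P_{n+1}}} \leq \lowerc{c+1}{P_{n+1}} \leq K\pi_{n+1}$, i.e.\ $N_n \leq KN_{n+1}$, for all $n \geq n_1$. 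Iterating, $N_{n_1} \leq \bigcap_{j \geq 0} K N_{n_1+j} = K$, since $K$ is closed and $\bigcap_n N_n = \1$. So $\ker\pi_{n_0} \leq K$ with $n_0 = n_1$, completing the proof.

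The forward direction is essentially bookkeeping once Lemma~\ref{lem:fullstruct} is in hand, the only subtlety being the index shift by two, chosen so that $\lowerc{c+1}{Q_n}$ is non-trivial. The crux is the converse, and within it the two linking steps: promoting the dichotomy~\ref{i:invlim-dichotomy} from open to arbitrary closed normal subgroups (for which the commutator identity $\lowerc{c+1}{AM} \leq \lowerc{c+1}{A}M$ and a closedness argument suffice, so no appeal to Theorem~\ref{thm:Cset-finite} is needed here), and recognizing that the Melnikov inclusion in~(iii) is exactly the hypothesis that Lemma~\ref{lem:Mel}\ref{i:Mel-incl} requires in order to cancel the spurious factor $\ker\rho_n$ — this is what converts the weak inclusion $\lowerc{c+1}{P_{n+1}} \leq (K\pi_{n+1})\ker\rho_n$ into the strong one and lets the chain $N_n \leq KN_{n+1}$ telescope down to~$K$.
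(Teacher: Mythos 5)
Your proposal is correct, and it follows the same overall skeleton as the paper's proof (Lemma~\ref{lem:fullstruct} feeding a quotient construction for the forward direction; the dichotomy plus Lemma~\ref{lem:Mel}\ref{i:Mel-incl} telescoping $\ker\pi_{n} \leq K\ker\pi_{n+1}$ down to~$K$ for the converse), but several intermediate steps are genuinely different. In the forward direction the paper takes $G_{n} = G/\Mel[G]{\lowerc{c+1}{H_{n+1}}}$, so that $\ker\rho_{n-1}$ is exactly the Melnikov subgroup $\Mel[G_{n}]{\lowerc{c+1}{P_{n}}}$, whereas you take $G_{n} = G/\lowerc{c+1}{H_{n+2}}$, so that $\ker\rho_{n-1}$ coincides with $\lowerc{c+1}{Q_{n}}$ instead; since condition~(iii) only demands the sandwich of inequalities, both are legitimate, and your verification via Lemma~\ref{lem:Mel}\ref{i:Mel-corr} together with Lemma~\ref{lem:fullstruct}(i) is sound. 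In the converse the divergence is more substantial: the paper proves that $G$ is not virtually class\nbd$c$ nilpotent by an inductive count of subgroups $\lowerc{c+1}{K}$ with $K \normalo G_{n}$ combined with Theorem~\ref{thm:Cset-finite}\ref{i:Cset-finite} (this is where finite generation enters), while you get it directly from ``infinite and Fitting-free implies not virtually nilpotent''; the paper locates the threshold index using $\lowerc{c+1}{\lowerc{c+1}{K}} \neq \1$ and $\lowerc{c+1}{Q_{n+1}} \leq \ker\rho_{n}$, while you use $\bigcap_{n}\pi_{n}^{-1}(P_{n}) = \1$ plus a persistence argument; and the paper only ever applies the dichotomy~\ref{i:invlim-dichotomy} to the open subgroups $N\pi_{n}$ for open normal $N \geq K$, intersecting over all such~$N$ at the end, whereas you first promote the dichotomy to closed normal subgroups of~$G_{n}$ (your argument via $\lowerc{c+1}{AM} \leq \lowerc{c+1}{A}M$ and closedness of $\lowerc{c+1}{A}$ is fine) and then apply it directly to~$K\pi_{n}$. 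The endgame --- turning $\lowerc{c+1}{Q_{n}} \leq K\pi_{n}$ into $\lowerc{c+1}{P_{n+1}} \leq (K\pi_{n+1})\ker\rho_{n}$ and cancelling $\ker\rho_{n}$ with Lemma~\ref{lem:Mel}\ref{i:Mel-incl} --- is identical. What your route buys is independence from Theorem~\ref{thm:Cset-finite} in the converse (so the $d$\nbd generator hypothesis is used only incidentally there) and a one-shot application of the dichotomy; what the paper's route buys is that the dichotomy is used exactly as stated, with no strengthening required, and quotients whose kernels are precisely Melnikov subgroups, in closer parallel with Reid's inverse-limit characterizations.
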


In the case of finitely generated profinite groups, it is known that
\emph{any} homomorphism is necessarily continuous.  Consequently, the
word ``continuous'' could be omitted from the statement without
affecting its validity.  For arbitrary finitely generated profinite
groups, this follows by the work of Nikolov and Segal~\cite{NS1} (and
depends upon the Classification of Finite Simple Groups).  However, as
the groups~$G_{n}$ are assumed to be virtually nilpotent, it is easy
to reduce to the case of finitely generated (nilpotent) pro-$p$ groups
which was already covered by Serre; compare with~\cite{Anderson}.

\begin{proof}
  Suppose $G$~is \JNNcF\@.  Then, as observed in
  Lemma~\ref{lem:fullstruct}, there is a descending sequence $G \geq
  H_{0} > H_{1} > H_{2} > \dots$ of open normal subgroups such that
  $\bigcap_{n=0}^{\infty} H_{n} = \1$ and, for each $n \geq 1$,
  \ $\lowerc{c+1}{H_{n}} \leq \Mel[G]{\lowerc{c+1}{H_{n-1}}} <
  \lowerc{c+1}{H_{n-1}}$ and if $N \normalo G$ either $\lowerc{c+1}{N}
  \leq H_{n-1}$ or $\lowerc{c+1}{H_{n}} \leq \lowerc{c+1}{N}$.  To
  simplify notation, write $M_{n} = \Mel[G]{\lowerc{c+1}{H_{n+1}}}$
  for each~$n \geq 0$.  Then define $G_{n} = G/M_{n}$, \ $P_{n} =
  H_{n}/M_{n}$ and $Q_{n} = H_{n+1}/M_{n}$.  Let $\rho_{n} \colon
  G_{n+1} \to G_{n}$ be the natural map.  Since
  $\bigcap_{n=0}^{\infty} M_{n} \leq \bigcap_{n=0}^{\infty} H_{n} =
  \1$, it follows that $G = \invlim G_{n}$, while the conditions
  stated in the theorem all hold.  Indeed, using
  Lemma~\ref{lem:Mel}\ref{i:Mel-corr}, \ $\ker\rho_{n-1} =
  M_{n-1}/M_{n} = \Mel[G_{n}]{\lowerc{c+1}{P_{n}}}$.

  Conversely, suppose that $G = \invlim G_{n}$ is an inverse limit of
  $d$\nbd generator profinite groups~$G_n$, for $n \geq 0$, with
  respect to surjective continuous homomorphisms $\rho_{n} \colon
  G_{n+1} \to G_{n}$ such that $G$~is Fitting-free and
  conditions~\ref{i:invlim-vN}--\ref{i:invlim-dichotomy} hold where
  $P_{n} \normalo G_{n}$ and $Q_{n} = P_{n+1}\rho_{n}$.  Then $G$~is
  also $d$\nbd generated (by~\cite[Lemma~2.5.3]{RZ}).  Let $\pi_{n}
  \colon G \to G_{n}$ be the natural maps associated to the inverse
  limit.

  Observe first that the open normal subgroups $P_{1}$~and~$Q_{1}$ of
  $G_{1}$ satisfy $\lowerc{c+1}{P_{1}} > \lowerc{c+1}{Q_{1}} > \1$.
  Suppose that $G_{n}$, for some $n \geq 1$, possesses open normal
  subgroups $C_{0}$,~$C_{1}$, \dots,~$C_{n}$ such that the
  subgroups~$\lowerc{c+1}{C_{i}}$ are distinct and non-trivial.  Upon
  taking the inverse images under the homomorphism~$\rho_{n}$, we
  obtain open normal subgroups $C_{0}\rho_{n}^{\,
    -1}$,~$C_{1}\rho_{n}^{\, -1}$, \dots,~$C_{n}\rho_{n}^{\, -1}$ with
  $\lowerc{c+1}{C_{i}\rho_{n}^{-1}} \nleq \ker\rho_{n}$.  When taken
  together with~$Q_{n+1}$, these give $n+1$~open normal subgroups~$K$
  of~$G_{n+1}$ such that the corresponding~$\lowerc{c+1}{K}$ are
  distinct and non-trivial.  By induction, we conclude that $\set{
    \lowerc{c+1}{K} }{ K \normalo G_{n} }$ contains at least
  $n+1$~subgroups for all~$n$.  The corresponding set for~$G$ must
  therefore be infinite and hence $G$~is not virtually class\nbd$c$
  nilpotent by Theorem~\ref{thm:Cset-finite}\ref{i:Cset-finite}.

  Now let $K$~be a non-trivial closed normal subgroup of~$G$.  Since
  $G$~is Fitting-free, $\lowerc{c+1}{\lowerc{c+1}{K}} \neq \1$.  If
  $\lowerc{c+1}{K}\pi_{n+2} \leq P_{n+2}$ for some~$n \geq 1$, then
  $\lowerc{c+1}{\lowerc{c+1}{K}}\pi_{n} = \1$ because
  $P_{n+2}\rho_{n+1} = Q_{n+1}$ and $\lowerc{c+1}{Q_{n+1}} \leq
  \ker\rho_{n}$.  Hence there exists~$n_{0} \geq 1$ such that
  $\lowerc{c+1}{K}\pi_{n} \nleq P_{n}$ for all $n \geq n_{0}$.  Let
  $N$~be any open normal subgroup of~$G$ with $K \leq N$.  If $n \geq
  n_{0}$, then $N\pi_{n}$~is an open normal subgroup of~$G_{n}$ with
  $\lowerc{c+1}{N\pi_{n})} \nleq P_{n}$ and so $\lowerc{c+1}{Q_{n}}
  \leq \lowerc{c+1}{N\pi_{n}}$ by Condition~\ref{i:invlim-dichotomy}.
  Hence
  \[
  \lowerc{c+1}{P_{n+1}} \leq \lowerc{c+1}{N\pi_{n+1}} \ker\rho_{n}
  \leq \lowerc{c+1}{N\pi_{n+1}} \, \Mel[G_{n+1}]{\lowerc{c+1}{P_{n+1}}}
  \]
  and so we deduce $\lowerc{c+1}{P_{n+1}} \leq
  \lowerc{c+1}{N\pi_{n+1}}$ by Lemma~\ref{lem:Mel}\ref{i:Mel-incl}.
  Consequently $\ker\rho_{n} \leq \lowerc{c+1}{N\pi_{n+1}}$ for all $n
  \geq n_{0}$; that is, $\ker\pi_{n} \leq \lowerc{c+1}{N}
  \ker\pi_{n+1}$ for all $n \geq n_{0}$.  This implies
  \[
  \ker\pi_{n_{0}} \leq \bigcap_{n \geq n_{0}} \lowerc{c+1}{N}
  \ker\pi_{n} = \lowerc{c+1}{N} \leq N
  \]
  since $\lowerc{c+1}{N}$~is closed.  Now $K$~is the intersection of
  all such open normal subgroups~$N$ and therefore
  $\ker\pi_{n_{0}} \leq K$.  Consequently, $G/K$~is a quotient
  of~$G_{n_{0}}$ and so is virtually class\nbd$c$ nilpotent.  This
  demonstrates that $G$~is \JNNcF, as required.
\end{proof}

%%%

\section{Characterization of hereditarily \JNNcF\ profinite groups}
\label{sec:hJNNF}

In this section, we fix again the integer $c \geq 0$ and we shall
provide various descriptions of Fitting-free profinite groups that are
hereditarily \JNNcF.  The results that we present parallel those of
the previous section and indicate what additional properties ensure
that not only is the group itself \JNNcF, but also every open subgroup
has this property.

Let $G$~be a profinite group.  Analogous to the sets appearing in
Theorem~\ref{thm:Aset}, we define, for an open subgroup~$H$ of~$G$,
\begin{align*}
  \Astarset{H} &= \set{ \lowerc{c+1}{K} }{\text{$K \leqo G$ with $H
      \leq \Norm{G}{K}$ and $\lowerc{c+1}{K} \nleq H$}}\\
  \Cstarset{H} &= \set{ \lowerc{c+1}{K} }{\text{$K \leqc G$ with $H
      \leq \Norm{G}{K}$ and $\lowerc{c+1}{K} \nleq H$}}.
\end{align*}
If $H$~and~$L$ are open subgroups of~$G$ with $H \leq L$, we also set
$\Aset{H}(L) = \set{ \lowerc{c+1}{K} }{\text{$K \normalo L$ with
    $\lowerc{c+1}{K} \nleq H$}}$ and $\Cset{H}(L) = \set{
  \lowerc{c+1}{K} }{\text{$K \normalc L$ with $\lowerc{c+1}{K} \nleq
    H$}}$.  The following observation is straightforward:

\begin{lemma}
  \label{lem:A*-union}
  Let $G$~be a profinite group and $H$~be an open subgroup of~$G$.
  Then
  \begin{enumerate}
  \item $\Astarset{H} = \bigcup \set{ \Aset{H}(L) }{ \text{$L \leqo G$
        with $H \leq L$} }$;
  \item $\Cstarset{H} = \bigcup \set{ \Cset{H}(L) }{ \text{$L \leqo G$
      with $H \leq L$} }$.
  \end{enumerate}
\end{lemma}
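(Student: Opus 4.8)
The plan is to establish both equalities by a direct double-inclusion argument, handling the open version~(i) and the closed version~(ii) in parallel, since only the adjectives ``open'' and ``closed'' differ in the two statements.

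For the inclusion from right to left I would start with a subgroup of the form $\lowerc{c+1}{K}$ lying in $\Aset{H}(L)$ (respectively $\Cset{H}(L)$) for some open subgroup~$L$ of~$G$ containing~$H$; thus $K \normalo L$ (respectively $K \normalc L$) and $\lowerc{c+1}{K} \nleq H$. Since $L$~is open in~$G$, the subgroup~$K$ is then open (respectively closed) in~$G$ as well, and normality of~$K$ in~$L$ gives $H \leq L \leq \Norm{G}{K}$. Hence $\lowerc{c+1}{K}$ belongs to $\Astarset{H}$ (respectively $\Cstarset{H}$), which is the inclusion wanted.

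For the reverse inclusion, given $\lowerc{c+1}{K} \in \Astarset{H}$ --- so $K \leqo G$, \ $H \leq \Norm{G}{K}$ and $\lowerc{c+1}{K} \nleq H$ --- the natural candidate for~$L$ is $L = HK$. Because $H$~normalizes~$K$, the set~$HK$ is a subgroup of~$G$ containing both~$H$ and~$K$, it is open since it contains the open subgroup~$H$, and $K \normalo HK$; therefore $\lowerc{c+1}{K} \in \Aset{H}(HK)$, and this term appears in the union on the right. The closed case runs identically once one checks that $HK$~is again an open subgroup: as $H$~has finite index in~$G$, the family $\set{ Hk }{ k \in K }$ consists of only finitely many right cosets of~$H$, so $HK$~is a finite union of translates of the clopen set~$H$ and hence itself clopen, with $K \normalc HK$.

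The whole argument is routine; the only point that calls for a little care is verifying that $HK$~is genuinely an open subgroup in the closed case, which rests on $H$~being both of finite index in~$G$ and contained in~$\Norm{G}{K}$. No further machinery is needed.
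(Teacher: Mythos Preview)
Your argument is correct and is exactly the routine double-inclusion that the paper has in mind; the paper itself omits the proof, simply declaring the observation ``straightforward'' before stating the lemma. One minor simplification: in the closed case you do not need the finite-coset argument to see that $HK$ is open --- any subgroup containing the open subgroup~$H$ is automatically open (being a union of cosets of~$H$).
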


%\begin{proof}
%  If $K' \in \Astarset{H}$ where $K$~is an open subgroup of~$G$ with
%  $K' \nleq H \leq \Norm{G}{K}$, then $K' \in \Aset{H}(L)$ where $L =
%  \Norm{G}{K}$.  Conversely if $K' \in \Aset{H}(L)$ for some~$L$ with
%  $H \leq L \leqo G$, where $K \normalo L$ and $K' \nleq H$, then $H
%  \leq L \leq \Norm{G}{K}$ and so $K' \in \Astarset{H}$.
%\end{proof}

In order to establish Theorem~\ref{thm:Astarset}, which is the
analogue of Theorem~\ref{thm:Aset} for hereditarily \JNNcF\ groups, we
need to know that the condition that the group is Fitting-free is
inherited by open subgroups.  We establish this in
Lemma~\ref{lem:open-noabelian} below.  We shall use the following
analogue of an observation made in the proof
of~\cite[(2.1)]{JSW-Large}.  The argument is similar but included for
completeness.

\begin{lemma}
  \label{lem:JNNF-infconjs}
  Let $G$~be a Fitting-free \JNNcF\ profinite group.  Then
  \begin{enumerate}
  \item \label{i:JNNF-eltconj}
    every non-identity element of~$G$ has infinitely many conjugates
    in~$G$;
  \item \label{i:subgp-infconjs}
    if $H$~is a non-trivial finite subgroup of~$G$, then $H$~has
    infinitely many conjugates in~$G$.
  \end{enumerate}
\end{lemma}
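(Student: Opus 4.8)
The plan is to handle both parts at once, via the following observation: \emph{if $S$ is a subset of $G$ with $\langle S\rangle\neq\1$ that has only finitely many conjugates $S=S_1,\dots,S_n$ under $G$, and each $\Cent{G}{S_i}$ has finite index in $G$, then a contradiction results.} To see this, set $Z=\bigcap_{i=1}^{n}\Cent{G}{S_i}$. Each $\Cent{G}{S_i}$ is closed, being the preimage of a finite set under a continuous map, and of finite index, hence open; so $Z$ is open, and since a \JNNcF\ group is necessarily infinite (a finite group is trivially virtually class-$c$ nilpotent, hence not \JNNcF) we get $Z\neq\1$. On the other hand $Z$ centralises the abstract normal subgroup $\langle S_1,\dots,S_n\rangle=\langle S^{G}\rangle$; as each $\Cent{G}{z}$, for $z\in Z$, is closed, $Z$ in fact centralises the closed normal subgroup $N:=\overline{\langle S^{G}\rangle}$, which is non-trivial because $\langle S\rangle\neq\1$. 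But Lemma~\ref{lem:FittingFree-Cent}, applied using that $G$ is Fitting-free and \JNNcF, gives $\Cent{G}{N}=\1$ --- contradicting $\1\neq Z\leq\Cent{G}{N}$.

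Granting this, the two parts follow quickly. For~\ref{i:JNNF-eltconj}, if a non-identity $x$ had finitely many conjugates $x_1,\dots,x_n$ then $[G:\Cent{G}{x_i}]=\order{x_i^{G}}=n<\infty$ for every $i$, so the observation applies with $S=\{x\}$. For~\ref{i:subgp-infconjs}, if the non-trivial finite subgroup $H$ had finitely many conjugates $H_1,\dots,H_n$ then $\Norm{G}{H}$ would have finite index in $G$; since $\Norm{G}{H}/\Cent{G}{H}$ embeds in the finite group $\Aut{(H)}$, the centraliser $\Cent{G}{H}$ also has finite index, and so does each $\Cent{G}{H_i}=\Cent{G}{H}^{g_i}$; thus the observation applies with $S=H$. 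In both cases the contradiction forces $x$, respectively $H$, to have infinitely many conjugates.

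I do not anticipate a genuine obstacle: this is a routine FC-style argument, modelled on the proof of~\cite[(2.1)]{JSW-Large}. The only points that need a little care are the topological bookkeeping --- checking that the relevant centralisers and normalisers of finite subsets are \emph{closed}, so that finite index upgrades to open and hence to non-trivial --- and the one-line remark that a \JNNcF\ group is infinite. (If one preferred to avoid Lemma~\ref{lem:FittingFree-Cent}: the subgroup $N$ above has a centre of finite index, so by Schur's theorem $N'$ is finite, whence $N'$ is either trivial --- making $N$ a non-trivial abelian normal subgroup, against Fitting-freeness --- or a non-trivial finite normal subgroup of $G$, against Corollary~\ref{cor:nofinite}.)
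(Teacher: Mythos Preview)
Your proof is correct and, for part~\ref{i:JNNF-eltconj}, essentially identical to the paper's: both intersect the centralisers of the finitely many conjugates to get an open (hence non-trivial) subgroup that centralises the normal closure, then invoke the Fitting-free hypothesis for a contradiction (the paper uses Lemma~\ref{lem:JNNF-intersect} directly, you use its consequence Lemma~\ref{lem:FittingFree-Cent}).

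The organisational difference is in part~\ref{i:subgp-infconjs}. You unify both parts under a single observation and handle~\ref{i:subgp-infconjs} via the $N/C$ theorem (finite~$H$ with finitely many conjugates forces $\Cent{G}{H}$ to have finite index). The paper instead reduces~\ref{i:subgp-infconjs} to~\ref{i:JNNF-eltconj} by noting that any non-identity element of~$H$ has all its $G$-conjugates contained in the finitely many conjugates of~$H$, hence has finitely many conjugates itself. Both arguments are short; yours is marginally more uniform, the paper's avoids the $N/C$ step. Neither buys anything substantial over the other.
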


\begin{proof}
  \ref{i:JNNF-eltconj}~Suppose that $x$~is a non-identity element
  of~$G$ with finitely many conjugates in~$G$.  Let $X$~be the closed
  normal subgroup of~$G$ generated by the conjugates of~$x$ and $C$~be
  the intersection of the centralizers in~$G$ of each conjugate
  of~$x$.  Since $x$~has finitely many conjugates, $C$~is open in~$G$
  and, in particular, non-trivial.  Since $[C,X] = \1$, it follows
  that $C \cap X$~is an abelian closed normal subgroup of~$G$ and so
  $C \cap X = \1$ by assumption.  This contradicts
  Lemma~\ref{lem:JNNF-intersect}.

  \ref{i:subgp-infconjs}~Let $H$~be a non-trivial finite subgroup
  of~$G$ with finitely many conjugates in~$G$.  If $x$~is a
  non-identity element of~$H$, then every conjugate of~$x$ belongs to
  one of the conjugates of~$H$.  It follows that $x$~has finitely many
  conjugates in~$G$, which contradicts~\ref{i:JNNF-eltconj}.
\end{proof}

\begin{lemma}
  \label{lem:open-noabelian}
  Let $G$~be a Fitting-free \JNNcF\ profinite group.  If $H$~is any
  open subgroup of~$G$, then $H$~is also Fitting-free.
\end{lemma}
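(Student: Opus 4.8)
The plan is to argue by contradiction: suppose some open subgroup~$H$ of~$G$ possesses a non-trivial abelian closed normal subgroup~$A$, and build from~$A$ a non-trivial abelian closed normal subgroup of~$G$, contradicting the hypothesis that $G$~is Fitting-free. The principal tool is the basal subgroup construction of Lemma~\ref{lem:Reid-basal}, and the preliminary reduction exists precisely to make its hypotheses available.

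The heart of the matter is the case $H \normalo G$. Here every conjugate~$A^{g}$, for $g \in G$, is contained in~$H$, and since $A \normal H$ each such conjugate normalizes~$A$; consequently $A \normal A^{G}$. Also $\Norm{G}{A} \supseteq H$ has finite index in~$G$, so $A$~has only finitely many conjugates $\set{A_{i}}{i \in I}$. Since $G$~is Fitting-free, Lemma~\ref{lem:Reid-basal} (with $K = A$) supplies a non-empty subset $J \subseteq I$ for which $B = \bigcap_{j \in J} A_{j}$ is basal. As $B$~lies inside the abelian group~$A_{j}$ for any $j \in J$, it is abelian, hence so is every conjugate of~$B$, and therefore the normal closure~$B^{G}$ --- being the direct product of these conjugates --- is a non-trivial abelian closed normal subgroup of~$G$; this is the desired contradiction.

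For a general open subgroup~$H$, I would reduce to the normal case by passing to $N = \Core{G}{H}$, which is an open normal subgroup of~$G$ contained in~$H$ and hence Fitting-free by the previous paragraph. Given a non-trivial abelian closed normal subgroup~$A$ of~$H$, the intersection $A \cap N$~is closed, abelian and normal in~$H$, so in particular normal in~$N$; Fitting-freeness of~$N$ then forces $A \cap N = \1$. But then $A \cong AN/N$ embeds in the finite group~$H/N$, so $A$~is a non-trivial finite subgroup of~$G$ admitting at most $\order{G:H}$ conjugates (because $A \normal H \leq \Norm{G}{A}$), which contradicts Lemma~\ref{lem:JNNF-infconjs}\ref{i:subgp-infconjs}. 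Hence $H$~is Fitting-free in all cases.

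I do not expect a genuine obstacle: once $H$~is normal the argument is a routine application of the basal subgroup machinery. The only subtlety is the reduction step --- for non-normal~$H$ the conjugates of~$A$ need not lie in~$H$, so $A \normal A^{G}$ can fail and Lemma~\ref{lem:Reid-basal} cannot be applied to~$A$ directly; disposing of the leftover possibility $A \cap \Core{G}{H} = \1$ via the finiteness of~$A$ and Lemma~\ref{lem:JNNF-infconjs} is what makes the passage to the normal case legitimate.
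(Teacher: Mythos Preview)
Your proof is correct and reaches the same endpoint as the paper's --- once $A \cap \Core{G}{H} = \1$ is established, finiteness of~$A$ together with Lemma~\ref{lem:JNNF-infconjs}\ref{i:subgp-infconjs} finishes the job --- but the route to that intersection being trivial differs.  You split into the normal and non-normal cases and, in the normal case, invoke the basal-subgroup machinery of Lemma~\ref{lem:Reid-basal} to manufacture a non-trivial \emph{abelian} normal subgroup of~$G$ (the normal closure of the basal~$B$, being a direct product of abelian groups).  The paper instead handles both cases at once by setting $B = A \cap \Core{G}{H}$ directly: its finitely many $G$\nbd conjugates are all abelian and normal in~$\Core{G}{H}$, so their product~$B^{G}$ is nilpotent by Fitting's Theorem, and Fitting-freeness of~$G$ then forces $B = \1$.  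The paper's argument is shorter, avoids the case split, and uses only classical nilpotent group theory; your approach stays within the paper's own toolkit and yields the slightly sharper intermediate conclusion that the offending normal closure is abelian rather than merely nilpotent.
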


\begin{proof}
  Suppose that $A$~is an abelian closed normal subgroup of~$H$.  Let
  $B = A \cap \Core{G}{H}$.  Note that $B$~has finitely many
  conjugates in~$G$ and each of them is a normal subgroup
  of~$\Core{G}{H}$.  Hence the normal closure~$B^{G}$ is the product
  of these subgroups and this is nilpotent by Fitting's Theorem.
  Since $G$~is Fitting-free, it follows that $B = \1$.  Therefore
  $A$~is finite and so, by
  Lemma~\ref{lem:JNNF-infconjs}\ref{i:subgp-infconjs}, $A = \1$.
\end{proof}

\begin{thm}
  \label{thm:Astarset}
  Let $G$~be a finitely generated infinite profinite group that is
  Fitting-free and let $c$~be a non-negative integer.  Then the
  following conditions are equivalent:
  \begin{enumerate}
  \item \label{i:HJNNF}
    $G$~is hereditarily \JNNcF;
  \item \label{i:A*-finite}
    the set~$\Astarset{H}$ is finite for every open subgroup~$H$
    of~$G$;
  \item \label{i:C*-finite}
    the set $\Cstarset{H}$ is finite for every open subgroup~$H$
    of~$G$.
  \end{enumerate}
\end{thm}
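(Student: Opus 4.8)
The plan is to prove Theorem~\ref{thm:Astarset} by establishing the cycle of implications \impliescond{(i)}{(iii)}, \impliescond{(iii)}{(ii)}, and \impliescond{(ii)}{(i)}, in close parallel with the proof of Theorem~\ref{thm:Aset} but using the decomposition of the starred sets provided by Lemma~\ref{lem:A*-union}. The implication \impliescond{(iii)}{(ii)} is immediate from the containment $\Astarset{H} \subseteq \Cstarset{H}$, which in turn follows termwise from $\Aset{H}(L) \subseteq \Cset{H}(L)$.

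For \impliescond{(i)}{(iii)}, suppose $G$ is hereditarily \JNNcF\ and fix an open subgroup~$H$ of~$G$. By Lemma~\ref{lem:A*-union}\,(ii) it suffices to show that $\Cset{H}(L)$ is finite for each open subgroup~$L$ of~$G$ containing~$H$, and that only finitely many of these sets are non-empty contributing distinct subgroups --- but in fact the cleaner route is to observe that each $L$ is itself \JNNcF\ (since $G$ is hereditarily \JNNcF) and also Fitting-free by Lemma~\ref{lem:open-noabelian}, so Theorem~\ref{thm:Aset} applied to~$L$ (with the open subgroup $H$ of $L$) yields that $\Cset{H}(L)$ is finite. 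The remaining point is that $\Cstarset{H}$ is a union over \emph{all} open overgroups~$L$ of~$H$, of which there are only finitely many since $H$ is open; hence $\Cstarset{H}$ is a finite union of finite sets, so finite.

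For \impliescond{(ii)}{(i)}, assume $\Astarset{H}$ is finite for every open subgroup~$H$ of~$G$. We must show every open subgroup~$L$ of~$G$ is \JNNcF. Fix such an~$L$; it is Fitting-free by Lemma~\ref{lem:open-noabelian}, hence not virtually nilpotent. Given an arbitrary open subgroup~$H$ of~$L$, we want $\Aset{H}(L)$ finite, for then Theorem~\ref{thm:Aset} (in the form \impliescond{(ii)}{(i)}) applied to~$L$ gives that $L$ is \JNNcF. But $\Aset{H}(L) \subseteq \Astarset{H}$ directly from the definitions: every $K \normalo L$ satisfies $L \leq \Norm{G}{K}$, and $H \leq L$, so $H \leq \Norm{G}{K}$, placing $\lowerc{c+1}{K}$ in $\Astarset{H}$ whenever $\lowerc{c+1}{K} \nleq H$. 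Thus $\Aset{H}(L)$ is finite, completing the argument.

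The main subtlety --- though it is more bookkeeping than genuine obstacle --- is ensuring that Theorem~\ref{thm:Aset} may legitimately be invoked with the \emph{open subgroup} $L$ playing the role of the ambient group: this requires $L$ to be finitely generated (true, as an open subgroup of a finitely generated profinite group), infinite (true, as open in an infinite group), and Fitting-free (supplied by Lemma~\ref{lem:open-noabelian}), so all hypotheses transfer. One should also double-check that in the \impliescond{(i)}{(iii)} direction the finiteness of the index set $\set{ L }{ H \leq L \leqo G }$ is used correctly --- this is the standard fact that an open subgroup of a profinite group is contained in only finitely many subgroups. With these points noted, the proof is a routine reduction to Theorem~\ref{thm:Aset} via Lemmas~\ref{lem:A*-union} and~\ref{lem:open-noabelian}.
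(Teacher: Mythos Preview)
Your proof is correct and follows essentially the same route as the paper's: the same cycle of implications, the same use of Lemma~\ref{lem:A*-union} to decompose $\Cstarset{H}$ as a finite union of sets $\Cset{H}(L)$, and the same reduction to Theorem~\ref{thm:Aset} applied to each open overgroup~$L$, with Lemma~\ref{lem:open-noabelian} supplying the Fitting-free hypothesis.

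There is one small logical slip in your \impliescond{(ii)}{(i)} argument: you invoke Lemma~\ref{lem:open-noabelian} to conclude that an arbitrary open subgroup~$L$ is Fitting-free, but that lemma requires $G$ itself to be \JNNcF, which you have not yet established at that point. The paper handles this by first observing that $\Aset{H} \subseteq \Astarset{H}$ for every open~$H$, so Theorem~\ref{thm:Aset} gives that $G$ is \JNNcF; only then does Lemma~\ref{lem:open-noabelian} become available for general~$L$. This is a one-line fix, not a new idea, but as written your argument is circular in its appeal to that lemma.
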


\begin{proof}
  Suppose first that $\Astarset{H}$~is finite for every open
  subgroup~$H$ of~$G$.  Since $\Aset{H} \subseteq \Astarset{H}$, it
  follows that $G$~is \JNNcF\ by Theorem~\ref{thm:Aset}.  Let $L$~be
  an open subgroup of~$G$.  Then $L$~is Fitting-free by
  Lemma~\ref{lem:open-noabelian} and $\Aset{H}(L)$~is finite for every
  open subgroup~$H$ of~$L$ as it is contained~$\Astarset{H}$.  Hence
  $L$~is also \JNNcF\ by Theorem~\ref{thm:Aset}.  This establishes
  $\text{\ref{i:A*-finite}} \Rightarrow \text{\ref{i:HJNNF}}$.

  Since $\Astarset{H} \subseteq \Cstarset{H}$, certainly
  $\text{\ref{i:C*-finite}} \Rightarrow \text{\ref{i:A*-finite}}$.
  Finally assume that $G$~is hereditarily \JNNcF\ and let $H$~be an
  open subgroup of~$G$.  There are finitely many open subgroups~$L$
  of~$G$ with $H \leq L$.  If $L$~is such an open subgroup, then
  $L$~is \JNNcF, so $\Cset{H}(L)$~is finite by Theorem~\ref{thm:Aset}
  together with Lemma~\ref{lem:open-noabelian}.  Hence
  $\Cstarset{H}$~is a finite union of finite sets, by
  Lemma~\ref{lem:A*-union}, and so is finite.  This establishes the
  final implication $\text{\ref{i:HJNNF}} \Rightarrow
  \text{\ref{i:C*-finite}}$.
\end{proof}

Wilson~\cite[(2.1)]{JSW-Large} characterizes when a just infinite
group is not hereditarily just infinite.  The following is our
analogue for \JNNcF\ groups.  The same method is used to construct the
basal subgroup~$K$ but then a few additional steps establish its
properties.

\begin{prop}
  \label{prop:notHJNNF}
  Let $G$~be a Fitting-free \JNNcF\ profinite group that is not
  hereditarily \JNNcF\@.  Then $G$~has an infinite closed basal
  subgroup~$K$ such that $\Norm{G}{K}/K$~is not virtually class\nbd$c$
  nilpotent and $K$~has no non-trivial abelian closed subgroup that is
  topologically characteristic in~$K$.  In particular, $K$~is not
  normal in~$G$.
\end{prop}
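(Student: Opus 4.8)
The plan is to follow Wilson's strategy from~\cite[(2.1)]{JSW-Large}: first produce a basal subgroup~$K$ witnessing the failure of the hereditary property, and then exploit the Fitting-free hypothesis to rule out a non-trivial topologically characteristic abelian subgroup of~$K$. The first move is a reduction to the case of an open \emph{normal} subgroup. Since $G$~is not hereditarily \JNNcF, some open subgroup~$H$ is not \JNNcF; I would argue that such a subgroup can be taken normal. Set $C = \Core{G}{H}$. Then $C$~is open, infinite and Fitting-free by Lemma~\ref{lem:open-noabelian}, and $H$~has no non-trivial finite normal subgroup, since such a subgroup would have only finitely many conjugates in~$G$, contrary to Lemma~\ref{lem:JNNF-infconjs}\ref{i:subgp-infconjs}. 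If $C$~were itself \JNNcF, then for any non-trivial closed normal subgroup~$N$ of~$H$ one would have $N \cap C \neq \1$ (otherwise $N$~embeds in the finite group~$H/C$ and is a non-trivial finite normal subgroup of~$H$), so $CN/N \cong C/(N \cap C)$ would be virtually class\nbd$c$ nilpotent; as $CN$~has finite index in~$H$ and $H$~is not virtually nilpotent, this would make $H$~\JNNcF, a contradiction. So $C$~is a non-\JNNcF\ open normal subgroup of~$G$, and I replace~$H$ by~$C$.

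Next I would construct the basal subgroup. Fix a non-trivial closed normal subgroup~$N$ of~$H$ with $H/N$ not virtually class\nbd$c$ nilpotent. Since $H \leq \Norm{G}{N}$ is open, $N$~has only finitely many conjugates in~$G$; and since $H \normal G$, every conjugate of~$N$ is again normal in~$H$, so $N^{G} \leq H$ and $N \normal N^{G}$. Lemma~\ref{lem:Reid-basal} then provides a subgroup~$K$, obtained as an intersection of finitely many conjugates of~$N$, that is basal in~$G$; in particular $K$~lies in some conjugate~$N^{g}$ of~$N$, and $K \normal \Norm{G}{K}$ with $[G:\Norm{G}{K}] = n$, where $n$~is the number of conjugates of~$K$. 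If $K$~were finite then $K^{G}$ would be a non-trivial finite normal subgroup of~$G$, contradicting Corollary~\ref{cor:nofinite}; hence $K$~is infinite.

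The crux is then to show $n \geq 2$, that is, that $K$~is not normal in~$G$. If $K \normal G$, then $G/K$ is virtually class\nbd$c$ nilpotent since $G$~is \JNNcF\ and $K \neq \1$; but conjugation by~$g$ carries $H/N$ isomorphically onto $H/N^{g}$, which is therefore not virtually class\nbd$c$ nilpotent, while $H/N^{g}$ is a quotient of the subgroup $H/K$ of~$G/K$ and so \emph{is} virtually class\nbd$c$ nilpotent, a contradiction. Thus $n \geq 2$. Writing $K^{G} = K_{1} \times \dots \times K_{n}$ with $K = K_{1}$, each~$K_{i}$ is an infinite conjugate of~$K$ and is Fitting-free: $\Fitt{K_{i}}$~is characteristic in $K_{i} \normal \Norm{G}{K_{i}}$, and were it non-trivial its centre~$\Centre{\Fitt{K_{i}}}$ would be a non-trivial abelian closed normal subgroup of the (Fitting-free, by Lemma~\ref{lem:open-noabelian}) open subgroup~$\Norm{G}{K_{i}}$. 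An infinite Fitting-free group is not virtually nilpotent, so $K_{2}$~is not virtually class\nbd$c$ nilpotent. Since $K^{G} \normal \Norm{G}{K}$ and $\Norm{G}{K}/K^{G}$ embeds in the virtually class\nbd$c$ nilpotent group~$G/K^{G}$, whereas $K^{G}/K \cong K_{2} \times \dots \times K_{n}$ contains~$K_{2}$, the quotient $\Norm{G}{K}/K$ is not virtually class\nbd$c$ nilpotent.

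Finally, for the characteristic-subgroup clause, suppose $A \neq \1$ were an abelian closed subgroup of~$K$ that is topologically characteristic in~$K$. Then each $g \in G$ maps~$K$ isomorphically onto a conjugate~$K^{g}$ and carries~$A$ onto the corresponding characteristic subgroup of~$K^{g}$, and $A$~is fixed by every automorphism of~$K$ induced by an element of~$\Norm{G}{K}$; hence $A$~has exactly $n$~conjugates $A_{1}, \dots, A_{n}$ with $A_{i} \leq K_{i}$. As the~$K_{i}$ are the direct factors of~$K^{G}$, the normal closure $A^{G} = A_{1} \times \dots \times A_{n}$ is then a non-trivial abelian closed normal subgroup of~$G$, contrary to the Fitting-free hypothesis; so no such~$A$ exists. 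That $K$~is not normal in~$G$ is immediate from $n \geq 2$. The step I expect to be the main obstacle is the opening reduction to an open normal subgroup, which requires carefully marrying the behaviour of the \JNNcF\ condition under passage to and from finite-index subgroups (via Lemmas~\ref{lem:JNNF-intersect}, \ref{lem:open-noabelian} and~\ref{lem:JNNF-infconjs}) with the absence of non-trivial finite normal subgroups; the remaining steps are then fairly mechanical given Lemma~\ref{lem:Reid-basal} and Corollary~\ref{cor:nofinite}.
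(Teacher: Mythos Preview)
Your overall strategy matches the paper's (reduce to an open normal subgroup, apply Lemma~\ref{lem:Reid-basal}, then read off the properties), and both your reduction step and your treatment of the topologically-characteristic-abelian clause are correct variants of what the paper does. The divergence is in proving that $\Norm{G}{K}/K$ is not virtually class\nbd$c$ nilpotent: you first establish $n\geq 2$ and then try to embed a non-virtually-nilpotent copy of~$K_{2}$ into $\Norm{G}{K}/K$, for which you claim each~$K_{i}$ is Fitting-free because otherwise $\Centre{\Fitt{K_{i}}}$ would be a non-trivial abelian normal subgroup of~$\Norm{G}{K_{i}}$. This step does not work as written: the Fitting subgroup is merely generated by the nilpotent normal subgroups and need not itself be nilpotent, so $\Centre{\Fitt{K_{i}}}$ may well be trivial even when $\Fitt{K_{i}}\neq\1$. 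The conclusion you are after is in fact true, but it requires a different justification.

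The paper avoids this detour entirely. Since $K$~is an intersection of $G$\nbd conjugates of~$N$, each normal in~$H$ (as $H\normal G$), one has $H\leq\Norm{G}{K}$; and after replacing~$N$ by the appropriate conjugate one may take $K\leq N$. The chain $K\leq N\leq H\leq\Norm{G}{K}$ then exhibits $H/N$ as a section of $\Norm{G}{K}/K$, so if the latter were virtually class\nbd$c$ nilpotent then $H/N$ would be too, contrary to choice. Non-normality of~$K$ follows at once, since $K\normal G$ would force $\Norm{G}{K}/K=G/K$ to be virtually class\nbd$c$ nilpotent. This order of deductions is shorter and sidesteps any appeal to the internal structure of the~$K_{i}$.
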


\begin{proof}
  Since $G$~is not hereditarily \JNNcF, there is an open subgroup~$H$
  of~$G$ and a non-trivial closed normal subgroup~$L$ of~$H$ such that
  $H/L$~is not virtually class\nbd$c$ nilpotent.  Let $C$~be the core
  of~$H$ in~$G$.  If $C \cap L = \1$, then $L$~is finite, which is a
  contradiction by
  Lemma~\ref{lem:JNNF-infconjs}\ref{i:subgp-infconjs}. Hence $C \cap L
  \neq \1$.  Note that $CL/L$~is a subgroup of finite index in~$H/L$
  and is isomorphic to~$C/(C \cap L)$.  If $C/(C \cap L)$~were
  virtually class\nbd$c$ nilpotent, then as $H/L$~is a finite
  extension we would obtain another contradiction.  Hence $C/(C \cap
  L)$~is not virtually class\nbd$c$ nilpotent and we may replace
  $H$~and~$L$ by $C$~and~$C \cap L$, respectively, and assume that
  $H$~is an open \emph{normal} subgroup of~$G$ with a non-trivial
  closed normal subgroup~$L$ such that $H/L$~is not virtually
  class\nbd$c$ nilpotent.

  Now $L$~has finitely many conjugates in~$G$ and these are all
  contained in~$H$.  Hence $L \normal L^{G}$ and
  Lemma~\ref{lem:Reid-basal} tells us that we can construct a basal
  subgroup~$K$ of~$G$ by intersecting a suitable collection of the
  conjugates of~$L$.  We may assume that $L$~is one of these
  conjugates so that $K \leq L$.  Note that $K$~is infinite by use of
  Lemma~\ref{lem:JNNF-infconjs}\ref{i:subgp-infconjs}.  If
  $\Norm{G}{K}/K$~were virtually class\nbd$c$ nilpotent, then so would
  be~$H/L$ since $K \leq L \leq H \leq \Norm{G}{K}$, contrary to our
  hypothesis.  Note then that $K$~is cannot be normal in~$G$ since if
  it were then $\Norm{G}{K}/K = G/K$ would be virtually class\nbd$c$
  nilpotent.  Finally if $A$~were a non-trivial abelian closed
  subgroup that is topologically characteristic in~$K$, then as
  conjugation is a homeomorphism there would be precisely one $G$\nbd
  conjugate of~$A$ in each conjugate of~$K$.  Hence $A$~would also be
  basal and its normal closure~$A^{G}$ would be a non-trivial abelian
  normal subgroup of~$G$, contrary to assumption.  This establishes
  the claimed conditions.
\end{proof}

Using the characterization given in Theorem~\ref{thm:Astarset}, we are
able to give a description of hereditarily \JNNcF\ groups of a similar
form to our earlier Theorem~\ref{thm:JNNFstruct}.

\begin{thm}
  Let $G$~be a finitely generated profinite group that is Fitting-free
  and let $c$~be a non-negative integer.  Then $G$~is hereditarily
  \JNNcF\ if and only if there is a descending sequence $G \geq H_{0}
  > H_{1} > H_{2} > \dots$ of open normal subgroups such that
  \begin{enumerate}
  \item \label{i:Hstruct-intersect}
    $\bigcap_{n=0}^{\infty} H_{n} = \1$, and
  \item \label{i:Hstruct-dichot}
    if $L$~is an open subgroup of~$G$ that is normalized
    by~$H_{n-1}$ for some $n \geq 1$, then either $\lowerc{c+1}{L}
    \leq H_{n-1}$ or $\lowerc{c+1}{H_{n}} \leq \lowerc{c+1}{L}$.
  \end{enumerate}
\end{thm}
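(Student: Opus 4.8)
The plan is to follow the proof of Theorem~\ref{thm:JNNFstruct} almost verbatim, replacing throughout the sets~$\Aset{H}$ by their ``starred'' versions~$\Astarset{H}$ and Theorem~\ref{thm:Aset} by Theorem~\ref{thm:Astarset}. For the forward implication, assume $G$~is hereditarily \JNNcF; then $G$~is in particular a finitely generated Fitting-free \JNNcF\ profinite group, so all the machinery of Section~\ref{sec:JNNF} applies. I would build the chain recursively exactly as in the construction preceding Lemma~\ref{lem:fullstruct}: take $H_{0} = G$, so $\lowerc{c+1}{H_{0}} \neq \1$ by Fitting-freeness, and, given an open normal subgroup~$H_{n-1}$, put
\[
R = \Mel[G]{\lowerc{c+1}{H_{n-1}}} \cap \bigcap \Astarset{H_{n-1}}.
\]
Here $\Astarset{H_{n-1}}$~is finite by Theorem~\ref{thm:Astarset}, and each of its members is a non-trivial subgroup of the form $\lowerc{c+1}{K}$ with $\lowerc{c+1}{K} \nleq H_{n-1}$. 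Since $H_{n-1} \normalo G$, conjugation by~$G$ permutes this finite set, so $\bigcap \Astarset{H_{n-1}}$ is a closed \emph{normal} subgroup of~$G$; together with $\Mel[G]{\lowerc{c+1}{H_{n-1}}}$, which is normal in~$G$ and non-trivial by Lemma~\ref{lem:JNNF-Mel}, Lemma~\ref{lem:JNNF-intersect} gives $R \neq \1$. Hence $G/R$~is virtually class\nbd$c$ nilpotent, so I may choose $S \normalo G$ with $\lowerc{c+1}{S} \leq R$ and set $H_{n} = H_{n-1} \cap S$.

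It then remains to check the three desired properties. First, $\lowerc{c+1}{H_{n}} \leq R \leq \Mel[G]{\lowerc{c+1}{H_{n-1}}} < \lowerc{c+1}{H_{n-1}}$, so the subgroups~$\lowerc{c+1}{H_{n}}$ strictly decrease. Second, if $L$~is an open subgroup normalized by~$H_{n-1}$ with $\lowerc{c+1}{L} \nleq H_{n-1}$, then $\lowerc{c+1}{L} \in \Astarset{H_{n-1}}$ by definition, whence $\lowerc{c+1}{H_{n}} \leq R \leq \lowerc{c+1}{L}$; this is exactly condition~\ref{i:Hstruct-dichot}. Third, $\bigcap_{n} H_{n} = \1$: otherwise $J = \bigcap_{n} H_{n}$ would satisfy $\lowerc{c+1}{J} \neq \1$, the proper quotient~$G/\lowerc{c+1}{J}$ would be finitely generated virtually class\nbd$c$ nilpotent, yet it would contain the infinitely many distinct subgroups~$\lowerc{c+1}{H_{n}}/\lowerc{c+1}{J} = \lowerc{c+1}{H_{n}/\lowerc{c+1}{J}}$, contradicting Theorem~\ref{thm:Cset-finite}\ref{i:Cset-finite}.

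For the converse, suppose a descending chain of open normal subgroups~$H_{n}$ with~\ref{i:Hstruct-intersect} and~\ref{i:Hstruct-dichot} is given. Specializing~\ref{i:Hstruct-dichot} to open \emph{normal} subgroups~$N$ of~$G$ (each normalized by every~$H_{n-1}$) recovers the hypotheses of Theorem~\ref{thm:JNNFstruct}, so $G$~is \JNNcF; being Fitting-free as well, every open subgroup of~$G$ is Fitting-free by Lemma~\ref{lem:open-noabelian}. By Theorem~\ref{thm:Astarset} it now suffices to show $\Astarset{H}$~is finite for every open subgroup~$H$ of~$G$. Since the sets $H_{n} \setminus H$ form a descending chain of closed subsets of the compact group~$G$ with empty intersection, some~$H_{m}$ lies inside~$H$, and then $\Astarset{H} \subseteq \Astarset{H_{m}}$ (as $\lowerc{c+1}{K} \nleq H$ forces $\lowerc{c+1}{K} \nleq H_{m}$, and $H \leq \Norm{G}{K}$ forces $H_{m} \leq \Norm{G}{K}$), so it is enough to show each~$\Astarset{H_{n-1}}$ is finite. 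Let $K \leqo G$ satisfy $H_{n-1} \leq \Norm{G}{K}$ and $\lowerc{c+1}{K} \nleq H_{n-1}$; applying~\ref{i:Hstruct-dichot} with $L = K$ gives $\lowerc{c+1}{H_{n}} \leq \lowerc{c+1}{K}$, where $\lowerc{c+1}{H_{n}} \neq \1$ is normal in~$G$ and $G/\lowerc{c+1}{H_{n}}$~is a finitely generated virtually class\nbd$c$ nilpotent proper quotient. There are only finitely many open subgroups~$M$ of~$G$ with $H_{n-1} \leq M$, and for each of them $M/\lowerc{c+1}{H_{n}}$~is finitely generated virtually class\nbd$c$ nilpotent, so $\set{\lowerc{c+1}{\bar K}}{\bar K \normalc M/\lowerc{c+1}{H_{n}}}$ is finite by Theorem~\ref{thm:Cset-finite}\ref{i:Cset-finite}. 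As $K \normal \Norm{G}{K}$ with $\lowerc{c+1}{H_{n}} \leq \lowerc{c+1}{K}$, the subgroup $\lowerc{c+1}{K}/\lowerc{c+1}{H_{n}} = \lowerc{c+1}{K/\lowerc{c+1}{H_{n}}}$ lies in that finite set for $M = \Norm{G}{K}$, so $\lowerc{c+1}{K}$ has only finitely many possible values; summing over the finitely many~$M$ shows $\Astarset{H_{n-1}}$ is finite.

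I expect the main obstacle to be in the converse, and to be precisely the point that distinguishes the starred sets from the unstarred ones: a subgroup~$K$ contributing to~$\Astarset{H}$ need only be normalized by~$H$, not normal in~$G$, so finiteness cannot be read off from a single quotient of~$G$ as in Theorem~\ref{thm:JNNFstruct}; instead one must split according to the finitely many overgroups $M = \Norm{G}{K}$ of~$H$ and apply Theorem~\ref{thm:Cset-finite}\ref{i:Cset-finite} inside each~$M/\lowerc{c+1}{H_{n}}$. A secondary point, in the forward direction, is that one must use the normality of~$H_{n-1}$ in~$G$ to see that $G$~permutes the finite set~$\Astarset{H_{n-1}}$, so that $\bigcap \Astarset{H_{n-1}}$, and hence $R$ and~$H_{n}$, remain open normal subgroups of~$G$.
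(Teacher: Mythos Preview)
Your forward direction is essentially identical to the paper's (the paper starts with an arbitrary open normal~$H_{0}$ rather than $H_{0}=G$, but the recursive construction via $R=\Mel[G]{\lowerc{c+1}{H_{n-1}}}\cap\bigcap\Astarset{H_{n-1}}$ and the verification of \ref{i:Hstruct-intersect}--\ref{i:Hstruct-dichot} are the same).

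Your converse is correct but genuinely different from the paper's. The paper argues by contradiction using Proposition~\ref{prop:notHJNNF}: if $G$~were \JNNcF\ but not hereditarily so, there would exist a closed basal subgroup~$K$ with $\Norm{G}{K}/K$ not virtually class\nbd$c$ nilpotent; one then chooses~$m$ with $\lowerc{c+1}{K}\nleq H_{m}$ and $H_{m}\leq\Norm{G}{K}$, applies condition~\ref{i:Hstruct-dichot} to each $L=KU$ with $U\normalo G$, and concludes $\lowerc{c+1}{H_{m+1}}\leq\bigcap_{U}KU=K$, forcing $\Norm{G}{K}/K$ to be virtually class\nbd$c$ nilpotent after all. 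Your route instead verifies the finiteness criterion of Theorem~\ref{thm:Astarset} directly: you observe that condition~\ref{i:Hstruct-dichot} forces $\lowerc{c+1}{H_{n}}\leq\lowerc{c+1}{K}$ for every $K$ contributing to~$\Astarset{H_{n-1}}$, and then bound the number of such~$\lowerc{c+1}{K}$ by splitting over the finitely many overgroups $M\supseteq H_{n-1}$ and invoking Theorem~\ref{thm:Cset-finite}\ref{i:Cset-finite} inside each finitely generated virtually class\nbd$c$ nilpotent quotient~$M/\lowerc{c+1}{H_{n}}$. Your argument is more elementary in that it avoids the basal-subgroup machinery of Proposition~\ref{prop:notHJNNF} altogether; the paper's argument, by contrast, reuses a structural result it needs elsewhere and does not call on Theorem~\ref{thm:Cset-finite} at this point.
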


\begin{proof}
  Suppose first that $G$~is hereditarily \JNNcF\@.  We start with any
  open normal subgroup~$H_{0}$ of~$G$.  Suppose then, as an inductive
  hypothesis, that we have constructed open normal subgroups
  $G \geq H_{0} > H_{1} > \dots > H_{n-1}$ such that, for each $i \in
  \{1, \ldots, n-1\}$, \ $\lowerc{c+1}{H_{i-1}} > \lowerc{c+1}{H_{i}}$
  and if $L$~is normalized by~$H_{i-1}$ then either $\lowerc{c+1}{L}
  \leq H_{i-1}$ or $\lowerc{c+1}{H_{i}} \leq \lowerc{c+1}{L}$.  By
  Theorem~\ref{thm:Astarset}, the set
  \[
  \Astarset{H_{n-1}} = \set{ \lowerc{c+1}{K} }{\text{$K \leqo G$ with
      $H_{n-1} \leq \Norm{G}{K}$ and $\lowerc{c+1}{K} \nleq H_{n-1}$}}
  \]
  contains finitely many members.  Use of Lemma~\ref{lem:JNNF-Mel}
  shows that $\Mel[G]{\lowerc{c+1}{H_{n-1}}} \neq \1$.  Hence
  \[
  R = \Mel[G]{\lowerc{c+1}{H_{n-1}}} \cap \bigcap \Astarset{H_{n-1}}
  \]
  is a non-trivial closed normal subgroup of~$G$ (by
  Lemma~\ref{lem:JNNF-intersect}).  The quotient~$G/R$ is then
  virtually class\nbd$c$ nilpotent, so there exists an open normal
  subgroup~$S$ with $\lowerc{c+1}{S} \leq R$.  Take $H_{n} = H_{n-1}
  \cap S$, so that $H_{n}$~is an open normal subgroup of~$G$ contained
  in~$H_{n-1}$ with $\lowerc{c+1}{H_{n}} \leq R <
  \lowerc{c+1}{H_{n-1}}$.  If $L$~is an open subgroup normalized
  by~$H_{n-1}$, then either $\lowerc{c+1}{L} \leq H_{n-1}$ or
  $\lowerc{c+1}{L} \in \Astarset{H_{n-1}}$.  In the latter case,
  $\lowerc{c+1}{H_{n}} \leq R \leq \lowerc{c+1}{L}$.

  Repeating this process constructs a descending sequence of open
  normal subgroups~$H_{n}$ such that condition~\ref{i:Hstruct-dichot}
  holds.  If the intersection $J = \bigcap_{n=0}^{\infty} H_{n}$ were
  non-trivial, then $G/\lowerc{c+1}{J}$~would be virtually
  class\nbd$c$ nilpotent, but would have infinitely many distinct
  subgroups~$\lowerc{c+1}{H_{n}/\lowerc{c+1}{J}}$ contrary to
  Theorem~\ref{thm:Cset-finite}.  Hence
  condition~\ref{i:Hstruct-intersect} also holds.

  Conversely suppose that $G$~is a finitely generated profinite group
  that has no non-trivial abelian closed normal subgroup with a
  descending sequence of open normal subgroups~$H_{n}$ satisfying
  conditions \ref{i:Hstruct-intersect} and~\ref{i:Hstruct-dichot}.  In
  particular, $G$~satisfies the conditions appearing in
  Theorem~\ref{thm:JNNFstruct} and so is \JNNcF\@.  Suppose that it is
  not hereditarily \JNNcF\@.  By Proposition~\ref{prop:notHJNNF},
  $G$~has a closed basal subgroup~$K$ with no non-trivial abelian
  topologically characteristic subgroup such that $\Norm{G}{K}/K$~is
  not virtually class\nbd$c$ nilpotent.  Then $\lowerc{c+1}{K} \neq
  \1$, so there exists~$m \geq 0$ such that $\lowerc{c+1}{K} \nleq
  H_{m}$.  Since $\bigcap_{n=0}^{\infty} H_{n} = \1$, it follows that
  every open subgroup of~$G$ contains some~$H_{n}$.  Hence, by
  increasing~$m$ if necessary, we can assume $H_{m} \leq \Norm{G}{K}$.
  Let $U$~be any open normal subgroup of~$G$ and $L = KU$.  Then
  $H_{m}$~normalizes~$L$ and $\lowerc{c+1}{L} \nleq H_{m}$.  Hence, by
  condition~\ref{i:Hstruct-dichot},
  $\lowerc{c+1}{H_{m+1}} \leq \lowerc{c+1}{L}$.  It follows that
  \[
  \lowerc{c+1}{H_{m+1}} \leq \bigcap_{U \normalo G} KU = K.
  \]
  Therefore $\Norm{G}{K}/K$~is isomorphic to a quotient of a subgroup
  of~$G/\lowerc{c+1}{H_{m+1}}$ and hence is virtually class\nbd$c$
  nilpotent.  This is a contradiction and we conclude that $G$~is
  indeed hereditarily \JNNcF, as claimed.
\end{proof}

We complete the section by giving a suitable description of a
hereditarily \JNNcF\ profinite group as an inverse limit of virtually
nilpotent groups in a manner analogous to the description appearing in
Theorem~\ref{thm:JNNF-invlim}.

\begin{thm}
  \label{thm:Hinvlim}
  Let $G$~be a finitely generated profinite group that is Fitting-free
  and let $c$~be a non-negative integer.  If $G$~is hereditarily
  \JNNcF, then it is the inverse limit of a family~$G_{n}$, for $n
  \geq 0$, of profinite groups with respect to surjective continuous
  homomorphisms $\rho_{n} \colon G_{n+1} \to G_{n}$ with the following
  properties: for every~$n \geq 0$, \ $G_{n}$~has an open normal
  subgroup~$P_{n}$ such that, upon setting $Q_{n} = P_{n+1}\rho_{n}$,
  \begin{enumerate}
  \item \label{i:Hinvlim-vN}
    $G_{n}$~is virtually class\nbd$c$ nilpotent;
  \item \label{i:Hinvlim-openinc}
    $P_{n} > Q_{n}$;
  \item \label{i:Hinvlim-derivs}
    $\lowerc{c+1}{P_{n}} > \Mel[G_{n}]{\lowerc{c+1}{P_{n}}} \geq
    \ker\rho_{n-1} \geq \lowerc{c+1}{Q_{n}} > \1$;
  \item \label{i:Hinvlim-dichot}
    if $N$~is an open normal subgroup of~$G_{n}$, then either
    $\lowerc{c+1}{N} \leq P_{n}$ or $\lowerc{c+1}{Q_{n}} \leq
    \lowerc{c+1}{N}$;
  \item \label{i:Hinvlim-basal}
    there is no non-normal closed subgroup~$V$ of~$G_{n}$ with at most
    $n$~conjugates such that any pair of distinct conjugates of~$V$
    centralize each other and such that the normal closure~$W = V^{G}$
    satisfies $\lowerc{c+1}{P_{n}} \leq \lowerc{c+1}{\lowerc{c+1}{W}}$.
  \end{enumerate}

  Conversely, if, for some integer $d \geq 1$, $G = \invlim G_{n}$ is
  an inverse limit of a countable family of $d$\nbd generator
  profinite groups with respect to surjective continuous
  homomorphisms~$\rho_{n}$ such that $G$~is Fitting-free and the above
  conditions hold, then $G$~is hereditarily \JNNcF.
\end{thm}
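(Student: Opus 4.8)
The plan is to run the argument of Theorem~\ref{thm:JNNF-invlim} once more, supplementing it with the hereditary refinements recorded in the preceding theorem (the descending\nbd chain characterization of hereditarily \JNNcF\ groups) and in Proposition~\ref{prop:notHJNNF}. For the forward implication, assume $G$~is hereditarily \JNNcF\@. Take the descending sequence $G \geq H_{0} > H_{1} > H_{2} > \dots$ of open normal subgroups furnished by the preceding theorem, so that in particular $\bigcap_{n} H_{n} = \1$ and, for each $n \geq 1$, \ $\lowerc{c+1}{H_{n}} \leq \Mel[G]{\lowerc{c+1}{H_{n-1}}} < \lowerc{c+1}{H_{n-1}}$, while the relevant dichotomies for open (normal) subgroups hold. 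Put $M_{n} = \Mel[G]{\lowerc{c+1}{H_{n+1}}}$, \ $G_{n} = G/M_{n}$, \ $P_{n} = H_{n}/M_{n}$, \ $Q_{n} = H_{n+1}/M_{n}$, and let $\rho_{n} \colon G_{n+1} \to G_{n}$ be the natural map. Since $\bigcap_{n} M_{n} \leq \bigcap_{n} H_{n} = \1$ we have $G = \invlim G_{n}$, and conditions \ref{i:Hinvlim-vN}--\ref{i:Hinvlim-dichot} follow exactly as in the proof of Theorem~\ref{thm:JNNF-invlim} (using Lemma~\ref{lem:Mel}\ref{i:Mel-corr} to identify $\ker\rho_{n-1} = M_{n-1}/M_{n} = \Mel[G_{n}]{\lowerc{c+1}{P_{n}}}$).

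The substance is in condition~\ref{i:Hinvlim-basal}, which I would establish by contradiction. Suppose some~$G_{n}$ contained a prohibited subgroup~$V$. Its preimage $\tilde{V}$ in~$G$ contains~$M_{n}$; because distinct conjugates of~$V$ centralize each other, $\tilde{V}$ is normal in its normal closure~$\tilde{V}^{G}$, which is the preimage of $W = V^{G_{n}}$, and the inclusion $\lowerc{c+1}{P_{n}} \leq \lowerc{c+1}{\lowerc{c+1}{W}}$ records that $\tilde{V}^{G}$ carries ``enough non\nbd nilpotence''. I would then push this datum down through the maps $\rho_{n}, \rho_{n+1}, \dots$, intersecting at each stage with suitable terms so as to restore the centralizing and non\nbd normality conditions modulo the shrinking kernels, thereby assembling a coherent tower whose inverse limit is an infinite closed basal subgroup~$K$ of~$G$ with $\Norm{G}{K}/K$ not virtually class\nbd$c$ nilpotent and with no non-trivial abelian topologically characteristic closed subgroup. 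Proposition~\ref{prop:notHJNNF} rules out such a~$K$ in a hereditarily \JNNcF\ group, which is the required contradiction. I expect this last step --- building the basal subgroup coherently along a tower with varying kernels --- to be the main obstacle, consistent with the remark that this inverse\nbd limit description shares the technicality of Reid's description of hereditarily just infinite groups.

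For the converse, note first that conditions \ref{i:Hinvlim-vN}--\ref{i:Hinvlim-dichot}, together with the hypotheses that $G$~is Fitting-free and $d$\nbd generated, already imply by the converse half of Theorem~\ref{thm:JNNF-invlim} that $G$~is \JNNcF, and that for each non-trivial closed normal subgroup~$N$ of~$G$ there is an index~$n_{0}$ with $\ker\pi_{n_{0}} \leq N$. Suppose, for a contradiction, that $G$~is not hereditarily \JNNcF\@. By Proposition~\ref{prop:notHJNNF}, $G$~has an infinite closed basal subgroup~$K$, with conjugates $K_{1}, \dots, K_{r}$ and $K^{G} = K_{1} \times \dots \times K_{r}$, such that $\Norm{G}{K}/K$~is not virtually class\nbd$c$ nilpotent and $K$~has no non-trivial abelian topologically characteristic closed subgroup. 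Since $K$ is infinite, the last property forces $\Centre{K} = \1$, hence $\lowerc{c+1}{K} \neq \1$, and then (as $\Centre{\lowerc{c+1}{K}}$ would otherwise be such a subgroup of~$K$) $\lowerc{c+1}{\lowerc{c+1}{K}} \neq \1$; the same applies to each conjugate~$K_{i}$, so $D = \lowerc{c+1}{\lowerc{c+1}{K^{G}}} = \lowerc{c+1}{\lowerc{c+1}{K_{1}}} \times \dots \times \lowerc{c+1}{\lowerc{c+1}{K_{r}}}$ is a non-trivial closed normal subgroup of~$G$. Pick~$n_{0}$ with $\ker\pi_{n_{0}} \leq D$. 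Condition~\ref{i:Hinvlim-derivs} gives $\lowerc{c+1}{P_{m}}\rho_{m-1} = \lowerc{c+1}{Q_{m-1}} \leq \ker\rho_{m-2}$, so $\lowerc{c+1}{P_{m}}$ lies in the kernel of the composite $G_{m} \to G_{m-2}$; hence, for $m \geq n_{0}+2$,
\[
\lowerc{c+1}{P_{m}} \leq \ker(G_{m} \to G_{m-2}) \leq \ker(G_{m} \to G_{n_{0}}) = (\ker\pi_{n_{0}})\pi_{m} \leq D\pi_{m} = \lowerc{c+1}{\lowerc{c+1}{(K^{G})\pi_{m}}}.
\]
For all sufficiently large~$m$ --- also $m \geq r$, with the images $K_{i}\pi_{m}$ pairwise distinct and $K\pi_{m}$ non-normal in~$G_{m}$ --- the subgroup $V = K\pi_{m}$ is then a non-normal closed subgroup of~$G_{m}$ with exactly $r \leq m$ conjugates (namely the $K_{i}\pi_{m}$, any two distinct ones centralizing each other) and with $V^{G_{m}} = (K^{G})\pi_{m}$, so that $\lowerc{c+1}{P_{m}} \leq \lowerc{c+1}{\lowerc{c+1}{V^{G_{m}}}}$ by the inclusions above. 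This contradicts condition~\ref{i:Hinvlim-basal}, so $G$~is hereditarily \JNNcF, as required.
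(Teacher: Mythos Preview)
Your converse is essentially the paper's argument and is correct; the only point you leave implicit is why $K\pi_{m}$ stays non-normal for large~$m$, which the paper handles cleanly by observing that otherwise (since distinct conjugates of~$K$ commute) $L\pi_{m}$ would be abelian while $\ker\pi_{m} < L'$.

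The forward direction has a real gap at condition~\ref{i:Hinvlim-basal}. Your plan---starting from a bad $V \leq G_{n}$, lifting and then ``pushing down through the maps'' to manufacture an infinite basal subgroup of~$G$ so as to invoke Proposition~\ref{prop:notHJNNF}---cannot work: every lift of~$V$ (to~$G$ or to any $G_{m}$ with $m > n$) contains the relevant kernel, so distinct conjugates of the lift meet non-trivially and nothing basal can be extracted from data living in a single quotient. The paper proceeds quite differently. It builds a \emph{new} chain~$(H_{n})$ with two extra features absent from the chain of the preceding theorem: first, $H_{n} \leq U_{n}$, where $U_{n}$~is contained in every open subgroup of index at most~$n$, so that any subgroup with at most $n$~conjugates is normalized by~$H_{n}$; second, the inductive step intersects with the \emph{derived subgroup} $\bigl(\bigcap \Astarset{H_{n-1}}\bigr)'$ rather than the bare intersection. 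It then passes to \emph{doubled} indexing, $P_{n} = H_{2n}/M_{n}$ and $Q_{n} = H_{2n+2}/M_{n}$ with $M_{n} = \Mel[G]{\lowerc{c+1}{H_{2n+2}}}$, leaving an intermediate level~$H_{2n+1}$. Given a bad $V \leq G_{n}$ with preimage~$K$ in~$G$, one now has $H_{2n+1} \leq U_{n} \leq \Norm{G}{K}$, whence $\lowerc{c+1}{KH_{i}} \in \Astarset{H_{2n+1}}$ for all~$i$, and the derived-subgroup refinement yields $\lowerc{c+1}{Q_{n}} \leq \bigl(\bigcap \Astarset{H_{2n+1}}\bigr)' \leq [K^{g},K^{h}]$ for all $g,h \in G$; this contradicts the existence of a pair of distinct commuting conjugates of~$V$. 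None of these three refinements---the $U_{n}$~condition, the commutator of~$\bigcap\Astarset{}$, or the doubled indexing---is available from the chain you propose to use.
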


\begin{proof}
  Suppose that $G$~is hereditarily \JNNcF\@.  Since $G$~is finitely
  generated it has finitely many open subgroups of each index and so
  we can enumerate a sequence of open normal subgroups~$U_{n}$ of~$G$
  such that, for each $n \geq 1$, every open subgroup of index at
  most~$n$ contains~$U_{n}$.  Take $H_{0}$~to be any open normal
  subgroup of~$G$.  Certainly $\lowerc{c+1}{H_{0}} \neq \1$.  Now
  assume, as an inductive hypothesis, that we have constructed a
  sequence of open normal subgroups $G \geq H_{0} > H_{1} > \dots >
  H_{n-1}$.  By Theorem~\ref{thm:Astarset}, the
  set~$\Astarset{H_{n-1}}$ is finite while
  $\Mel[G]{\lowerc{c+1}{H_{n-1}}}$~is non-trivial by
  Lemma~\ref{lem:JNNF-Mel}.  Hence, by Lemma~\ref{lem:JNNF-intersect},
  \[
  R = \Mel[G]{\lowerc{c+1}{H_{n-1}}} \cap \left( \bigcap
  \Astarset{H_{n-1}} \right)'
  \]
  is a non-trivial closed subgroup of~$G$, so $G/R$~is virtually
  class\nbd$c$ nilpotent and there exists an open normal subgroup~$S$
  of~$G$ with $\lowerc{c+1}{S} \leq R$.  Take $H_{n} = H_{n-1} \cap
  U_{n} \cap S$.  In particular, $\lowerc{c+1}{H_{n}} \leq R \leq
  \Mel[G]{\lowerc{c+1}{H_{n-1}}} < \lowerc{c+1}{H_{n-1}}$.  By
  repeated application, we conclude there is a descending sequence of
  open normal subgroups $G \geq H_{0} > H_{1} > H_{2} > \dots$ such
  that $H_{n} \leq U_{n}$ and
  \[
  \lowerc{c+1}{H_{n}} \leq \Mel[G]{\lowerc{c+1}{H_{n-1}}} \cap
  \left( \bigcap \Astarset{H_{n-1}} \right)' <
  \lowerc{c+1}{H_{n-1}}
  \]
  for all $n \geq 1$.  Since $H_{n} \leq U_{n}$ for each~$n$, it
  immediately follows that $\bigcap_{n=0}^{\infty} H_{n} = \1$.

  Now, for $n \geq 0$, write $M_{n} = \Mel[G]{\lowerc{c+1}{H_{2n+2}}}$
  and define $G_{n} = G/M_{n}$, $P_{n} = H_{2n}/M_{n}$ and
  $Q_{n} = H_{2n+2}/M_{n}$.  Let $\rho_{n} \colon G_{n+1} \to G_{n}$
  be the natural map.  Since $\bigcap_{n=0}^{\infty} M_{n} = \1$, it
  is the case that $G = \invlim G_{n}$.  Since each~$M_{n} \neq \1$,
  the assumption that $G$~is \JNNcF\ ensures each~$G_{n}$ is virtually
  class\nbd$c$ nilpotent and Conditions
  \ref{i:Hinvlim-openinc}~and~\ref{i:Hinvlim-derivs} follow
  immediately from the construction.  Indeed $\ker\rho_{n-1} =
  M_{n-1}/M_{n} = \Mel[G_{n}]{\lowerc{c+1}{P_{n}}}$ using
  Lemma~\ref{lem:Mel}\ref{i:Mel-corr}.  If $N \normalo G_{n}$, say $N
  = K/M_{n}$, with $\lowerc{c+1}{N} \nleq P_{n}$, then
  $\lowerc{c+1}{K} \in \Aset{H_{2n}} \subseteq \Astarset{H_{2n}}$.
  Hence $\lowerc{c+1}{H_{2n+2}} < \lowerc{c+1}{H_{2n+1}} \leq \bigcap
  \Astarset{H_{2n}} \leq \lowerc{c+1}{K}$ and this establishes
  Condition~\ref{i:Hinvlim-dichot}.

  Suppose there is a non-normal closed subgroup~$V$ of~$G_{n}$ with at
  most $n$~conjugates such that $[V^{g},V^{h}] = \1$ when $gh^{-1}
  \notin \Norm{G_{n}}{V}$ and such that the normal closure $W =
  V^{G_{n}}$ satisfies $\lowerc{c+1}{P_{n}} \leq
  \lowerc{c+1}{\lowerc{c+1}{W}}$.  Since elements from distinct
  conjugates of~$V$ commute, $\lowerc{c+1}{\lowerc{c+1}{W}}$~is the
  product of the conjugates of~$\lowerc{c+1}{\lowerc{c+1}{V}}$.  Write
  $V = K/M_{n}$ and $W = L/M_{n}$.  Then observe $L = K^{G}$,
  $\lowerc{c+1}{\lowerc{c+1}{L}} \leq
  \lowerc{c+1}{\lowerc{c+1}{K}}^{G} M_{n}$ and $\lowerc{c+1}{H_{2n}}
  \leq \lowerc{c+1}{\lowerc{c+1}{L}} M_{n}$, which implies
  $\lowerc{c+1}{H_{2n}} \leq \lowerc{c+1}{\lowerc{c+1}{L}}$ with use
  of Lemma~\ref{lem:Mel}\ref{i:Mel-incl}.  Also $K$~has at most
  $n$~conjugates in~$G$, so it must be the case that $H_{2n+1} \leq
  U_{n} \leq \Norm{G}{K}$.  Now $\lowerc{c+1}{H_{2n+1}} <
  \lowerc{c+1}{\lowerc{c+1}{L}}$, so $\lowerc{c+1}{\lowerc{c+1}{K}}
  \nleq \lowerc{c+1}{H_{2n+1}}$ and therefore $\lowerc{c+1}{K} \nleq
  H_{2n+1}$.  In conclusion, for each~$i \geq 0$, \ $KH_{i}$~is an
  open subgroup of~$G$ with the property that $\lowerc{c+1}{KH_{i}}
  \in \Astarset{H_{2n+1}}$.  Thus
  \[
  \bigcap \Astarset{H_{2n+1}} \leq
  \bigcap_{i \geq 0} \lowerc{c+1}{K} H_{i} = \lowerc{c+1}{K}.
  \]
  Since $\bigcap \Astarset{H_{2n+1}}$~is a normal subgroup, it is
  contained in all conjugates of~$K$ and therefore
  \[
  \lowerc{c+1}{H_{2n+2}} \leq
  \left( \bigcap \Astarset{H_{2n+1}} \right)' \leq 
  [K^{g},K^{h}]
  \]
  for all $g,h \in G$.  Consequently, $\1 \neq \lowerc{c+1}{Q_{n}}
  \leq [V^{g},V^{h}]$ for all $g,h \in G_{n}$.  However, as $V$~is not
  normal in~$G_{n}$ there exists $g,h \in G_{n}$ such that
  $V^{g}$~and~$V^{h}$ are distinct and these satisfy $[V^{g},V^{h}] =
  \1$.  This contradiction establishes
  Condition~\ref{i:Hinvlim-basal}.

  \spc

  Conversely, suppose that $G = \invlim G_{n}$ is an inverse limit of
  $d$\nbd generator profinite groups~$G_n$, for $n \geq 0$, with
  respect to surjective continuous homomorphisms $\rho_{n} \colon
  G_{n+1} \to G_{n}$ such that $G$~has no non-trivial abelian closed
  normal subgroup and that conditions
  \ref{i:Hinvlim-vN}--\ref{i:Hinvlim-basal} hold where $P_{n} \normalo
  G_{n}$ and $Q_{n} = P_{n+1}\rho_{n}$.  In particular, the conditions
  of Theorem~\ref{thm:JNNF-invlim} are satisfied and so $G$~is
  \JNNcF\@.  Let $\pi_{n} \colon G \to G_{n}$ be the natural maps
  associated to the inverse limit.  Suppose that $G$~is not
  hereditarily \JNNcF\@.  Then by Proposition~\ref{prop:notHJNNF},
  $G$~has some closed non-normal basal subgroup~$K$.  Take~$n_{0}$ to
  be a positive integer such that $K$~has fewer than~$n_{0}$
  conjugates in~$G$ and set $L = K^{G}$, the direct product of the
  conjugates of~$K$.

  Since $\lowerc{c+1}{\lowerc{c+2}{L}} \neq \1$, it is the case that
  $\ker\pi_{n} \leq \lowerc{c+1}{\lowerc{c+2}{L}}$ for all
  sufficiently large~$n$ by Theorem~\ref{thm:JNNF-invlim}.  Hence,
  increasing~$n_{0}$ if necessary, we may assume that $\ker\pi_{n} <
  \lowerc{c+1}{\lowerc{c+2}{L}} \leq L'$ for all $n \geq n_{0}$.  The
  subgroup~$K$ has at least two conjugates in~$G$ and any distinct
  pair commutes as $K$~is basal.  If $K\pi_{n}$~were normal
  in~$G_{n}$, then the images of these conjugates would coincide and
  so $L\pi_{n} = K\pi_{n}$ would be abelian.  This is impossible since
  $\ker\pi_{n} < L'$.  Since the number of conjugates cannot increase
  in the image, we deduce that, when $n \geq n_{0}$, \ $K\pi_{n}$~is a
  closed subgroup of~$G_{n}$ that is not normal and has at most
  $n_{0}$~conjugates in~$G_{n}$. For such~$n$, if $x \in
  \lowerc{c+1}{P_{n+2}}$, write $x = g\pi_{n+2}$ for some~$g \in G$.
  Using the fact that $\lowerc{c+1}{Q_{n+1}} \leq \ker\rho_{n}$, one
  observes $g \in \ker\pi_{n} \leq \lowerc{c+1}{\lowerc{c+1}{L}}$ and
  therefore $\lowerc{c+1}{P_{n+2}} \leq
  \lowerc{c+1}{\lowerc{c+1}{L\pi_{n+2}}}$ for $n \geq n_{0}$.  In
  particular, for such~$n$, taking $V = K\pi_{n+2}$ and $W =
  L\pi_{n+2}$ in~$G_{n+2}$ contradicts the hypothesis in
  Condition~\ref{i:Hinvlim-basal}.
\end{proof}

When comparing the above description of hereditarily \JNNcF\ groups
with the corresponding result of Reid~\cite[Theorem~5.2]{Reid-corr}
for hereditarily just infinite groups, one notices the bound on the
number of conjugates appearing in our Condition~\ref{i:Hinvlim-basal}.
There seems to be no analogue in the corresponding description of
hereditarily just infinite groups.  However, note that the bound
of~$n$ for the number of conjugates could, with only minor adjustment
to the proof, be replaced by some bound~$f(n)$ where $f \colon \Nat
\to \Nat$ is any strictly increasing function.  In~\cite{Reid-corr},
the hereditarily just infinite group is isomorphic to an inverse limit
$G = \invlim G_{n}$ of \emph{finite} groups and there is therefore an
implicit bound on the number of conjugates for subgroups of~$G_{n}$.
Consequently, this condition is quite reasonable.

%%%

\section{Subgroups of finite index in \JNNcF\ groups}
\label{sec:finiteindex}

In this section we shall establish Theorem~\ref{thm:Hmax} (see
Corollary~\ref{cor:max}) and so consider both profinite groups and
discrete groups.  We shall adopt the common convention that, in the
case of profinite groups, all subgroups are assumed to be within the
same category and so ``subgroup'' means ``closed subgroup'' in this
case.  This enables our results to be more streamlined in their
statement and the proofs correspondingly cleaner.  We fix the integer
$c \geq 0$ throughout and begin with an observation that is, modulo
our standard assumption about abelian normal subgroups, an improvement
on Corollary~\ref{cor:nofinite}.

\begin{lemma}
  \label{lem:noVN}
  Let $G$~be a profinite group or discrete group that is \JNNcF\ and
  Fitting-free.  Then $G$~has no non-trivial normal subgroup that is
  virtually nilpotent.
\end{lemma}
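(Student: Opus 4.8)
The plan is to argue by contradiction: suppose $N$ is a non-trivial normal subgroup of $G$ that is virtually nilpotent, and derive a non-trivial abelian normal subgroup of $G$, contradicting the Fitting-free hypothesis. First I would pass to the Fitting subgroup $\Fitt{N}$ of $N$. Since $N$ is virtually nilpotent, $\Fitt{N}$ is non-trivial (a virtually nilpotent group has a non-trivial nilpotent normal subgroup, namely an open nilpotent normal subgroup can be taken, or one invokes that the Fitting subgroup of an infinite virtually nilpotent group is non-trivial; in the finite case it is standard). Being characteristic in $N$, $\Fitt{N}$ is normal in $G$, so after replacing $N$ by $\Fitt{N}$ we may assume $N$ is itself nilpotent and non-trivial.

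Next I would exploit nilpotency of $N$ to extract an abelian piece: the last non-trivial term of the lower central series of $N$, or equivalently the centre $\Centre{N}$, is non-trivial. Here one must be slightly careful in the profinite case that the relevant subgroup is closed, but $\Centre{N} = N \cap \Cent{G}{N}$ is automatically closed, and for a non-trivial nilpotent (closed) group $\Centre{N}\neq\1$. Since $\Centre{N}$ is characteristic in $N$ and $N \normal G$, we get $\Centre{N} \normal G$. This $\Centre{N}$ is a non-trivial abelian normal (closed) subgroup of $G$, contradicting the assumption that $G$ is Fitting-free.

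I expect the main obstacle to be the justification, in the profinite setting, that a non-trivial virtually nilpotent closed normal subgroup really does contain a non-trivial \emph{closed} abelian normal subgroup of $G$ — in particular making sure the Fitting subgroup of $N$ is non-trivial and closed when $N$ is infinite profinite. One clean way around this is to use Lemma~\ref{lem:FittingFree-Cent}: since $G$ is \JNNcF\ and Fitting-free, $\Cent{G}{K} = \1$ for every non-trivial normal (closed) subgroup $K$ of $G$; but if $N$ is non-trivial virtually nilpotent normal in $G$, one finds a non-trivial closed normal subgroup $K$ of $G$ inside $N$ (e.g.\ $K = [N,_iN]$ for suitable $i$, or the relevant term of the upper central series) with $K \leq \Cent{G}{K}$, forcing $K = \1$, a contradiction. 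The bookkeeping to pick the right term and confirm it is both non-trivial and $G$-normal is the only delicate part; everything else is routine.
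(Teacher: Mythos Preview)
Your approach is essentially the paper's: both pass to the Fitting subgroup $\Fitt{N}$, which is characteristic in $N$ and hence normal in $G$, and then use the Fitting-free hypothesis to reach a contradiction. The one point you gloss over---and which the paper makes explicit---is why $\Fitt{N}$ is actually \emph{nilpotent} (rather than merely locally nilpotent, as it is for arbitrary groups): since $N$ has a nilpotent normal subgroup of finite index, $\Fitt{N}$ is a product of \emph{finitely many} nilpotent normal subgroups of $N$ and is therefore nilpotent by Fitting's theorem. Without this step, ``replace $N$ by $\Fitt{N}$ and assume $N$ is nilpotent'' is not yet justified.

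Once that is in place, your route via $\Centre{\Fitt{N}}$ is in fact slightly more direct than the paper's. The paper instead concludes $\Fitt{N}=\1$ from the Fitting-free hypothesis, deduces that $N$ is finite, and then finishes by observing that $\Cent{G}{N}$ has finite index, contradicting Lemma~\ref{lem:FittingFree-Cent}. Your argument and the paper's are the same in spirit; the paper's extra step through the centralizer is not strictly needed once one knows $\Fitt{N}$ is nilpotent and non-trivial.
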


\begin{proof}
  Suppose that $N$~is a non-trivial normal subgroup of~$G$ with a
  nilpotent normal subgroup of finite index in~$N$.  The Fitting
  subgroup~$\Fitt{N}$ of~$N$ is then a product of finitely many
  nilpotent normal subgroups of~$N$ and so is a nilpotent normal
  subgroup of~$G$.  Since $G$~is Fitting-free, it follows that $N$~is
  finite.  Then $\Cent{G}{N}$~has finite index in~$G$, which
  contradicts Lemma~\ref{lem:FittingFree-Cent}.
\end{proof}

\begin{lemma}
  \label{lem:HJNNF-normalredn}
  Let $G$~be a profinite group or a discrete group that is
  Fitting-free.  Suppose that every normal subgroup of finite index is
  \JNNcF\@.  Then $G$~is hereditarily \JNNcF.
\end{lemma}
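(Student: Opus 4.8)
The plan is to reduce the claim about subgroups of finite index to the given hypothesis about \emph{normal} subgroups of finite index by passing to cores, with Lemma~\ref{lem:FittingFree-Cent} as the one substantial input.  First I would note that $G$~itself is \JNNcF\ (apply the hypothesis with the normal subgroup of finite index taken to be~$G$), so in particular $G$~is infinite, since every finite group is virtually class\nbd$c$ nilpotent (take the trivial subgroup).

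Now let $H$~be an arbitrary (closed) subgroup of finite index in~$G$ and set $C = \Core{G}{H}$, a (closed) normal subgroup of finite index in~$G$; by hypothesis $C$~is \JNNcF, and $C \neq \1$ because $G$~is infinite.  I would first observe that $H$~is not virtually class\nbd$c$ nilpotent, since otherwise $C$, being of finite index in~$H$, would be too, contradicting that $C$~is \JNNcF\@.  It then remains to verify that $H/N$~is virtually class\nbd$c$ nilpotent for every non-trivial (closed) normal subgroup~$N$ of~$H$.

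The key point is that $N \cap C \neq \1$.  Indeed, $N$~and~$C$ are both normal in~$H$, so if $N \cap C = \1$ then $[N,C] \leq N \cap C = \1$, whence $N \leq \Cent{G}{C}$; but $C$~is a non-trivial (closed) normal subgroup of~$G$ and $G$~is \JNNcF\ and Fitting-free, so $\Cent{G}{C} = \1$ by Lemma~\ref{lem:FittingFree-Cent}, forcing $N = \1$, a contradiction.  Granting $N \cap C \neq \1$, the fact that $C$~is \JNNcF\ shows that $C/(N \cap C)$~is virtually class\nbd$c$ nilpotent; since $CN/N \cong C/(N \cap C)$ has finite index in~$H/N$ (as $\order{H:C} < \infty$), the quotient~$H/N$ is a finite extension of a virtually class\nbd$c$ nilpotent group and hence is itself virtually class\nbd$c$ nilpotent.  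Therefore $H$~is \JNNcF, and since $H$~was an arbitrary (closed) subgroup of finite index, $G$~is hereditarily \JNNcF\@.

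I do not anticipate a genuine obstacle.  The only care needed is the bookkeeping around $H$~versus its core~$C$ and, in the profinite case, observing that $CN$~is a finite union of cosets of the open subgroup~$C$ and hence closed, so that $CN/N \cong C/(N \cap C)$ is an isomorphism of topological groups; with ``closed'' inserted or ignored as appropriate, the argument runs uniformly in the profinite and discrete settings.  It is worth noting that one never needs $C$ or~$H$ themselves to be Fitting-free---only that $G$~is.
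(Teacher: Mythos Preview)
Your proof is correct and follows essentially the same route as the paper's: pass to the core $C=\Core{G}{H}$, use Lemma~\ref{lem:FittingFree-Cent} (applied to~$G$, which is \JNNcF\ by hypothesis) to force $N\cap C\neq\1$, and then read off that $H/N$ is a finite extension of $CN/N\cong C/(N\cap C)$. You are somewhat more explicit than the paper in recording that $G$ (and hence $H$) is not virtually class\nbd$c$ nilpotent and in noting the topological bookkeeping for $CN$, but the argument is the same.
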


\begin{proof}
  Suppose that $H$~is a subgroup of finite index~$G$ and that $N$~is a
  non-trivial normal subgroup of~$H$.  Let $K = \Core{G}{H}$, so that
  $K$~is a normal subgroup of~$G$ also of finite index and hence
  \JNNcF\ by hypothesis.  If it were the case that $K \cap N = \1$,
  then $[K,N] = \1$ since both $K$~and~$N$ are normal subgroups
  of~$H$.  Then $N \leq \Cent{G}{K}$, in contradiction to
  Lemma~\ref{lem:FittingFree-Cent}.  We deduce therefore that $K \cap
  N \neq \1$.  Then $H/N$~is a finite extension of $KN/N \cong K/(K
  \cap N)$, which is virtually class\nbd$c$ nilpotent.  Hence $H$~is
  \JNNcF, as required.
\end{proof}

Recall that the \emph{finite radical}~$\Fin{G}$ of a group~$G$ is the
union of all finite normal subgroups of~$G$.  The following is a
\JNNcF\ analogue of~\cite[Lemma~4]{Reid10a}.

\begin{lemma}
  \label{lem:Fin}
  \begin{enumerate}
  \item \label{i:Fin-finiteindex}
    Let $G$~be a group with $\Fin{G} = \1$.  If $H$~is a subgroup of
    finite index, then $\Fin{H} = \1$.
  \item \label{i:Fin-JNNF}
    Let $G$~be a profinite or discrete group with $\Fin{G} = \1$ and
    $H$~be a subgroup of finite index that is \JNNcF\@.  Then every
    subgroup of~$G$ containing~$H$ is \JNNcF\@.
  \end{enumerate}
\end{lemma}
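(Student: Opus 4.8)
The plan is to treat part~\ref{i:Fin-finiteindex} as the substantive step and to derive part~\ref{i:Fin-JNNF} from it by combining it with the standard behaviour of the \JNNcF\ condition under passage to and from subgroups of finite index. Part~\ref{i:Fin-finiteindex} is a statement about abstract groups and needs no topological input; in part~\ref{i:Fin-JNNF} the profinite case requires only the routine observations that finite subgroups are automatically closed and that a closed subgroup of finite index is open, so I shall not dwell on these below.

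For part~\ref{i:Fin-finiteindex} I would argue by contradiction. Suppose $H$~has a non-trivial finite normal subgroup and choose a non-identity element~$x$ of it. Since that subgroup is finite and normal in~$H$, the conjugacy class~$x^{H}$ is finite, so $\Cent{H}{x}$~has finite index in~$H$ and hence $\Cent{G}{x}$~has finite index in~$G$; in particular $x$~has only finitely many conjugates in~$G$. Set $X = \langle x^{G} \rangle$, the normal closure of~$\langle x \rangle$ in~$G$. Then $X \normal G$, the group~$X$ is generated by the finite set~$x^{G}$, and $\Cent{G}{X} = \bigcap_{y \in x^{G}} \Cent{G}{y}$ is a finite intersection of subgroups of finite index; hence $\Centre{X} = X \cap \Cent{G}{X}$ has finite index in~$X$. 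By Schur's theorem (recall that a group whose centre has finite index has finite derived subgroup), $X'$~is finite, and since $X'$~is characteristic in $X \normal G$ it is a finite normal subgroup of~$G$, so $X' = \1$. Therefore $X$~is abelian, and since it is generated by the finitely many elements of~$x^{G}$, each of finite order, it is finite. But then $X$~is a finite normal subgroup of~$G$, forcing $X = \1$ and contradicting $x \neq 1$. This proves $\Fin{H} = \1$.

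For part~\ref{i:Fin-JNNF}, let $H \leq L \leq G$, so that $L$~has finite index in~$G$. First, $L$~is not virtually class\nbd$c$ nilpotent, since otherwise its finite-index subgroup~$H$ would be too, contradicting that $H$~is \JNNcF\@. Next, part~\ref{i:Fin-finiteindex} applied to the pair $L \leq G$ gives $\Fin{L} = \1$. Now let $N$~be a non-trivial (closed) normal subgroup of~$L$ and put $C = \Core{L}{H}$, a normal subgroup of finite index in~$L$ with $C \leq H$. If $N \cap C = \1$, then $N$~embeds into the finite group~$L/C$, so $N$~is finite and therefore trivial by $\Fin{L} = \1$, a contradiction; hence $N \cap C \neq \1$, and in particular $N \cap H \neq \1$. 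Since $N \cap H$~is a non-trivial (closed) normal subgroup of the \JNNcF\ group~$H$, the quotient $H/(N \cap H) \cong HN/N$ is virtually class\nbd$c$ nilpotent; and as $HN/N$~has index at most~$\order{L:H}$ in~$L/N$, the group $L/N$~is virtually class\nbd$c$ nilpotent. Thus every proper quotient of~$L$ is virtually class\nbd$c$ nilpotent while $L$~itself is not, so $L$~is \JNNcF\@.

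The step I expect to be the main obstacle is part~\ref{i:Fin-finiteindex}: one has to manufacture a non-trivial finite normal subgroup \emph{of~$G$} from a finite normal subgroup of~$H$, and the device that makes this work is to pass to the normal closure $\langle x^{G}\rangle$ of a single non-identity element, whose finite $G$\nbd conjugacy class forces a centre of finite index and hence, via Schur's theorem together with $\Fin{G} = \1$, first that $\langle x^{G}\rangle$ is abelian and then that it is finite. Once part~\ref{i:Fin-finiteindex} is in hand, part~\ref{i:Fin-JNNF} is a routine combination of it with the usual permanence of the \JNNcF\ property under finite index, the only mild points being the topological remarks noted at the outset.
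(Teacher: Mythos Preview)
Your proof is correct and follows essentially the same route as the paper: for part~\ref{i:Fin-finiteindex} the paper simply cites \cite[Lemma~4]{Reid10a}, and your self-contained FC-element/Schur argument is the standard way to establish that result; for part~\ref{i:Fin-JNNF} the paper argues just as you do, the only cosmetic difference being that it observes directly that a non-trivial normal~$N$ of~$L$ is infinite (since $\Fin{L}=\1$) and hence $H\cap N$, having finite index in~$N$, is non-trivial --- avoiding your detour through $\Core{L}{H}$, though your version is equally valid.
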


\begin{proof}
  \ref{i:Fin-finiteindex}~This is established
  in~\cite[Lemma~4]{Reid10a}.
  
  \ref{i:Fin-JNNF}~Suppose that $H \leq L \leq G$.  First note that
  $L$~is not virtually class\nbd$c$ nilpotent as it contains~$H$.  Let
  $K$~be a non-trivial normal subgroup of~$L$.  Since $\Fin{L} = \1$
  by part~\ref{i:Fin-finiteindex}, $K$~is infinite.  As $H \cap K$~has
  finite index in~$K$, it follows that $H \cap K$~is non-trivial and
  so $H/(H \cap K)$~is virtually class\nbd$c$ nilpotent.  We conclude
  that $L/K$~is a finite extension of~$HK/K \cong H/(H \cap K)$, so
  $L/K$~is virtually class\nbd$c$ nilpotent.  Hence $L$~is \JNNcF\@.
\end{proof}

We are now in a position to establish a theorem for \JNNcF\ groups
that is an analogue of the main theorem of~\cite{Reid10a}:

\begin{thm}
  \label{thm:JNNF-finiteindexnormal}
  Let $G$~be a profinite group or a discrete group and let $c$~be a
  non-negative integer.  Suppose that $G$~is \JNNcF\ and Fitting-free,
  and that $H$~is a normal subgroup of finite index in~$G$.  Then the
  following are equivalent:
  \begin{enumerate}
  \item \label{i:H-JNNF}
    $H$~is \JNNcF;
  \item \label{i:containH-JNNF}
    every subgroup of~$G$ containing~$H$ is \JNNcF;
  \item \label{i:max-JNNF}
    every maximal subgroup of~$G$ containing~$H$ is \JNNcF.
  \end{enumerate}
\end{thm}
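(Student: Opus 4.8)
I would dispose of the two easy implications first. A non-trivial finite normal subgroup of~$G$ would be a non-trivial virtually class\nbd$c$ nilpotent normal subgroup, which Lemma~\ref{lem:noVN} forbids, so $\Fin{G} = \1$; hence \impliescond{\ref{i:H-JNNF}}{\ref{i:containH-JNNF}} is precisely Lemma~\ref{lem:Fin}\ref{i:Fin-JNNF}, while \impliescond{\ref{i:containH-JNNF}}{\ref{i:max-JNNF}} is trivial since a maximal subgroup of~$G$ containing~$H$ is in particular a subgroup of~$G$ containing~$H$. I would also record here, for repeated use, that every subgroup of~$G$ of finite index is Fitting-free with trivial finite radical: triviality of the finite radical is Lemma~\ref{lem:Fin}\ref{i:Fin-finiteindex}, and Fitting-freeness follows as in Lemma~\ref{lem:open-noabelian}, that argument being equally valid in the discrete case once Lemma~\ref{lem:noVN} and Fitting's Theorem are substituted for the profinite ingredient. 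In particular $H$~itself is Fitting-free and so, having finite index in~$G$, is not virtually class\nbd$c$ nilpotent.

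For \impliescond{\ref{i:max-JNNF}}{\ref{i:H-JNNF}} I would argue by induction on~$\order{G:H}$, the case $\order{G:H}=1$ merely asserting that $G$~is \JNNcF\@. Assume \ref{i:max-JNNF} holds but, for a contradiction, that $H$~is not \JNNcF; then there is a non-trivial closed normal subgroup~$K$ of~$H$ with $H/K$ not virtually class\nbd$c$ nilpotent. The crucial observation is that any such~$K$ has $\Core{G}{K} = \1$: otherwise $G/\Core{G}{K}$ would be virtually class\nbd$c$ nilpotent, and hence so would its subgroup~$H/\Core{G}{K}$ and therefore the quotient~$H/K$ of the latter. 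Since $H \normal G$, it follows that $N = \Norm{G}{K}$ is a proper subgroup of~$G$ containing~$H$, and $N$~is not \JNNcF\ because $H/K$ embeds in~$N/K$. A short check --- using $\Fin{G} = \1$, elementary closure properties of virtual class\nbd$c$ nilpotency, and the inclusion $H \leq \Core{G}{N}$ (valid as $H \normal G$) --- then shows that the normal subgroup $\Core{G}{N}$ of~$G$, which has finite index and contains~$H$, is likewise not \JNNcF\@. If $\Core{G}{N} > H$, then $\order{G:\Core{G}{N}} < \order{G:H}$ and the inductive hypothesis, applied to the non-\JNNcF\ normal subgroup~$\Core{G}{N}$, supplies a maximal subgroup of~$G$ containing~$\Core{G}{N}$ --- hence containing~$H$ --- that is not \JNNcF, contradicting~\ref{i:max-JNNF}.

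There remains the case $\Core{G}{N} = H$, equivalently $\Core{G}{\Norm{G}{K}} = H$; this I expect to be the main obstacle and would treat by a basal subgroup argument modelled on~\cite{Reid10a}. The $G$\nbd conjugates of~$K$ are finite in number (because $H \leq \Norm{G}{K}$) and all normal in~$H$, and $K \normal K^{G} \leq H$, so Lemma~\ref{lem:Reid-basal} yields an intersection~$B$ of some of these conjugates that is basal in~$G$. Since $\Fin{G} = \1$, the subgroup~$B$ is infinite, so its normal closure~$B^{G}$ --- a direct product of the conjugates of~$B$ --- is a non-trivial closed normal subgroup of~$G$ contained in~$H$, whence $G/B^{G}$, and with it $H/B^{G}$, is virtually class\nbd$c$ nilpotent. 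Comparing~$B^{G}$ with~$K$ and exploiting both the direct-product structure of~$B^{G}$ and the absence of non-trivial virtually nilpotent normal subgroups of~$G$ (Lemma~\ref{lem:noVN}) then yields the required contradiction. Throughout, the profinite and discrete cases run in parallel, with Lemma~\ref{lem:noVN} and Fitting's Theorem playing in the discrete setting the role that Corollary~\ref{cor:nofinite} plays in the profinite one.
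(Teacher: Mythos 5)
Your treatment of the easy implications and your reduction (induction on $\order{G:H}$, the observation $\Core{G}{K} = \1$, passing to $N = \Norm{G}{K}$ and to $\Core{G}{N}$) is sound, but there is a genuine gap at the point you yourself flag as the main obstacle. In the residual case $\Core{G}{\Norm{G}{K}} = H$ your sketch claims a contradiction from ``the direct-product structure of~$B^{G}$'' together with Lemma~\ref{lem:noVN} and a comparison of~$B^{G}$ with~$K$ --- but nowhere in that final step do you invoke hypothesis~\ref{i:max-JNNF}, and without it no contradiction is available. Indeed, all the facts you list can hold simultaneously: take $G$~a Fitting-free branch group (so \JNNcF\ with $c=1$, or just infinite in the $c=0$ reading), $H$~a level stabilizer and $K$~a rigid vertex stabilizer; then $K \normal H$, \ $H/K$~is not virtually abelian, $\Norm{G}{K} = H$ so $\Core{G}{\Norm{G}{K}} = H$, the basal subgroup~$B$ produced by Lemma~\ref{lem:Reid-basal} is (a subgroup of) $K$ and is not normal in~$G$, while $B^{G}$~is a non-trivial closed normal subgroup contained in~$H$ with $G/B^{G}$ and $H/B^{G}$ virtually abelian. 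No contradiction ensues --- of course, in this situation condition~\ref{i:max-JNNF} fails, which is exactly the hypothesis your final step never uses.

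What is missing is the paper's use of~\ref{i:max-JNNF} to force $B$~to be normal in~$G$: if $B \ntrianglelefteqslant G$, choose a maximal subgroup~$M$ with $\Norm{G}{B} \leq M < G$; then $M$~is \JNNcF\ by hypothesis, so $M/B^{M}$~is virtually class\nbd$c$ nilpotent, hence so is $(M \cap B^{G})/B^{M}$ and therefore $B^{G}/B^{M}$ (as $M \cap B^{G}$~has finite index in~$B^{G}$); but $B^{G}/B^{M}$~is a non-trivial direct product of copies of~$B$, and $B$~cannot be virtually nilpotent since otherwise $B^{G}$~would be, contradicting Lemma~\ref{lem:noVN}. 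Once $B \normal G$, the conclusion is immediate ($B \leq K$, so $H/K$~is a quotient of~$H/B$, which is virtually class\nbd$c$ nilpotent because $G/B$~is), and no induction or passage to $\Core{G}{N}$ is needed at all. So either import this maximal-subgroup argument into your final case, or explain concretely how~\ref{i:max-JNNF} enters there; as written, the decisive step is asserted rather than proved, and the asserted ingredients alone are demonstrably insufficient.
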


\begin{proof}
  By Lemma~\ref{lem:noVN}, $\Fin{G} = \1$.  Hence an application of
  Lemma~\ref{lem:Fin}\ref{i:Fin-JNNF} shows that
  Condition~\ref{i:H-JNNF} implies Condition~\ref{i:containH-JNNF}.
  It is trivial that Condition~\ref{i:containH-JNNF} implies
  Condition~\ref{i:max-JNNF}.

  Now assume Condition~\ref{i:max-JNNF}.  Let $K$~be a non-trivial
  normal subgroup of~$H$.  Since $H$~is a normal subgroup of~$G$, we
  observe that $K^{g} \normal H \leq \Norm{G}{K}$ for all $g \in G$
  and hence $K \normal K^{G}$.  By Lemma~\ref{lem:Reid-basal}, there
  is a basal subgroup~$B$ that is an intersection of some conjugates
  of~$K$ and, conjugating if necessary, we may assume $B \leq K$.
  Note also that $H \leq \Norm{G}{B}$ since each conjugate of~$K$ is
  normal in~$H$.  We shall show that $B$~is normal in~$G$.  For then,
  $G/B$~is virtually class\nbd$c$ nilpotent by hypothesis and hence
  $H/K$~is also virtually class\nbd$c$ nilpotent since $B \leq K$.
  This will establish that $H$~is indeed \JNNcF\@.
  
  Suppose, for a contradiction, that $B$~is not a normal subgroup
  of~$G$.  Consequently, $\Norm{G}{B}$~is a proper subgroup of~$G$ and
  there is some maximal subgroup~$M$ of~$G$ with $\Norm{G}{B} \leq M$.
  Now $B^{G}$~is the direct product of the conjugates of~$B$ and it is
  not virtually nilpotent by Lemma~\ref{lem:noVN}.  Observe that
  $B$~has fewer conjugates in~$M$ than in the group~$G$, so
  $B^{G}/B^{M}$~is isomorphic to a direct product of some copies
  of~$B$ and so is not virtually nilpotent.  On the other hand, $M$~is
  \JNNcF\ by assumption, so the quotient~$M/B^{M}$ of~$M$ by the
  normal closure of~$B$ in~$M$ is a virtually nilpotent group.  Hence
  $(M \cap B^{G})/B^{M}$~is virtually nilpotent and this implies
  $B^{G}/B^{M}$~is also virtually nilpotent since $M \cap B^{G}$~has
  finite index in~$B^{G}$.  This is a contradiction and completes the
  proof of the theorem.
\end{proof}

With use of Lemma~\ref{lem:HJNNF-normalredn}, we then immediately
conclude:

\begin{cor}
  \label{cor:max}
  Let $G$~be a profinite or discrete group that is \JNNcF\ and
  Fitting-free.  Then $G$~is hereditarily \JNNcF\ if and only if every
  maximal (open) subgroup of finite index is \JNNcF.
\end{cor}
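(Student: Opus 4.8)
The plan is to derive this corollary directly from Theorem~\ref{thm:JNNF-finiteindexnormal} and Lemma~\ref{lem:HJNNF-normalredn}, both already available. The forward implication is immediate from the definition: if $G$~is hereditarily \JNNcF, then every subgroup of finite index in~$G$ is \JNNcF, and in particular so is every maximal (open) subgroup of finite index.

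For the converse, suppose that every maximal (open) subgroup of finite index in~$G$ is \JNNcF. Since $G$~is Fitting-free, Lemma~\ref{lem:HJNNF-normalredn} reduces the task to showing that every normal subgroup of finite index in~$G$ is \JNNcF. Fix such a subgroup~$H$. I would then verify that Condition~\ref{i:max-JNNF} of Theorem~\ref{thm:JNNF-finiteindexnormal} holds for~$H$. The point is that if $M$~is a maximal subgroup of~$G$ with $H \leq M$, then $M$~has finite index, since $\order{G:M} \leq \order{G:H} < \infty$; in the profinite setting a closed subgroup of finite index is open, so $M$~is open. Moreover such an~$M$ is genuinely a maximal subgroup of~$G$, because any subgroup lying strictly between~$M$ and~$G$ would again contain~$H$, contradicting the maximality of~$M$ among subgroups containing~$H$. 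Hence $M$~is a maximal (open) subgroup of finite index, so is \JNNcF\ by hypothesis.

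With Condition~\ref{i:max-JNNF} in hand, and since $G$~is \JNNcF\ and Fitting-free and $H$~is normal of finite index, the equivalence in Theorem~\ref{thm:JNNF-finiteindexnormal} (specifically the implication from~\ref{i:max-JNNF} to~\ref{i:H-JNNF}) yields that $H$~is \JNNcF. As $H$~was an arbitrary normal subgroup of finite index, Lemma~\ref{lem:HJNNF-normalredn} now gives that $G$~is hereditarily \JNNcF, completing the proof.

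No genuine obstacle arises here: the real content sits in Theorem~\ref{thm:JNNF-finiteindexnormal} (which in turn relies on the basal-subgroup machinery of Lemma~\ref{lem:Reid-basal} together with Lemma~\ref{lem:noVN}) and in Lemma~\ref{lem:HJNNF-normalredn} (which uses Lemma~\ref{lem:FittingFree-Cent}). The only step needing a moment's care is the bookkeeping about indices and the open/closed dictionary in the profinite case: one must observe that, once $H$~has finite index, ``maximal subgroup containing~$H$'' and ``maximal (open) subgroup of finite index'' describe the same subgroups, and that this is consistent with the standing convention that ``subgroup'' means ``closed subgroup'' for profinite~$G$.
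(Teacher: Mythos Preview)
Your proof is correct and follows exactly the approach the paper intends: the paper simply states that the corollary follows from Theorem~\ref{thm:JNNF-finiteindexnormal} ``with use of Lemma~\ref{lem:HJNNF-normalredn}'', and you have spelled out precisely this deduction. The only superfluous step is the sentence verifying that $M$ is ``genuinely a maximal subgroup of~$G$'', since this is already the hypothesis on~$M$; but the index bookkeeping and the profinite open/closed remark are appropriate.
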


%%%

\section{A construction of hereditarily \JNNcF\ groups}
\label{sec:construction}

The work of the preceding sections suggest that \JNNcF\ groups are
quite closely related to just infinite groups.  Similarly, Wilson's
classification~\cite{JSW71,JSW00} of just infinite groups has the same
dichotomy as Hardy's~\cite{Hardy-PhD} for JNAF groups, namely branch
groups and subgroups of wreath products built from hereditarily just
infinite, or respectively JNAF, groups.  To fully investigate the
class of \JNNcF\ groups, one would like a good supply of examples of
hereditarily \JNNcF\ groups.  In this section, we provide one method
for constructing such a group in Theorem~\ref{thm:JI-automs}.  At
first sight the construction may appear somewhat unspectacular since
it merely consists of a semidirect product of a hereditarily just
infinite group~$H$ by some group~$A$ of (outer) automorphisms.
However, by applying it to a variety of known hereditarily just
infinite groups~$H$ and observing that the range of possible
groups~$A$ that could be used is rather wide, we manufacture
interesting examples of \JNNcF\ groups.  In both
Examples~\ref{ex:JNAF1} and~\ref{ex:JNAF2}, we shall observe that,
with suitable choices of ingredients for $H$, then among abelian
profinite groups the options for~$A$ are about as wide as could be
hoped for.  For example, one can take~$A$ to be any closed subgroup of
the Cartesian product of countably many copies of the profinite
completion~$\hat{\Zint}$ of the integers.  In Example~\ref{ex:Nott},
we are able to take~$A$ to be any finitely generated virtually
nilpotent pro\nbd$p$ group and so again this permits a wide range of
possible choices.

\begin{lemma}
  \label{lem:noInn-centralizer}
  Let $H$~be a group and $A$~be a group of automorphisms of~$H$ such
  that $A \cap \Inn{H} = \1$.  Define $G = H \rtimes A$ to be the
  semidirect product of~$H$ by~$A$ via its natural action on~$H$.
  Then $\Cent{G}{H} = \Centre{H}$.
\end{lemma}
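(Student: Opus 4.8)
The statement asserts the equality of two subgroups of $G$, so the plan is to prove the two inclusions separately. The inclusion $\Centre{H} \leq \Cent{G}{H}$ is immediate: every element of $\Centre{H}$ lies in $H$, hence in $G$, and by definition of the centre it commutes with every element of $H$.

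For the reverse inclusion I would take an arbitrary $g \in \Cent{G}{H}$ and use the semidirect-product decomposition $G = H \rtimes A$ to write $g = ha$ uniquely with $h \in H$ and $a \in A$. For any $x \in H$, conjugation in $G$ gives $x^{g} = x^{ha} = (x^{h})^{a}$, where $x \mapsto x^{h}$ is the inner automorphism of $H$ induced by $h$ and $x \mapsto x^{a}$ is the automorphism $a \in A \leq \Aut{H}$. Since $g$ centralises $H$, the composite of these two automorphisms is the identity map on $H$, and therefore $a$ must coincide with the inverse of the inner automorphism induced by $h$; in particular $a \in \Inn{H}$.

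Now the hypothesis $A \cap \Inn{H} = \1$ forces $a$ to be the trivial automorphism, hence $a = 1$ as an element of $G$, so $g = h \in H$. Consequently $g \in \Cent{G}{H} \cap H = \Cent{H}{H} = \Centre{H}$ (alternatively, triviality of the inner automorphism induced by $h^{-1}$ says directly that $h \in \Centre{H}$). This yields $\Cent{G}{H} \leq \Centre{H}$ and completes the argument.

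There is no genuine obstacle here; the only point that requires care is to keep track of the paper's conventions --- maps written on the right and $x^{y} = y^{-1}xy$ --- so that the identification of $a$ with an inner automorphism of $H$ is bookkept correctly inside the semidirect product.
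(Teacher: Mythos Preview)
Your argument is correct and follows essentially the same route as the paper: decompose a centralizing element as $h\alpha$ with $h\in H$, $\alpha\in A$, observe that the composite of the inner automorphism~$\tau_{h}$ with~$\alpha$ is trivial in~$\Aut H$, conclude $\alpha\in\Inn H$ and hence $\alpha=1$, and then $h\in\Centre{H}$. The only difference is that you spell out the conjugation computation $x^{g}=(x^{h})^{a}$ explicitly, whereas the paper states the conclusion $\tau_{h}\alpha=1$ directly.
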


\begin{proof}
  Let $x = h\alpha \in \Cent{G}{H}$ with $h \in H$ and $\alpha \in A$.
  If $\tau_{h}$~denotes the inner automorphism of~$H$ induced by~$h$
  on~$H$, then we observe $\tau_{h}\alpha = 1$ in~$\Aut H$, so $\alpha
  \in \Inn{H}$.  Hence $\alpha = 1$, so $x = h$ and necessarily $h \in
  \Centre{H}$.  The reverse inclusion is trivial.
\end{proof}

\begin{thm}
  \label{thm:JI-automs}
  Let $H$~be a hereditarily just infinite (discrete or profinite)
  group that is Fitting-free.  Let $A$~be a (discrete or profinite,
  respectively) group of (continuous) automorphisms of~$H$ that is
  virtually class\nbd$c$ nilpotent, for some $c \geq 0$, and satisfies
  $A \cap \Inn H = \1$.  Then the semidirect product of~$H$ by~$A$ is
  hereditarily \JNNcF.
\end{thm}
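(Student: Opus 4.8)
The plan is to verify the hypotheses of Lemma~\ref{lem:HJNNF-normalredn}: I will show that $G = H \rtimes A$ is Fitting-free and that every normal subgroup of finite index in~$G$ is \JNNcF. For the Fitting-free part, recall from Lemma~\ref{lem:noInn-centralizer} together with the hypothesis $A \cap \Inn H = \1$ that $\Cent{G}{H} = \Centre{H}$, and the latter is trivial since $H$~is Fitting-free (its centre is an abelian normal subgroup of~$H$). Hence if $V$~were a non-trivial abelian (closed) normal subgroup of~$G$, then $V \cap H$ would be an abelian normal subgroup of~$H$, so $V \cap H = \1$; as $V$~and~$H$ are both normal in~$G$ this forces $[V,H] \leq V \cap H = \1$, whence $V \leq \Cent{G}{H} = \1$, a contradiction.

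The heart of the argument is a single observation that I would isolate and then apply repeatedly: if $P$~is a profinite (or discrete) group possessing a normal subgroup~$M$ that is just infinite and Fitting-free, with $\Cent{P}{M} = \1$ and $P/M$~virtually class\nbd$c$ nilpotent, then $P$~is \JNNcF. Indeed $P$~cannot be virtually class\nbd$c$ nilpotent, for then $M$~would be so, contradicting the fact that an infinite Fitting-free group is never virtually nilpotent. And if $N$~is a non-trivial closed normal subgroup of~$P$, then $N \cap M \neq \1$, since otherwise $[N,M] \leq N \cap M = \1$ would give $N \leq \Cent{P}{M} = \1$. As $M$~is just infinite, $M/(M \cap N)$~is finite, so $MN/N$~is a finite normal subgroup of~$P/N$ with $(P/N)/(MN/N) \cong P/MN$ a quotient of the virtually class\nbd$c$ nilpotent group~$P/M$, hence itself virtually class\nbd$c$ nilpotent. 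The lemma on residually finite groups with a finite normal subgroup then forces $P/N$~to be virtually class\nbd$c$ nilpotent, so $P$~is \JNNcF.

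Applying this observation with $P = G$ and $M = H$ already shows that $G$~is \JNNcF. For a normal subgroup~$L$ of finite index in~$G$, I would set $M_{L} = L \cap H$; this is normal in~$L$ (indeed in~$G$) and of finite index in~$H$, hence just infinite because $H$~is \emph{hereditarily} just infinite, and Fitting-free because $H$~is not virtually abelian (being infinite and Fitting-free). Moreover $L/M_{L} \cong LH/H$ embeds in $G/H \cong A$, so it is virtually class\nbd$c$ nilpotent, being a subgroup of such a group. It remains to check $\Cent{L}{M_{L}} = \1$: this subgroup is normal in~$L$ and meets~$M_{L}$ in $\Centre{M_{L}} = \1$; since $M_{L} \normal H$ and $H$~has trivial finite radical (a just infinite group has no non-trivial finite normal subgroup), a short argument gives $\Cent{H}{M_{L}} = \1$; then any element of~$G$ centralizing~$M_{L}$ induces an automorphism of~$H$ trivial on the finite-index subgroup~$M_{L}$, and since $\Cent{H}{M_{L}} = \1$ such an automorphism is trivial on all of~$H$, placing the element in $\Cent{G}{H} = \1$. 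Thus the observation applies to~$L$, giving that $L$~is \JNNcF, and Lemma~\ref{lem:HJNNF-normalredn} completes the proof.

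The step I expect to be the main obstacle is the passage from ``finite-by-(virtually class\nbd$c$ nilpotent)'' to ``virtually class\nbd$c$ nilpotent'' for the proper quotients~$P/N$; this is exactly the content of the cited lemma on residually finite groups, which is immediate in the profinite setting (profinite groups are residually finite) but requires a little extra care for discrete~$G$. A secondary point needing attention is the assertion that an automorphism of~$H$ acting trivially on a finite-index normal subgroup is the identity, which rests on a just infinite group having no non-trivial finite normal subgroup.
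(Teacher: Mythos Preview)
Your modular ``observation'' is attractive, but it has a genuine gap in the discrete case that you flag yet do not close.  The passage from ``$P/N$ is finite-by-(virtually class-$c$ nilpotent)'' to ``$P/N$ is virtually class-$c$ nilpotent'' via the residually finite lemma requires $P/N$ to be residually finite.  In the profinite setting this is automatic, but for discrete $G = H \rtimes A$ nothing in the hypotheses forces $A$, or quotients of $G$, to be residually finite (think of $A$ a Pr\"ufer group).  Without residual finiteness the implication can fail in general: an infinite extraspecial $p$-group is (finite central)-by-abelian yet not virtually abelian.  So your observation, as stated for discrete groups, is not justified by the cited lemma, and the ``little extra care'' you allude to is the whole crux.

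The paper's proof sidesteps this entirely.  For $P = G$ it does not pass through a finite-by-nilpotent description of $G/N$; instead it notes that $H\cap N$ has finite index in $H$, and that $A$ embeds as a subgroup of \emph{finite index} in $G/(H\cap N)$ (since $A \cap H = \1$ and $|G:A(H\cap N)| = |H:H\cap N|$).  Thus $G/(H\cap N)$, and hence its quotient $G/N$, is virtually class-$c$ nilpotent outright.  For a finite-index normal $L$ and $\1 \neq N \normal L$, the paper again avoids your observation: once $H\cap N \neq \1$ it takes $R = \Core{G}{H\cap N}$, a non-trivial normal subgroup of $G$ (a finite intersection of finite-index subgroups of $H$), and uses the already-established fact that $G$ is \JNNcF\ to conclude $G/R$---and hence the section $L/N$---is virtually class-$c$ nilpotent.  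Your argument can be repaired along exactly these lines; in particular, the core trick replaces the appeal to the residually finite lemma.

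A minor point: your verification that $\Cent{L}{M_L} = \1$ via ``an automorphism of $H$ trivial on a finite-index normal subgroup is trivial'' is correct but more work than needed.  Since you have already shown $G$ is \JNNcF\ and Fitting-free, and $M_L = L\cap H$ is a non-trivial closed normal subgroup of $G$, Lemma~\ref{lem:FittingFree-Cent} gives $\Cent{G}{M_L} = \1$ immediately.
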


The only discrete hereditarily just infinite groups that are virtually
abelian are the infinite cyclic group and the infinite dihedral group.
The only profinite hereditarily just infinite groups that are
virtually abelian are semidirect products of the $p$\nbd adic integers
by a finite (and consequently cyclic) subgroup of its automorphism
group.  Consequently, the hypothesis that $H$~is Fitting-free in the
above theorem excludes only a small number of possibilities.
Moreover, this hypothesis on~$H$ is also necessary since the
semidirect product~$H \rtimes A$ can otherwise be virtually abelian.

\begin{proof}
  Let $H$~be a hereditarily just infinite discrete group that is
  Fitting-free and $A \leq \Aut H$ be virtually class\nbd$c$
  nilpotent with $A \cap \Inn{H} = \1$.  We shall first show that the
  semidirect product $G = H \rtimes A$ is \JNNcF\@.  We shall view
  $H$~and~$A$ as subgroups of~$G$ in the natural way.  Note that as
  $H$~is Fitting-free, it is not virtually nilpotent and therefore
  neither is~$G$.

  Let $N$~be a non-trivial normal subgroup of~$G$.  If
  $H \cap N = \1$, then $[H,N] = \1$, so
  $N \leq \Cent{G}{H} = \Centre{H}$ by use of
  Lemma~\ref{lem:noInn-centralizer}.  This is a contradiction and so
  $H \cap N \neq \1$.  Thus $H \cap N$~is of finite index in~$H$.
  Then $G/(H \cap N)$~has a copy of the group~$A$ as a subgroup of
  finite index and is therefore also virtually class\nbd$c$ nilpotent.
  We deduce that $G/N$~is virtually class\nbd$c$ nilpotent and hence
  $G$~is \JNNcF, as claimed.

  Now let $L$~be a normal subgroup of finite index in~$G$ and let
  $N$~be a non-trivial normal subgroup of~$L$.  If $H \cap N = \1$,
  then $[H \cap L, N] \leq H \cap N = \1$, so $N \leq \Cent{G}{H \cap
    L}$.  By Lemma~\ref{lem:FittingFree-Cent}, this is impossible
  since $H \cap L$~is a normal subgroup of~$G$ that is non-trivial
  (since it has finite index in~$H$) and we have already observed
  $G$~is \JNNcF\@.

  Therefore $H \cap N \neq \1$.  Since $H$~is hereditarily just
  infinite, $H \cap N$~has finite index in~$H \cap L$.  Moreover,
  $H \cap N$~is normalized by~$L$ and hence has finitely many
  conjugates in~$G$, each of which also has finite index in~$H$.  We
  deduce that $R = \Core{G}{H \cap N}$ is non-trivial, so $G/R$~is
  virtually class\nbd$c$ nilpotent.  Since $R \leq N$, we conclude
  that $L/N$~is virtually class\nbd$c$ nilpotent.

  We have shown that every normal subgroup of finite index in~$G$ is
  \JNNcF\ and therefore $G$~is hereditarily \JNNcF\ by
  Lemma~\ref{lem:HJNNF-normalredn}.

  The situation when $H$~is profinite and $A$~consists of continuous
  automorphisms of~$H$ is established by the same argument.  The only
  difference is that one needs $A$~to have the structure of a
  profinite group under the topology induced from the group~$\Autc{H}$
  of topological automorphisms of~$H$ in order that $G = H \rtimes A$
  is a profinite group.
\end{proof}

\subsection{Hereditarily \JNNcF\ groups via iterated wreath products}

We shall now construct abelian groups of automorphisms of some just
infinite groups that arise as iterated wreath products of non-abelian
finite simple groups.  We permit two possible options for the action
used for the permutational wreath product at each step.  The just
infinite groups constructed are closely related to those in Wilson's
Construction~A~\cite{JSW-Large}, though he uses two applications of
the permutational wreath product at each stage.  If one employs the
product action option~(P) at each step of our construction, then the
inverse limit constructed would be a special case of what Vannacci
terms a \emph{generalized Wilson group}
(see~\cite[Definition~3]{Vann}).  Vannacci makes use
of~\cite[Theorem~6.2]{Reid12} to determine that the profinite groups
concerned are hereditarily just infinite (and his groups also satisfy
the hypotheses of the corrected version in~\cite{Reid-corr}).  Since
we also wish to construct discrete examples of hereditarily just
infinite groups via a direct limit, we shall present a direct
verification as the discrete and profinite cases are closely linked.
This verification is somewhat general since it only requires the
action employed to be transitive and subprimitive (in the sense
of~\cite{Reid12}).  We shall then specialize to regular actions and
product actions in Example~\ref{ex:wreath} when constructing
automorphisms of the resulting hereditarily just infinite groups so as
to apply Theorem~\ref{thm:JI-automs}.

We first recall the definition of what is meant by a subprimitive
action:

\begin{defn}[\protect{\cite[Definition~1.4]{Reid12}}]
  Let $\Omega$~be a set and $H$~be a permutation group on~$\Omega$.
  We shall say that $H$~acts \emph{subprimitively} on~$\Omega$ if
  every normal subgroup~$K$ of~$H$ acts faithfully on every $K$\nbd
  orbit.
\end{defn}

Let $X_{0}$,~$X_{1}$, $X_{2}$,~\dots\ be a sequence of non-abelian
finite simple groups.  Define $W_{0} = X_{0}$.  Suppose that for
some~$n \geq 1$, we have a constructed a group~$W_{n-1}$ and choose
some faithful, transitive and subprimitive action of~$W_{n-1}$ on a
finite set~$\Omega_{n-1}$.  We define $W_{n} = X_{n}
\wr_{\Omega_{n-1}} W_{n-1}$ to be the wreath product of~$X_{n}$
by~$W_{n-1}$ and write $B_{n} = X_{n}^{\Omega_{n-1}}$ for its base
group.  We shall assume at this point that such an action always
exists, while in Example~\ref{ex:wreath} below we describe possible
examples.  Write $\rho_{n} \colon W_{n} \to W_{n-1}$ for the natural
surjective homomorphism associated to the wreath product and also note
that $W_{n-1}$~occurs as a subgroup of~$W_{n}$ so we have a chain of
inclusions: $W_{0} \leq W_{1} \leq W_{2} \leq \dots$.  We shall
write~$W$ to denote the direct limit~$\dirlim W_{n}$ of these wreath
products and $\hat{W}$~to denote the inverse limit~$\invlim W_{n}$.
It will be convenient to view~$W$ as the union of the groups~$W_{n}$.

The following is the key observation required to show that $W$~is a
hereditarily just infinite (discrete) group and $\hat{W}$~is a
hereditarily just infinite profinite group.

\begin{lemma}
  \label{lem:Wn-subnormal}
  Let $X$~be a non-abelian simple group and $H$~be a permutation group
  on a finite set~$\Omega$ that acts transitively and subprimitively.
  Define $W = X \wr_{\Omega} H$ to be the wreath product of~$X$ by~$H$
  with respect to this action and $B$~to be the base group of~$W$.
  Let $K$~be a normal subgroup of~$W$ and $N$~be a normal subgroup
  of~$K$ such that $N \nleq B$.  Then $B \leq N$.
\end{lemma}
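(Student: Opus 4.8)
The plan is to realize $W = B \rtimes H$ in the usual way and to push the whole problem down to the quotient $W/B \cong H$, where the subprimitivity of the $H$\nbd action on~$\Omega$ can be brought to bear. The two preliminary facts I will rely on are: every normal subgroup of $B = X^{\Omega}$ is a sub-product $\prod_{\omega \in S} X_{\omega}$ for some $S \subseteq \Omega$ (as $X$~is non-abelian simple), and $H$~acts faithfully on~$\Omega$.

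First I would determine $K \cap B$. Since $K$~and~$B$ are both normal in~$W$, the subgroup $K \cap B$ is normal in~$W$ and contained in~$B$, hence equals $\prod_{\omega \in S} X_{\omega}$ with $S$ an $H$\nbd invariant subset of~$\Omega$; transitivity of~$H$ then forces $S = \emptyset$ or $S = \Omega$. In the first case $[K,B] \leq K \cap B = \1$, so $K \leq \Cent{W}{B}$; but $\Cent{W}{B} = \1$, because this centralizer meets~$B$ in $\Centre{B} = \1$ while faithfulness of~$H$ on~$\Omega$ forces the $H$\nbd part of any element centralizing~$B$ to be trivial. Then $K = \1$ and hence $N = \1$, contradicting $N \nleq B$. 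So from now on $B \leq K$.

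Next put $P = N \cap B$. Being the intersection of the two normal subgroups $N$~and~$B$ of~$K$, $P$~is normal in~$K$; in particular $P \normal B$, so $P = \prod_{\omega \in T} X_{\omega}$ for some $T \subseteq \Omega$, and normality of~$P$ in~$K$ says that $T$~is invariant under $L := K/B$, viewed as a normal subgroup of $W/B = H$. The goal is $T = \Omega$, for then $B = P \leq N$, as required. To enlarge~$T$ I would use commutators. Fix $x \in N \setminus B$ and write $x = bh$ with $b \in B$ and $1 \neq h \in H$; then for every $\beta \in B$ one has $[\beta, x] = \beta^{-1}\beta^{x} \in N \cap B = P$. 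Taking $\beta$ supported at a single point~$\omega$ with $\omega^{h} \neq \omega$, the $\omega$\nbd coordinate of~$[\beta,x]$ is~$\beta^{-1}$, which runs over all of~$X_{\omega}$ as $\beta$~varies; since $P = \prod_{\omega \in T} X_{\omega}$, this forces $\omega \in T$. Hence $T$~contains the support of~$h$. As $x$~ranges over $N \setminus B$ the corresponding permutations~$h$ are exactly the non-trivial elements of $\bar{N} := NB/B \leq H$, so $T \supseteq \Omega \setminus \operatorname{Fix}_{\Omega}(\bar{N})$.

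It remains to show $\operatorname{Fix}_{\Omega}(\bar{N}) = \emptyset$, and I expect this to be the main point of the argument, since it is exactly here that subprimitivity is used. Now $\bar{N}$~is a non-trivial normal subgroup of $L \normal H$. As $H$~is transitive, all $L$\nbd orbits $\Delta_{1}, \dots, \Delta_{k}$ have the same size, and subprimitivity says that $L$~acts faithfully on each~$\Delta_{i}$, so $L$~embeds in $\operatorname{Sym}(\Delta_{i})$ and the non-trivial group~$\bar{N}$ therefore induces a non-trivial normal subgroup of the transitive permutation group that $L$~induces on~$\Delta_{i}$. A non-trivial normal subgroup of a transitive permutation group has empty fixed-point set (its fixed-point set is invariant under the transitive overgroup, hence is $\Delta_{i}$ or empty, and it cannot be~$\Delta_{i}$ by faithfulness). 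Thus $\bar{N}$~fixes no point of any~$\Delta_{i}$, so $\operatorname{Fix}_{\Omega}(\bar{N}) = \emptyset$, giving $T = \Omega$ and $B \leq N$. The crux is the transfer carried out in the previous paragraph, which replaces the subgroup $N \normal K$ inside~$W$ by the normal subgroup~$\bar{N}$ of~$L$ acting on~$\Omega$, so that the subprimitivity hypothesis can be applied.
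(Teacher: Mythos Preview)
Your proof is correct and follows essentially the same architecture as the paper's: establish $B \leq K$, pass to $L = K/B \normal H$ and $\bar{N} = NB/B \normal L$, use subprimitivity (together with transitivity of $H$ permuting the $L$\nbd orbits) to show $\bar{N}$ has no fixed points on~$\Omega$, and deduce $B \leq N$.

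The only difference is in how the last deduction is carried out. The paper decomposes $B = Q_{1} \times \dots \times Q_{m}$ along the $\bar{N}$\nbd orbits and argues that each~$Q_{j}$ is a minimal normal subgroup of~$BN$; if some $Q_{j} \nleq N$ then $Q_{j} \cap N = \1$, so $[Q_{j},N] = \1$, contradicting the fact that $\bar{N}$ moves the direct factors of~$Q_{j}$. You instead identify $N \cap B$ as a sub-product $\prod_{\omega \in T} X_{\omega}$ and use the explicit commutator $[\beta,x]$ with $\beta$ supported at a single moved point to force that point into~$T$, obtaining $T \supseteq \Omega \setminus \operatorname{Fix}_{\Omega}(\bar{N}) = \Omega$ directly. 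Your route is slightly more hands-on and avoids the minimality argument; the paper's route packages the commutator step into the structural observation about minimal normal subgroups of~$BN$. Both are short and neither offers a real advantage over the other.
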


\begin{proof}
  Write $\pi \colon W \to H$ for the natural map associated to the
  wreath product.  Since $H$~acts transitively and faithfully
  on~$\Omega$, it easily follows that $B$~is the unique minimal normal
  subgroup of~$W$.  Therefore $B \leq K$, so we may write $K = B
  \rtimes L$ where $L$~is a normal subgroup of~$H$.  Write $\Omega =
  \Gamma_{1} \cup \Gamma_{2} \cup \dots \cup \Gamma_{k}$ as the
  disjoint union of the orbits of~$L$.  Since $H$~is assumed to act
  subprimitively, $L$~acts faithfully on each~$\Gamma_{i}$.

  Since $N \nleq B$ by hypothesis, $M = N\pi$ is a non-trivial normal
  subgroup of~$L$, so the orbits of~$M$ on~$\Gamma_{i}$ form a block
  system for~$L$.  Consequently, $M$~must act without fixed points on
  each~$\Gamma_{i}$, as otherwise $M$~would fix all points
  of~$\Gamma_{i}$ and then lie in the kernel of the action of~$L$
  on~$\Gamma_{i}$.  Therefore $M$~acts without fixed points
  on~$\Omega$.  Let us write
  \[
  B = Q_{1} \times Q_{2} \times \dots \times Q_{m}
  \]
  where each $Q_{j} = X^{\Delta_{j}}$ corresponds to an
  orbit~$\Delta_{j}$ of~$M$ on~$\Omega$.  Let us suppose, for a
  contradiction, that $B \nleq N$.  Then $Q_{j} \nleq N$ for some~$j$.
  Since $M$~permutes the factors of~$Q_{j}$ transitively, $Q_{j}$~is a
  minimal normal subgroup of $BM = BN$.  However, $B \leq K$ so
  $B$~normalizes~$N$ and hence $Q_{j} \cap N$~is normal in~$BN$.  We
  deduce that $Q_{j} \cap N = \1$ and hence $[Q_{j},N] = \1$.  This
  implies that $BN$~fixes all the direct factors of~$Q_{j}$, which is
  a contradiction.  This establishes that $B \leq N$, as claimed.
\end{proof}

\begin{cor}
  \label{cor:wreath-HJI}
  \begin{enumerate}
  \item \label{i:W-HJI}
    The group $W = \dirlim W_{n}$ is hereditarily just infinite.
  \item \label{i:What-HJI}
    The profinite group $\hat{W} = \invlim W_{n}$ is hereditarily just
    infinite.
  \end{enumerate}
\end{cor}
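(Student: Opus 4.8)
The goal is Corollary~\ref{cor:wreath-HJI}: that $W=\dirlim W_n$ is hereditarily just infinite (discrete) and $\hat W=\invlim W_n$ is hereditarily just infinite (profinite). The plan is to prove both parts simultaneously, since the group-theoretic heart is the same and only the topological bookkeeping differs. First I would reduce, via Lemma~\ref{lem:HJNNF-normalredn} (with $c=0$, so that ``\JNNcF'' reads ``just infinite'') or directly, to showing: (i) $W$ (resp.\ $\hat W$) is infinite and has no non-trivial finite normal subgroup, and (ii) every normal subgroup of finite index is just infinite. Actually the cleanest route is to show directly that \emph{every} non-trivial (closed) normal subgroup of $W$ (resp.\ $\hat W$) has finite index, and that the same holds for every (closed) subgroup of finite index; this is exactly ``hereditarily just infinite''. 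Note $W$ is infinite since the $W_n$ form a strictly increasing chain, and $\hat W$ is infinite for the same reason; and neither is virtually abelian — indeed Fitting-free — because each $W_n$ contains the non-abelian simple group $X_0$ and, more relevantly, $W$ maps onto each $W_n$ with the base groups giving arbitrarily large non-abelian chief factors.

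\textbf{The key step.}
The core is to iterate Lemma~\ref{lem:Wn-subnormal}. Suppose $N$ is a non-trivial (closed) normal subgroup of $W$ (resp.\ $\hat W$). For each $n$, write $\pi_n\colon W\to W_n$ (resp.\ $\hat W\to W_n$) for the natural map and $N_n=N\pi_n$, a normal subgroup of $W_n$. Since $W=\invlim W_n$ in the profinite case (and in the discrete case $W=\bigcup W_n$ with $W_{n-1}\leq W_n$), a non-trivial $N$ must have $N_n\neq\1$ for some $n$, hence for all larger $n$. Fix such an $n_0$. Now $W_n=X_n\wr_{\Omega_{n-1}}W_{n-1}$ with base group $B_n$. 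If $N_{n}\nleq B_n$, then taking $K=W_n$ and the normal subgroup $N_n$ of $K$ in Lemma~\ref{lem:Wn-subnormal}, we get $B_n\leq N_n$. If instead $N_n\leq B_n$, then $N_n$ projects trivially to $W_{n-1}$; but $N_n$ is normal in $W_n$, and the conjugation action of $W_{n-1}$ on $B_n=X_n^{\Omega_{n-1}}$ permutes the coordinate factors transitively (the action on $\Omega_{n-1}$ being transitive), so any non-trivial $W_n$-normal subgroup of $B_n$ projects onto each factor $X_n$ and — since $X_n$ is simple — in fact equals $B_n$ (using that $B_n$ is the unique minimal normal subgroup of $W_n$, as recorded in the proof of Lemma~\ref{lem:Wn-subnormal}). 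Either way $B_n\leq N_n$. Hence $W_n/N_n$ is a quotient of $W_n/B_n\cong W_{n-1}$, and so $\ker\pi_n\leq N$ would give $W/N$ a quotient of $W_{n-1}$, which is finite. The remaining point is to pass from ``$B_n\leq N_n$ for all large $n$'' to ``$\ker\pi_{n_0}\leq N$'' (profinite case) or ``$N\supseteq W_{n_0}'$-type conclusion'' (discrete case): in the profinite case, $\ker\pi_n$ is generated topologically by the base groups appearing at levels $>n$, i.e.\ $\ker\pi_n = \invlim(\ker(W_m\to W_n))$, and each such kernel is built from the $B_m$, $m>n$; a short induction using $B_m\leq N_m$ for all $m>n_0$ shows $\ker\pi_{n_0}\leq N$, whence $N$ has finite index. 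In the discrete case, $\ker(W\to W_n)\cap W_m$ for $m>n$ is the corresponding product of base groups, all of which lie in $N$ once $m>n_0$, so again $\ker\pi_{n_0}\leq N$ and $[W:N]\leq|W_{n_0}|<\infty$.

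\textbf{Heredity.}
To get \emph{hereditarily} just infinite, I would invoke Lemma~\ref{lem:HJNNF-normalredn}: it suffices that every normal subgroup of finite index $L$ in $W$ (resp.\ $\hat W$) is just infinite, and for that we need $W$ Fitting-free (clear: a non-trivial abelian normal subgroup would, by the above, have finite index, forcing $W$ virtually abelian, contradicting that $W_{n-1}$ and hence $W$ has the non-abelian simple section $B_n/\1$ for all $n$) together with $L$ just infinite. So let $L\normal W$ have finite index and let $N$ be a non-trivial (closed) normal subgroup of $L$. Since $W/L$ is finite, $L\supseteq\ker\pi_m$ for some $m$; increasing $m$, I may assume $L\supseteq\ker\pi_{n}$ for all large $n$ — no, more carefully: $L$ contains $\ker\pi_m$ for a single $m$, hence $L$ contains the base groups $B_j$ for all $j>m$. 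Now $N^W$, the normal closure of $N$ in $W$, is non-trivial normal in $W$, so by the main step $\ker\pi_{k}\leq N^W$ for some $k$; but $N^W$ is a product of finitely many $W$-conjugates of $N$, each contained in $L$ and normal in $L$, and distinct conjugates of a subgroup of the ``upper'' base groups commute appropriately — so I would instead argue: $N\cap\ker\pi_j$ is a normal subgroup of $L$ for each $j>m$, and since $L\supseteq B_j$ acts on the relevant base-group layers as $W_j$ does, Lemma~\ref{lem:Wn-subnormal} applies with $K=L\pi_j$ (noting $B_j\leq L\pi_j$) to force $B_j\leq N\pi_j$ once $N\pi_j\nleq B_j$; and $N\pi_j\leq B_j$ for all large $j$ is impossible since then $N$ would centralise $B_k$ for $k\gg j$ hence lie in $\Cent{W}{B_k}$, which one checks is trivial because $B_k$ generates $W$ ``from level $k$ upward'' — in fact $\Cent{W}{B_k}\cap W_k$ is trivial and $B_k$ is normal, giving $\Cent{W}{B_k}=\1$. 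So $B_j\leq N\pi_j$ for all large $j$, and as before $\ker\pi_{j_0}\leq N$ for some $j_0$, so $N$ has finite index in $L$, so $L/N$ is finite and $L$ is infinite (it contains $B_j$, $j>m$), so $L$ is just infinite. Then Lemma~\ref{lem:HJNNF-normalredn} finishes both parts.

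\textbf{Main obstacle.}
The delicate point is \emph{not} the single-level application of Lemma~\ref{lem:Wn-subnormal} but the bookkeeping that converts ``the base group at level $j$ lies inside the image $N\pi_j$ for every large $j$'' into ``some $\ker\pi_{j_0}$ lies inside $N$ itself'' — i.e.\ lifting from all finite quotients back to $W$ (or $\hat W$). In the profinite case this is a clean inverse-limit argument (a nested system of kernels, each engulfed), but in the discrete direct-limit case one must be careful that $\ker(W\to W_n)=\bigcup_{m>n}\ker(W_m\to W_n)$ and that this ascending union is genuinely absorbed by $N$ once we know each finite stage $\ker(W_m\to W_n)$ — a product of base groups $B_{n+1},\dots,B_m$ — lies in $N$; this works precisely because $B_j\leq N$ for all $j>n_0$ and $\ker(W_m\to W_n)=\langle B_{n+1},\dots,B_m\rangle^{W_m}$ sits inside the subgroup generated by those $B_j$. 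A secondary subtlety is verifying $\Cent{W}{B_k}=\1$ (and its profinite analogue), needed to rule out the case $N\pi_j\leq B_j$ for all large $j$; this follows from transitivity of the action at each level together with $B_k$ being the unique minimal normal subgroup of $W_k$, so that no non-trivial element of $W$ centralises $B_k$ for all $k$. Once these two are in hand, the corollary is immediate.
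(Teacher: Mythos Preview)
Your plan is workable but substantially more convoluted than necessary, and one step is not correct as written. The paper's proof is a single clean pass: take $K$ a normal subgroup of finite index and $N$ a non-trivial normal subgroup of~$K$, and apply Lemma~\ref{lem:Wn-subnormal} directly in its full subnormal form ($N\cap W_n \normal K\cap W_n \normal W_n$ in the discrete case, $N\pi_n \normal K\pi_n \normal W_n$ in the profinite case). You instead split into two stages --- first show just infinite with $K=W$, then bootstrap to heredity via Lemma~\ref{lem:HJNNF-normalredn} --- which forces you to redo essentially the same argument twice. The whole point of stating Lemma~\ref{lem:Wn-subnormal} for $N\normal K\normal W$ rather than merely $N\normal W$ is precisely to handle heredity in one shot.

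Two specific issues. First, your case split ``$N_n\leq B_n$ versus $N_n\nleq B_n$'' is unnecessary: once $N\pi_{n_0}\neq\1$, the second case holds for \emph{all} $n>n_0$, simply because $B_n=\ker\rho_{n-1}$, so $N\pi_n\leq B_n$ would force $N\pi_{n-1}=\1$ and hence (by descent) $N\pi_{n_0}=\1$. This same observation replaces your centralizer argument in the heredity step, which as written is incorrect: the implication ``$N\pi_j\leq B_j$ for all large~$j$ $\Rightarrow$ $N$ centralises $B_k$ for $k\gg j$'' does not hold --- lying in a base group says nothing about centralizing one. What you actually get from $N\pi_j\leq B_j$ for all large~$j$ is $N\pi_{j-1}=\1$ for all large~$j$, hence $N\leq\bigcap_j\ker\pi_j=\1$, a contradiction; no centralizer analysis is needed. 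Second, in the discrete case the paper works with intersections $N\cap W_n$ rather than images: since $W_k\leq W_n$ sits in the top group for $n>k$, one has $W_k\cap B_n=\1$, so a non-trivial element of $N\cap W_k$ witnesses $N\cap W_n\nleq B_n$ immediately, and then $B_n\leq N\cap W_n\leq N$; this avoids your lifting bookkeeping entirely, because the conclusion $B_n\leq N$ is obtained directly rather than only in the quotient.
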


\begin{proof}
  \ref{i:W-HJI}~Let $K$~be a normal subgroup of finite index in~$W$
  and $N$~be a non-trivial normal subgroup of~$K$.  Then $N \cap W_{k}
  \neq \1$ for some~$k$.  Consequently $N \cap W_{n} \nleq B_{n}$ for
  all $n \geq k+1$.  Applying Lemma~\ref{lem:Wn-subnormal} with~$W =
  W_{n}$, we deduce $B_{n} \leq N \cap W_{n}$ for each $n \geq k+1$.
  Hence $\langle B_{k+1}, B_{k+2}, \dots \rangle$~is contained in~$N$
  and the former is the kernel of the surjective homomorphism $W \to
  W_{k}$.  It follows that $K/N$~is finite and this shows that $W$~is
  hereditarily just infinite.

  \ref{i:What-HJI}~We shall write $\pi_{n} \colon \hat{W} \to W_{n}$
  for the surjective homomorphisms associated with the inverse limit.
  Let $K$~be an open normal subgroup of~$\hat{W}$ and $N$~be a
  non-trivial closed normal subgroup of~$K$.  Then $N\pi_{k} \neq \1$
  for some~$k$.  Now $N\pi_{n} \normal K\pi_{n} \normal W_{n}$ and
  $N\pi_{n} \nleq B_{n}$ for all $n \geq k+1$.  Hence by
  Lemma~\ref{lem:Wn-subnormal}, \ $B_{n} \leq N\pi_{n}$ for all $n
  \geq k+1$; that is, $\ker\rho_{n-1} \leq N\pi_{n}$ for all $n \geq
  k+1$.  It follows that $\ker\pi_{n-1} \leq N \ker\pi_{n}$ for all $n
  \geq k+1$.  As the kernels form a neighbourhood base for the
  identity in~$\hat{W}$, we conclude that
  \[
  \ker\pi_{k} \leq \bigcap_{n=0}^{\infty} N \ker\pi_{n} = \overline{N}
  = N.
  \]
  Since $\hat{W}/\ker\pi_{k} \cong W_{k}$ is finite, it follows that
  $K/N$~is finite.  This establishes that $\hat{W}$~is hereditarily
  just infinite.
\end{proof}

We now specify the examples of subprimitive actions that we shall use
and construct abelian groups of automorphisms of the iterated wreath
products.

\begin{example}
  \label{ex:wreath}
  As before, let $X_{0}$,~$X_{1}$, $X_{2}$, \dots\ be a sequence of
  non-abelian finite simple groups.  Define $W_{0} = X_{0}$,
  \ $\Omega_{0} = X_{0}$ also, and let $W_{0}$~act regularly
  on~$\Omega_{0}$.  We shall also define $B_{0} = W_{0}$ for use
  later.  Suppose that, for $n \geq 1$, we have constructed~$W_{n-1}$
  with a specified action on a set~$\Omega_{n-1}$.  As above, define
  $W_{n} = X_{n} \wr_{\Omega_{n-1}} W_{n-1}$ and write $B_{n} =
  X_{n}^{\, \Omega_{n-1}}$ for its base group.  There are then two
  options for the action of~$W_{n}$ on some set~$\Omega_{n}$:
  \begin{description}
  \item[(R)] Take $\Omega_{n} = W_{n}$ and let $W_{n}$~act regularly
    upon~$\Omega_{n}$; or
  \item[(P)] let $X_{n}$~act regularly on itself and use the
    \emph{product action} of~$W_{n}$ on $\Omega_{n} = B_{n} =
    X_{n}^{\, \Omega_{n-1}}$.
  \end{description}
\end{example}

For more information upon the product action of a wreath product see,
for example,~\cite[Section~2.7]{DM}.  In the case~(P) of the product
action, the elements of~$B_{n}$ act regularly on the set~$\Omega_{n}$
while the elements of~$W_{n-1}$ act to permute the factors; that is,
the action of~$W_{n-1}$ on~$\Omega_{n}$ coincides with the conjugation
action of~$W_{n-1}$ on the base group~$B_{n}$ of~$W_{n}$.  It is
immediate from the definition that the regular action of~$W_{n}$ is
subprimitive.  The product action is faithful and transitive and the
following ensures shows that it is a valid choice for our
construction.

\begin{lemma}
  Let $X$~be a non-abelian finite simple group acting regularly upon
  itself and $H$~be a transitive permutation group on a finite
  set~$\Omega$.  Then the product action of $W = X \wr_{\Omega} H$ on
  the base group $B = X^{\Omega}$ is subprimitive.
\end{lemma}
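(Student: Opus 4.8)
The plan is to exploit the fact that, in the product action, the base group $B = X^{\Omega}$ already acts \emph{regularly} on the underlying set; this forces every non-trivial normal subgroup of~$W$ to act transitively, so that the subprimitivity requirement collapses to a single faithfulness statement.

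First I would fix notation, writing $B = X^{\Omega} = \prod_{\omega \in \Omega} X_{\omega}$, so that $W = B \rtimes H$, and writing $\Sigma$ for the set~$B$ equipped with the product action: as recalled before the lemma, $B$~acts regularly on~$\Sigma$ while $H$~acts by conjugation, permuting the direct factors~$X_{\omega}$ according to its action on~$\Omega$. Recall that to prove that $W$~acts subprimitively on~$\Sigma$ one must show that every normal subgroup~$K$ of~$W$ acts faithfully on each of its orbits. Two preliminary observations do the bulk of the work. The first is that $\Cent{W}{B} = \1$: if $bh \in \Cent{W}{B}$ with $b \in B$ and $h \in H$, then conjugation by~$bh$ permutes the non-trivial factors~$X_{\omega}$ according to the action of~$h$ on~$\Omega$, so its triviality on~$B$ forces $h$~to fix~$\Omega$ pointwise; hence $h = 1$ since $H$~is a (faithful) permutation group on~$\Omega$, and then $b \in \Centre{B} = \Centre{X}^{\Omega} = \1$ because $X$~is non-abelian simple. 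The second is that $B$~is the unique minimal normal subgroup of~$W$: since $H$~acts transitively and faithfully on~$\Omega$, this is exactly the observation made in the proof of Lemma~\ref{lem:Wn-subnormal} (the only $W$-invariant subgroups of~$B$ are $\1$ and~$B$, and a minimal normal subgroup meeting~$B$ trivially would lie in $\Cent{W}{B} = \1$).

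With these in hand I would argue as follows. Let $K \normal W$; the case $K = \1$ is trivial, so assume $K \neq \1$. As $W$~is finite, $K$~contains a minimal normal subgroup of~$W$, which must be~$B$, so $B \leq K$. Since $B$~acts transitively (indeed regularly) on~$\Sigma$, so does~$K$, and therefore $\Sigma$~is the only $K$-orbit. Let $N$~be the kernel of the action of~$K$ on~$\Sigma$. Then $N \normal K$ and $N \cap B = \1$, the latter because $B$~acts regularly on~$\Sigma$. Since $B \normal K$ and $N \normal K$, we get $[N,B] \leq N \cap B = \1$, hence $N \leq \Cent{K}{B} \leq \Cent{W}{B} = \1$. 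Thus $K$~acts faithfully on its unique orbit~$\Sigma$, which is what was required.

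I do not expect a genuine obstacle: the argument is short once the product action is correctly identified with ``$B$ regular, $H$ by conjugation''. The only point needing care is the second preliminary observation, that $B$~is the unique minimal normal subgroup of~$W$; this uses that $X$~is non-abelian simple --- so that $X^{\Omega}$ is centreless and has no normal subgroups other than subproducts of the~$X_{\omega}$ --- together with transitivity of~$H$ on~$\Omega$, exactly as already used in the proof of Lemma~\ref{lem:Wn-subnormal}.
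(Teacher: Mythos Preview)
Your proof is correct and follows essentially the same approach as the paper: both arguments use that $B$~is the unique minimal normal subgroup of~$W$, conclude $B \leq K$ for any non-trivial $K \normal W$, and then use regularity of~$B$ to deduce that $K$~has a single orbit, namely all of~$\Sigma$. The paper finishes in one line by observing that since the product action of~$W$ on~$\Sigma$ is already faithful and the unique $K$-orbit is all of~$\Sigma$, faithfulness of~$K$ on that orbit is immediate; your detour through $\Cent{W}{B} = \1$ is correct but unnecessary for this reason.
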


\begin{proof}
  By transitivity of~$H$ on~$\Omega$, $B$~is the unique minimal normal
  subgroup of~$W$.  Consequently, if $K$~is a normal subgroup of~$W$
  then $B \leq K$.  In the product action, $B$~acts regularly and
  hence $K$~is transitive on~$B$.  Thus, as the product action is
  faithful, it follows that the action of~$K$ on the only $K$\nbd
  orbit is also faithful.
\end{proof}

Corollary~\ref{cor:wreath-HJI} therefore applies and tells us that $W
= \dirlim W_{n}$ and $\hat{W} = \invlim W_{n}$ are hereditarily just
infinite.  We shall now construct some examples of abelian subgroups
of the automorphism groups of these groups.  There has been much study
of automorphism groups of wreath products (see, for
example,~\cite{MH}), but our requirement is simply to produce some
automorphisms that commute and so we choose not to use the full power
of such studies.

Suppose that, for each~$i \geq 0$, $\phi_{i}$~is an automorphism of
the simple group~$X_{i}$.  We take $\psi_{0} = \phi_{0}$.  Suppose
that at stage~$n-1$, we have constructed an automorphism~$\psi_{n-1}$
of~$W_{n-1}$.  Since the action of~$W_{n-1}$ on~$\Omega_{n-1}$ is
either regular or the product action (with $\Omega_{n-1} = B_{n-1}$ in
the latter case), $\psi_{n-1}$~induces a permutation of~$\Omega_{n-1}$
(that we also denote by~$\psi_{n-1}$) with the property that
\begin{equation}
  (\omega^{y})\psi_{n-1} = (\omega\psi_{n-1})^{y\psi_{n-1}}
  \label{eq:autom-act}
\end{equation}
for all $\omega \in \Omega_{n-1}$ and $y \in W_{n-1}$.  We define a
bijection $\psi_{n} \colon W_{n} \to W_{n}$ as follows:
\[
\psi_{n} \colon (x_{\omega}) y \mapsto
\bigl( (x_{\omega\psi_{n-1}^{\, -1}})\phi_{n} \bigr) (y\psi_{n-1})
\]
where $x_{\omega} \in X_{n}$ for each $\omega \in \Omega_{n-1}$ and $y
\in W_{n-1}$.  (Here we are writing elements of the base group~$B_{n}$
as sequences~$(x_{\omega})$ indexed by~$\Omega_{n-1}$ with $x_{\omega}
\in X_{n}$ in the $\omega$\nbd coordinate.)  Thus the effect
of~$\psi_{n}$ on elements in the base group is to apply~$\phi_{n}$ to
each coordinate \emph{and} permute the coordinates using the
permutation~$\psi_{n-1}$ of~$\Omega_{n-1}$, while we simply apply the
previous automorphism~$\psi_{n-1}$ to elements in the
complement~$W_{n-1}$.  It is a straightforward calculation to verify
that the resulting map is an automorphism of~$W_{n}$ and by
construction it restricts to~$\psi_{n-1}$ on the subgroup~$W_{n-1}$.
(Indeed, in the case~(R), the group~$W_{n}$ is the standard wreath
product of~$X_{n}$ by~$W_{n-1}$.  If we write $\phi = \phi_{n}$ and
$\beta = \psi_{n-1}$, then $\psi_{n} = \phi^{\ast}\beta^{\ast}$ is the
composite of the automorphisms $\phi^{\ast}$~and~$\beta^{\ast}$
introduced on pages~474 and~476, respectively, of~\cite{NeuNeu}.  The
verification for the product action case~(P) is similarly
straightforward and depends primarily on
Equation~\eqref{eq:autom-act}.)

The final result is that, for each~$n$, we have constructed an
automorphism~$\psi_{n}$ of~$W_{n}$ that extends all the previous
automorphisms.  As a consequence, we certainly have determined an
automorphism~$\psi$ of~$W$ whose restriction to each~$W_{n}$ coincides
with~$\psi_{n}$ and an automorphism~$\hat{\psi}$ of the
group~$\hat{W}$ such that $\hat{\psi} \pi_{n} = \pi_{n} \psi_{n}$ for
each~$n$ (where, as above, we write $\pi_{n} \colon \hat{W} \to W_{n}$
for the surjective homomorphism determined by the inverse limit).  The
key properties of the automorphisms that we have constructed are as
follows:

\begin{lemma}
  \label{lem:wreath-psi}
  Let $(\phi_{i}), (\phi_{i}')$ be sequences of automorphisms with
    $\phi_{i}, \phi_{i}' \in \Aut X_{i}$ for each~$i$.  Define
  $\psi$~and~$\hat{\psi}$ to be the automorphisms of\/
  $W$~and~$\hat{W}$ determined by the sequence~$(\phi_{i})$ and
  $\psi'$~and~$\hat{\psi}'$ those determined by~$(\phi_{i}')$.  Then
  \begin{enumerate}
  \item \label{i:psihat-cont}
    $\hat{\psi}$~is a continuous automorphism of\/~$\hat{W}$;
  \item \label{i:psi-comp}
    $\psi\psi'$~and~$\hat{\psi}\hat{\psi}'$ are the automorphisms of\/
    $W$~and~$\hat{W}$, respectively, determined by the
    sequence~$(\phi_{i}\phi_{i}')$;
  \item \label{i:psi-outer}
    if, for some $n \geq 0$, \ $\phi_{0}$,~$\phi_{1}$,
    \dots,~$\phi_{n-1}$ are the identity maps and $\phi_{n}$~is an
    outer automorphism of~$X_{n}$, then $\psi$~is an outer
    automorphism of\/~$W$ and $\hat{\psi}$~is an outer automorphism
    of\/~$\hat{W}$.
  \end{enumerate}
\end{lemma}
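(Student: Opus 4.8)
The plan is to handle the three parts separately; parts~\ref{i:psihat-cont} and~\ref{i:psi-comp} are bookkeeping, while part~\ref{i:psi-outer} carries the real content. For~\ref{i:psihat-cont}, I would use the defining relation $\hat{\psi}\pi_{n} = \pi_{n}\psi_{n}$ together with the fact that each $\psi_{n}$ is a bijection of~$W_{n}$: then $g\hat{\psi}\in\ker\pi_{n}$ precisely when $(g\pi_{n})\psi_{n}=1$, i.e.\ precisely when $g\pi_{n}=1$, so each $\ker\pi_{n}$ is $\hat{\psi}$-invariant. Since the $\ker\pi_{n}$ form a base of open neighbourhoods of the identity in~$\hat{W}$, the preimage under~$\hat{\psi}$ of every basic neighbourhood is open, so $\hat{\psi}$ is continuous, and hence (its inverse being the automorphism built from~$(\phi_{i}^{-1})$, continuous by the same reasoning) a topological automorphism.

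For~\ref{i:psi-comp}, I would argue by induction on the level~$n$ that, writing $\psi_{n}''$ for the level-$n$ automorphism built from the sequence~$(\phi_{i}\phi_{i}')$, one has $\psi_{n}\psi_{n}'=\psi_{n}''$ both as automorphisms of~$W_{n}$ and as the induced permutations of~$\Omega_{n}$. The base case is $\psi_{0}\psi_{0}'=\phi_{0}\phi_{0}'$, and the inductive step is a direct substitution into the defining formula for~$\psi_{n}$: applying $\phi_{n}$ and then $\phi_{n}'$ coordinatewise yields $\phi_{n}\phi_{n}'$ coordinatewise, relabelling coordinates by $\psi_{n-1}^{-1}$ and then by $\psi_{n-1}'^{-1}$ amounts to relabelling by $(\psi_{n-1}\psi_{n-1}')^{-1}=(\psi_{n-1}'')^{-1}$ by the inductive hypothesis, and $y\psi_{n-1}\psi_{n-1}'=y\psi_{n-1}''$; this is exactly the formula defining~$\psi_{n}''$. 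Passing to the direct limit gives $\psi\psi'=\psi''$ on~$W$, and then $\hat{\psi}\hat{\psi}'\pi_{n}=\pi_{n}\psi_{n}\psi_{n}'=\pi_{n}\psi_{n}''=\hat{\psi}''\pi_{n}$ for all~$n$ forces $\hat{\psi}\hat{\psi}'=\hat{\psi}''$ because the maps~$\pi_{n}$ separate the points of~$\hat{W}$.

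For~\ref{i:psi-outer} the key reduction is that it suffices to prove the level-$n$ automorphism~$\psi_{n}$ is an \emph{outer} automorphism of the \emph{finite} group~$W_{n}$. Granting this: if $\psi=\tau_{g}$ were inner on~$W$, with $g\in W_{m}$ (we may assume $m\geq n$, else replace~$m$ by~$n$), then applying the natural surjective homomorphism $\sigma\colon W_{m}\to W_{n}$—which restricts to the identity on the subgroup~$W_{n}$—to the identity $u\psi_{n}=u^{g}$ (valid for $u\in W_{n}$) gives $u\psi_{n}=u^{g\sigma}$ with $g\sigma\in W_{n}$, contradicting outerness of~$\psi_{n}$; likewise if $\hat{\psi}=\tau_{g}$ were inner on~$\hat{W}$, applying~$\pi_{n}$ to $\hat{\psi}\pi_{n}=\pi_{n}\psi_{n}$ yields $u\psi_{n}=u^{g\pi_{n}}$ for all $u\in W_{n}$, the same contradiction. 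To show $\psi_{n}$ is outer, note first that since $\phi_{0},\dots,\phi_{n-1}$ are trivial, an easy induction gives $\psi_{n-1}=\mathrm{id}_{W_{n-1}}$ and that $\psi_{n-1}$ induces the identity permutation of~$\Omega_{n-1}$; hence $\psi_{n}$ fixes the complement~$W_{n-1}$ pointwise and acts on the base group $B_{n}=X_{n}^{\Omega_{n-1}}$ coordinatewise by~$\phi_{n}$. If $\psi_{n}=\tau_{g}$ with $g\in W_{n}=B_{n}\rtimes W_{n-1}$, then $g$ centralizes~$W_{n-1}$; applying~$\rho_{n}$ and using that $W_{n-1}$ is centreless (a short induction from the facts that each~$X_{i}$ is non-abelian simple and each action is faithful) forces $g\in B_{n}$, and then transitivity of~$W_{n-1}$ on the coordinates forces $g$ to be a constant tuple $(x,x,\dots,x)$ with $x\in X_{n}$. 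But conjugation by such a~$g$ acts on each coordinate of~$B_{n}$ as the inner automorphism~$\tau_{x}$ of~$X_{n}$, so comparing with the coordinatewise action of~$\psi_{n}$ gives $\phi_{n}=\tau_{x}\in\Inn{X_{n}}$, contradicting that~$\phi_{n}$ is outer.

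The main obstacle is organizing part~\ref{i:psi-outer} cleanly: one must pass carefully between the infinite groups~$W$, $\hat{W}$ and the finite levels~$W_{n}$ (hence the roles of the projection~$\sigma$ and of the maps~$\pi_{n}$), and one needs the two auxiliary facts—that every~$W_{n}$ is centreless, and that the fixed points of a transitive coordinate action on a direct power are exactly the diagonal—both of which are routine inductions but must be recorded. Parts~\ref{i:psihat-cont} and~\ref{i:psi-comp} present no real difficulty.
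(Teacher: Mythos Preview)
Your proof is correct and follows essentially the same approach as the paper's. The only minor variation is in part~\ref{i:psi-outer}: the paper establishes inductively that $\psi_{m}\notin\Inn{W_{m}}$ for \emph{every} $m\geq n$ (the inductive step being that if $\psi_{m+1}$ were conjugation by~$bz$ with $b\in B_{m+1}$, $z\in W_{m}$, then $b$ centralizes~$W_{m}$ and so $\psi_{m}$ would be conjugation by~$z$) and then the outerness of $\psi$ and~$\hat\psi$ is immediate, whereas you prove outerness only at level~$n$ and then push down via the projections $W_{m}\to W_{n}$ and $\pi_{n}\colon\hat W\to W_{n}$; both routes work and rely on the same two auxiliary facts ($\Centre{W_{n-1}}=\1$ and the diagonal description of $\Cent{B_{n}}{W_{n-1}}$), which the paper also uses but leaves implicit.
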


\begin{proof}
  \ref{i:psihat-cont}~By construction, $\hat{\psi}$~fixes the
  kernels~$\ker\pi_{n}$ associated to the inverse limit.  These form a
  neighbourhood base for the identity and so we deduce that
  $\hat{\psi}$~is continuous.

  \ref{i:psi-comp}~For each~$n$, write $\psi_{n}$~and~$\psi'_{n}$ for
  the automorphisms of~$W_{n}$ determined by the sequences
  $(\phi_{i})$~and~$(\phi'_{i})$, respectively.  One computes that,
  for $n \geq 1$, the composite~$\psi_{n}\psi'_{n}$ is given by
  \[
  (x_{\omega})y \mapsto \bigl(
  (x_{\omega(\psi_{n-1}')^{-1}\psi_{n-1}^{\, -1}})\phi_{n}\phi'_{n}
  \bigr) (y\psi_{n-1}\psi'_{n-1}) =
  \bigl( (x_{\omega(\psi_{n-1}\psi'_{n-1})^{-1}})\phi_{n}\phi'_{n}
  \bigr) (y\psi_{n-1}\psi'_{n-1}).
  \]
  A straightforward induction argument then shows that
  $\psi_{n}\psi'_{n}$~is the automorphism of~$W_{n}$ determined by the
  sequence~$(\phi_{i}\phi'_{i})$.  The claim appearing in the lemma
  then follows.

  \ref{i:psi-outer}~Suppose that $\phi_{0}$,~$\phi_{1}$,
  \dots,~$\phi_{n-1}$ are the identity and that $\phi_{n} \notin
  \Inn{X_{n}}$.  We claim that $\psi_{m} \notin \Inn W_{m}$ for all $m
  \geq n$.  The first of these automorphisms is given by $\bigl(
  (x_{\omega})_{\omega \in \Omega_{n-1}}  \cdot y \bigr)\psi_{n} =
  (x_{\omega}\phi_{n})_{\omega \in \Omega_{n-1}} \cdot y$ for
  $x_{\omega} \in X_{n}$ and $y \in W_{n-1}$.  Suppose that
  $\psi_{n}$~is produced by conjugating by the element~$bz$ where $b
  \in B_{n}$ and $z \in W_{n-1}$.  Note that
  $\psi_{n}$~fixes~$W_{n-1}$ and hence $b$~normalizes~$W_{n-1}$.
  Since $y^{b} = [b,y^{-1}]y$ for all $y \in W_{n-1}$, we determine
  that $b$~centralizes~$W_{n-1}$.  Therefore $z \in \Centre{W_{n-1}} =
  \1$.  We then determine that $b = (b_{\omega})_{\omega \in
    \Omega_{n-1}}$ is the constant sequence and $\phi_{n}$~coincides
  with conjugation by the element~$b_{\omega}$, contrary to
  assumption.  Hence $\psi_{n}$~is an outer automorphism of~$W_{n}$.

  Now suppose, as an induction hypothesis, that $\psi_{m} \notin
  \Inn{W_{m}}$ for some $m \geq n$.  Suppose that $\psi_{m+1}$~is
  produced by conjugating by~$bz$ where $b \in B_{m+1}$ and $z \in
  W_{m}$.  Then $b$~fixes~$W_{m}$ and hence centralizes this
  subgroup.  Consequently, $\psi_{m}$, which is the restriction
  of~$\psi_{m+1}$ to~$W_{m}$ is given by conjugating by~$z$.  This
  contradicts the inductive hypothesis.  We conclude that
  $\psi_{m}$~is an outer automorphism for all~$m \geq n$.  It now
  immediately follows that $\psi$~is an outer automorphism of~$W$ and
  $\hat{\psi}$~is an outer automorphism of~$\hat{W}$.
\end{proof}

\begin{thm}
  \label{thm:wreath-A}
  Let $X_{0}$,~$X_{1}$, \dots\ be a sequence of non-abelian finite
  simple groups.  Define $W$~to be the direct limit and $\hat{W}$~to
  be the inverse limit of the wreath products~$W_{n}$ constructed as
  in Example~\ref{ex:wreath}.  Suppose that, for each~$i \geq 0$,
  $\phi_{i}$~is an automorphism of~$X_{i}$ such that
  $\langle\phi_{i}\rangle \cap \Inn{X_{i}} = \1$.  Then the group $A =
  \prod_{i=0}^{\infty} \langle \phi_{i} \rangle$ embeds naturally
  \begin{enumerate}
  \item \label{i:W-abelianautoms}
    as a subgroup of~$\Aut W$ such that $A \cap \Inn{W} = \1$;
  \item \label{i:What-abelianautoms}
    as a profinite subgroup of~$\Autc{\hat{W}}$ such that $A \cap
    \Inn{\hat{W}} = \1$.
  \end{enumerate}
\end{thm}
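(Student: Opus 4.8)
The plan is to build the embedding directly from the construction of the automorphisms $\psi$ and $\hat{\psi}$ preceding Lemma~\ref{lem:wreath-psi} and then read off every required property from that lemma. For an element $\alpha = (\alpha_{i})_{i \geq 0}$ of $A$, so that $\alpha_{i} \in \langle\phi_{i}\rangle \leq \Aut X_{i}$ for each~$i$, let $\Theta(\alpha)$ denote the automorphism of~$W$ and $\hat{\Theta}(\alpha)$ the automorphism of~$\hat{W}$ determined by the sequence~$(\alpha_{i})$ via that recursive recipe. By Lemma~\ref{lem:wreath-psi}\ref{i:psi-comp}, the automorphism determined by $(\alpha_{i}\alpha'_{i})$ is the composite of those determined by $(\alpha_{i})$ and by $(\alpha'_{i})$; hence $\Theta \colon A \to \Aut W$ and $\hat{\Theta} \colon A \to \Aut{\hat{W}}$ are group homomorphisms, and Lemma~\ref{lem:wreath-psi}\ref{i:psihat-cont} shows that the image of $\hat{\Theta}$ is contained in $\Autc{\hat{W}}$.

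Next I would establish injectivity together with the two triviality-of-intersection claims, all of which follow from a single observation. Suppose $\alpha \in A$ is non-trivial and let $n$ be least with $\alpha_{n} \neq 1$. Since $\alpha_{n}$ is a non-identity element of~$\langle\phi_{n}\rangle$ and $\langle\phi_{n}\rangle \cap \Inn{X_{n}} = \1$, the automorphism~$\alpha_{n}$ is not inner; as $\alpha_{0}, \dots, \alpha_{n-1}$ are the identity, Lemma~\ref{lem:wreath-psi}\ref{i:psi-outer} shows that $\Theta(\alpha)$ is an outer automorphism of~$W$ and $\hat{\Theta}(\alpha)$ an outer automorphism of~$\hat{W}$. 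In particular neither equals the identity (which is an inner automorphism), so $\Theta$ and $\hat{\Theta}$ are injective; and neither image meets the inner automorphism group non-trivially. Identifying $A$ with its image, this yields $A \cap \Inn{W} = \1$ and $A \cap \Inn{\hat{W}} = \1$, establishing part~\ref{i:W-abelianautoms} in full and part~\ref{i:What-abelianautoms} except for the assertion that $\hat{\Theta}(A)$ is a \emph{profinite} subgroup of~$\Autc{\hat{W}}$.

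For that remaining point, observe first that $A = \prod_{i=0}^{\infty}\langle\phi_{i}\rangle$ is itself profinite, being a Cartesian product of finite groups. By construction $\hat{\Theta}(\alpha)$ fixes every kernel~$\ker\pi_{n}$ of the inverse system (compare the proof of Lemma~\ref{lem:wreath-psi}\ref{i:psihat-cont}) and induces on the finite quotient $W_{n} \cong \hat{W}/\ker\pi_{n}$ precisely the automorphism~$\psi_{n}$ determined by the truncation $(\alpha_{0}, \dots, \alpha_{n})$, which therefore depends on only finitely many coordinates of~$\alpha$. Since the subgroups~$\ker\pi_{n}$ form a neighbourhood base of the identity in~$\hat{W}$, the natural topology on $\Autc{\hat{W}}$ has the pointwise stabilisers of these subgroups as a neighbourhood base of the identity, and the previous sentence shows that $\alpha \mapsto \psi_{n}$ is locally constant, hence continuous, for each~$n$; therefore $\hat{\Theta}$ is continuous. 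A continuous injective homomorphism from a compact group into a Hausdorff topological group is a homeomorphism onto its image, so $\hat{\Theta}(A)$ is a closed subgroup of $\Autc{\hat{W}}$ that is topologically isomorphic to~$A$ and hence profinite, completing part~\ref{i:What-abelianautoms}.

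The whole argument is essentially an assembly of the three parts of Lemma~\ref{lem:wreath-psi}, and the only place where real care is needed is the continuity step of the last paragraph: one must make the natural topology on $\Autc{\hat{W}}$ explicit and verify that the coordinate maps $\alpha \mapsto \psi_{n}$ are continuous. I do not anticipate any other obstacle.
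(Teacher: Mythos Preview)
Your argument is correct and follows essentially the same route as the paper: define the map via the recursive construction, use Lemma~\ref{lem:wreath-psi}\ref{i:psi-comp} for the homomorphism property, Lemma~\ref{lem:wreath-psi}\ref{i:psi-outer} for injectivity and trivial intersection with inner automorphisms, and verify continuity of~$\hat{\Theta}$ by checking that preimages of basic open neighbourhoods in~$\Autc{\hat{W}}$ are open in~$A$. The paper phrases the continuity step slightly more concretely---identifying the preimage of the congruence subgroup $\Gamma_{n} = \set{\gamma}{[\hat{W},\gamma] \leq \ker\pi_{n}}$ as exactly $\prod_{i \geq n+1}\langle\phi_{i}\rangle$ and citing~\cite[Section~5.2]{DDMS} for the topology---but your locally-constant argument amounts to the same thing.
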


Combining this theorem with Theorem~\ref{thm:JI-automs} and
Corollary~\ref{cor:wreath-HJI} produces examples of hereditarily JNAF
discrete and profinite groups.

\begin{proof}
  \ref{i:W-abelianautoms}~Each element~$g$ of $A =
  \prod_{i=1}^{\infty} \langle\phi_{i}\rangle$ is a
  sequence~$(\phi_{i}^{\, k_{i}})$ of automorphisms.  Let
  $\psi_{g}$~denote the automorphism of~$W$ determined by this
  sequence.  By Lemma~\ref{lem:wreath-psi}\ref{i:psi-comp}, the map $g
  \mapsto \psi_{g}$ is a homomorphism $\theta \colon A \to \Aut W$.
  It is clearly injective while part~\ref{i:psi-outer} of the lemma
  ensures that the image satisfies $A\theta \cap \Inn W = \1$.

  \ref{i:What-abelianautoms}~As with the first part, each~$g$ in $A =
  \prod_{i=1}^{\infty} \langle\phi_{i}\rangle$ determines a continuous
  automorphism~$\hat{\psi}_{g}$ of~$\hat{W}$.  Hence there is an
  injective homomorphism $\theta \colon A \to \Autc{\hat{W}}$ given by
  $g \mapsto \hat{\psi}_{g}$.  The subgroups $\Gamma_{n} = \set{
    \gamma \in A\theta }{[\hat{W},\gamma] \leq \ker\pi_{n}}$, for $n
  \geq 0$, form a neighbourhood base for the identity in the subspace
  topology on~$A\theta$ (see~\cite[Section~5.2]{DDMS}) and the inverse
  image of~$\Gamma_{n}$ under~$\theta$ is $\prod_{i \geq n+1}
  \langle\phi_{i}\rangle$, which is open in the product topology
  on~$A$.  Hence $\theta$~is continuous and so its image is a
  profinite subgroup of~$\Autc{\hat{W}}$ that is topologically
  isomorphic to~$A$ and satisfies $A\theta \cap \Inn \hat{W} = \1$ by
  Lemma~\ref{lem:wreath-psi}\ref{i:psi-outer}.
\end{proof}

\begin{example}
  \label{ex:JNAF1}
  As a concrete example to finish our discussion of iterated wreath
  products, fix a prime number~$p$ and let $(n_{i})$~be a sequence of
  positive integers.  Take $X_{i} = \PSL{2}{p^{n_{i}}}$, so that
  $X_{i}$~has an outer automorphism~$\phi_{i}$ of order~$n_{i}$
  induced by the Frobenius automorphism of the finite
  field~$\Field{p^{n_{i}}}$.  Then Theorem~\ref{thm:wreath-A} shows
  that the group $A = \prod_{i=0}^{\infty} C_{n_{i}}$ appears as a
  subgroup of the automorphism group of  the direct limit~$W$ with $A
  \cap \Inn{W} = \1$ and as a profinite subgroup of~$\Autc{\hat{W}}$
  with $A \cap \Inn{\hat{W}} = \1$.

  Many examples of profinite groups occur as closed subgroups of such
  a Cartesian product.  For example, by taking a suitable
  enumeration~$(n_{i})$ of prime-powers, we can embed the Cartesian
  product of countably many copies of the profinite
  completion~$\hat{\Zint}$ of the integers in some suitable
  product~$A$ and hence use Theorem~\ref{thm:JI-automs} to construct a
  hereditarily JNAF profinite group of the form
  \[
  \left( \invlim W_{n} \right) \rtimes \prod_{i=0}^{\infty} \hat{\Zint}.
  \]
\end{example}

\subsection{Hereditarily \JNNcF\ groups via Wilson's Construction~B}

The second examples of hereditarily just infinite group that we shall
consider are those introduced by Wilson~\cite{JSW-Large} in his
Construction~B.  We recall this construction here in order that we can
describe some automorphisms of these groups.  We make one notational
adjustment to Wilson's recipe: When constructing the
group~$G_{n}$, he defines $s = \order{U_{n-1}}$ and views $G_{n-1} =
U_{n-1} \rtimes L_{n-1}$ as a subgroup of the symmetric group of
degree~$s$ via its action upon~$U_{n-1}$.  Accordingly, various
elements in his construction have an integer~$i$ as a parameter with
$1 \leq i \leq s$.  In our description, we shall index using the
elements of~$U_{n-1}$ since this will aid our defining automorphisms
of the constructed groups.  We refer to~\cite{JSW-Large} for
justification of the assertions made when describing the
construction.

Let $(p_{n})$, for~$n \geq 1$, and~$(q_{n})$, for $n \geq 0$, be two
sequences of prime numbers such that, for every $n \geq 1$, $p_{n}
\neq 2$, \ $p_{n}$~divides~$q_{n}-1$ and $q_{n-1} \neq p_{n}$.  Also
let $(t_{n})$~be a sequence of positive integers.  We now describe the
construction of a sequence~$G_{n}$ of finite soluble groups.

First define $G_{0} = U_{0}$ to be the additive group of the finite
field~$\Field{q_{0}}$ and take $L_{0} = \1$.  In particular,
$G_{0}$~is cyclic of order~$q_{0}$.

Now suppose that we have constructed a group $G_{n-1} = U_{n-1}
\rtimes L_{n-1}$ where $U_{n-1}$~is the unique minimal normal subgroup
of~$G_{n-1}$ and $U_{n-1}$~is an elementary abelian $q_{n-1}$\nbd
group.  To simplify notation, write $U = U_{n-1}$ and let
$G_{n-1}$~act upon~$U$ by using the regular action of~$U_{n-1}$ upon
itself and the conjugation action of~$L_{n-1}$ upon the normal
subgroup~$U_{n-1}$.  Define
\[
\Gamma = U \times \{1,2,\dots,t_{n}\} =
\set{(u,k)}{u \in U, \ 1 \leq k \leq t_{n}}.
\]
Let $A$~be an elementary abelian $p_{n}$\nbd group with basis
$\set{a_{\gamma}}{\gamma \in \Gamma}$ and $V$~be the group
algebra~$\Field{q_{n}}A$.  Let $\zeta$~be an element of order~$p_{n}$
in the multiplicative group of the field~$\Field{q_{n}}$.  Define
invertible linear maps $x_{\delta}$,~$y_{\delta}$ (for $\delta \in
\Gamma$) and~$z$ of~$V$ by $x_{\delta} \colon v \mapsto va_{\delta}$
for~$v \in V$, \ $y_{\delta} \colon \prod  a_{\gamma}^{\, r_{\gamma}}
\mapsto \zeta^{r_{\delta}} \prod a_{\gamma}^{\,r_{\gamma}}$ for each
$\prod a_{\gamma}^{\, r_{\gamma}} \in A$, and $z \colon v \mapsto
\zeta v$ for $v \in V$.  Then define the following subgroups
of~$\GL{V}$: \ $X = \group{x_{\gamma}}{\gamma \in \Gamma}$, \ $Y =
\group{y_{\gamma}}{\gamma \in \Gamma}$ and $E = \langle X,Y \rangle$.
The action of~$G_{n-1}$ upon~$U$ induces an action on~$\Gamma$ and
hence an action on the basis of~$A$: \ $a_{(u,k)}^{\,\, g} =
a_{(u^{g},k)}$ for each $u \in U$, \ $1 \leq k \leq t_{n}$ and $g \in
G_{n-1}$.  Hence each~$g \in G_{n-1}$ determines an invertible linear
transformation of~$V$ and this normalizes both $X$~and~$Y$
(see~\cite[(4.3)]{JSW-Large}).

Now fix some element $u_{0} \in U$.  Set $\tilde{\Gamma} = \set{ (u,k)
  \in \Gamma}{ u \neq u_{0} }$ and, for $(u,k) \in \tilde{\Gamma}$,
let
\[
\tilde{a}_{(u,k)} = a_{(u_{0},k)}^{\,\, -1} a_{(u,k)}, \qquad
\tilde{x}_{(u,k)} = x_{(u_{0},k)}^{\,\, -1} x_{(u,k)}, \qquad
\tilde{y}_{(u,k)} = y_{(u_{0},k)}^{\,\, -1} y_{(u,k)}.
\]
Define $\tilde{A} = \group{\tilde{a}_{\gamma}}{\gamma \in
  \tilde{\Gamma}}$, \ $\tilde{X} = \group{\tilde{x}_{\gamma}}{\gamma
  \in \tilde{\Gamma}}$, \ $\tilde{Y} =
\group{\tilde{y}_{\gamma}}{\gamma \in \tilde{\Gamma}}$, and $D =
\langle \tilde{X}, \tilde{Y} \rangle$.  In~\cite[(4.2)]{JSW-Large}, it
is observed that
\begin{equation}
  [x_{\gamma},y_{\delta}] = \begin{cases}
    z &\text{if $\gamma = \delta$} \\
    1 &\text{if $\gamma \neq \delta$}.
  \end{cases}
  \label{eq:[x,y]}
\end{equation}
Since $z$~is central, it follows that $E$~is nilpotent of class~$2$
and that $D = \tilde{X} \tilde{Y} \langle z \rangle$.  Also set $W =
\Field{q_{n}}\tilde{A}$.  Then $W$~is an irreducible $D$\nbd module
and the group~$G_{n-1}$, via its action on~$V$, normalizes~$D$ and
induces automorphisms of~$W$ (see~\cite[(4.4) \&~(4.5)]{JSW-Large}).
Finally set $G_{n} = (W \rtimes D) \rtimes G_{n-1}$, \ $U_{n} = W$ and
$L_{n} = D \rtimes G_{n-1}$.  Associated to this semidirect product,
there are surjective homomorphisms $G_{n} \to G_{n-1}$ and inclusions
$G_{n-1} \hookrightarrow G_{n}$.  Let $\hat{G} = \invlim G_{n}$ and $G =
\dirlim G_{n}$ be the associated inverse and direct limits.

\begin{prop}[{{\cite[(4.7)]{JSW-Large}}}]
  The inverse limit~$\hat{G}$ is a hereditarily just infinite
  profinite group and the direct limit~$G$ is a hereditarily just
  infinite (discrete) group.
\end{prop}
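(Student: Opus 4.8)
The plan is to observe first that the assertion about the discrete direct limit~$G$ is precisely~\cite[(4.7)]{JSW-Large}, so that the only real task is to transfer the same argument to the profinite inverse limit~$\hat G$; this runs in close parallel to the way Corollary~\ref{cor:wreath-HJI}\ref{i:What-HJI} was deduced from Lemma~\ref{lem:Wn-subnormal} in the wreath-product case. Everything rests on the structural analysis of the finite soluble groups~$G_n$ carried out in~\cite[\S4]{JSW-Large}: for each~$n$ the normal subgroup $U_n = W$ is the \emph{unique} minimal normal subgroup of~$G_n$ and is self-centralizing, $\Cent{G_n}{U_n} = U_n$; and, as the analogue of Lemma~\ref{lem:Wn-subnormal}, if $K \normal G_n$ and $N \normal K$ with $N \nleq U_n$, then $U_n \leq N$. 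These are exactly the properties the module $W = \Field{q_n}\tilde A$, the commutator pattern~\eqref{eq:[x,y]} and the actions of $X$,~$Y$,~$z$ and~$G_{n-1}$ are engineered to guarantee, and I would simply quote them. Write $\rho_{n-1} \colon G_n \to G_{n-1}$ for the structural surjections, so $\ker\rho_{n-1} = U_n$, and $\pi_n \colon \hat G \to G_n$ for the projections of the inverse limit; the kernels $\ker\pi_n$ form a neighbourhood base of the identity.

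First I would verify that $\hat G$ is just infinite. Given a non-trivial closed normal subgroup~$N$, choose~$k$ with $N\pi_k \neq \1$. For $n \geq k$ the image $N\pi_n$ is a non-trivial normal subgroup of~$G_n$ (it surjects onto $N\pi_k$), hence contains the unique minimal normal subgroup: $U_n \leq N\pi_n$. Taking preimages under~$\pi_n$ this reads $\ker\pi_{n-1} \leq N\,\ker\pi_n$ for all $n \geq k+1$, and iterating gives $\ker\pi_k \leq \bigcap_n N\,\ker\pi_n = \overline N = N$. Hence $\hat G/N$ is a quotient of $\hat G/\ker\pi_k \cong G_k$ and is finite.

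Next, following Corollary~\ref{cor:wreath-HJI}\ref{i:What-HJI}, I would show that every open normal subgroup~$K$ of~$\hat G$ is just infinite. Let $N$ be a non-trivial closed normal subgroup of~$K$ and choose~$k$ with $N\pi_k \neq \1$. For $n \geq k+1$ one has $N\pi_n \normal K\pi_n \normal G_n$, and $N\pi_n \nleq U_n$: indeed $U_n = \ker\rho_{n-1}$ lies in $\ker(G_n \to G_k)$, so $N\pi_n \leq U_n$ would force $N\pi_k = \1$. The structural lemma then yields $U_n \leq N\pi_n$, i.e.\ $\ker\pi_{n-1} \leq N\,\ker\pi_n$, for all $n \geq k+1$; as in the previous paragraph $\ker\pi_k \leq N$, and $K/N$ is finite. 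That every open subgroup — not merely every open normal subgroup — of~$\hat G$ is then just infinite follows by the standard reduction: for $H \leqo \hat G$ with $C = \Core{\hat G}{H}$ (open normal, hence just infinite) and $\1 \neq N \normalc H$, if $C \cap N = \1$ then $[C,N] = \1$, so $N \leq \Cent{\hat G}{C}$ and $\Centre{C} = C \cap \Cent{\hat G}{C}$ is an abelian closed normal subgroup of~$\hat G$; as $\hat G$ is Fitting-free (which one reads off from $\Cent{G_n}{U_n} = U_n$ together with the non-trivial action of each level on the next), $\Centre{C} = \1$, so $C$ and the non-trivial subgroup $\Cent{\hat G}{C}$ are non-trivial closed normal subgroups of the just infinite group~$\hat G$ meeting trivially — impossible; thus $C \cap N \neq \1$ and $H/N$ is finite because $C/(C \cap N)$ and $H/CN$ are. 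The same three steps, with $\dirlim$ in place of $\invlim$ and subgroups in place of closures, are Wilson's proof of~\cite[(4.7)]{JSW-Large} for~$G$.

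The main obstacle is not this inverse-limit bookkeeping but the structural input behind it: that $U_n$ is the unique minimal normal subgroup of~$G_n$, that $\Cent{G_n}{U_n} = U_n$, and the subnormal dichotomy ``$N \nleq U_n \Rightarrow U_n \leq N$'' for $N \normal K \normal G_n$. Verifying these requires unwinding the explicit linear construction of~$G_n$ — the action of $X$,~$Y$ and~$z$ on~$V$, the descent to the $D$\nbd module~$W$, and the induced action of~$G_{n-1}$ on~$W$ — which is carried out in~\cite[(4.1)--(4.6)]{JSW-Large}. In the present paper this is legitimately used as a black box, so the proof reduces to the transfer argument sketched above.
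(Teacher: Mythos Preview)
The paper does not give its own proof of this proposition: it is stated purely as a citation of Wilson's result~\cite[(4.7)]{JSW-Large}, and the text continues immediately with ``We need the following additional properties of the groups~$G_{n}$ that are not recorded in Wilson's paper.'' There is therefore nothing in the paper against which to compare your proposal.

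That said, your reconstruction is sensible and mirrors the authors' own treatment of the wreath-product case (Corollary~\ref{cor:wreath-HJI}). The one point to flag is the structural input you label the ``subnormal dichotomy'': that for $N \normal K \normal G_{n}$ with $N \nleq U_{n}$ one has $U_{n} \leq N$. You are right that this is the heart of the matter and that the inverse-limit bookkeeping is routine once it is in hand; but you should be aware that this is not simply a formal consequence of $U_{n}$~being the unique minimal normal subgroup of~$G_{n}$ (that gives the conclusion only for $N \normal G_{n}$, not for $N$~subnormal of depth~$2$). In the wreath-product setting the analogous step required the subprimitivity hypothesis and a genuine argument (Lemma~\ref{lem:Wn-subnormal}). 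Whether Wilson's~\cite[(4.1)--(4.6)]{JSW-Large} actually supplies this dichotomy, or only the weaker statements about $U_{n}$ and its centralizer that the present paper quotes, is something you would need to verify in Wilson's paper rather than assert.
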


We need the following additional properties of the groups~$G_{n}$ that
are not recorded in Wilson's paper:

\begin{lemma}
  \label{lem:JSW-centres}
  \begin{enumerate}
  \item \label{i:Z(Gn)}
    For $n \geq 1$, the centre of~$G_{n}$ is trivial.
  \item \label{i:Z(D.G0)}
    If $n = 1$ and $p_{1}$~divides~$q_{0}-1$, then the centre of~$D
    \rtimes G_{0}$ is cyclic generated by~$z$.
  \item \label{i:Z(D)}
    If $n \geq 1$ and $p_{n}$~divides~$q_{n-1} - 1$, then the centre
    of~$D$ is cyclic generated by~$z$.
  \end{enumerate}
\end{lemma}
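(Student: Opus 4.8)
The plan is to reduce all three parts to two facts: that the linear map~$z$ is a \emph{scalar} transformation of~$V$, hence lies in the centre of~$\GL{V}$ and commutes with the permutation action of~$G_{n-1}$ on~$V$; and that $D$ is a $p_{n}$-group of nilpotency class~$2$. The first fact immediately yields $\langle z\rangle\leq\Centre{G}$ in each of the three ambient groups, so for parts~\ref{i:Z(D.G0)} and~\ref{i:Z(D)} only the reverse inclusion needs work. For part~\ref{i:Z(Gn)}, since $n\geq 1$ the group~$G_{n}$ is finite, so if $\Centre{G_{n}}\neq\1$ then this non-trivial normal subgroup contains a minimal normal subgroup, necessarily the unique minimal normal subgroup $U_{n}=W$ (which is part of the construction in~\cite{JSW-Large}). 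But then $W$ is central in~$G_{n}$, hence centralized by $D\leq G_{n}$, which is absurd: $W=\Field{q_{n}}\tilde{A}\neq\1$ (as $\tilde{\Gamma}\neq\emptyset$, since $\order{U_{n-1}}\geq 2$ and $t_{n}\geq 1$), whereas $z\in D$ acts on~$W$ as multiplication by~$\zeta$, a scalar of order~$p_{n}\geq 3$. Hence $\Centre{G_{n}}=\1$.

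For part~\ref{i:Z(D)}: by~\eqref{eq:[x,y]} and the centrality of the scalar~$z$, the group~$D$ has class~$2$ with $D'\leq\langle z\rangle$; since $p_{n}$ is odd one checks $D'=\langle z\rangle$, cyclic of order~$p_{n}$. The commutator map thus descends to a well-defined alternating $\Field{p_{n}}$-bilinear form $\beta\colon(D/\langle z\rangle)\times(D/\langle z\rangle)\to\langle z\rangle$, and $d\langle z\rangle$ lies in the radical of~$\beta$ exactly when $d\in\Centre{D}$; that is, $\Centre{D}/\langle z\rangle=\operatorname{rad}\beta$. As $D=\tilde{X}\tilde{Y}\langle z\rangle$ with $\tilde{X},\tilde{Y}$ abelian, the cosets of the generators $\tilde{x}_{\gamma},\tilde{y}_{\gamma}$ ($\gamma\in\tilde{\Gamma}$) span $D/\langle z\rangle$, so it suffices to show their Gram matrix under~$\beta$ is non-singular. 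Expanding each~$\tilde{x}$ and~$\tilde{y}$ as a product of two original generators and using~\eqref{eq:[x,y]} gives $[\tilde{x}_{\gamma},\tilde{x}_{\delta}]=[\tilde{y}_{\gamma},\tilde{y}_{\delta}]=1$ and $[\tilde{x}_{(u,k)},\tilde{y}_{(v,l)}]=1$ unless $k=l$, while $[\tilde{x}_{(u,k)},\tilde{y}_{(v,k)}]$ equals~$z$ when $u\neq v$ and~$z^{2}$ when $u=v$ (the factors indexed by~$u_{0}$ are precisely what kill the cross terms). Hence the Gram matrix has zero diagonal blocks, upper off-diagonal block~$M$ and lower off-diagonal block~$-M^{\mathrm{T}}$, where $M$ is block-diagonal with $t_{n}$ identical blocks, each equal to $I+J$ (identity plus all-ones matrix) of size $\order{U_{n-1}}-1$; such a block has determinant~$\order{U_{n-1}}$, so $\det M=\order{U_{n-1}}^{t_{n}}$, which is non-zero modulo~$p_{n}$ because $\order{U_{n-1}}$ is a power of the prime~$q_{n-1}\neq p_{n}$. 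Therefore $\operatorname{rad}\beta=0$ and $\Centre{D}=\langle z\rangle$.

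For part~\ref{i:Z(D.G0)}: here $n=1$ and $G_{0}=U_{0}$ is cyclic of order~$q_{0}$, acting on $V=\Field{q_{1}}A$ by permuting the basis~$A$ via translation in the first coordinate of $\Gamma=U_{0}\times\{1,\dots,t_{1}\}$, hence by permutation matrices, which commute with the scalar~$z$; so $\langle z\rangle$ is central. Conversely, write a central element as $x=dg$ with $d\in D$, $g\in G_{0}$: commuting with the abelian~$G_{0}$ forces $d\in\Cent{D}{G_{0}}$, and once $d\in\Centre{D}$, commuting with~$D$ forces $g\in\Cent{G_{0}}{D}$; so it suffices to prove $\Cent{D}{G_{0}}=\langle z\rangle$ and $\Cent{G_{0}}{D}=\1$. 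For the first, $G_{0}$ acts on $D/\langle z\rangle$, spanned by the images of the $\tilde{x}_{\gamma}$ and~$\tilde{y}_{\gamma}$; the $G_{0}$-equivariant assignment $\tilde{x}_{(u,k)}\mapsto\tilde{a}_{(u,k)}$ identifies the relevant module with~$\tilde{A}$, which embeds $G_{0}$-equivariantly in $\Field{p_{1}}^{\Gamma}$ as the span of the differences $a_{(u,k)}-a_{(u_{0},k)}$, i.e.\ (for each~$k$) the functions on~$U_{0}$ of zero coordinate-sum, on which $G_{0}$ acts by translation; a translation-invariant such function is constant with zero sum, hence zero because $\order{U_{0}}=q_{0}\not\equiv 0\pmod{p_{1}}$. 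So $G_{0}$ fixes no non-trivial element of $D/\langle z\rangle$, giving $\Cent{D}{G_{0}}\leq\langle z\rangle$ and hence equality. The same description shows a non-identity element of~$G_{0}$ acts non-trivially on~$\tilde{A}$, hence on $\tilde{X}\leq D$, so $\Cent{G_{0}}{D}\leq\Cent{G_{0}}{\tilde{X}}=\1$. Combining, $x=d\in\Centre{D}=\langle z\rangle$ by part~\ref{i:Z(D)}, so $\Centre{D\rtimes G_{0}}=\langle z\rangle$.

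The step I expect to be most delicate is the commutator bookkeeping in part~\ref{i:Z(D)}: one must expand $[\tilde{x}_{(u,k)},\tilde{y}_{(v,l)}]$ as the product of four commutators of original generators, track how the factors indexed by~$u_{0}$ cancel all but the ``diagonal'' contributions, and then recognise the resulting Gram matrix in the $I+J$ form so that its determinant can be read off. A secondary subtlety, needed in part~\ref{i:Z(D.G0)}, is that the chosen base-point~$u_{0}$ is not fixed by~$G_{0}$, so the identification of~$\tilde{A}$ with the zero-sum submodule of $\Field{p_{1}}^{\Gamma}$ should be set up without privileging~$u_{0}$; after that the vanishing of fixed points is immediate from $q_{0}\neq p_{1}$.
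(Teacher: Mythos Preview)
Your proof is correct, and in several places takes a cleaner route than the paper's. For part~\ref{i:Z(Gn)} the paper quotes $\Cent{G_{n}}{W}=W$ from~\cite{JSW-Large} and then observes that $z$ moves every non-zero vector; your use of the unique minimal normal subgroup $W=U_{n}$ reaches the same contradiction by a slightly different door. For part~\ref{i:Z(D)} the paper writes a central element as $ghz^{t}$ and solves the linear system $N_{\ell}+r_{\delta}\equiv 0$ coming from $[g,\tilde{y}_{\delta}]=1$; your repackaging via the alternating form $\beta$ on $D/\langle z\rangle$ and the block Gram matrix $I+J$ is a genuinely different organisation of the same commutator identities, and it makes the non-degeneracy transparent: $\det(I+J)=\order{U_{n-1}}$, invertible mod~$p_{n}$ because $q_{n-1}\neq p_{n}$. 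Notice that this last step \emph{does not use} the hypothesis $p_{n}\mid q_{n-1}-1$; the paper invokes that hypothesis to conclude $\order{U}-1\equiv 0$, but your determinant argument (and, on inspection, a one-line rearrangement of the paper's own equations to $\order{U}\,N_{\ell}\equiv 0$) shows the weaker standing assumption $q_{n-1}\neq p_{n}$ already suffices. The same remark applies to part~\ref{i:Z(D.G0)}: the paper shows the $G_{0}$-fixed vectors $v_{k}=\prod_{u}x_{(u,k)}$ lie outside~$\tilde{X}$ by appealing to $p_{1}\mid q_{0}-1$, whereas your identification of $\tilde{X}$ (and $\tilde{Y}$) with the zero-sum submodule of the permutation module makes it plain that a translation-invariant zero-sum function vanishes as soon as $q_{0}\not\equiv 0\pmod{p_{1}}$. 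You are also more explicit than the paper about why the $G_{0}$-component of a central element must be trivial; the paper's argument tacitly uses that $\Centre{D\rtimes G_{0}}$ is a $p_{1}$-group (since $G_{0}$ acts non-trivially on~$D$), which your $\Cent{G_{0}}{D}=\1$ step makes overt. One cosmetic point: in your final sentence for part~\ref{i:Z(D.G0)} the appeal to part~\ref{i:Z(D)} is superfluous, since you have already shown $d\in\Cent{D}{G_{0}}=\langle z\rangle$ directly.
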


\begin{proof}
  \ref{i:Z(Gn)}~It is observed in~\cite[(4.6)(b)]{JSW-Large} that
  $\Cent{G_{n}}{W} = W$.  Hence $\Centre{G_{n}} \leq W$.  However,
  note $z \in D'$ by~\cite[(4.4)(a)]{JSW-Large} and $w^{z} = \zeta w$
  for all $w \in W$ and so only the identity (that is, the zero vector
  in~$W$) commutes with all elements of~$G_{n}$.

  \ref{i:Z(D.G0)}~Suppose that $p_{1}$~divides~$q_{0}-1$ and recall
  that $U = G_{0}$ when $n = 0$.  Consider first the action of~$G_{0}$
  on $X = \group{x_{\gamma}}{\gamma \in \Gamma}$.  The group~$X$ is an
  elementary abelian $p_{n}$\nbd group and so as an
  $\Field{p_{n}}G_{0}$\nbd module is a direct sum $X =
  \bigoplus_{k=1}^{t_{1}} X_{k}$ where $X_{k}$~is isomorphic to the
  group algebra~$\Field{p_{n}}G_{0}$ (since $G_{0}$~acts regularly
  on~$U$ in this case).  There is a unique $1$\nbd dimensional
  submodule of~$X_{k}$ upon which $G_{0}$~acts trivially, namely that
  generated by the product~$v_{k} = \prod_{u \in U} x_{(u,k)}$ and an
  element of~$X$ is fixed by~$G_{0}$ if and only if it belongs to $P =
  \langle v_{1},v_{2},\dots,v_{t_{1}} \rangle$.

  Now if $v_{k}$~were an element of~$\tilde{X}$, it could be written
  as $v_{k} = \prod_{u \neq u_{0}} \tilde{x}_{(u,k)}^{\,\, r_{u}}$ for
  some values~$r_{u}$; that is, $v_{k} = x_{(u_{0},k)}^{\,\, -s}
  \prod_{u \neq u_{0}} x_{(u,k)}^{\,\, r_{u}}$ where $s = \sum_{u \neq
    u_{0}} r_{u}$.  Hence $r_{u} = 1$ for all~$u \neq u_{0}$ but then
  $s = \order{U}-1 \equiv 0 \pmod{p_{1}}$ since
  $p_{1}$~divides~$q_{0}-1$.  This is a contradiction and so we
  conclude $v_{k} \notin \tilde{X}$ for all~$k$.  Since the set
  of~$\tilde{x}_{\gamma}$ for $\gamma \in \tilde{\Gamma}$ forms a
  basis for~$X$, we deduce that $\tilde{X} \cap P = \1$; that is, only
  the identity element of~$\tilde{X}$ is fixed under the action
  of~$G_{0}$.  Similarly, only the identity element is fixed under the
  action of~$G_{0}$ on~$\tilde{Y}$.  From these observations, we
  deduce that if $a = ghz^{t}$ is centralized by~$G_{0}$ where $g \in
  \tilde{X}$ and $h \in \tilde{Y}$, then necessarily $g = h = 1$.  The
  claim that $\Centre{D \rtimes G_{0}} = \langle z \rangle$ then
  follows.

  \ref{i:Z(D)}~Suppose that $p_{n}$~divides~$q_{n-1}-1$.  Let $a =
  ghz^{t}$ be in the centre of~$D$ where $g \in \tilde{X}$ and $h \in
  \tilde{Y}$.  From Equation~\eqref{eq:[x,y]}, it follows that, for
  $\gamma = (u,k)$ and $\delta = (v,\ell)$ with $u,v \neq u_{0}$,
  \[
  [\tilde{x}_{\gamma},\tilde{y}_{\delta}]
  = [x_{(u_{0},k)}^{\,\, -1}x_{(u,k)}, y_{(u_{0},\ell)}^{\,\,
      -1}y_{(v,\ell)}]
  = \begin{cases}
    z^{2} &\text{if $\gamma = \delta$}, \\
    z &\text{if $k = \ell$ and $u \neq v$}, \\
    1 &\text{if $k \neq \ell$}.
  \end{cases}
  \]
  Suppose $g = \prod_{\gamma \in \tilde{\Gamma}}
  \tilde{x}_{\gamma}^{\, r_{\gamma}}$.  Then, for $\delta = (v,\ell)
  \in \tilde{\Gamma}$, \ $[g,\tilde{y}_{\delta}] =
  z^{N_{\ell}+r_{\delta}}$ where $N_{\ell} = \sum_{u \neq u_{0}}
  r_{(u,\ell)}$.  It follows that $r_{\delta} \equiv -{N_{\ell}}
  \pmod{p_{n}}$ for all $\delta = (u,\ell) \in \tilde{\Gamma}$.  Hence
  $N_{\ell} \equiv -\bigl(\order{U} - 1\bigr) N_{\ell} \equiv 0
  \pmod{p_{n}}$ for $1 \leq \ell \leq t_{n}$, using the fact that
  $U$~is an elementary abelian $q_{n-1}$\nbd group and
  $p_{n}$~divides~$q_{n-1}-1$.  This shows $r_{\delta} \equiv 0
  \pmod{p_{n}}$ for all $\delta \in \tilde{\Gamma}$ and hence $g =
  1$.  It similarly follows that $h = 1$.  We conclude that $a =
  z^{t}$ for some~$t$ and this establishes that $\Centre{D} = \langle
  z \rangle$.
\end{proof}

We shall now describe a method to construct some automorphisms of the
groups $G$~and~$\hat{G}$.  For each~$i \geq 0$, let $\lambda_{i}$~be a
non-zero scalar in the field~$\Field{q_{i}}$.  In particular,
$\psi_{0} \colon x \mapsto \lambda_{0}x$ is an automorphism of the
additive group $G_{0} = \Field{q_{0}}$.  Now suppose that we have
constructed an automorphism~$\psi_{n-1}$ of~$G_{n-1}$.  Since
$U_{n-1}$~is the unique minimal normal subgroup of~$G_{n-1}$,
$\psi_{n-1}$~induces an automorphism of~$U = U_{n-1}$.  Hence we
induce a bijection $\psi_{n-1} \colon \Gamma \to \Gamma$ by
$(u,k)\psi_{n-1} = (u\psi_{n-1},k)$ and consequently determine
an automorphism of~$A$ by $a_{\gamma} \mapsto a_{\gamma\psi_{n-1}}$
and this extends to an invertible linear map $\psi_{n-1} \colon V \to
V$.

\begin{lemma}
  \label{lem:JSW-B-conj}
  The induced linear map~$\psi_{n-1} \in \GL{V}$ satisfies
  $\psi_{n-1}^{\, -1} x_{\delta}\psi_{n-1} = x_{\delta\psi_{n-1}}$ and
  $\psi_{n-1}^{\, -1} y_{\delta}\psi_{n-1} = y_{\delta\psi_{n-1}}$ for
  each $\delta \in \Gamma$.
\end{lemma}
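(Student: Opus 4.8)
The plan is to unwind the definitions and verify both identities by evaluating on a basis of~$V$; no substantial input is required. First I would record that the map $\psi_{n-1}$ on~$V$ is simply the $\Field{q_{n}}$\nbd linear extension of the group automorphism of~$A$ given by $a_{\gamma} \mapsto a_{\gamma\psi_{n-1}}$; this is indeed an automorphism of~$A$ because $\psi_{n-1}$ permutes $\Gamma = U \times \{1,\dots,t_{n}\}$ bijectively (acting through its automorphism of~$U$), and the $a_{\gamma}$ form a basis of the elementary abelian group~$A$. Being the linearisation of a group automorphism of the basis~$A$, this $\psi_{n-1}$ is in fact an $\Field{q_{n}}$\nbd algebra automorphism of $V = \Field{q_{n}}A$, and its inverse sends $a_{\gamma} \mapsto a_{\gamma\psi_{n-1}^{\, -1}}$. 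Since $x_{\delta}$, $y_{\delta}$, $\psi_{n-1}$ and $\psi_{n-1}^{\, -1}$ are all $\Field{q_{n}}$\nbd linear, it suffices to check the two asserted equalities after applying each side to an arbitrary basis element $v = \prod_{\gamma} a_{\gamma}^{\, r_{\gamma}}$ of $A \subseteq V$.

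For the statement about~$x_{\delta}$, recall that $x_{\delta}$ is right multiplication by~$a_{\delta}$ on~$V$. Computing with maps written on the right, $v\,\psi_{n-1}^{\, -1} x_{\delta}\psi_{n-1} = \bigl((v\psi_{n-1}^{\, -1})\,a_{\delta}\bigr)\psi_{n-1}$, and since $\psi_{n-1}$ respects products on~$V$ this equals $(v\psi_{n-1}^{\, -1}\psi_{n-1})\,(a_{\delta}\psi_{n-1}) = v\,a_{\delta\psi_{n-1}} = v\,x_{\delta\psi_{n-1}}$. As this holds for every basis vector~$v$, the identity $\psi_{n-1}^{\, -1} x_{\delta}\psi_{n-1} = x_{\delta\psi_{n-1}}$ follows by linearity.

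For the statement about~$y_{\delta}$, note that $v\psi_{n-1}^{\, -1} = \prod_{\gamma} a_{\gamma\psi_{n-1}^{\, -1}}^{\, r_{\gamma}} = \prod_{\gamma} a_{\gamma}^{\, r_{\gamma\psi_{n-1}}}$ after reindexing, so the exponent of~$a_{\delta}$ in $v\psi_{n-1}^{\, -1}$ is $r_{\delta\psi_{n-1}}$. Hence $(v\psi_{n-1}^{\, -1})\,y_{\delta} = \zeta^{\, r_{\delta\psi_{n-1}}}\,(v\psi_{n-1}^{\, -1})$, and applying $\psi_{n-1}$, which is $\Field{q_{n}}$\nbd linear and so fixes the scalar $\zeta^{\, r_{\delta\psi_{n-1}}}$, gives $\zeta^{\, r_{\delta\psi_{n-1}}}\,v$. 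On the other hand, the exponent of~$a_{\delta\psi_{n-1}}$ in~$v$ is also $r_{\delta\psi_{n-1}}$, so $v\,y_{\delta\psi_{n-1}} = \zeta^{\, r_{\delta\psi_{n-1}}}\,v$ as well; comparing and using linearity yields $\psi_{n-1}^{\, -1} y_{\delta}\psi_{n-1} = y_{\delta\psi_{n-1}}$. The only point requiring any care is keeping the direction of the permutation~$\psi_{n-1}$ and the right\nbd hand action convention consistent through the reindexing in the $y_{\delta}$ case; beyond that the whole argument is a routine verification, so I do not anticipate a genuine obstacle.
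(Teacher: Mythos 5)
Your proposal is correct and follows essentially the same route as the paper: a direct verification by evaluating $\psi_{n-1}^{\,-1}x_{\delta}\psi_{n-1}$ on elements of $V$ (using that $\psi_{n-1}$ is multiplicative on $V$) and $\psi_{n-1}^{\,-1}y_{\delta}\psi_{n-1}$ on group-algebra basis elements $\prod a_{\gamma}^{\,r_{\gamma}}$, with the same reindexing giving the exponent $r_{\delta\psi_{n-1}}$. The only difference is that you make explicit the (correct, but implicit in the paper) observation that $\psi_{n-1}$ is the algebra automorphism of $\Field{q_{n}}A$ induced by a group automorphism of $A$.
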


\begin{proof}
  If $v \in V$, then $v \psi_{n-1}^{\, -1} x_{\delta}\psi_{n-1} = (v
  \psi_{n-1}^{\, -1} \cdot a_{\delta})\psi_{n-1} = v\cdot
  a_{\delta\psi_{n-1}}$ and hence $\psi_{n-1}^{\, -1}
  x_{\delta}\psi_{n-1} = x_{\delta\psi_{n-1}}$.  For an element $\prod
  a_{\gamma}^{\, r_{\gamma}} \in A$, we compute
  \begin{align*}
    \left( \prod a_{\gamma}^{\, r_{\gamma}} \right)
    \psi_{n-1}^{\, -1} y_{\delta}\psi_{n-1}
    &= \left( \prod a_{\gamma\psi_{n-1}^{\, -1}}^{\, r_{\gamma}} \right)
    y_{\delta}\psi_{n-1} \\
    &= \left( \zeta^{r_{\delta\psi_{n-1}}} \prod
    a_{\gamma\psi_{n-1}^{\, -1}}^{\, r_{\gamma}} \right)\psi_{n-1} =
    \zeta^{r_{\delta\psi_{n-1}}} \prod a_{\gamma}^{\, r_{\gamma}}
  \end{align*}
  and hence $\psi_{n-1}^{\, -1} y_{\delta}\psi_{n-1} =
  y_{\delta\psi_{n-1}}$.
\end{proof}

As a consequence, we determine an automorphism~$\psi_{n-1}^{\ast}$ of
the subgroup~$E$ of~$\GL{V}$ given by conjugating by this linear
map~$\psi_{n-1}$.  Notice furthermore that $D\psi_{n-1}^{\ast} = D$
since
\[
\tilde{x}_{(u,k)}\psi_{n-1}^{\ast} = \psi_{n-1}^{\, -1}
x_{(u_{0},k)}^{\,\, -1} x_{(u,k)} \psi_{n-1} =
x_{(u_{0}\psi_{n-1},k)}^{\,\, -1} x_{(u\psi_{n-1},k)} =
\tilde{x}_{(u_{0}\psi_{n-1},k)}^{\,\, -1}
\tilde{x}_{(u\psi_{n-1},k)}
\]
and similary for~$\tilde{y}_{(u,k)}$.  Finally, we determine a
bijection $\psi_{n} \colon G_{n} \to G_{n}$ by
applying~$\psi_{n-1}^{\ast}$ to elements in~$D$ and
applying~$\psi_{n-1}$ to those in~$G_{n-1}$ and defining its effect on
elements of~$W = \Field{q_{n}}\tilde{A}$ by
\[
\tilde{a}_{(u,k)}\psi_{n} =
\lambda_{n} a_{(u_{0}\psi_{n-1},k)}^{\,\, -1} a_{(u\psi_{n-1},k)} =
\lambda_{n} \tilde{a}_{(u_{0}\psi_{n-1},k)}^{\,\, -1}
\tilde{a}_{(u\psi_{n-1},k)}
\]
and extending by linearity.  Thus, the effect of~$\psi_{n}$ on~$W$ is
the composite of the linear map~$\psi_{n-1}$ defined above together
with scalar multiplication by~$\lambda_{n}$.  Since each
$x_{\delta}$~and~$y_{\delta}$ is a linear map, it follows that
$\psi_{n}$~induces an automorphism of~$W \rtimes D$.  Also notice
that, since the action of~$G_{n-1}$ on~$U = U_{n-1}$ is given by the
regular action of~$U_{n-1}$ on itself and the conjugation action
of~$L_{n-1}$ on~$U_{n-1}$, the automorphism~$\psi_{n-1}$ satisfies
\[
(u^{g})\psi_{n-1} = (u\psi_{n-1})^{g\psi_{n-1}}
\]
for all $u \in U$ and $g \in G_{n-1}$ (and here exponentiation denotes
the action).  One determines, using Lemma~\ref{lem:JSW-B-conj}, that
$(x_{\delta}^{\, g}) \psi_{n-1}^{\ast} =
(x_{\delta}\psi_{n-1}^{\ast})^{g\psi_{n-1}}$ for $\delta \in \Gamma$
and $g \in G_{n-1}$.  Similar formulae hold when we conjugate
$y_{\delta}$~and~$a_{\delta}$ by elements of~$G_{n-1}$ (in the latter
case, we rely upon the fact that an element of~$G_{n-1}$ induces a
linear map on~$V$ and so commutes with the operation of multiplying by
the scalar~$\lambda_{n}$).  We conclude that $\psi_{n}$~is indeed an
automorphism of~$G_{n}$ that restricts to the previous
one~$\psi_{n-1}$ on~$G_{n-1}$.  As a consequence, we determine an
automorphism~$\psi$ of~$G$ whose restriction to each~$G_{n}$
equals~$\psi_{n}$ and an automorphism~$\hat{\psi}$ of~$\hat{G}$ such
that $\hat{\psi}\pi_{n} = \pi_{n}\psi_{n}$ for each~$n$ (where
$\pi_{n} \colon \hat{G} \to G_{n}$ is the surjective homomorphism
associated to the inverse limit).  The properties of this construction
are analogous to those for the iterated wreath product and the first
two parts of the following are established similarly to those of
Lemma~\ref{lem:wreath-psi}.

\begin{lemma}
  \label{lem:JSW-psi}
  Let $(\lambda_{i}), (\mu_{i})$ be sequences of scalars with
  $\lambda_{i},\mu_{i} \in \Field{q_{i}}^{\ast}$.  Define
  $\psi$~and~$\hat{\psi}$ to be the automorphisms of $G$~and~$\hat{G}$
  determined by the sequence~$(\lambda_{i})$ and
  $\theta$~and~$\hat{\theta}$ those determined by~$(\mu_{i})$.  Then
  \begin{enumerate}
  \item $\hat{\psi}$~is a continuous automorphism of~$\hat{G}$;
  \item $\psi\theta$~and~$\hat{\psi}\hat{\theta}$ are the
    automorphisms of $G$~and~$\hat{G}$, respectively, determined by
    the sequence~$(\lambda_{i}\mu_{i})$;
  \item \label{i:JSW-outer}
    if $p_{i}$~divides~$q_{i-1}-1$ for all $i \geq 1$ and, for some $n
    \geq 0$, $\lambda_{i} = 1$ in~$\Field{q_{i}}$ for $0 \leq i \leq
    n-1$ and $\lambda_{n}$~is not in the subgroup of order~$p_{n}$ in
    the multiplicative group of the field\/~$\Field{q_{n}}$, then
    $\psi$~is an outer automorphism of~$G$ and $\hat{\psi}$~is an
    outer automorphism of~$\hat{G}$.
  \end{enumerate}
\end{lemma}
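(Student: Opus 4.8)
The first two parts follow the template already used for Lemma~\ref{lem:wreath-psi}, and I would treat them briefly. For continuity, $\hat{\psi}$ fixes each kernel $\ker\pi_{n}$ of the inverse limit and these form a neighbourhood base at the identity, so $\hat{\psi}$ is continuous. For the composition law, an induction on~$n$---using the explicit recipe for $\psi_{n}$ and the facts that the scalar actions on the successive modules~$U_{n}$ multiply and the induced permutations of the sets~$\Gamma$ compose---shows that $\psi_{n}\theta_{n}$ is the automorphism of $G_{n}$ associated with the sequence~$(\lambda_{i}\mu_{i})$, whence the claims for $G$ and $\hat{G}$. The substance is part~\ref{i:JSW-outer}.

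For part~\ref{i:JSW-outer} the plan is to prove, by induction on $m \geq n$, that $\psi_{m}$ is an outer automorphism of~$G_{m}$; the statements for $G$ and $\hat{G}$ then follow routinely. Indeed, if $\psi$ were conjugation by some $g \in G = \bigcup_{m} G_{m}$, then $g$ would lie in some $G_{m}$ with $m \geq n$ and, since $\psi$ restricts to $\psi_{m}$ on $G_{m}$, the automorphism $\psi_{m}$ would be inner; similarly, if $\hat{\psi}$ were conjugation by $\hat{g} \in \hat{G}$, then applying $\pi_{m}$ and using $\hat{\psi}\pi_{m} = \pi_{m}\psi_{m}$ together with the surjectivity of $\pi_{m}$ would show that $\psi_{m}$ is conjugation by $\pi_{m}(\hat{g}) \in G_{m}$, again inner. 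For the base case $m = n$, I would first note that, as $\lambda_{0} = \dots = \lambda_{n-1} = 1$, the automorphism $\psi_{n-1}$ of $G_{n-1}$ is the identity, so the permutation it induces on~$\Gamma$ and the linear map it induces on~$V$ are trivial; hence $\psi_{n}$ fixes $L_{n} = D \rtimes G_{n-1}$ pointwise and acts on $U_{n} = W$ as scalar multiplication by~$\lambda_{n}$. Supposing $\psi_{n}$ is conjugation by $g \in G_{n}$, and recalling that $G_{n} = W \rtimes L_{n}$ with $W = U_{n}$ the unique minimal normal subgroup of $G_{n}$, the fact that $\psi_{n}$ fixes $L_{n}$ pointwise forces the image of $g$ in $G_{n}/W \cong L_{n}$ to lie in $\Centre{L_{n}}$. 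Here $\Centre{L_{n}} = \langle z \rangle$: this is Lemma~\ref{lem:JSW-centres}\ref{i:Z(D.G0)} when $n = 1$, while for $n \geq 2$ it follows from Lemma~\ref{lem:JSW-centres}\ref{i:Z(Gn)} and~\ref{i:Z(D)} (a central element of $D \rtimes G_{n-1}$ maps to a central element of $G_{n-1}$, hence lies in $D$, hence in $\Centre{D}$, and conversely $z$ is central in $D \rtimes G_{n-1}$). Since $L_{n}$ is a complement to $W$ in $G_{n}$, this gives $g = wz^{t}$ for some $w \in W$ and some integer~$t$; as $W$ is abelian and $z$ acts on $W$ as the scalar~$\zeta$, conjugation by $g$ acts on $W$ as multiplication by~$\zeta^{t}$. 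Comparing with the action of $\psi_{n}$ on $U_{n} = W$ yields $\lambda_{n} = \zeta^{t}$, contradicting the hypothesis that $\lambda_{n}$ avoids the order-$p_{n}$ subgroup of $\Field{q_{n}}^{\ast}$ generated by~$\zeta$.

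For the inductive step, I would suppose $\psi_{m}$ outer for some $m \geq n$ and that $\psi_{m+1}$ is conjugation by $g \in G_{m+1}$. Writing $G_{m+1} = (U_{m+1} \rtimes D_{m+1}) \rtimes G_{m}$ and letting $q \colon G_{m+1} \to G_{m}$ be the projection onto the top quotient, we have $g = v\,d\,x'$ with $v \in U_{m+1}$, $d \in D_{m+1}$ and $x' \in G_{m}$. For any $y \in G_{m}$ one has $\psi_{m}(y) = \psi_{m+1}(y) = y^{g}$ with $\psi_{m}(y) \in G_{m}$, so applying $q$ gives $\psi_{m}(y) = q(y^{g}) = y^{x'}$; thus $\psi_{m}$ is conjugation by $x' \in G_{m}$, contrary to the inductive hypothesis. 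This completes the induction. The main obstacle is the base case: one must work carefully inside the iterated semidirect product $G_{n} = (W \rtimes D) \rtimes G_{n-1}$ and invoke the right parts of Lemma~\ref{lem:JSW-centres}---with the genuine distinction between $n = 1$, where $G_{0}$ is abelian, and $n \geq 2$---in order to pin down the conjugating element modulo~$W$; by contrast the inductive step is a clean application of the quotient map $G_{m+1} \to G_{m}$.
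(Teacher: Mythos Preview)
Your proof is correct and follows essentially the same route as the paper: establish that $\psi_m$~is outer for all $m \geq n$ by an induction whose base case hinges on Lemma~\ref{lem:JSW-centres}, then deduce the statements for $G$ and~$\hat{G}$.  There are two minor points worth noting.  First, you omit the case $n = 0$: the paper disposes of this in one line by observing that $G_{0}$~is abelian, so any non-identity automorphism is outer.  Second, your treatment of the base case for $n \geq 1$ is a mild reorganization of the paper's argument.  The paper writes the putative conjugating element as $w\,d\,g$ with $w \in W$, $d \in D$, $g \in G_{n-1}$, and argues componentwise (separating $n = 1$ from $n \geq 2$) to force $g = 1$, $d = z^{k}$, and then $w = 0$ via irreducibility of~$W$ as a $D$\nbd module.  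You instead pass to the quotient $G_{n}/W \cong L_{n}$, compute $\Centre{L_{n}} = \langle z \rangle$ (unifying the two cases via different parts of Lemma~\ref{lem:JSW-centres}), and then compare actions on~$W$ without ever needing $w = 0$.  This is slightly slicker but amounts to the same computation.
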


\begin{proof}
  We prove part~\ref{i:JSW-outer}.  Suppose that
  $p_{i}$~divides~$q_{i-1}-1$ for all~$i \geq 1$ in addition to the
  original assumptions on the $p_{i}$~and~$q_{j}$.  Suppose that
  $\lambda_{i} = 1$ for $0 \leq i \leq n-1$ and that $\lambda_{n}$~is
  not a power of~$\zeta$ where $\zeta$~is an element of order~$p_{n}$
  in~$\Field{q_{n}}\mult$.  Since $\lambda_{i} = 1$ for $0 \leq i \leq
  n-1$, the automorphism~$\psi_{n-1}$ of~$G_{n-1}$ is the identity
  map.  We shall first show that $\psi_{n} \notin \Inn{G_{n}}$.  We
  will need a different argument according to the value of~$n$.  If $n
  = 0$, then $G_{0}$~is abelian so $\psi_{0}$~cannot be an inner
  automorphism as it is not the identity.

  Suppose that $n = 1$ and that $\psi_{1}$~is produced by conjugating
  by the element~$wdh$ where $w \in W$, $d \in D$ and $h \in G_{0}$.
  In this case, $\psi_{0}$~is the identity, so $\psi_{1}$~induces the
  identity on~$D \rtimes G_{0}$ and hence $dh \in \Centre{D \rtimes
    G_{0}}$; that is, $h = 1$ and $d = z^{k}$ for some~$k$ by
  Lemma~\ref{lem:JSW-centres}\ref{i:Z(D.G0)}.  Now observe that
  $w$~must normalize~$D$ since $D\psi_{1} = D$ and it follows that
  $[w,g] = 1$ for all $g \in D$.  Hence $\Field{q_{n}}w$~is a $D$\nbd
  invariant subspace of~$W$ so $w = 0$ as $W$~is an irreducible
  $D$\nbd module (by~\cite[(4.5)(c)]{JSW-Large}).  In conclusion,
  $\psi_{1}$~is the inner automorphism of~$G_{1}$ determined by
  conjugation by~$z^{k}$.  This means that $\lambda_{1} = \zeta^{k}$,
  contrary to our assumption.
  
  Suppose that $n \geq 2$ and that $\psi_{n} \in \Inn{G_{n}}$, say
  that conjugation by the element~$wdg$ (where $w \in W$, $d \in D$
  and $g \in G_{n-1}$) achieves the same effect as
  applying~$\psi_{n}$.  In particular, $wdg$~centralizes~$G_{n-1}$ and
  so $wd$~normalizes~$G_{n-1}$.  It follows that $[wd,y] = 1$ for all
  $y \in G_{n-1}$ and hence $w$~and~$d$ are both centralized
  by~$G_{n-1}$ and $g \in \Centre{G_{n-1}}$.  Therefore $g = 1$ by
  Lemma~\ref{lem:JSW-centres}\ref{i:Z(Gn)}.  Also necessarily $d \in
  \Centre{D}$, so $d = z^{k}$ for some~$k$ by
  Lemma~\ref{lem:JSW-centres}\ref{i:Z(D)}, while $w$~spans a $D$\nbd
  submodule of~$W$ and hence $w = 0$.  We conclude, as in the previous
  case, that $\psi_{n}$~is the inner automorphism of~$G_{n}$
  determined by conjugation by~$z^{k}$, which is impossible as
  $\lambda_{n} \notin \langle \zeta \rangle$ by assumption.

  Now suppose that $\psi_{m} \notin \Inn{G_{m}}$ for some $m \geq n$.
  If it were the case that $\psi_{m+1}$~is produced by conjugating
  by~$wdg$ where $w \in W$, $d \in D$ and $g \in G_{m}$, then
  $\psi_{m}$~would coincide with conjugation by~$g$, contrary to
  assumption.  Hence $\psi_{m} \notin \Inn{G_{m}}$ for all $m \geq n$.
  It now follows that $\psi$~is an outer automorphism of~$W$ and
  $\hat{\psi}$~is an outer automorphism of~$\hat{W}$.
\end{proof}

\begin{thm}
  \label{thm:JSW-Aut}
  Let $(p_{n})$, for $n \geq 1$, and $(q_{n})$, for $n \geq 0$, be a
  sequence of prime numbers such that for every $n \geq 1$, $p_{n}
  \neq 2$, $p_{n}$~divides both $q_{n-1}-1$ and~$q_{n}-1$.  Let
  $(t_{n})$~be any sequence of positive integers and define $G$~to be
  the direct limit and $\hat{G}$~to be the inverse limit of the
  semidirect products~$G_{n}$ built via Wilson's Construction~B.  Take
  $r_{0} = q_{0}-1$ and, for each~$i \geq 1$, write $q_{i}-1 = r_{i}
  p_{i}^{\, m_{i}}$ where $p_{i} \nmid r_{i}$ and let
  $C_{r_{i}}$~denote a cyclic group of order~$r_{i}$.  Then the group
  $A = \prod_{i=0}^{\infty} C_{r_{i}}$ embeds naturally
  \begin{enumerate}
  \item \label{i:JSW-abelianaut}
    as a subgroup of~$\Aut{G}$ such that $A \cap \Inn{G} = \1$;
  \item as a profinite subgroup of~$\Autc{\hat{G}}$ such that $A \cap
    \Inn{\hat{G}} = \1$.
  \end{enumerate}
\end{thm}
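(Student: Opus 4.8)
The plan is to follow the proof of Theorem~\ref{thm:wreath-A} almost verbatim, using Lemma~\ref{lem:JSW-psi} in place of Lemma~\ref{lem:wreath-psi}. First I would fix the identifications: for $i \geq 1$, since $\Field{q_{i}}^{\ast}$ is cyclic of order $q_{i}-1 = r_{i}p_{i}^{\, m_{i}}$ with $p_{i} \nmid r_{i}$, it has a unique subgroup of order~$r_{i}$, namely its Hall $p_{i}'$\nbd subgroup, and I identify $C_{r_{i}}$ with this subgroup; for $i = 0$ I take $C_{r_{0}} = \Field{q_{0}}^{\ast}$ since $r_{0} = q_{0}-1$. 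An element of $A = \prod_{i=0}^{\infty} C_{r_{i}}$ is then a sequence $(\lambda_{i})$ of scalars with $\lambda_{i} \in C_{r_{i}} \leq \Field{q_{i}}^{\ast}$, and by the construction preceding Lemma~\ref{lem:JSW-psi} such a sequence determines an automorphism $\psi_{(\lambda_{i})}$ of~$G$ and a continuous automorphism $\hat{\psi}_{(\lambda_{i})}$ of~$\hat{G}$.

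For part~\ref{i:JSW-abelianaut}, I would define $\theta \colon A \to \Aut{G}$ by $g \mapsto \psi_{g}$; this is a homomorphism by Lemma~\ref{lem:JSW-psi}(ii). For injectivity, if $\psi_{g}$ is the identity then each $\psi_{n}$ is the identity on~$G_{n}$, and, inspecting the action of $\psi_{n}$ on $U_{n} = W = \Field{q_{n}}\tilde{A}$ for $n \geq 1$ — which, once $\psi_{n-1}$ is the identity, is just scalar multiplication by~$\lambda_{n}$ on the nonzero space~$W$ — together with the action of $\psi_{0}$ on the additive group $G_{0}$, an induction on~$n$ forces every $\lambda_{i} = 1$, i.e.\ $g = 1$. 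To check $A\theta \cap \Inn{G} = \1$, take $g = (\lambda_{i}) \neq 1$ and let $n$ be least with $\lambda_{n} \neq 1$; then $\lambda_{0} = \dots = \lambda_{n-1} = 1$, while $\lambda_{n}$ is a non-trivial element of the Hall $p_{n}'$\nbd subgroup $C_{r_{n}}$ of $\Field{q_{n}}^{\ast}$, so (as $\gcd(r_{n},p_{n}) = 1$) it does not lie in the subgroup of order~$p_{n}$. Combined with the hypothesis $p_{i} \mid q_{i-1}-1$ for all $i \geq 1$, Lemma~\ref{lem:JSW-psi}\ref{i:JSW-outer} shows that $\psi_{g}$ is an outer automorphism. (For $n = 0$ the condition reduces to $\lambda_{0} \neq 1$, and $\psi_{0}$ is then a non-identity automorphism of the abelian group~$G_{0}$, hence outer.) Thus $\psi_{g} \notin \Inn{G}$, as required.

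For part~(ii), the same assignment $g \mapsto \hat{\psi}_{g}$ gives an injective homomorphism $\theta \colon A \to \Autc{\hat{G}}$, using Lemma~\ref{lem:JSW-psi}(i) and~(ii) together with the injectivity argument above. To see $\theta$ is continuous I would argue exactly as in Theorem~\ref{thm:wreath-A}(ii): the subgroups $\Gamma_{n} = \set{\gamma \in A\theta}{[\hat{G},\gamma] \leq \ker\pi_{n}}$ form a neighbourhood base for the identity in the subspace topology on $A\theta$ (compare~\cite[Section~5.2]{DDMS}), and $\hat{\psi}_{g} \in \Gamma_{n}$ precisely when $\psi_{n}$ is the identity on~$G_{n}$, which by the injectivity analysis happens exactly when $\lambda_{0} = \dots = \lambda_{n} = 1$; hence $\theta^{-1}(\Gamma_{n}) = \prod_{i \geq n+1} C_{r_{i}}$, which is open in the product topology on~$A$. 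Therefore $\theta$ is continuous, $A\theta$ is a profinite subgroup of $\Autc{\hat{G}}$ topologically isomorphic to~$A$, and $A\theta \cap \Inn{\hat{G}} = \1$ by Lemma~\ref{lem:JSW-psi}\ref{i:JSW-outer} just as above.

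Given Lemma~\ref{lem:JSW-psi}, I do not expect any substantial obstacle; the only delicate point — and the only place where the extra hypothesis $p_{n} \mid q_{n-1}-1$ of the theorem is used — is ensuring that the factors $C_{r_{i}}$ are taken to be the Hall $p_{i}'$\nbd subgroups of $\Field{q_{i}}^{\ast}$, so that the least non-trivial coordinate of a general element of~$A$ supplies precisely a scalar outside the order-$p_{n}$ subgroup, which is exactly what is needed to invoke Lemma~\ref{lem:JSW-psi}\ref{i:JSW-outer}.
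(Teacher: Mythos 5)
Your proposal is correct and follows essentially the same route as the paper: choosing the scalars $\lambda_{i}$ of order dividing~$r_{i}$ (so that $\langle\lambda_{i}\rangle$ meets the order\nbd$p_{i}$ subgroup of $\Field{q_{i}}^{\ast}$ trivially), mapping $g \mapsto \psi_{g}$ via Lemma~\ref{lem:JSW-psi}, and repeating the continuity argument of Theorem~\ref{thm:wreath-A} for the profinite case. Your added details (injectivity and the Hall $p_{i}'$\nbd subgroup identification) are exactly what the paper leaves implicit.
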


\begin{proof}
  The proof is similar to that of Theorem~\ref{thm:wreath-A}.  For
  each~$i$, let $\lambda_{i}$~be an element of order~$r_{i}$ in the
  multiplicative group~$\Field{q_{i}}^{\ast}$.  Then, for $i \geq 1$,
  $\langle \lambda_{i} \rangle \cap \langle \zeta_{i} \rangle = \1$
  where $\zeta_{i}$~denotes an element of order~$p_{i}$
  in~$\Field{q_{i}}\mult$.  Now if $g = (\lambda_{i}^{\, k_{i}}) \in
  \prod_{i=0}^{\infty} \langle\lambda_{i}\rangle \cong A$, write
  $\psi_{g}$~for the automorphism~$\psi$ determined by the
  sequence~$(\lambda_{i}^{\, k_{i}})$ as above.
  Lemma~\ref{lem:JSW-psi} ensures that $g \mapsto \psi_{g}$ is a
  homomorphism into~$\Aut G$ whose image satisfies the conclusion
  of~\ref{i:JSW-abelianaut}.  The second part is established
  similarly: we determine an injective homomorphism $\theta \colon
  \prod_{i=0}^{\infty} \langle\lambda_{i}\rangle \to \Autc{\hat{G}}$
  and this is continuous since the inverse image under~$\theta$ of the
  basic neighbourhood of the identity comprising those automorphisms
  that act trivially on~$G_{n}$ is $\prod_{i \geq n+1}
  \langle\lambda_{i}\rangle$.
\end{proof}

\begin{example}
  \label{ex:JNAF2}
  A specific example can be constructed as follows.  Let $(n_{i})$~be
  any sequence of positive integers.  Let $(p_{i})$, for $i \geq 1$,
  be any sequence of odd primes such that $p_{i}$~does not
  divide~$n_{i}$.  When $i \geq 1$, take $a_{i} =
  \lcm(p_{i}n_{i},p_{i+1})$ and $a_{0} = \lcm(n_{0},p_{1})$.  Now
  take, for~$i \geq 0$, $q_{i}$~to be any prime number of the
  form~$a_{i}k+1$ for some $k \in \Nat$.  (The existence of such a
  prime number is guaranteed by Dirichlet's Theorem.)  These choices
  of sequences then fulfil the requirements of
  Theorem~\ref{thm:JSW-Aut} and the integer~$r_{i}$ appearing in the
  statement is divisible by~$n_{i}$ by construction.  Consequently, we
  deduce that the Cartesian product $\prod_{i=0}^{\infty} C_{n_{i}}$
  embeds in the subgroup~$A$ appearing in Theorem~\ref{thm:JSW-Aut}.
  We may use any closed subgroup of this Cartesian product as the
  choice of~$A$ in Theorem~\ref{thm:JI-automs}.  In particular, there
  are many choices of abelian profinite groups~$A$ such that $\hat{G}
  \rtimes A$ is hereditarily JNAF including, as with the iterated
  wreath product, a hereditarily JNAF example of the form
  \[
  \left( \invlim G_{n} \right) \rtimes \prod_{i=0}^{\infty} \hat{\Zint}.
  \]
\end{example}

%%%

\subsection{Hereditarily \JNNcF\ groups by use of the Nottingham
  group}

The following construction brings together two facets of the study of
pro\nbd$p$ groups.  As a first ingredient, we make use of the work of
Lubotzky--Shalev~\cite{LS} on $R$\nbd analytic groups, in the specific
case when $R$~is the formal powers series ring $\psring$ to identify a
specific hereditarily just infinite pro\nbd$p$ group~$G$.  Secondly,
we use the fact that every countably based pro\nbd$p$ group embeds in
the the automorphism group $\Aut(R)$ to obtain a wide range of groups
of automorphisms of our group~$G$.

\begin{example}
  \label{ex:Nott}
  Let $p$~be a prime number and let $n$~be a positive integer with
  $n \geq 2$ such that $p$~does not divide~$n$.  Take $R = \psring$,
  the pro\nbd$p$ ring of all formal power series over the field of
  $p$~elements, which is a local ring with unique maximal ideal
  $\mathfrak{m} = T \, \psring$ generated by the indeterminate~$T$.
  Then take $G = \SLc{n}{R}$, the first principal congruence subgroup
  of the special linear group of all $n \times n$~matrices of
  determinant~$1$ over $R$; that is,
  \[
    G = \set{g \in \SL{n}{R}}{g \equiv I \pmod{\mathfrak{m}}},
  \]
  where $I$ denotes the $n \times n$ identity matrix.  Using the
  technology of~\cite{LS}, it is straightforward to observe that
  $G$~is a hereditarily just infinite pro\nbd$p$ group.  First $G$~is
  $R$\nbd perfect and so the terms of its lower central series are the
  congruence subgroups
  \[
  \gamma_{k}(G) = G_{k} =
  \set{g \in \SL{n}{R}}{g \equiv I \pmod{\mathfrak{m}^{k}}},
  \]
  for each~$k \geq 1$.  Adapting slightly the notation used
  in~\cite{LS}, we see that the (completed) graded Lie ring
  associated to the lower central series of $G$ satisfies
  \begin{equation*}
    L(G) = L_G(G) = \overline{ \bigoplus_{i=1}^{\infty} G_{i}/G_{i+1}
    } \cong \prod_{i=1}^{\infty} T^{i} 
    \slLie{n}{\Field{p}} \cong \slLie{n}{\mathfrak{m}},
  \end{equation*}
  the latter being the Lie algebra over~$\Field{p}$ of $n \times n$
  matrices with entries in~$\mathfrak{m}$ and trace~$0$.  To every
  closed subgroup $H$ of~$G$ we associate a closed Lie subalgebra
  of~$L(G)$ that we denote by~$L_{G}(H)$ and whose properties are
  described in~\cite[Lemma~2.13]{LS}.  Using the isomorphism above we
  view~$L_{G}(H)$ as a Lie subalgebra of $\slLie{n}{\mathfrak{m}}$.
  In particular, $L_{G}(G_{k})$~corresponds to the Lie subalgebra
  $\prod_{i=k}^{\infty} T^{i} \slLie{n}{\Field{p}} \cong
  \slLie{n}{\mathfrak{m}^{k}}$.  If $W$~is a non-zero $\Field{p}$\nbd
  subspace of~$\slLie{n}{R}$ satisfying $[W,
    \slLie{n}{\mathfrak{m}^{k}}]_{\text{Lie}} \subseteq W$ for some~$k
  \geq 1$, then a direct computation shows there exists~$r$ such that
  $\slLie{n}{\mathfrak{m}^{r}} \subseteq W$.  (It is this computation
  that uses the fact that $p \nmid n$.)

  Now let $H$~and~$N$ be closed subgroups of~$G$ such that $\1 \neq N
  \normal H \normal G$.  Then $L_{G}(H)$~is an ideal of the Lie
  algebra~$\slLie{n}{\mathfrak{m}}$ and hence there exists~$r$ such
  that $\slLie{n}{\mathfrak{m}^{r}} \subseteq L_{G}(H)$.
  Consequently, $G_{r} \leq H$, so that $[N,G_{r}] \leq N$ and one
  deduces $[L_{G}(N),\slLie{n}{\mathfrak{m}^{r}}]_{\text{Lie}}
  \subseteq L_{G}(N)$.  It follows that $\slLie{n}{\mathfrak{m}^{s}}
  \subseteq L_{G}(N)$ for some~$s$ and hence $G_{s} \leq N$ and so
  $\order{G:N} < \infty$.  This demonstrates that $G$~is hereditarily
  just infinite.

  Next we exploit properties of the Nottingham group~$\Nott$ over
  $\Field{p}$ to produce groups of automorphisms of the above
  group~$G$.  The group $\Nott$ is the Sylow pro-$p$ subgroup of the
  profinite group $\Autc(R) = \Aut(R)$; it coincides with the
  group~$\Aut^{1}(R)$ of all automorphisms of the ring~$R$ that act
  trivially modulo~$\mathfrak{m}^{2}$.  Any element $\alpha$
  of~$\Nott$ is then uniquely determined by its effect upon the
  indeterminate~$T$ and, conversely, for any $f \in R$ with
  $f \equiv T \pmod{\mathfrak{m}^{2}}$ there is a unique element of
  $\Nott$ mapping $T$ to~$f$.  (Thus~$\Nott$ could alternatively be
  defined as a group of power series $T + \mathfrak{m}^{2}$ with the
  binary operation given by substitution of power series.  For our
  construction, however, the behaviour as automorphisms is more
  relevant.)  We refer to~\cite{Camina-survey} for background material
  concerning the Nottingham group, which plays a role also in number
  theory and dynamics.

  The action of the Nottingham group~$\Nott$ on~$R$ induces a faithful
  action upon the group~$G = \SLc{n}{R}$ and hence we
  construct a subgroup~$\dot{\Nott} \leq \Autc G$ isomorphic
  to~$\Nott$.  Suppose $\alpha \in \Nott$ is an element that induces
  an inner automorphism~$\dot{\alpha}$ of the group~$G$, and put $f =
  T \alpha \in T + \mathfrak{m}^{2}$.  Then there exists a matrix~$h
  \in G$ such that $hx^{\dot{\alpha}} = xh$ for all $x \in G$.  In
  particular, upon taking $x = I + Te_{ij}$ for $1 \leq i,j \leq n$
  with $i \neq j$, we conclude that $h$~must be a diagonal matrix such
  that every pair of distinct diagonal entries $a$~and~$b$ are linked
  by the relation $Ta = fb$ in~$R$.  It follows that $f^{2} = T^{2}$
  and hence, since $f \equiv T \pmod{\mathfrak{m}^{2}}$, that $f = T$
  and $\dot{\alpha} = \mathrm{id}_G$.  In conclusion, the copy of the
  Nottingham group in~$\Autc G$ satisfies $\dot{\Nott} \cap \Inn G =
  \1$.

  As the final step in our construction, we use the result of
  Camina~\cite{Camina-embed} that every countably-based pro\nbd$p$
  group can be embedded as a closed subgroup in~$\Nott$.  Hence if
  $A$~is any finitely generated pro\nbd$p$ group that is virtually
  nilpotent (say, of class~$c$), then it may be embedded in~$\Autc G$
  in such a way that $A \cap \Inn G = \1$.  Hence we have satisfied
  the conditions of Theorem~\ref{thm:JI-automs} and the semidirect
  product~$G \rtimes A$ is an example of a hereditarily
  \JNNcF\ pro\nbd$p$ group.
\end{example}

%%%%%

\paragraph{Acknowledgements} Some of this research was conducted while
the second author was visiting Heinrich Heine University
D\"{u}sseldorf.  He thanks the university for its hospitality and we
gratefully acknowledge partial support by the Humboldt Foundation.  We
also thank the referees for their careful reading of the paper and
insightful suggestions, particularly that of using subprimitivity in
Section~6.

%%%%%

\noindent
Benjamin Klopsch, \texttt{klopsch@math.uni-duesseldorf.de}\\
Mathematisches Institut, Heinrich-Heine-Universit\"{a}t, 40225
D\"{u}sseldorf, Germany\\[5pt]
Martyn Quick, \texttt{mq3@st-andrews.ac.uk}\\
School of Mathematics \& Statistics, University of St Andrews, St
Andrews, UK

\end{document}